\newcommand{\f}{\varphi}
\newcommand{\aA}{{\mathcal{A}}}
\newcommand{\bB}{{\mathcal{B}}}
\newcommand{\cC}{{\mathcal{C}}}
\newcommand{\dD}{{\mathcal{D}}}
\newcommand{\eE}{{\mathcal{E}}}
\newcommand{\fF}{{\mathcal{F}}}
\newcommand{\hH}{{\mathcal{H}}}
\newcommand{\lL}{{\mathcal{L}}}
\newcommand{\oO}{{\mathcal{O}}}
\newcommand{\tT}{{\mathcal{T}}}
\newcommand{\lle}{\mbox{\raisebox{0.25ex}{${\scriptscriptstyle\le}$}}}
\newcommand{\gge}{\mbox{\raisebox{0.25ex}{${\scriptscriptstyle\ge}$}}}
\newcommand{\tr}{{\mbox{$t$-struc}\-tu\-r}}
\newcommand{\bcdot}{{\mbox{\boldmath{$\cdot$}}}}
\newcommand{\Hom}{\mathop{\textrm{Hom}}\nolimits}
\newcommand{\Ext}{\mathop{\textrm{Ext}}\nolimits}
\newcommand{\Coh}{\mathop{\textrm{Coh}}\nolimits}
\newcommand{\hHom}{\mathop{\mathcal{H}\textrm{om}}\nolimits}
\newcommand{\pPerf}{\mathop{\mathfrak{P}\textrm{erf}}\nolimits}
\newcommand{\Conn}{\mathop{\textrm{Conn}}\nolimits}
\newcommand{\Dec}{\mathop{\textrm{Dec}}\nolimits}
\newcommand{\Irr}{\mathop{\textrm{Irr}}\nolimits}
\newcommand{\Ex}{\mathop{\textrm{Ex}}\nolimits}
\newcommand{\Spec}{\mathop{\textrm{Spec}}\nolimits}
\newcommand{\Perf}{\mathop{\textrm{Perf}}\nolimits}
\newcommand{\Id}{\mathop{\textrm{Id}}\nolimits}
\newcommand{\wt}[1]{\widetilde{#1}}
\newcommand{\ol}[1]{\overline{#1}}
\newcommand{\di}[1]{\omega_{#1}|_{D_{#1}}}
\newtheorem{LEM}{Lemma}[section]
\newtheorem{THM}[LEM]{Theorem}
\newtheorem{PROP}[LEM]{Proposition}
\newtheorem{COR}[LEM]{Corollary}
\theoremstyle{definition}
\newtheorem{EXM}[LEM]{Example}
\newtheorem{REM}[LEM]{Remark}
\newtheorem{DEF}[LEM]{Definition}
\begin{document}
	
\title{Canonical tilting relative generators}
\author{Agnieszka Bodzenta}
\author{Alexey Bondal}
\date{\today}

\address{Agnieszka Bodzenta\\
	School of Mathematics, 
	University of Edinburgh \\ Peter Guthrie Tait Road \\ Edinburgh EH9 3FD,
	U. K.} \email{A.Bodzenta@ed.ac.uk}

\address{Alexey Bondal\\
	Steklov Mathematical Institute of Russian Academy of Sciences, Moscow, Russia, and
	Kavli Institute for the Physics and Mathematics of the Universe (WPI), The University of Tokyo, Kashiwa, Chiba 277-8583, Japan,  and National Research University Higher School of Economics, Russian Federation} \email{bondal@mi.ras.ru}

\begin{abstract}
	Given a relatively projective birational morphism $f\colon X\to Y$ of smooth algebraic spaces with dimension of fibers bounded by 1, we construct tilting relative (over $Y$) generators $T_{X,f}$ and $S_{X,f}$ in $\dD^b(X)$. We develop a piece of general theory of strict admissible lattice filtrations in triangulated categories and show that $\dD^b(X)$ has such a filtration $\lL$ where the lattice is the set of all birational decompositions $f \colon X \xrightarrow{g} Z \xrightarrow{h} Y$ with smooth $Z$. The \tr es related to $T_{X,f}$ and $S_{X,f}$ are proved to be glued via filtrations left and right dual to $\lL$. We realise all such $Z$ as the fine moduli spaces of simple quotients of $\oO_X$ in the heart of the \tr e for which $S_{X,g}$ is a relative projective generator over $Y$. This implements the program of interpreting relevant smooth contractions of $X$ in terms of a suitable system of \tr es on $\dD^b(X)$.
\end{abstract}	

\maketitle
\tableofcontents

\section*{\textbf{Introduction}}

This paper is devoted to the categorical study of relatively projective birational morphisms $f\colon X\to Y$ between smooth algebraic spaces with the dimension of fibres  bounded by 1. 
According to a theorem of V. Danilov such a morphism has a decomposition into a sequence of blow-ups with smooth centers of codimension 2. 
Our goal is to find a categorical interpretation for $f$ and for all possible intermediate contractions in terms of transformations of \tr es in the bounded derived category $\dD^b(X)$ of coherent sheaves on $X$.

Recall that T. Bridgeland, in his approach to proving the derived flop conjecture (see \cite{BO}) in dimension 3, introduced in \cite{Br1} a series of \tr es in $\dD^b(X)$ related to a birational morphism $f\colon X\to Y$ of projective varieties with fibers of dimension bounded by 1. The \tr es, with hearts $\ ^p{\rm Per}(X/Y)$, depended on an integer parameter $p\in \mathbb{Z}$. Under the assumption that $f$ was a flopping contraction, he used these \tr es to define the flopped variety as a moduli space of so-called {\em point objects} in $\ ^{-1}{\rm Per}(X/Y)$. 

In our setting of \emph{divisorial contractions} instead of flopping contractions, we construct a system of \tr es with nice properties and interpret all possible intermediate smooth contractions between $X$ and $Y$ as the fine moduli spaces of simple quotients of $\mathcal {O}_X$ in the hearts of those \tr es.

We study the partially ordered set ${\rm Dec}(f)$ of all decompositions for $f$ into two birational morphisms with a smooth intermediate space. We prove that it is a distributive lattice and identify it with the lattice of lower ideals in a poset ${\rm Conn}(f)$, which is a subposet in ${\rm Dec}(f)$ (see Corollary \ref{cor_Dec_id_in_Con}). We provide with various descriptions of $\Conn(f)$ and find a one-to-one correspondence between the elements of poset  ${\rm Conn}(f)$ and the set ${\rm Irr}(f)$ of irreducible components of the exceptional divisor for $f$, thus inducing a poset structure on $\Irr(f)$. This partial order is important from the categorical viewpoint, though it is very far from the incidence relation of irreducible components of the exceptional divisor even for the case of smooth surface contractions. One can assign to every element $g\in {\rm Dec}(f)$ the subset in ${\rm Irr}(f)$ of irreducible components contracted by $g$. Then the induced partial order on ${\rm Irr}(f)$ allows to identify ${\rm Dec}(f)$ with the lower ideals in poset ${\rm Irr}(f)$.

We consider the general set-up of a morphism $X \to Y$ of quasi-compact quasi-separated algebraic spaces and substacks $\mathfrak{C}$ in $\dD(X)$ over $Y$. We prove that  a relative generator $T$ in $\mathfrak{C}$ induces an equivalence of $\mathfrak{C}$ with the stack of perfect modules over the relative endomorphism algebra of $T$ (Theorem \ref{thm_relative_tilting}). Various partial versions of this statement are scattered in the literature, cf. \cite{VdB}, \cite{SpVdB}, \cite{KruPloSos}.

If $X$ is smooth, one can assign two \tr es in $\mathfrak{C}$ to a \emph{tilting} relative  generator $T$, the one when the object is relatively projective in the heart of the \tr e, and when it is relatively injective (\ref{eqtn_t-str_on_stack}), which we dubbed $T$-{\em projective} and, respectively, $T$-{\em injective} \tr e. The above equivalence is $t$-exact for the first \tr e. 

We construct for our $f$ a tilting relative  generator in $\dD^b(X)$. It has a surprisingly simple canonical form and, remarkably, it is just a direct sum of discrepancy sheaves:
$$
T_{X,f} = \omega_X \oplus \bigoplus_{g\in \Conn(f)}\omega_{X}|_{D_g},
$$
with $D_g$ the discrepancy divisor for $g\colon X \to Z$ in $\Conn(f)$, i.e. $\omega_g=g^!(\oO_Z) = \oO_X(D_g)$.

We hope that explicit dependence of this generator on the relative canonical classes  will shed light on the mystery of the conjectures that the behaviour of the derived categories under birational transformations in the Minimal Model Program, namely divisorial contractions, flops, and flips, is controlled by the canonical classes of the varieties involved \cite{BonOrl}, \cite{BO}, \cite{Kaw}.

By applying the relative duality functor $\mathscr{D}_f(-)= R\hHom_X(-,\omega_f)$, we obtain another  tilting relative generator
$$
S_{X,f}=\oO_X \oplus \bigoplus_{g\in \Conn(f)}\omega_g|_{D_g}[-1].
$$

Then we study the four \tr es in $\dD^b(X)$ where either $T_{X,f}$ or $S_{X,f}$ are relatively projective or injective.

In \cite{VdB}, M. Van den Bergh found that, if  $Y$ is the spectrum of a complete local ring, then $\ ^{-1}{\rm Per}(X/Y)$ and $\ ^{0}{\rm Per}(X/Y)$ have  projective generators,
$P=M$ or $P=M^* $ respectively, and this allowed him  to identify these two  hearts of the \tr es with the category of modules over the algebra $A_P={\rm End}P$ and to construct a derived equivalence $\dD(X)\simeq \dD({\rm mod}-A_P)$.  Some examples of this sort of equivalences were already known by that time (for instance, to the second-named author of this paper and D. Orlov in the study of intersection of quadrics by means of the sheaf of Clifford algebras, \textit{cf.} \cite{BO}).  This approach paved the way to interpreting birational geometry via non-commutative resolutions, which includes derived Mac-Kay correspondence as a very particular case. 

Note that Van den Bergh's construction of projective generators was inherently non-canonical, which implied extra conditions for gluing a relative projective generator along the base $Y$, for the case when $Y$ is not the spectrum of a complete local ring.

In order to understand the gluing properties of our \tr es, we develop a piece of general theory of $\lL$-filtrations in triangulated categories, where $\lL$ is a lattice. In particular, we introduce the notion of {\rm strict} admissible $\lL$-filtration and show that the strictness is preserved under the transit to the dual $\lL$-filtration. Under additional assumption that $\lL$ is a distributive lattice, Theorem \ref{thm_gluing_via_poset} claims that, given \tr es on all minimal subquotients of a strict $\lL$-filtration, one can construct a unique \tr e with nice gluing properties on the whole category by iterating Beilinson-Bernstein-Deligne gluing procedure. Since this part of the work is of abstract general character and can be of independent interest, we put it at the beginning of the paper. 

The {\em null-category} $\cC_f$ of the morphism $f$, whose importance for construction of spherical functors related to flops was emphasised in \cite{BodBon}, is proven to have a strict admissible ${\rm Dec}(f)$-filtration. The standard \tr e restricts to $\cC_f$ and the resulting \tr e on $\cC_f$ is glued via this filtration from the standard \tr es on categories $\dD^b(\bB_g)$, where $\bB_g$'s are the centers of the intermediate blow-ups that lead to $f$.

Objects  $T_f=\bigoplus_{g\in \Conn(f)} \omega_X|_{D_g}$ and $S_f=\bigoplus_{g\in \Conn(f)}\omega_g|_{D_g}[-1]$ are tilting generators in $\cC_f$.
We prove that the $S_f$-projective \tr e on the null category $\cC_f$ is glued via a $\Dec(f)^{op}$-filtration which to an element $(g\colon X \to Z, h\colon Z \to Y)$ in $\Dec(f)$ assigns subcategory $g^! \cC_h$. The $S_{X,f}$-projective \tr e is glued via the $\Dec(f)$-filtration extended from $\cC_f$ to $\dD^b(X)$ by adding one element to the filtration, $\dD^b(X)$ itself. 
In particular, functor $f^!:\dD^b(Y)\to \dD^b(X)$ is t-exact for the standard \tr e on $\dD^b(Y)$ and for the $S_{X,f}$-projective \tr e on $\dD^b(X)$.
	
Remarkably, the $S_{X,f}$-projective \tr e is also glued via recollement
\[
\xymatrix{\cC_f \ar[rr]|{\iota_{f*}} && \dD^b(X) \ar@<-2ex>[ll]|{\iota_f^*} \ar@<2ex>[ll]|{\iota_f^!} \ar[rr]|{Rf_*} && \dD^b(Y) \ar@<-2ex>[ll]|{Lf^*} \ar@<2ex>[ll]|{f^!} }
\]
from the $S_f$-projective \tr e on $\cC_f$ and the standard \tr e on $\dD^b(Y)$ .
Similar dual facts hold for the $T_{X,f}$-injective \tr e (see Subsection \ref{ssec_gluing_prop_for_t_str}).

Then we assign the $S_{X,g}$-projective \tr e on $\dD(X)$ to every element $(g,h)\in {\rm Dec}(f)$ as above. It is instructive to understand how they are interrelated.
To this end, we assign also a \tr e to every pair of compatible elements $(g,h)\ge (g',h')$ in ${\rm Dec}(f)$ and prove that this \tr e is related  with the $S_{X,g}$-projective \tr e and the $S_{X,g'}$-projective \tr es by one tilt in a torsion pair (see Subsection \ref{ssec_Def_f_syst_of_trs}). 

Finally, we recover $Y$ as a fine moduli space of objects in the heart of the $S_{X,g}$-projective \tr e. To be more precise, since the \tr e is glued via two 'opposite' recollements from the given \tr es on $\cC_g$ and $\dD^b(Z)$, all simple objects in its heart are either simple in $\cC_g$ or isomorphic to $g^!\oO_z$, for closed points $z\in Z$. 
Moreover, $g^!\oO_z$ are the only simple quotients of $\oO_X$ in the heart of the \tr e under consideration. 

This suggests to consider a moduli functor of simple quotients of $\oO_X$ by mimicking the Hilbert functor of $0$-dimensional subschemes. Theorem \ref{thm_Z_is_moduli_space} claims that $Z$ represents the functor, i.e. it is the fine moduli space of simple quotients of $\oO_X$ in the heart of the  $S_{X,g}$-projective \tr e. 

Here is a couple of remarks concerning our approach. First, in contrast to Bridgeland's set-up, we consider everything relatively over $Y$, including the functor of points, which allows us to avoid the assumption on (quasi) projectiveness of our algebraic spaces and some complications in proving the existence of the fine moduli space. Second, the $t$-exactness of functors $Rg_*$ and $g^!$ for our choice of \tr e allows us to show that there is a one-to-one correspondence between families of simple quotients of $\oO_X$ in the heart of $S_{X,g}$-projective \tr e and families of skyscrapers on $Z$.  This justifies our choice of the \tr e related to $g\in \Dec(f)$. For the case when $g$ is the blow-up of a smooth locus of codimension 2, our \tr e coincides with the one constructed by Bridgeland for $p=1$, but it is neither of his \tr es for more involved $g$.

In \cite{Toda} Yu.Toda, in his approach to Minimal Model Program (MMP) of birational geometry via variations of stability conditions,  considers a birational morphism of smooth projective surfaces $f\colon X\to Y$. He endows  the quotient category, $\dD^b(Y)$, with a \tr e corresponding to an ample class $\omega$ and glues it with the \tr e on $\cC_f \otimes \omega_f^{\vee}$ to get a \tr e on $\dD^b(X)$. One can check that his \tr e on $\cC_f\otimes \omega_f^\vee$ coincides for the case of surfaces with our $S_f$-projective \tr e after $\omega_f$-twist (though he constructs it in a very different way). In \cite{Toda} it is proved that the \tr e can be accompanied with the central charge to give a stability condition on $\dD^b(X)$, which allows to reconstruct $Y$ as the moduli space of stable point objects in the heart. Our approach is probably more direct, as we don't need stability conditions for reconstruction of the contracted variety.


The system of \tr es constructed in this paper and the corresponding moduli interpretation of birational contractions opens a new perspective for the categorical interpretation of the MMP by means of transformations of \tr es.

\textbf{Acknowledgement}
We would like to thank Alexander Efimov, Benjamin Hennion, Alexander Kuznetsov, \v{S}pela \v{S}penko, Yukinobu Toda, Michel Van den Bergh and Michael Wemyss  for many useful remarks. This work was partially supported by
World Premier International Research Center Initiative (WPI Initiative), MEXT, Japan. A. Bodzenta was partially supported by Polish National Science Centre grant nr 2013/11/N/ST1/03208 and by the EPSRC grant EP/K021400/1. A. Bondal was partially supported by the Russian Academic Excellence Project '5-100'. The reported study was partially supported by RFBR, research projects 14-01-00416 and 15-51-50045. This work is supported by the Program of the Presidium of
the Russian Academy of Sciences 01 'Fundamental Mathematics and
its Applications' under grant PRAS-18-01.


\section{\textbf{Strict lattice filtrations and gluing of \tr es}}

\subsection{Strict lattice filtrations on categories}\label{ssec_strict_poset_filt}

A full subcategory $\dD_0 \subset \dD$ of a triangulated category is \emph{left}, respectively \emph{right, admissible} if the inclusion functor $\iota_{0*} \colon \dD_0 \to \dD$ has left adjoint $\iota_0^* \colon \dD \to \dD_0$, respectively right adjoint $\iota_0^! \colon \dD \to \dD_0$, and it is \emph{admissible} if it is both left and right admissible \cite{B}.

If $\dD_0\subset \dD$ is left admissible then ${}^{\perp}\dD_0 := \{ D\in \dD\,|\, \Hom(D, \dD_0) = 0\}$ is right admissible \cite[Lemma 3.1]{B} and category $\dD$ admits a semi-orthogonal decomposition (or simply SOD) $\dD = \langle \dD_0, {}^\perp \dD_0\rangle$. Similarly, for a right admissible subcategory $\dD_0 \subset \dD$, category $\dD_0^{\perp} := \{ D\in \dD\,|\, \Hom(\dD_0, D) = 0\}$ is left admissible and $\dD$ admits an SOD $\dD = \langle \dD_0^\perp, \dD_0 \rangle$. 
Conversely, for any SOD $\dD = \langle \dD_0, \dD_1\rangle$, category $\dD_0 \subset \dD$ is left admissible while $\dD_1\subset \dD$ is right admissible.

A \emph{recollement} is a diagram of triangulated categories with exact functors 
\begin{equation}\label{eqtn_recollement}
\xymatrix{\dD_0 \ar[rr]|{i_*} && \dD \ar[rr]|{j^*} \ar@<2ex>[ll]|{i^!} \ar@<-2ex>[ll]|{i^*}&& \dD_1, \ar@<2ex>[ll]|{j_*} \ar@<-2ex>[ll]|{j_!}}
\end{equation}
such that
\begin{itemize}
	\item[(r1)]\label{con_r1} functors $i_*$, $j_*$, $j_!$ are fully faithful,
	\item[(r2)]\label{con_r2} $(i^*\dashv i_* \dashv i^!)$, $(j_! \dashv j^* \dashv j_*)$ are triples of adjoint functors,
	\item[(r3)]\label{con_r3} the kernel of $j^*$ is the essential image of $i_*$.
\end{itemize}

To simplify the exposition, we omit sometimes functors $i^*$, $i^!$, $j_!$ and $j_*$ in the notation of recollement (\ref{eqtn_recollement}). 

For recollement (\ref{eqtn_recollement}), category $\iota_*\dD_0$ is clearly admissible. Two corresponding SOD's are
\begin{equation}\label{eqtn_SOD_from_recoll}
\dD = \langle i_* \dD_0, j_! \dD_1\rangle = \langle j_* \dD_1, i_* \dD_0 \rangle.
\end{equation}
Conversely, an admissible subcategory $\dD_0 \subset \dD$ gives a recollement. Indeed, let $\dD_1$ be the quotient $\dD/\dD_0$ and $j^* \colon \dD \to \dD_1$ be the quotient functor. Distinguished triangles
\begin{align*}
&\iota_{0*}\iota_0^! \to \Id \to \iota_{\dD_0^\perp *}\iota_{\dD_0^\perp}^* \to \iota_{0*}\iota_0^![1],& &\iota_{{}^\perp\dD_0*}\iota_{{}^\perp \dD_0}^! \to \Id \to \iota_{0*}\iota_{0}^* \to \iota_{{}^\perp\dD_0*}\iota_{{}^\perp \dD_0}^![1]&
\end{align*} 
imply that $\dD_1\simeq \dD_0^\perp \simeq {}^\perp \dD_0$ and under these equivalences functor $j^*$ is isomorphic to $i_{\dD_0^\perp}^*$, respectively $i_{{}^\perp \dD_0}^!$. In particular, $j^*$ has two fully faithful adjoint functors: $j_*$ is the embedding of $\dD_0^\perp$ while $j_!$ is the embedding of ${}^\perp \dD_0$. Given admissible $\dD_0 \subset \dD$, we say that (\ref{eqtn_recollement}) is \emph{the recollement w.r.t. subcategory} $\dD_0$.

For a triangulated category $\dD$, define the \emph{right admissible poset} $\textrm{rAdm}(\dD)$ to be the poset of all right admissible subcategories with the order by inclusion. Similarly, we have the \emph{left admissible poset}  $\textrm{lAdm}(\dD)$ and the \emph{admissible poset} $\textrm{Adm}(\dD)$.
In general, $\textrm{rAdm}(\dD)$, $\textrm{lAdm}(\dD)$ and $\textrm{Adm}(\dD)$ have neither unions nor intersections.

Let $\lL$ be a finite lattice with the minimal element $0$ and the maximal element $1$ and $\dD$ a triangulated category. A \emph{right admissible} $\lL$\emph{-filtration} on $\dD$ is a map of posets $\lL \to \textrm{rAdm}(\dD)$, $I \mapsto \dD_I$, $I \in \lL$, such that 
\begin{itemize}
	\item[(Ri)] $\dD_{0}  = 0$, $\dD_{1}  = \dD$,
	\item[(Rii)] for any $I,J \in \lL$, $\dD_{I\cap J} = \dD_I \cap \dD_J$, and $\dD_{I \cup J}^\perp = \dD_I^\perp \cap \dD_J^\perp$.
\end{itemize}
A \emph{left admissible} $\lL$\emph{-filtration} on $\dD$ is a map of posets $\lL \to \textrm{lAdm}(\dD)$, $I \mapsto \dD_I$, s. t. 
\begin{itemize}
	\item[(Li)] $\dD_{0} = 0$, $\dD_1 = \dD$,
	\item[(Lii)] for any $I, J \in \lL$, $\dD_{I \cap J} = \dD_I \cap \dD_J$, and ${}^\perp\dD_{I \cup J} = {}^\perp \dD_I \cap {}^\perp \dD_J$.
\end{itemize}
An \emph{admissible} $\lL$\emph{-filtration} on $\dD$ is a map of posets $\lL \to \textrm{Adm}(\dD)$ which defines both right and left admissible $\lL$-filtrations.

Furthermore, we say that a left (or right) admissible $\lL$-filtration on $\dD$ is \emph{strict} if, besides the above conditions, we have:
\begin{itemize}
	\item[(iii)] for any $I, J \in \lL$, $\Hom_{\dD_{I\cup J}/\dD_{I\cap J}}(\dD_I/\dD_{I\cap J}, \dD_J/\dD_{I\cap J})=0$ in the Verdier quotient $\dD_{I\cup J}/\dD_{I\cap J}$.
\end{itemize}

For elements $I\preceq J$ in $\lL$, we consider the Verdier quotient:
$$
\dD_{[I,J]} :=\dD_{J}/\dD_{I}.
$$

\begin{REM}\label{rem_adm_on_quotients}
	Let $\dD$ be a triangulated category with a left (or right) admissible $\lL$-filtration and $I\preceq J \preceq K$ a triple of elements in $\lL$. The fully faithful functor $i\colon \dD_J \to \dD_K$ gives fully faithful $\ol{i} \colon \dD_J/\dD_I \to \dD_K/\dD_I$. Moreover, the composite $\dD_K \xrightarrow{i^{\sharp}} \dD_J \xrightarrow{Q} \dD_J/\dD_I$ of the functor left or right adjoint to $i$ with the quotient functor $Q$ takes $\dD_I \subset \dD_K$ to zero, hence it gives a functor $\ol{i}^{\sharp} \colon \dD_K/\dD_I \to \dD_J/\dD_I$. Then $\ol{i}^* \dashv \ol{i}$, $\ol{i} \dashv \ol{i}^!$ are adjoint pairs of functors, hence $\dD_J/\dD_I \subset \dD_K/\dD_I$ is left (respectively, right) admissible.
\end{REM}

For any $I \preceq K$ in $\lL$ the set
\[
[I,K] := \{ J \in \lL\,|\, I \preceq J \preceq K\}
\]
is a lattice with the minimal element $I$ and the maximal element $K$. Remark \ref{rem_adm_on_quotients} implies that a left (respectively right) admissible $\lL$-filtration on $\dD$ induces a left (respectively right) admissible $[I,K]$-filtration on $D_{[I,K]}$.

\begin{LEM}\label{lem_functor_D_T_1_D_T_left}
	Let $\dD$ be a category with a left (resp., right) admissible $\lL$-filtration. Then, for any triple of elements $I \preceq J \preceq K$ in $\lL$, we have functors $\dD_{[I,J]} \to \dD_{[I,K]}$ and $\dD_{[J,K]} \to \dD_{[I,K]}$ which give SOD $\dD_{[I,K]} = \langle \dD_{[I,J]}, \dD_{[J,K]} \rangle$ (resp., $\dD_{[I,K]} = \langle \dD_{[J,K]}, \dD_{[I,J]}\rangle$).
\end{LEM}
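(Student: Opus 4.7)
The plan is to reduce everything to Remark \ref{rem_adm_on_quotients} together with the standard isomorphism of iterated Verdier quotients. I treat the left admissible case; the right admissible case is entirely analogous.

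First, I construct the two functors. The inclusion $\dD_J \hookrightarrow \dD_K$ composed with the quotient $\dD_K \to \dD_{[I,K]} = \dD_K/\dD_I$ sends $\dD_I \subset \dD_J$ to zero, hence factors through a functor $\alpha \colon \dD_{[I,J]} \to \dD_{[I,K]}$ which is fully faithful by Remark \ref{rem_adm_on_quotients} (applied to the triple $I \preceq J \preceq K$ in the role of the triple there). Similarly, the quotient functor $\dD_K \to \dD_{[I,K]}$ need not kill $\dD_J$, but the composite of the inclusion $\dD_J \hookrightarrow \dD_K$ with the quotient $\dD_K \to \dD_K/\dD_J = \dD_{[J,K]}$ is zero. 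By the universal property of Verdier localisation applied to $\dD_I \subset \dD_K$, we therefore obtain a canonical functor $\dD_{[I,K]} \to \dD_{[J,K]}$, and the standard isomorphism for iterated localisations yields an equivalence
\[
\dD_{[I,K]}/\alpha(\dD_{[I,J]}) \;\simeq\; \dD_K/\dD_J \;=\; \dD_{[J,K]}.
\]

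Next, I promote $\alpha$ to a semi-orthogonal factor. By Remark \ref{rem_adm_on_quotients}, the subcategory $\alpha(\dD_{[I,J]}) \subset \dD_{[I,K]}$ is left admissible, because $\dD_J \subset \dD_K$ is. Therefore $\dD_{[I,K]}$ admits the SOD
\[
\dD_{[I,K]} \;=\; \bigl\langle \alpha(\dD_{[I,J]}),\; {}^{\perp}\alpha(\dD_{[I,J]}) \bigr\rangle,
\]
and the embedding ${}^{\perp}\alpha(\dD_{[I,J]}) \hookrightarrow \dD_{[I,K]}$ is naturally equivalent to the quotient $\dD_{[I,K]} \to \dD_{[I,K]}/\alpha(\dD_{[I,J]})$ via the two fully faithful adjoints that come from left admissibility. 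Combining this with the identification of the preceding paragraph gives an equivalence between ${}^{\perp}\alpha(\dD_{[I,J]})$ and $\dD_{[J,K]}$, and the composite
\[
\beta \colon \dD_{[J,K]} \;\simeq\; {}^{\perp}\alpha(\dD_{[I,J]}) \hookrightarrow \dD_{[I,K]}
\]
is the desired functor $\dD_{[J,K]} \to \dD_{[I,K]}$. The SOD then reads $\dD_{[I,K]} = \langle \dD_{[I,J]}, \dD_{[J,K]}\rangle$.

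In the right admissible case, one obtains the opposite SOD $\dD_{[I,K]} = \langle \dD_{[J,K]}, \dD_{[I,J]}\rangle$ by replacing ${}^{\perp}(-)$ with $(-)^{\perp}$ throughout, which is exactly the dual content of Remark \ref{rem_adm_on_quotients}. The main (essentially only) point requiring care is the identification of $\dD_{[J,K]}$ with the semi-orthogonal complement of $\alpha(\dD_{[I,J]})$ in $\dD_{[I,K]}$: one must check that the adjoint-induced equivalence between the quotient $\dD_{[I,K]}/\alpha(\dD_{[I,J]})$ and ${}^{\perp}\alpha(\dD_{[I,J]})$ intertwines with the universal functor $\dD_{[I,K]} \to \dD_{[J,K]}$ produced by Verdier's universal property. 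This is a routine diagram chase using the defining triangles of the gluing functors and the universal property of the quotient, and I would carry it out by comparing both functors as left (resp.\ right) adjoints to the fully faithful embedding of $\alpha(\dD_{[I,J]})^{\perp}$ (resp.\ ${}^{\perp}\alpha(\dD_{[I,J]})$) into $\dD_{[I,K]}$.
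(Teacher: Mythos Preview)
Your argument is correct and is precisely the routine unpacking that the paper suppresses: the paper's own proof reads simply ``This is clear,'' and what you have written is the standard justification via Remark~\ref{rem_adm_on_quotients} and the third-isomorphism identification $(\dD_K/\dD_I)/(\dD_J/\dD_I)\simeq \dD_K/\dD_J$.
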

\begin{proof}
	This is clear.
\end{proof}

\begin{PROP}\label{prop_strict_right_admissible_filtr}
	Let $\dD$ be a triangulated category with a strict right (or left) admissible $\lL$-filtration. Then, for any $I,J \in \lL$, the embedding of subcategories gives an exact equivalence $\dD_I/\dD_{I \cap J} \oplus \dD_J/\dD_{I \cap J}\simeq \dD_{I\cup J}/\dD_{I \cap J}$.
\end{PROP}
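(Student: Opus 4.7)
The plan is to denote $C := \dD_{I\cup J}/\dD_{I\cap J}$, $A := \dD_I/\dD_{I\cap J}$, $B := \dD_J/\dD_{I\cap J}$, and prove that $A$ and $B$ are completely mutually orthogonal triangulated subcategories of $C$ which jointly generate it. I treat the right admissible case; the left admissible case is dual.

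First, applying the strictness condition (iii) to both $(I,J)$ and $(J,I)$ gives $\Hom_C(A,B) = 0$ and $\Hom_C(B,A) = 0$. Since $A$ and $B$ are triangulated subcategories of $C$ (closed under shifts), this yields full mutual orthogonality in all degrees. By Remark \ref{rem_adm_on_quotients}, $A$ and $B$ are right admissible subcategories of $C$, producing SODs $C = \langle A^\perp, A \rangle$ and $C = \langle B^\perp, B\rangle$. Moreover, Lemma \ref{lem_functor_D_T_1_D_T_left} applied to the triples $I\cap J \preceq I \preceq I \cup J$ and $I \cap J \preceq J \preceq I \cup J$ identifies $A^\perp = \dD_{[I, I\cup J]}$ and $B^\perp = \dD_{[J, I\cup J]}$ inside $C$.

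The crucial step is to establish that $A^\perp \cap B^\perp = 0$ in $C$. For this, I invoke the fact stated just after Remark \ref{rem_adm_on_quotients} that the right admissible $\lL$-filtration on $\dD$ induces a right admissible $[I\cap J, I\cup J]$-filtration on $C$ whose element attached to $K \in [I\cap J, I\cup J]$ is $\dD_K/\dD_{I\cap J}$. Axiom (Rii) of this induced filtration, applied to the elements $I$ and $J$ whose join in $[I\cap J, I\cup J]$ is the maximum $I \cup J$, yields $A^\perp \cap B^\perp = (\dD_{[I\cap J, I\cup J]})^\perp = C^\perp = 0$.

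Having assembled these ingredients, essential surjectivity of the functor $A \oplus B \to C$ follows formally. For $X \in C$, decompose along $C = \langle A^\perp, A\rangle$ to get a triangle $X_A \to X \to X_{A^\perp}$ with $X_A \in A$ and $X_{A^\perp} \in A^\perp$; then decompose $X_{A^\perp}$ along $C = \langle B^\perp, B\rangle$ into $X' \to X_{A^\perp} \to X''$ with $X' \in B$ and $X'' \in B^\perp$. Since $B \subseteq A^\perp$ by orthogonality and $A^\perp$ is triangulated, the cone $X'' \in A^\perp \cap B^\perp = 0$, so $X_{A^\perp} \simeq X' \in B$. The connecting morphism $X_{A^\perp} \to X_A[1]$ then lies in $\Hom_C(B, A[1]) = 0$, hence the triangle splits and $X \simeq X_A \oplus X_{A^\perp} \in A \oplus B$. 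Full faithfulness of the exact functor $A \oplus B \to C$ is immediate from the mutual orthogonality. The main obstacle is the identification $A^\perp \cap B^\perp = 0$, which crucially uses the interaction (Rii) between joins in $\lL$ and the filtration; everything else is standard SOD manipulation.
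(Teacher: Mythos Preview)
Your proof is correct and follows essentially the same approach as the paper: both arguments use the two SOD's coming from right admissibility of $A$ and $B$ in $C$, the mutual orthogonality from strictness, and the key vanishing $A^\perp \cap B^\perp = 0$ deduced from (Rii). The only cosmetic difference is that the paper packages the two decompositions into a single $3\times 3$ diagram rather than performing them sequentially as you do.
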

\begin{proof}
	Category $\dD' := \dD_{I\cup J}/\dD_{I\cap J}$ has SOD's $\dD' = \langle \mathcal{Q}_I,\dD'_I \rangle = \langle   \mathcal{Q}_J, \dD'_J \rangle$ with $\dD'_I := \dD_I/\dD_{I \cap J}$ and $\dD'_J := \dD_J/\dD_{I \cap J}$. As subcategories $\dD'_I, \dD'_J \subset \dD'$ are orthogonal, we have $\dD'_I \subset \mathcal{Q}_J$ and $\dD'_J \subset \mathcal{Q}_I$. For any $D\in \dD'$, these two SOD's yield a diagram:
	\[
	\xymatrix{D_I \ar[r] & Q_J \ar[r] & D_{IJ}\\
		D_I \ar[r] \ar[u] & D \ar[r] \ar[u] & Q_I \ar[u] \\ 
		0 \ar[r] \ar[u] & D_J \ar[r] \ar[u] & D_J \ar[u]}
	\]	
	with $D_I \in \dD'_I$, $D_J \in \dD'_J$, $Q_I \in \mathcal{Q}_I$, $Q_J \in \mathcal{Q}_J$ and with rows and columns exact triangles. Since $D_I$ and $Q_J$ are objects of $\mathcal{Q}_J$, so is $D_{IJ}$. Moreover, as $D_J, Q_I \in \mathcal{Q}_I$, we have $D_{IJ} \in \mathcal{Q}_I$. Condition (Rii) implies that $\mathcal{Q}_I \cap \mathcal{Q}_J = 0$.  It follows that $D_{IJ} = 0$, i.e. $Q_I \simeq D_J$ and $Q_J \simeq D_I$. Thus $D = D_I \oplus D_J$. 
\end{proof}

\begin{COR}\label{cor_strict_left_is_admissible}
	Let $\dD$ be a triangulated category with a strict left (or right) admissible $\lL$-filtration. If, for any $I \in \lL$, subcategory $\dD_I \subset \dD$ is admissible, then $I \mapsto \dD_I$ defines a strict admissible $\lL$-filtration.
\end{COR}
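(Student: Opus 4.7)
The plan is to verify the remaining conditions for an admissible $\lL$-filtration, starting from a strict left admissible one (the strict right admissible case is dual). Conditions (Li) and (Ri) coincide, the equality $\dD_{I\cap J} = \dD_I \cap \dD_J$ appears in both (Lii) and (Rii), the strictness condition (iii) is self-dual, and right admissibility of each $\dD_I$ is part of the hypothesis. Hence the only new content to check is the right-perp half of (Rii):
\[
\dD_{I\cup J}^\perp \;=\; \dD_I^\perp \cap \dD_J^\perp.
\]

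The inclusion $\subseteq$ is immediate from $\dD_I, \dD_J \subseteq \dD_{I\cup J}$. For the reverse, the key intermediate claim is that $\dD_I^{\perp_{I\cup J}} \subseteq \dD_J$, where the superscript denotes the right-perp taken inside the triangulated subcategory $\dD_{I\cup J}$. Granting this, for any $D \in \dD_I^\perp \cap \dD_J^\perp$ and any $E \in \dD_{I\cup J}$ the admissibility of $\dD_I$ in $\dD_{I\cup J}$ furnishes the SOD $\dD_{I\cup J} = \langle \dD_I^{\perp_{I\cup J}}, \dD_I \rangle$ and a triangle $E_1 \to E \to E_2$ with $E_2 \in \dD_I$ and $E_1 \in \dD_I^{\perp_{I\cup J}} \subseteq \dD_J$; both $\Hom(E_2, D)$ and $\Hom(E_1, D)$ vanish, whence $\Hom(E, D) = 0$ and $D \in \dD_{I\cup J}^\perp$.

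To establish the intermediate claim, I would apply Proposition \ref{prop_strict_right_admissible_filtr} to the induced $[I \cap J, I\cup J]$-filtration, producing the direct sum decomposition $\dD_{I\cup J}/\dD_{I\cap J} \simeq \dD_I/\dD_{I\cap J} \oplus \dD_J/\dD_{I\cap J}$. Since $\dD_{I\cap J}$ is admissible in $\dD_{I\cup J}$, the Verdier quotient is equivalent to the right-perp $\dD_{I\cap J}^{\perp_{I\cup J}}$, the canonical representative of $[A]$ being the third term $A^\perp$ of the SOD triangle $G \to A \to A^\perp$ with $G \in \dD_{I\cap J}$. For $A \in \dD_I$ one has $G \in \dD_{I\cap J} \subseteq \dD_I$, so $A^\perp \in \dD_I$ as well; this identifies the embedded summands $\dD_I/\dD_{I\cap J}$ and $\dD_J/\dD_{I\cap J}$ with $\dD_I \cap \dD_{I\cap J}^{\perp_{I\cup J}}$ and $\dD_J \cap \dD_{I\cap J}^{\perp_{I\cup J}}$ respectively. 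Any $E_1 \in \dD_I^{\perp_{I\cup J}} \subseteq \dD_{I\cap J}^{\perp_{I\cup J}}$ therefore splits as $E_1 = E_1^I \oplus E_1^J$ with $E_1^I \in \dD_I$; since $\dD_I^{\perp_{I\cup J}}$ is closed under direct summands, $E_1^I \in \dD_I \cap \dD_I^{\perp_{I\cup J}} = 0$, so $E_1 = E_1^J \in \dD_J$.

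The main obstacle I anticipate is the careful transport of the direct sum decomposition of Proposition \ref{prop_strict_right_admissible_filtr} from the Verdier quotient $\dD_{I\cup J}/\dD_{I\cap J}$ to its realisation as $\dD_{I\cap J}^{\perp_{I\cup J}}$, together with the verification that the two embedded summands correspond precisely to $\dD_I \cap \dD_{I\cap J}^{\perp_{I\cup J}}$ and $\dD_J \cap \dD_{I\cap J}^{\perp_{I\cup J}}$ rather than some twisted versions. Once this identification is in place the remainder of the argument is the summand-closure observation above, and the dual argument handles the strict right admissible case.
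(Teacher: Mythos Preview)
Your argument is correct and rests on the same device as the paper's proof---Proposition~\ref{prop_strict_right_admissible_filtr}---but you work harder than necessary. The paper simply observes that the direct-sum decomposition $\dD_{I\cup J}/\dD_{I\cap J}\simeq \dD_I/\dD_{I\cap J}\oplus \dD_J/\dD_{I\cap J}$ forces $\dD_{I\cup J}$ to be the smallest triangulated subcategory of $\dD$ containing $\dD_I$ and $\dD_J$; hence any $D\in\dD_I^\perp\cap\dD_J^\perp$ automatically satisfies $\Hom(\dD_{I\cup J},D)=0$, and (Rii) follows. Your intermediate claim $\dD_I^{\perp_{I\cup J}}\subseteq\dD_J$ is a sharper statement, and your transport of the quotient decomposition to $\dD_{I\cap J}^{\perp_{I\cup J}}$ is the right way to see it, but for the corollary one does not need to pin down the SOD so precisely: the mere fact that $\dD_I$ and $\dD_J$ generate $\dD_{I\cup J}$ as a triangulated category suffices.
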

\begin{proof}
	In view of Proposition \ref{prop_strict_right_admissible_filtr}, for a strict left admissible $\lL$-filtration $I \mapsto \dD_I$ , $\dD_{I\cup J}\subset \dD$ is the smallest triangulated subcategory containing $\dD_I$ and $\dD_J$. For $D\in \dD_I^\perp \cap \dD_J^\perp$, we have $\Hom(\dD_{I \cup J}, D) =0$. Hence the inclusion $\dD_{I\cup J}^\perp \subset \dD_I^\perp \cap \dD_J^\perp$ is an equivalence, i.e. condition (Rii) is satisfied.
\end{proof}

\begin{PROP}\label{prop_right_dual_filtr}
	Let $\dD$ be a triangulated category with a right (respectively left) admissible $\lL$-filtration. Then map $\lL^\textrm{op} \to \textrm{lAdm}(\dD)$, $I \mapsto \dD_{I}^\perp$, (respectively $\lL^\textrm{op} \to \textrm{rAdm}(\dD)$, $I\mapsto {}^\perp \dD_I$) defines a left (respectively right) admissible $\lL^\textrm{op}$-filtration on $\dD$, which is strict if the original filtration is.
\end{PROP}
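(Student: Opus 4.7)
The plan is to verify the four conditions (left admissibility of each $\dD_I^\perp$, (Li), (Lii), strictness) for the map $I\mapsto \dD_I^\perp$; the other direction is formally dual. Each $\dD_I^\perp\subseteq \dD$ is left admissible because $\dD_I\subseteq \dD$ is right admissible, and the SOD $\dD=\langle \dD_I^\perp,\dD_I\rangle$ yields the identity ${}^\perp(\dD_I^\perp)=\dD_I$. Inclusion reversal shows the map is order-preserving on $\lL^\textrm{op}$. Condition (Li) holds at once: $\dD_0^\perp=\dD$ and $\dD_1^\perp=0$ match the maximum ($0\in\lL$) and minimum ($1\in\lL$) of $\lL^\textrm{op}$. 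For (Lii) with $I,J\in\lL$, the meet in $\lL^\textrm{op}$ is $I\cup J$ and the join is $I\cap J$, so the required identities $\dD_{I\cup J}^\perp=\dD_I^\perp\cap \dD_J^\perp$ and ${}^\perp(\dD_{I\cap J}^\perp)={}^\perp(\dD_I^\perp)\cap {}^\perp(\dD_J^\perp)$ are respectively the second clause of (Rii), and, via ${}^\perp(\dD_K^\perp)=\dD_K$, the first clause of (Rii).

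For strictness, fix $I,J\in\lL$, put $K=I\cap J$, $L=I\cup J$, and aim to show $\Hom_{\dD_K^\perp/\dD_L^\perp}(\dD_I^\perp/\dD_L^\perp,\dD_J^\perp/\dD_L^\perp)=0$. The key preparation is to realise the Verdier quotients as genuine subcategories: for the right admissible inclusion $\dD_K\subseteq \dD_L$, the SODs $\dD_L=\langle \dD_L\cap \dD_K^\perp,\dD_K\rangle$ and $\dD_K^\perp=\langle \dD_L^\perp,\dD_L\cap \dD_K^\perp\rangle$ give canonical equivalences
\[
\dD_L/\dD_K \;\simeq\; \dD_L\cap \dD_K^\perp \;\simeq\; \dD_K^\perp/\dD_L^\perp.
\]
Tracing $X\in\dD_I$ through the first equivalence and $X\in\dD_I^\perp$ through the second, one checks that $\dD_I/\dD_K$ corresponds to $\dD_I\cap \dD_K^\perp$, while $\dD_I^\perp/\dD_L^\perp$ corresponds to $\dD_L\cap \dD_I^\perp$, both viewed inside the common middle model $\dD_L\cap \dD_K^\perp$; similarly for $J$.

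Applying Proposition \ref{prop_strict_right_admissible_filtr} to the strictness of the given filtration gives a biorthogonal decomposition $\dD_L/\dD_K \simeq \dD_I/\dD_K\oplus \dD_J/\dD_K$, which translates to $\dD_L\cap \dD_K^\perp=(\dD_I\cap \dD_K^\perp)\oplus(\dD_J\cap \dD_K^\perp)$ with both summands mutually biorthogonal. An elementary SOD calculation, decomposing any $X\in\dD_I$ via the sub-SOD $\dD_I=\langle \dD_I\cap \dD_K^\perp,\dD_K\rangle$ and noting that any $D\in\dD_L\cap\dD_K^\perp$ orthogonal to $\dD_I\cap\dD_K^\perp$ is automatically orthogonal to $\dD_I$, identifies the orthogonal complement of $\dD_I\cap \dD_K^\perp$ inside $\dD_L\cap \dD_K^\perp$ with $\dD_L\cap \dD_I^\perp$; by biorthogonality of the direct sum it also equals $\dD_J\cap\dD_K^\perp$. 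Hence $\dD_L\cap \dD_I^\perp=\dD_J\cap \dD_K^\perp$ and, dually, $\dD_L\cap \dD_J^\perp=\dD_I\cap \dD_K^\perp$, so the Hom in question becomes $\Hom(\dD_J\cap \dD_K^\perp,\dD_I\cap \dD_K^\perp)$, which vanishes by biorthogonality. I expect the main obstacle to be the bookkeeping through the three-term chain of equivalent models for $\dD_K^\perp/\dD_L^\perp$ and the careful identification of which subcategory lands where; the conceptual content is simply that the direct-sum decomposition supplied by Proposition \ref{prop_strict_right_admissible_filtr} is symmetric in $I$ and $J$.
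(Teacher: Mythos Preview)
Your proof is correct and follows essentially the same approach as the paper's. Both arguments reduce (Li) and (Lii) to the two clauses of (Rii) via ${}^\perp(\dD_I^\perp)=\dD_I$, and both establish strictness by invoking Proposition~\ref{prop_strict_right_admissible_filtr} to obtain the biorthogonal decomposition of $\dD_{I\cup J}/\dD_{I\cap J}$ into the pieces $\mathcal{Q}_I:=\dD_I\cap\dD_{I\cap J}^\perp$ and $\mathcal{Q}_J:=\dD_J\cap\dD_{I\cap J}^\perp$; the paper simply reads off the four-step SOD $\dD=\langle \dD_{I\cup J}^\perp,\mathcal{Q}_I,\mathcal{Q}_J,\dD_{I\cap J}\rangle$ and observes that $\dD_{I\cap J}^\perp/\dD_{I\cup J}^\perp\simeq\langle\mathcal{Q}_I,\mathcal{Q}_J\rangle$, whereas you spell out the identifications $\dD_L\cap\dD_I^\perp=\mathcal{Q}_J$ and $\dD_L\cap\dD_J^\perp=\mathcal{Q}_I$ explicitly---but the content is the same.
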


\begin{proof}
	Since $0$ in $\lL$ becomes $1$ in $\lL^\textrm{op}$ and vice versa, condition (i) is clearly satisfied. Let now $I_1, I_2$ be elements of $\lL$. Since for a right admissible $\dD_0 \subset \dD$, ${}^\perp(\dD_0^\perp) \simeq \dD_0$, we have: 
	\begin{align*}
	&\dD_{I_1 \cap_{\lL^\textrm{op}} I_2} = \dD_{I_1 \cup I_2}^\perp = \dD_{I_1}^\perp \cap \dD_{I_2}^\perp,&\\
	&{}^\perp \dD_{I_1 \cup_{\lL^\textrm{op}} I_2} =\dD_{I_1 \cap I_2} = \dD_{I_1} \cap \dD_{I_2} = {}^\perp(\dD_{I_1}^\perp) \cap {}^\perp(\dD_{I_2}^\perp),&
	\end{align*}
	i.e. condition (Lii) is satisfied.
	
	Assume now that the $\lL$-filtration is strict. Then, by Proposition \ref{prop_strict_right_admissible_filtr}, category $\dD_{I_1\cup I_2}$ admits an SOD $\dD_{I_1\cup I_2} = \langle \mathcal{Q}_{I_1}, \mathcal{Q}_{I_2}, \dD_{I_1 \cap I_2} \rangle$, with 
	\begin{equation}\label{eqtn_completely_orth}
	\Hom(\mathcal{Q}_{I_1}, \mathcal{Q}_{I_2}) = 0 = \Hom(\mathcal{Q}_{I_2},\mathcal{Q}_{I_1})
	\end{equation}
	and $\dD_{I_1} = \langle  \mathcal{Q}_{I_1}, \dD_{I_1\cap I_2} \rangle$, $\dD_{I_2} = \langle \mathcal{Q}_{I_2},\dD_{I_1 \cap I_2} \rangle$. We also have $\dD = \langle \dD_{I_1\cup I_2}^\perp, \mathcal{Q}_{I_1}, \mathcal{Q}_{I_2}, \dD_{I_1\cap I_2} \rangle$. It follows that $\dD_{I_1 \cup_{\lL^\textrm{op}} I_2} = \langle \dD_{I_1\cup I_2}^\perp, \mathcal{Q}_{I_1}, \mathcal{Q}_{I_2} \rangle$. As $\dD_{I_1\cup_{\lL^\textrm{op}} I_2}/\dD_{I_1\cap_{\lL^{\textrm{op}}} I_2} \simeq \langle \mathcal{Q}_{I_1}, \mathcal{Q}_{I_2} \rangle$, formula (\ref{eqtn_completely_orth}) implies that the $\lL^\textrm{op}$-filtration is strict.
\end{proof}
Assume that $\dD$ admits a right admissible $\lL$-filtration. 
\begin{DEF}
	We say that the  $\lL^\textrm{op}$-filtration on $\dD$ given by Proposition \ref{prop_right_dual_filtr} is \emph{left dual} to the original $\lL$-filtration. Similarly, for a left admissible $\lL$-filtration on $\dD$, the filtration given by Proposition \ref{prop_right_dual_filtr} is its \emph{right dual}.
\end{DEF}

If the order on $\lL$ is full, then an admissible $\lL$-filtration on $\dD$ is just an ordinary filtration 
\begin{equation}\label{eqtn_filtrat}
0 = \dD_0 \subset \dD_1 \subset \ldots \subset \dD_{n-1} \subset \dD_n = \dD
\end{equation}
with $\dD_i \subset \dD$ admissible. As for any $I, J \in \lL$, the intersection $I\cap J$ is equal to either $I$ or $J$, conditions (Rii), (Lii) and (iii) are vacuous. In particular, any admissible $\lL$-filtration is strict. For this case, the right and left dual filtrations are those defined in \cite{BK1}.

Putting $\bB_k:=\dD_k/\dD_{k-1}$ yields, for any $k =\{2, \ldots, n\} $, a recollement:
\begin{equation}\label{eqtn_recol_i}
\xymatrix{\dD_{k-1} \ar[rr] && \dD_k \ar@<-2ex>[ll] \ar@<2ex>[ll] \ar[rr]|{\wt{j}_k^*} && \bB_k \ar@<2ex>[ll]|{\wt{j}_{k*}} \ar@<-2ex>[ll]|{\wt{j}_{k!}} }
\end{equation}
Decomposition (\ref{eqtn_SOD_from_recoll}) implies by iteration two SOD's
\begin{equation}\label{eqtn_SOD_j*_j!}
\dD = \langle j_{1!}\bB_1,\ldots, j_{n!}\bB_n \rangle,\quad \dD =  \langle j_{n*}\bB_n,\ldots, j_{1*}\bB_1\rangle
\end{equation}
and 
\begin{equation*} 
\dD_k =\langle j_{1!}\bB_1, \ldots, j_{k!}\bB_k\rangle,\quad  \dD_k =\langle j_{k*}\bB_k,\ldots, j_{1*}\bB_1\rangle.
\end{equation*} 
Above $j_{k!}\colon \bB_k \to \dD$ is the composite of $\wt{j}_{k!}\colon \bB_k \to \dD_k$ with the inclusion $\dD_k \to \dD$ and similarly for $j_{k*}$. In particular, $j_{1*}=j_{1!}$. The SOD $\langle j_{1!}\bB_1, \ldots, j_{n!} \bB_n \rangle$ is the right dual to $\langle j_{n*}\bB_n, \ldots, j_{1*} \bB_1 \rangle$.

\subsection{Gluing of  \tr es}\label{ssec_gluin_of_tr}

For what follows we assume that lattice $\lL$ is distributive. An element $s\in \lL$ is \emph{join-prime} if, for any $J_1, J_2 \in \lL$, the fact that $s \preceq J_1 \cup J_2$ implies that either $s\preceq J_1$ or $s \preceq J_2$. We denote by $JP(\lL)$ the poset of join-prime elements in $\lL$. 

By Birkhoff's theorem, \cite{Birk}, a finite distributive lattice $\lL$ is isomorphic to the lattice of lower ideals in the poset $S = JP(\lL)$. 
Conversely, any finite poset $S$ defines the finite distributive lattice $I(S)$ of lower ideals in $S$. 

Recall that a subset $I$ of a poset $ S$ is a \emph{lower ideal} if, together with $s\in I$, it contains all $s'\preceq s$. We say that $T\subset S$ is \emph{interval closed} if with any two elements $t_1, t_2$ it contains all $s$ such that $t_1 \preceq s \preceq t_2$. 

We consider elements of a finite distributive lattice $\lL$ as lower ideals in $JP(\lL)$. To a pair $I\preceq J$ in $\lL$ we assign the interval closed subset $T= J \setminus I \subset S$. The set $T$ itself has a poset structure induced from $S$. Conversely, any interval closed $T \subset S$ defines two lower ideals in $S$, i.e. elements of $\lL$:
$$
I_T:=\{s\in S\,|\, s\preceq t,\textrm{ for some }t \in T\},\quad I_{<T} := I_T \setminus T.
$$

Let $\dD$ be a category with a strict admissible $\lL$-filtration. For a join-prime $s \in \lL$, i.e. an element of $S$, define category $\dD^o_{s}$ as the Verdier quotient
$$
\dD^o_{s} := \dD_s/\dD_{I_{<\{s\}}}.
$$

\begin{THM}\label{thm_gluing_via_poset}
	Let $\lL$ be a finite distributive lattice and $\dD$ a triangulated category with a strict admissible $\lL$-filtration. Given \tr es on $D_s^o$, for every $s \in JP(\lL)$, there exists a unique system of \tr es on $\dD_{[J,L]}$, for all pairs $J\preceq L \in \lL$, such that, for any $K\in [J,L]$, the \tr e on $\dD_{[J,L]}$ is glued via recollement with respect to subcategory $\dD_{[J,K]}$ from the \tr es on $\dD_{[J,K]}$ and $\dD_{[K,L]}$.
\end{THM}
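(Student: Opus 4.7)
The plan is to proceed by induction on $n := |T|$, where $T = L\setminus J$ is viewed as an interval closed subset of $S = JP(\lL)$. The case $n = 0$ is vacuous. For $n = 1$, writing $T = \{s\}$, the hypothesis that $L$ is a lower ideal containing $s$ while $J$ is one not containing $s$ forces $I_{<\{s\}} \subseteq J$, whence $J \cap I_{\{s\}} = I_{<\{s\}}$ and $J \cup I_{\{s\}} = L$. Proposition \ref{prop_strict_right_admissible_filtr} applied to the pair $(I_{\{s\}}, J)$ yields a direct sum decomposition $\dD_L/\dD_{I_{<\{s\}}} \simeq \dD_s^o \oplus \dD_J/\dD_{I_{<\{s\}}}$, which after further Verdier quotient by the second summand gives $\dD_{[J,L]} \simeq \dD_s^o$. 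I would install the given t-structure on $\dD_{[J,L]}$ via this equivalence.

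For the inductive step ($n \ge 2$), I would pick a maximal element $t \in T$ and set $K := L \setminus \{t\}$. Maximality of $t$ in $T$, combined with the fact that $J$ is a lower ideal in $S$, implies that $K$ is itself a lower ideal, so $K \in [J,L]$. Remark \ref{rem_adm_on_quotients} together with Corollary \ref{cor_strict_left_is_admissible} guarantees that $\dD_{[J,K]} \subseteq \dD_{[J,L]}$ is admissible, so the corresponding recollement is at our disposal. Using the inductively constructed t-structures on $\dD_{[J,K]}$ and on $\dD_{[K,L]} \simeq \dD_t^o$, I would glue by the Beilinson--Bernstein--Deligne procedure to produce a t-structure on $\dD_{[J,L]}$.

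The real content is to show that the construction is independent of the choice of maximal $t$ and, more strongly, that the resulting t-structure is glued from $\dD_{[J,K]}$ and $\dD_{[K,L]}$ for \emph{every} intermediate $K \in [J,L]$. For a nested triple $J \preceq K_1 \preceq K_2 \preceq L$ this follows from the associativity of BBD gluing: a direct inspection of the explicit description of the glued t-structure shows that building $\dD_{[J,L]}$ in stages through $K_1$ and then $K_2$ yields the same result as doing it through $K_2$ and then $K_1$.

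The main obstacle is the non-nested case. Given arbitrary $K_1, K_2 \in [J,L]$, my plan is to bridge the two gluings via the lattice operations $K_1 \cap K_2$ and $K_1 \cup K_2$. Proposition \ref{prop_strict_right_admissible_filtr} provides an orthogonal direct sum splitting $\dD_{[K_1 \cap K_2,\, K_1 \cup K_2]} \simeq \dD_{[K_1 \cap K_2,\, K_1]} \oplus \dD_{[K_1 \cap K_2,\, K_2]}$, and this orthogonality, together with the nested compatibilities connecting each $K_i$ to $K_1 \cap K_2$ and $K_1 \cup K_2$, should force the BBD gluings through $K_1$ and through $K_2$ to produce identical t-structures. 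Uniqueness of the whole system then follows by induction on $|T|$: the base case is pinned down by the given t-structures on the $\dD_s^o$, and each inductive step is determined by BBD gluing from smaller pieces.
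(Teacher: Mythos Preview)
Your proposal is correct and relies on the same two ingredients as the paper: associativity of BBD gluing along a nested chain (the paper isolates this as the ``full order case'') and the orthogonal direct-sum splitting from strictness (Proposition~\ref{prop_strict_right_admissible_filtr}) for non-nested intermediates. The only organisational difference is the induction variable: you induct on $|T| = |L\setminus J|$ and choose a maximal $t\in T$ afresh for each pair, which obliges you to check independence of that choice; the paper instead inducts on $|S|$, fixing one maximal $s_0\in S$ globally and treating all interval-closed $T$ according to whether $s_0\in T$, thereby absorbing the independence issue into Remark~\ref{rem_full_order}. Your non-nested sketch is a little underspecified: the precise fact you need is that when $\dD_{[K_1\cap K_2,\,K_1\cup K_2]}$ splits orthogonally as $\dD_{[K_1\cap K_2,\,K_1]}\oplus\dD_{[K_1\cap K_2,\,K_2]}$, a \tr e glued with respect to one summand is automatically glued with respect to the other; once you state this, combining it with the nested case via the three-step filtration through $K_1\cap K_2$ finishes the argument exactly as in the paper.
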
 

We say that  the \tr e on $\dD$ as in the above theorem is \emph{glued via the $\lL$-filtration}.

First we prove the theorem for the case when $S$ has a full order. 

\begin{proof}[Proof of Theorem \ref{thm_gluing_via_poset} in the full order case]
Given a recollement (\ref{eqtn_recollement}) of triangulated categories and \tr es on $\dD_0$ and $\dD_1$ there exists a unique \emph{glued \tr e} on $\dD$ for which functors $i_*$ and $j^*$ are $t$-exact \cite{BBD}. Recollements (\ref{eqtn_recol_i}) allow us to glue \tr es on $\bB_k$ to obtain by iteration a \tr e $(\dD^{\lle 0}, \dD^{\gge 1})$ on $\dD$.

Denote by $j_k^*\colon \dD \to \bB_k$ the functor left adjoint to $j_{k*}$ and by $j_k^!\colon \dD \to \bB_k$ the functor right adjoint to $j_{k!}$. Then, for the glued \tr e $(\dD^{\lle 0}, \dD^{\gge 1})$, we have
\begin{equation}\label{eqtn_aisles_of_glued_tr}
\begin{aligned}
\dD^{\lle 0} &= \{D \in \dD\,|\, j_k^*(D)\in \bB_k^{\lle 0},\, \textrm{ for any }\, k\},\\
\dD^{\gge 1} &= \{D \in \dD\,|\, j_k^!(D) \in \bB_k^{\gge 1},\, \textrm{ for any }\, k\}.
\end{aligned}
\end{equation}

Filtration (\ref{eqtn_filtrat}) induces a filtration
\begin{equation}\label{eqtn_quotient_filtr}
0 = \dD_k/\dD_k \subset \dD_{k+1}/\dD_k \ldots \subset \dD_{l-1}/\dD_k \subset \dD_l/\dD_k
\end{equation}
on the category $\dD_l/\dD_k$ which allows SOD's $\langle j_{k+1!}\bB_{k+1}, \ldots, j_{l!}\bB_l \rangle$ and  $\langle j_{l*} \bB_l, \ldots, j_{k+1*} \bB_{k+1} \rangle$, for any $k<l$. We define \tr e on $\dD_l/\dD_k$ as the \tr e glued from \tr es on  $\bB_{k+1}, \ldots, \bB_l$ along filtration (\ref{eqtn_quotient_filtr}).
	
It follows from (\ref{eqtn_aisles_of_glued_tr}) that, for any $j<k<l$, the \tr e on $\dD_l/\dD_j$ is glued via recollement with respect to subcategory $\dD_k/\dD_j$ from the corresponding \tr es on $\dD_k/\dD_j$ and $\dD_l/\dD_k$.
\end{proof}

\begin{proof}[Proof of Theorem \ref{thm_gluing_via_poset}]
	By Birkhoff's theorem, we can identify lattice $\lL$ with the lattice of lower ideals in the poset $S:=JP(\lL)$. To simplify the notation we write $\dD_T$ for the subcategory $\dD_{[I,J]}$ with $I\preceq J \in \lL$ and $T:=J\setminus I \subset S$. By assumption, category $\dD_{s}^o=\dD_{I_s \setminus I_{<\{s\}}}$ is endowed with a \tr e, for any $s \in S$.
	
	An ordered triple $I \preceq J \preceq K$ of elements in $\lL$ corresponds to the lower ideal $T_1 := J \setminus I$ in the interval closed $T := K \setminus I$. In order to prove the theorem we show the existence of a unique system of \tr es on $\dD_T$, for any interval closed $T \subset S$, such that, for any lower ideal $T_1 \subset T$, the \tr e on $\dD_T$ is glued via the recollement w.r.t. subcategory $\dD_{T_1}$. 
	
	We proceed by induction on $|S|$, the case $|S|=1$ being obvious.
	
	If $|S|>1$, let $s_0 \in S$ be a maximal element and put $S':=S\setminus \{s_0\}$. Let $T\subset S$ be an interval closed subset. If $s_0\notin T$, then 
	$T\subset S'$, hence the \tr e on $\dD_T$ is defined by the induction hypothesis applied to $S'$ and satisfies the required gluing property for any lower ideal $T_1 \subset T$.
	
	If $s_0 \in T$, put $T':=T \setminus \{s_0\}$. Then the \tr e on $\dD_{T'}$ is already defined by induction hypothesis on $S'$, and we define the \tr e on $\dD_T$, for any $T\subset S$ containing $s_0$, by gluing via the recollement w.r.t. subcategory $\dD_{T'}$. 
	
	Let now $T_1 \subset T$ be a lower ideal. First, assume that $s_0 \notin T_1$. Then $T_1\subset T'$ and category $\dD_T$ admits a three-step filtration $\dD_{T_1} \subset \dD_{T'} \subset \dD_T$. Since the \tr e on $\dD_T$ is glued via the recollement w.r.t. $\dD_{T'}$ and, by induction hypothesis, the \tr e on $\dD_{T'}$ is glued via the recollement w.r.t. $\dD_{T_1}$, the \tr e on $\dD_T$ is glued via the recollement w.r.t. $\dD_{T_1}$, see the proof of Theorem \ref{thm_gluing_via_poset} for the full order case.
	
	It remains to consider the case of $s_0 \in T_1$. Put $T'_1 := T'\cap T_1 = T_1\setminus s_0$. It is a lower ideal both in $T_1$ and $T'$. By construction and inductive hypothesis, the \tr e on $\dD_T$ is glued for the filtration $\dD_{T'_1} \subset \dD_{T'} \subset \dD_T$. Since the $S$-filtration on $\dD$ is strict and $T = T'\cup T_1$, the quotient $\dD_{T\setminus T'_1} = \dD_T/\dD_{T'_1}$ is isomorphic to $\dD_{T'\setminus T'_1} \oplus \dD_{T_1\setminus T'_1}$, see Proposition \ref{prop_strict_right_admissible_filtr}. It follows that the \tr e on $\dD_{T\setminus T'_1}$ glued via the recollement w.r.t. subcategory $\dD_{T'\setminus T'_1}$ is also glued via the recollement w.r.t. subcategory $\dD_{T_1\setminus T'_1}\subset \dD_{T\setminus T'_1}$. Then the proof in the full order case implies that the \tr e on $\dD_T$ is glued via the filtration $\dD_{T'_1} \subset \dD_{T_1} \subset \dD_T$.
	
	The uniqueness of the system of \tr es follows from the uniqueness of the glued \tr e. 
\end{proof}

\begin{REM}\label{rem_full_order}
	In the course of the proof we have chosen, in fact, a full order on $JP(\lL)$ compatible with the poset structure, i.e. a filtration on $\dD$ as in (\ref{eqtn_filtrat}). However, the theorem ensures that the glued \tr e on $\dD$ is independent of this choice.  
\end{REM}

\section{\textbf{The distributive lattice of decompositions for $f$}}

We consider algebraic spaces defined over an algebraically closed field.
For a birational morphism of smooth algebraic spaces $f\colon X \to Y$, we denote by $\Ex(f)\subset X$ the (set-theoretic) \emph{exceptional divisor} of $f$. We say that $f$ is \emph{relatively projective} if there exists an $f$-ample line bundle. Note that this notion is not local over $Y$ at least in the complex-analytic topology.

Let $f\colon X\to Y$ be a relatively projective birational morphism of smooth algebraic spaces with dimension of fibers bounded by 1.
\begin{DEF}
	The \emph{partially ordered set of smooth decompositions} for $f$ is
	\begin{align*}
		\textrm{Dec}(f) = \{ (g \colon X \to Z, h\colon Z \to Y) \, |\,&g\textrm{ and }h \textrm{ are birational, } Z\, \textrm{is smooth},\, f=h\circ g\}/\sim,
	\end{align*}
	where $(g\colon X \to Z, h\colon Z \to Y)\sim (g'\colon X \to Z', h' \colon Z'\to Y)$ if there exists an isomorphism $\alpha \colon Z \to Z'$ such that $\alpha g = g'$ and $h' \alpha = h$. For $(g \colon X \to Z, h\colon Z \to Y)$ and $(\wt{g} \colon X \to \wt{Z}, \wt{h} \colon \wt{Z} \to Y)$, we put $(g,h) \preceq (\wt{g}, 
	\wt{h})$ if $\wt{g}$ factors via $g$, i.e. if $ \wt{g} \colon X \xrightarrow{g} Z \xrightarrow{\f} \wt{Z},$ for some morphism $\f$. We write $(g',h')\prec (g,h)$ if $(g',h')\preceq (g,h)$ and $g'\neq g$.
\end{DEF}

By abuse of notation we shall sometimes say that $g\colon X \to Z$ is an element of $\Dec(f)$ meaning the existence of $h\colon Z \to Y$ such that $(g,h) \in \Dec(f)$.

We will prove that $\Dec(f)$ is a distributive lattice. Assuming this is proven, let us give a description of the poset $\Conn(f)$ of join-prime elements in $\Dec(f)$:
$$
\Conn(f) = JP(\Dec(f)).
$$
By Birkhoff's theorem, we should have:
$$
\Dec(f) = I(\Conn(f)).
$$
First, let us consider the abstract set-up of a poset $S$ and give another characterisation for elements of $S$ in $I(S)$. Element $s\in S$ is identified with a principal lower ideal generated by $s$. This ideal is exactly the corresponding join-prime element in $I(S)$.
\begin{LEM}
	A lower ideal $I\in I(S)$ is principal if and only if there exists a lower ideal $I_{<}$ which is maximal among all lower ideals strictly smaller than $I$.
\end{LEM}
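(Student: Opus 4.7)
The plan is to prove both implications directly from the definitions, using the partial order on $S$ and the fact that principal lower ideals are the smallest lower ideals containing a prescribed element.

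For the forward direction, I would assume $I = I_s := \{s' \in S \mid s' \preceq s\}$ is the principal ideal on some $s \in S$ and set $I_< := I \setminus \{s\}$. First I would check that $I_<$ is itself a lower ideal: if $t \in I_<$ and $t' \preceq t$, then $t' \in I$ since $I$ is a lower ideal, and one cannot have $t' = s$ because $t' = s$ would force $s \preceq t$ together with $t \preceq s$ (as $t \in I_s$), hence $t = s$, contradicting $t \in I_<$. Next, I would verify maximality: any lower ideal $J$ with $I_< \subsetneq J \subseteq I$ must contain some element of $I \setminus I_< = \{s\}$, so $s \in J$, and then $J \supseteq I_s = I$, forcing $J = I$; hence no lower ideal properly sits strictly between $I_<$ and $I$.

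For the converse, I would assume that such a maximal $I_<$ exists and pick any element $s$ of the non-empty set $I \setminus I_<$. Then the principal ideal $I_s$ is a lower ideal contained in $I$ (since $s \in I$ and $I$ is a lower ideal). If $I_s$ were strictly smaller than $I$, the maximality of $I_<$ would give $I_s \subseteq I_<$, which contradicts $s \in I_s \setminus I_<$. Therefore $I = I_s$ is principal, generated by $s$.

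Neither direction involves a genuine obstacle; the argument is a bookkeeping exercise in the lattice $I(S)$. The only mild subtlety lies in the backward direction, namely recognising that the choice of generator need not be unique a priori but that \emph{any} element $s$ of $I \setminus I_<$ already serves as a generator, which follows because $I_s$ must either equal $I$ or sit inside $I_<$. (As a by-product, this shows $I \setminus I_<$ consists of a single element, namely the unique generator of the principal ideal, but this stronger statement is not needed for the lemma itself.)
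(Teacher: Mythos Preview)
Your proof is correct. The forward direction coincides with the paper's: both set $I_< := I \setminus \{s\}$ when $I$ is generated by $s$, and you simply spell out the routine verifications that the paper leaves implicit.

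For the converse you argue directly, picking $s \in I \setminus I_<$ and showing $I_s = I$ (since otherwise $I_s \subseteq I_<$ by the assumed maximality, contradicting $s \notin I_<$). The paper instead argues the contrapositive: if $I$ is not principal then it has at least two distinct maximal elements $s_1, s_2$, and $I \setminus \{s_1\}$, $I \setminus \{s_2\}$ are incomparable proper lower sub-ideals, so no greatest proper lower sub-ideal exists. Both arguments are one-liners; yours is slightly more constructive in that it immediately identifies the generator, while the paper's makes the obstruction to uniqueness of $I_<$ more visible. Note that both your proof and the paper's implicitly read ``maximal'' as ``greatest'' (i.e.\ the unique maximal element), which is necessary for the statement to hold.
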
 
\begin{proof}
	If $I$ is generated by $s\in S$, then $I_{<} :=I\setminus \{s\}$.  On the other hand, if the set of maximal elements in $I$ has at least two distinct elements $s_1$, $s_2$, then $I\setminus \{s_1\}$ and $I\setminus \{s_2\}$ are non-comparable lower ideals strictly smaller than $I$. 
\end{proof}

This leads us to the definition of \emph{the poset of connected contractions for }$f$:

\begin{equation}\label{eqtn_def_Conn_f}
	\begin{aligned}
		\Conn(f) = \{(g,h) \in \Dec(f)\,|&\,g\neq \Id_X,\, \exists\, (s_g, t_g) \in \Dec(g),\, s_g \neq g,\, \textrm{ such that }\\&\forall \, (g', h') \in \Dec(g) \textrm{ with } g'\neq g,\, (g',h') \preceq (s_g,t_g)\} 
	\end{aligned}
\end{equation}
with the partial order induced from $\Dec(f)$. We shall give a more geometric description of $\Conn(f)$ in (\ref{eqtn_Conn_f_via_B_g}).

\subsection{Decomposition of birational morphisms of smooth algebraic spaces}

We recall V. Danilov's decomposition:
\begin{THM}\cite[Theorem 1]{Dan}\label{thm_Danilov}
	Let $f\colon X \to Y$ be a relatively projective birational morphism of smooth algebraic spaces with dimension of fibers less than or equal to one. Then $f$ is decomposed as a sequence of blow-ups with smooth centers of codimension two.
\end{THM}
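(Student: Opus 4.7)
The plan is to proceed by induction on the number $N$ of irreducible components of the exceptional divisor $\Ex(f)=\bigcup_{i=1}^{N}E_i$. The case $N=0$ forces $f$ to be an isomorphism. For the inductive step I aim to produce a factorisation $f=h\circ g$ in which $g\colon X\to Z$ is the blow-up of a single smooth codimension-two subvariety $B\subset Z$, $Z$ is smooth, and $h\colon Z\to Y$ is again a relatively projective birational morphism of smooth algebraic spaces with one-dimensional fibres but with $N-1$ exceptional components. Applying the inductive hypothesis to $h$ then completes the proof.

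To construct $g$, I would first record the basic geometry of fibres: smoothness of $X$ and $Y$ together with properness and birationality of $f$ give $Rf_{*}\oO_{X}=\oO_{Y}$, so every scheme-theoretic fibre is connected of arithmetic genus zero; combined with the bound on fibre dimension, this forces every irreducible component of every fibre to be a smooth rational curve. In particular each $E_i\to B_i:=f(E_i)$ is a $\mathbb{P}^1$-fibration with $B_i$ of codimension two in $Y$. Next I would invoke the relative cone and contraction theorem: relative projectivity makes $\overline{NE}(X/Y)$ a rational polyhedral cone whose generators are numerical classes of rational curves contracted by $f$, all of which are $K_X$-negative by adjunction (since the normal bundle of such a curve in $X$ must contain a negative summand along the direction transverse to the containing $E_i$). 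Choosing an extremal ray yields a contraction $g\colon X\to Z$ over $Y$, which is necessarily divisorial because every positive-dimensional fibre of $g$ is one-dimensional. Finally, the Fujiki--Nakano contractibility criterion applied to the unique divisor $E$ collapsed by $g$, with its $\mathbb{P}^1$-bundle structure $E\to B$ and normal bundle restricting to $\oO_{\mathbb{P}^1}(-1)$ on every fibre, yields that $Z$ is smooth, $B\subset Z$ is smooth of codimension two, and $g$ is the blow-up of $Z$ along $B$.

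The principal obstacle is guaranteeing that the intermediate space $Z$ is smooth; this is precisely what the Fujiki--Nakano criterion accomplishes, but its hypotheses (a genuine global $\mathbb{P}^1$-bundle structure on $E$ and the correct fibrewise degree of the normal bundle) must be verified everywhere, not merely generically. The one-dimensional fibre assumption is what forces this global bundle structure and rules out small contractions or non-reduced fibres, so the verification reduces to an étale-local model of a standard blow-up, which is legitimate in the algebraic space category by étale descent. Once smoothness of $Z$ is in hand, the residual morphism $h$ inherits all hypotheses of the theorem with strictly fewer exceptional components, and the induction closes.
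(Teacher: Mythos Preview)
Your approach is genuinely different from Danilov's, which the paper follows. Danilov works from the bottom up: he constructs a canonical smooth codimension-two locus $\bB_f\subset Y$ via the multiplicity stratification of the ideal sheaf $f_*\lL$ for an $f$-very ample $\lL$, shows that $f$ factors through the blow-up of $Y$ along $\bB_f$ (this is the content of the local Proposition~\ref{prop_Danilov_local}, cited from Danilov), and then inducts. You instead work from the top down, using the relative cone theorem to peel off a single blow-up from $X$. Both directions of induction are viable, but they are not the same argument.

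Your strategy can be made to work in characteristic zero, but one step is wrong as stated. You assert that every extremal class in $\overline{NE}(X/Y)$ is $K_X$-negative ``by adjunction''. This is false: already for a surface map obtained by two successive point blow-ups, the strict transform of the first exceptional curve is a $(-2)$-curve $C$ with $K_X\cdot C=0$; iterating produces $(-n)$-curves with $K_X\cdot C=n-2>0$. What is true, and what actually suffices, is that $K_X$ is not $f$-nef when $f$ is not an isomorphism: writing $K_X=f^*K_Y+\sum a_iE_i$ with all $a_i\ge 1$ (smoothness of $Y$), the negativity lemma forbids the effective $f$-exceptional divisor $\sum a_iE_i$ from being $f$-nef. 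So the cone theorem does supply at least one $K_X$-negative extremal ray, and its contraction $g\colon X\to Z$ inherits the one-dimensional fibre bound from $f$. At this point the clean reference is Ando's theorem (extending Mori's threefold case): a $K_X$-negative birational extremal contraction from a smooth variety with fibres of dimension at most one is the blow-up of a smooth codimension-two centre in a smooth target. Invoking this, rather than attempting to verify the Fujiki--Nakano hypotheses directly from your unjustified claim that every $E_i\to f(E_i)$ is globally a $\mathbb{P}^1$-bundle, closes the inductive step.

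Two further comparisons. Your route needs characteristic zero for the cone and contraction theorems, whereas Danilov's multiplicity argument does not; the paper eventually restricts to characteristic zero anyway, so this costs nothing here. More importantly, Danilov's approach yields a \emph{canonical} maximal first centre $\bB_f\subset Y$ (Proposition~\ref{prop_Danilov}), and this canonicity is what the paper exploits throughout its analysis of the lattice $\Dec(f)$ and the poset $\Conn(f)$. The top-down MMP factorisation depends on a choice of extremal ray and does not directly produce this structure.
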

The existence of a decomposition as in Theorem  \ref{thm_Danilov} is equivalent to the existence of smooth $\bB_f \subset Y$ such that $f$ factors through the blow-up of $\bB_f$. In order to find such a $\bB_f$, we consider the open embedding $j\colon U \to X$ of $U:=X\setminus \Ex(f)$. Then $f\circ j \colon U \to Y$ is an open embedding with complement of codimension two. Let $\lL$ be an $f$-very ample line bundle on $X$. Then $\wt{\lL}:= (f\circ j)_* j^* \lL$ is an invertible sheaf on $Y$ and functor $f_*$ applied to the canonical embedding $\lL \to j_*j^* \lL$ yields a map $f_* \lL \to \wt{\lL}$. Since twist with the pull-back of a line bundle on $Y$ preserves $f$-very ampleness of $\lL$, we can assume without loss of generality that $\wt{\lL}\simeq \oO_Y$, i.e. $f_* \lL$ is a sheaf of ideals on $Y$. We denote by $B_{\lL}\subset Y$ the closed subspace defined by this sheaf of ideals. The support of $B_{\lL}$ is the image $f(\Ex(f))$ of the exceptional divisor of $f$. 

For any point $\xi \in Y$ of codimension two, consider the multiplicity of $B_{\lL}$ at $\xi$:
$$
\nu_\xi(B_{\lL}) = \max \{\nu\,|\, f_* \lL \subset \mathfrak{m}_\xi^\nu\}.
$$
\begin{LEM}\label{lem_max_mult_for_the_first_cent}
	Let $f \colon X \to Y$ be a composition of blow-ups with smooth codimension two centers and let $B \subset Y$ be the center of the first blow-up. Then there exists an $f$-very ample line bundle $\lL$ such that $\nu_B(B_{\lL})$ is maximal among all irreducible components of $f(\Ex(f))$.
\end{LEM}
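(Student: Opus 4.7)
The plan is to reduce to the first blow-up in Danilov's decomposition and then amplify the multiplicity at $B$ by a twist. By Theorem \ref{thm_Danilov}, $f$ factors as $f = f_1\circ g$, where $f_1\colon Y_1 \to Y$ is the blow-up of $B$ with exceptional divisor $E_1\subset Y_1$, and $g\colon X \to Y_1$ is a composition of blow-ups in smooth codimension two centres; in particular $g$ is relatively projective and birational.

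I would start with an arbitrary $f$-very ample line bundle $\lL_0$ on $X$ normalised so that $\wt{\lL_0} = \oO_Y$ and thus $f_*\lL_0 = B_{\lL_0}$ is a sheaf of ideals on $Y$. I would then set $\lL := \lL_0 \otimes g^{*}\oO_{Y_1}(-kE_1)$ for large $k$. Since $\oO_{Y_1}(-E_1)$ is $f_1$-very ample and $\lL_0$ is $g$-ample (the $g$-fibres sit inside the $f$-fibres), the bundle $\lL$ is $f$-very ample for $k\gg 0$ by the standard amplification lemma for relative ampleness through a composition.

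The heart of the computation combines the projection formula with the classical identity $f_{1*}\oO_{Y_1}(-kE_1)=\iI_B^{k}$ for the blow-up of a smooth codimension two centre:
\[
f_*\lL \;=\; f_{1*}\bigl(g_*\lL_0 \cdot \iI_{E_1}^{k}\bigr) \;\subset\; f_{1*}\iI_{E_1}^{k} \;=\; \iI_B^{k},
\]
so $\nu_B(B_\lL)\ge k$. On the other hand, $\oO_{Y_1}(-kE_1)$ is canonically trivialised on $Y_1\setminus E_1$, so $\lL$ is canonically identified with $\lL_0$ over $X\setminus f^{-1}(B)\supset U$; hence $\lL$ remains normalised ($\wt{\lL}=\oO_Y$) and $f_*\lL$ coincides with $f_*\lL_0$ on $Y\setminus B$. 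For every irreducible component $B'\ne B$ of $f(\Ex(f))$ the generic point $\eta_{B'}$ lies outside $B$, which forces $\nu_{B'}(B_\lL)=\nu_{B'}(B_{\lL_0})$. Choosing $k$ large enough to exceed both $\max_{B'\ne B}\nu_{B'}(B_{\lL_0})$ and the threshold for $f$-very ampleness yields the required $\lL$.

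The main technical input to verify carefully is the pushforward identity $f_{1*}\iI_{E_1}^{k}= \iI_B^{k}$ for the smooth codimension two blow-up, which rests on the local structure of such a blow-up (e.g.\ on a Koszul-type resolution of $\iI_B^{k}$ coming from the projectivised conormal bundle of $B$ in $Y$). The secondary point is the compatibility of the canonical trivialisation of $\oO_{Y_1}(-kE_1)$ outside $E_1$ with the normalising trivialisation of $\lL_0$ on $U$, which ensures that no extra twist by a line bundle on $Y$ is needed to keep $\lL$ normalised.
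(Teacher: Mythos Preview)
Your approach is essentially the paper's: factor $f$ through the first blow-up and twist by a high power of $g^*\oO_{Y_1}(-E_1)$ to boost the multiplicity at $B$. The paper computes multiplicities via the coefficient formula $\nu_\xi(B_{\lL})=\sum_{f(E_j)=\xi}a_j$ (which requires $R^1f_*\lL=0$), whereas you argue more directly via $f_*\lL\subset f_{1*}\oO_{Y_1}(-kE_1)=\iI_B^k$ for the lower bound at $B$ and localisation on $Y\setminus B$ for the other components; this is arguably cleaner and sidesteps the higher-direct-image vanishing.

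There is, however, a small gap in your very-ampleness step. The ``standard amplification lemma'' you cite (the one the paper invokes as \cite[Theorem 1.7.8]{Laz}) only gives $f$-\emph{ampleness} of $\lL=\lL_0\otimes g^*\oO_{Y_1}(-kE_1)$ for $k\gg0$ from $g$-ampleness of $\lL_0$ and $f_1$-ampleness of $\oO_{Y_1}(-E_1)$; it does not give $f$-\emph{very} ampleness. The paper handles this by passing to a high power $\lL^{\otimes m}$. In your setup you can instead note that $g^*\oO_{Y_1}(-kE_1)$ is $f$-globally generated for every $k\ge0$: indeed $f_*\bigl(g^*\oO_{Y_1}(-kE_1)\bigr)=f_{1*}\oO_{Y_1}(-kE_1)=\iI_B^k$, and the blow-up property makes $f_1^*\iI_B^k\to\oO_{Y_1}(-kE_1)$ surjective, hence so is $f^*\iI_B^k\to g^*\oO_{Y_1}(-kE_1)$. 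Since the tensor product of an $f$-very ample bundle with an $f$-globally generated one is $f$-very ample, your $\lL$ is $f$-very ample for all $k\ge0$, and the rest of your argument goes through.
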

\begin{proof}
	Let $E_1, \ldots, E_n$ be the irreducible components of $\Ex(f)$. For $\xi\in Y$ of codimension two and an $f$-ample line bundle $\lL = \oO_X(- \sum a_i E_i)$ with $a_i\geq 0$ and such that $R^1f_* \lL =0$, we have:
	\begin{equation}\label{eqtn_multiplicity}
		\nu_{\xi}(B_{\lL}) = \sum_{\{E_j\,|\, f(E_j)=\xi \}} a_j.
	\end{equation}
	This is clear from the short exact sequence obtained by applying $f_*$ to
	$$
	0 \to \oO_X(-\sum a_i E_i) \to \oO_X \to \oO_{\sum a_i E_i} \to 0.
	$$
	Let $h\colon Z \to Y$ be the blow-up of $Y$ along $B$ and $f= h\circ g$ the resulting decomposition. Denote by $E\subset Z$ the exceptional divisor of $h$ and by $E_1\subset \Ex(f) \subset X$ its strict transform. Since $g$ is decomposed into a sequence of blow-ups, one can easily find a $g$-ample line bundle $\lL' =\oO_X(-\sum_{j\geq 2} b_j E_j)$ with $b_j>0$. For a $g$-ample line bundle $\lL'$, an $h$-ample line bundle $\lL_h$ and $N$  big enough, line bundle $\lL' \otimes g^* \lL_h^{\otimes N}$ is $f$-ample, cf. \cite[Theorem 1.7.8]{Laz}. As $\oO_Z(-E)$ is $h$-ample, it follows that there exists $N> \sum_{j \geq 2}b_j$ such that the line bundle $\lL:= \lL' \otimes g^* \oO_X(-N E) = \oO_X(-N E_1 - \sum_{j\geq 2} b'_j E_j)$ is $f$-ample. As $g^*\oO_X(E) = \oO_X(E_1 + \sum c_j E_j)$ with $c_j> 0$ if $f(E_j) = B$ and $c_j =0$ otherwise, we have $b'_j > b_j$ if $f(E_j)= B$ and $b'_j=b_j$ otherwise. In particular, $b'_j>0$, for all $j$. The $f$-ampleness of $\lL$ implies that there exists $m$ such that $\lL^{\otimes m}$ is $f$-very ample and $R^1f_* \lL^{\otimes m} = 0$ (cf. \cite[Theorem 1.7.6]{Laz}). As $\lL^{\otimes m} = \oO_X(-Nm E_1 -\sum m b'_j E_j)$, it follows from formula (\ref{eqtn_multiplicity}) that $\nu_B(B_{\lL}) = Nm + \sum_{\{E_j\,|\, f(E_j) = B\}} mb'_j$. As $N > \sum_{j\geq 2} b_j$, the multiplicity $\nu_B(B_{\lL})$ is maximal among all components of $f(\Ex(f))$.
\end{proof}

If $f \colon X \to Y$ is a proper birational morphism of smooth algebraic spaces such that  $\Ex(f) \neq \emptyset$ and $Y$ a spectrum of a henselian local ring with closed point $y$, then the composite of the blow-up $\tau \colon Y' \to Y$ of $Y$ along $y$ with $f^{-1}$ is well defined on the generic point $\mu$ of the exceptional divisor of $\tau$. The image $f^{-1} \tau(\mu)$ is the generic point of an irreducible component $E$ of $f^{-1}(y)$.
\begin{PROP}\cite[Theorem 3.1, Lemma 4.4]{Dan}\label{prop_Danilov_local}
	There exists a unique irreducible component $D$ of $\Ex(f)$ which contains $E$. The image $f(D)\subset Y$ is a smooth subspace and $\nu_{f(D)}(B_{\lL}) >\nu_{\xi'}(B_{\lL})$, for any $\xi'\in Y$ of codimension two. Morphism $f$ factors through the blow-up of $Y$ along $f(D)$.
\end{PROP}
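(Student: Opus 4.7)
The plan is to combine Danilov's decomposition (Theorem \ref{thm_Danilov}) with a valuation-theoretic analysis of $\mu$. First I would apply Theorem \ref{thm_Danilov} to write $f = h \circ g$ with $h\colon Z \to Y$ the blow-up along a smooth codimension-two subspace $B \subset Y$ and $g\colon X \to Z$ a further composition of codimension-two blow-ups. Let $E_h \subset Z$ be the exceptional divisor of $h$ and $D \subset X$ its strict transform under $g$; then $D$ is an irreducible component of $\Ex(f)$, and $f(D) = B$ is smooth, giving the smoothness statement for free.

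Next, to match $B$ with the data of the proposition, I would track the divisorial valuation $v_\mu$ on $K(Y) = K(X)$ corresponding to the exceptional divisor of $\tau$. Since $Y$ is henselian local, every non-empty closed subspace meets $y$, so $y \in B$. Choosing local parameters $x_1, \dots, x_n$ at $y$ with $B = V(x_1, x_2)$, one has $v_\mu(x_i) = 1$ for all $i$, and a direct computation in the affine charts of $Z = \mathrm{Bl}_B Y$ shows that the center of $v_\mu$ on $Z$ is the generic point of the fibre $h^{-1}(y) \simeq \mathbb{P}^1$. By properness of $g$, the center of $v_\mu$ on $X$, which by hypothesis is the generic point of $E$, maps under $g$ to this generic point, so $E$ is the strict transform of $h^{-1}(y)$ under $g$. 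In particular $E \subset D$ because $h^{-1}(y) \subset E_h$.

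For uniqueness of the component of $\Ex(f)$ containing $E$, I would proceed inductively along the Danilov decomposition $g = g_m \circ \cdots \circ g_1$ with smooth codimension-two centers $B'_i \subset Z_{i-1}$. At each step the strict transform $E^{(i)}$ of $h^{-1}(y)$ in $Z_i$ is, by definition, the closure of the locus above $E^{(i-1)} \setminus B'_i$, hence is not contained in the new exceptional divisor of $g_i$. Iterating, $E = E^{(m)}$ is not contained in any exceptional divisor of any $g_i$, so $D$ is the only divisorial component of $\Ex(f)$ that can contain $E$. The multiplicity inequality then follows from Lemma \ref{lem_max_mult_for_the_first_cent} applied to an $f$-very ample $\lL$ with $\nu_B(B_\lL)$ maximal, and the factorization through $\mathrm{Bl}_{f(D)}(Y)$ is immediate since $f = h \circ g$ with $h$ being the blow-up along $B = f(D)$.

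The main obstacle I anticipate is the independence of $B = f(D)$ from the choice of Danilov decomposition. A priori, different decompositions could yield different first centers; but since $E$ and $v_\mu$ are intrinsic to $f$, the identification above pins $B$ down uniquely. The henselian local hypothesis on $Y$ is essential here, as it forces $y \in B$ and makes the geometry of $h^{-1}(y)$ and the center of $v_\mu$ on $Z$ independent of any choices in the decomposition.
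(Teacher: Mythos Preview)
The paper does not give its own proof of this proposition; it simply cites Danilov \cite[Theorem 3.1, Lemma 4.4]{Dan}. So the comparison is really between your argument and the logical role the statement plays in the paper.

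There is a genuine circularity in your approach. You open by invoking Theorem \ref{thm_Danilov} to produce a decomposition $f = h\circ g$ with $h$ a smooth codimension-two blow-up. But the paper states explicitly, immediately after Proposition \ref{prop_Danilov_local}, that this proposition together with Lemma \ref{lem_max_mult_for_the_first_cent} yields Proposition \ref{prop_Danilov}, ``which in turn implies Theorem \ref{thm_Danilov}''. In Danilov's argument, Proposition \ref{prop_Danilov_local} is the local input from which the global decomposition theorem is deduced by induction; you are running the logic backwards. The valuation-theoretic part of your argument (tracking the center of $v_\mu$ on successive blow-ups) is in the right spirit and is essentially how Danilov identifies the distinguished component, but it has to be carried out \emph{without} assuming the decomposition in advance.

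There is a second gap in your treatment of the multiplicity inequality. Lemma \ref{lem_max_mult_for_the_first_cent} asserts only that \emph{some} $f$-very ample $\lL$ exists with $\nu_B(B_\lL)$ maximal (and maximal, not strictly maximal). Proposition \ref{prop_Danilov_local}, by contrast, is a statement about the \emph{given} $\lL$ fixed in the paragraph preceding it, and claims a \emph{strict} inequality $\nu_{f(D)}(B_\lL) > \nu_{\xi'}(B_\lL)$ for every codimension-two $\xi'$. This strict, $\lL$-independent maximality is exactly what the paper uses just afterwards to conclude that $\bB_f$ does not depend on the choice of $\lL$; your appeal to Lemma \ref{lem_max_mult_for_the_first_cent} does not deliver it.
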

Consider the set $S$ of points of codimension 2 in $Y$, such that multiplicity $\nu_\xi(B_{\lL})$ is greater than $\nu_{\xi'}(B_{\lL})$, for all codimension 2 points $\xi'\in Y$ with $\ol{\xi} \cap \ol{\xi'} \neq \emptyset$. Define algebraic subspace $\bB_f$ in $Y$ as the union of irreducible subspaces corresponding to points in $S$. It follows from Proposition \ref{prop_Danilov_local} that $\bB_f\subset Y$ is a disjoint union of smooth codimension 2 algebraic subspaces and that $\bB_f$ is independent of the choice of $\lL$. 

A direct consequence of Lemma \ref{lem_max_mult_for_the_first_cent} and Proposition \ref{prop_Danilov_local} is the following statement, which in turn implies Theorem \ref{thm_Danilov}:
\begin{PROP}\label{prop_Danilov}
	Morphism $f$ allows a decomposition $f=h \circ g$, for the blow-up $h\colon Z \to Y$ of $Y$ along $\bB_{f}$ and $\bB_f$ is maximal with the above property, i.e. if $W\subset Y$ is a smooth closed subspace of codimension two such that $f$ admits a factorisation via the blow-up of $Y$ along $W$, then $W$ is a union of some components of $\bB_f$.	
\end{PROP}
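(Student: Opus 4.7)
The proposition bundles two claims: existence of the factorization $f = h \circ g$ with $h$ the blow-up of $\bB_f$, and maximality of $\bB_f$ among smooth codimension two subspaces of $Y$ through whose blow-up $f$ factors. My plan is to deduce the first from Proposition \ref{prop_Danilov_local} via an étale-local reduction on $Y$, and the second from Lemma \ref{lem_max_mult_for_the_first_cent} applied to the Danilov decomposition of $f$ starting with the blow-up of a component of $W$.

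For existence, the universal property of blow-ups reduces the task to showing that $f^{-1}(\mathscr{I}_{\bB_f})\cdot \oO_X$ is an invertible ideal on $X$. This is an étale-local condition on $Y$, so I would pass to the henselization $\oO_{Y,y}^h$ at each closed point $y\in\bB_f$. In a henselian local $Y$ with closed point $y$, every codimension two point's closure contains $y$, so all pairs of such closures meet; by the definition of $\bB_f$, this forces $\bB_f$ locally to be either empty (when $\Ex(f)$ is empty above $y$) or the unique smooth codimension two subspace whose multiplicity $\nu_\xi(B_\lL)$ strictly exceeds that of every other codimension two point. Proposition \ref{prop_Danilov_local} identifies this subspace with $f(D)$ and supplies the local factorization through its blow-up. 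These local factorizations then glue to a global $g\colon X \to \mathrm{Bl}_{\bB_f}(Y)$ by faithfully flat descent of invertibility.

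For maximality, let $W_0$ be an irreducible component of $W$. Smoothness of $W$ in codimension two forces its components to be disjoint smooth codimension two subspaces, so $\mathrm{Bl}_W(Y) \to Y$ factors through $\mathrm{Bl}_{W_0}(Y) \to Y$ and hence $f$ factors through $\mathrm{Bl}_{W_0}(Y)$. Danilov's Theorem \ref{thm_Danilov} applied to the induced morphism $X \to \mathrm{Bl}_{W_0}(Y)$ then presents $f$ as a sequence of smooth codimension two blow-ups beginning with $W_0$, and Lemma \ref{lem_max_mult_for_the_first_cent} furnishes an $f$-very ample $\lL$ with $\nu_{W_0}(B_\lL) > \nu_{\xi'}(B_\lL)$ for every other codimension two point $\xi' \in f(\Ex(f))$; codimension two points outside $f(\Ex(f))$ contribute zero multiplicity and are therefore also strictly smaller. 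The generic point of $W_0$ thus lies in the set $S$, so $W_0$ is a component of $\bB_f$.

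The main obstacle I anticipate is the local reduction in the existence step. One has to verify that the formation of $\bB_f$ commutes with henselization (this follows from preservation of multiplicities under the flat local extension $\oO_{Y,y} \to \oO_{Y,y}^h$ together with the independence of $\bB_f$ from the choice of $\lL$), and that invertibility of the pullback ideal descends from the henselization. Once this bookkeeping is in place, both parts of the proposition follow essentially formally from the two cited results.
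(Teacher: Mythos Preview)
Your proposal is correct and follows essentially the same route as the paper: existence comes from Proposition~\ref{prop_Danilov_local} via \'etale/henselian localization on $Y$, and maximality comes from Lemma~\ref{lem_max_mult_for_the_first_cent} (applied to a Danilov decomposition of $f$ beginning with the blow-up of a component $W_0$ of $W$) together with the $\lL$-independence of $\bB_f$ deduced from Proposition~\ref{prop_Danilov_local}. The only cosmetic difference is that the paper proves maximality by first henselizing at a closed point $y\in W\cap B$ where $W$ meets another component $B$ of $f(\Ex(f))$, whereas you run Lemma~\ref{lem_max_mult_for_the_first_cent} globally; note that the lemma as stated gives only ``maximal'', but its proof in fact yields the strict inequality you use.
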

\begin{proof}
	Let $y \in B \cap W$ be a closed point where $W$ meets another component $B$ of $f(\Ex(f))$. Let $f_y$ be the base change of $f$ to the henselization of $Y$ in $y$. By Lemma \ref{lem_max_mult_for_the_first_cent}, there exists an $f_y$-ample line bundle $\lL$ such that $\nu_W(B_{\lL})> \nu_{B}(B_{\lL})$. Then by Proposition \ref{prop_Danilov_local}, $\nu_W(B_{\lL'}) > \nu_{B}(B_{\lL'})$, for any $f_y$-ample $\lL'$. By definition of $\bB_f$, $W$ is a component of $\bB_f$.	
\end{proof}

It follows from Proposition \ref{prop_Danilov} that 
\begin{align}\label{eqtn_Conn_f_via_B_g}
	\textrm{Conn}(f) = \{ (g,h) \in \textrm{Dec}(f) \, |\, \bB_g \, \textrm{has one component} \},
\end{align}
i.e. a decomposition $f\colon X \xrightarrow{g} Z \xrightarrow{h} Y$ lies in $\Conn(f)$ if there exists a unique irreducible smooth subspace $W\subset Z$ such that $g$ factors via the blow-up of $Z$ along $W$.

\begin{LEM}\label{lem_h_projective}
	Let $(g\colon X \to Z, h\colon Z \to Y)$ be an element of $\Dec(f)$. Then $g$ and $h$ are relatively projective.
\end{LEM}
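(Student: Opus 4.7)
The plan is to produce a $g$-ample and an $h$-ample line bundle separately, in both cases starting from an $f$-ample line bundle $\lL$ on $X$ furnished by the hypothesis that $f$ is relatively projective.

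First, I would show that $\lL$ itself is $g$-ample. Since $f$ is proper and $h$ is separated, $g$ is proper, and so the fibrewise criterion of relative ampleness applies: it suffices to verify that $\lL|_{g^{-1}(z)}$ is ample for every point $z \in Z$. But $g^{-1}(z)$ sits as a closed subspace of $f^{-1}(h(z))$, on which $\lL$ restricts to an ample line bundle by hypothesis, and the restriction of an ample line bundle to a closed subspace remains ample. This step should be essentially immediate.

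Next, I would show $h$ is relatively projective by descent along the proper surjection $g$. The morphism $g$ is proper (as above) and, being birational and proper, is surjective onto $Z$. From $f = h\circ g$ proper and $g$ proper surjective it then follows that $h$ is itself proper: for any base change $Y' \to Y$, a closed subset $C \subset Z \times_Y Y'$ has closed preimage $C'$ in $X \times_Y Y'$, and $h'(C)$ coincides with the image of $C'$ under the proper map $(h\circ g)'$, which is closed. At this point I would invoke the descent theorem for relative projectivity (EGA II.5.5.5 for schemes, and the analogous statement for quasi-compact quasi-separated algebraic spaces): given $g$ proper surjective, $h$ proper, and $h \circ g$ projective, the morphism $h$ is projective, so it admits an $h$-ample line bundle on $Z$.

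The only non-elementary ingredient is the relative projectivity descent theorem in the algebraic-space setting used in the second step; in this generality it is a standard result, so I do not expect it to pose a genuine obstacle.
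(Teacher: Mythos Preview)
Your first step, showing that any $f$-ample line bundle is $g$-ample, is correct and matches the paper's opening line.

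The second step, however, has a genuine gap. The ``descent theorem'' you invoke does not exist: EGA~II~5.5.5(v) is the cancellation statement in the \emph{other} direction (if $h\circ g$ is projective and $h$ is separated then $g$ is projective), and there is no general result saying that $h$ is projective whenever $g$ is proper surjective and $h\circ g$ is projective. Indeed such a statement is false. Take any proper non-projective variety $Z$ over $Y=\Spec\mathbb{C}$ (e.g.\ Hironaka's example); by Chow's lemma there is a projective surjective morphism $g\colon X\to Z$ with $X\to Y$ quasi-projective, hence projective since it is proper. Then $g$ is proper surjective, $h\circ g$ is projective, but $h$ is not. So the step where you ``invoke the descent theorem'' fails outright.

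The paper avoids this by constructing an $h$-ample line bundle explicitly. Using Danilov's theorem it reduces by induction to the case where $g$ is the blow-up of $Z$ along a smooth irreducible center $B$ with exceptional divisor $E$. For an $f$-ample divisor $L$ one sets $k=L\cdot f_g$, where $f_g$ is a fiber of $g$ over $B$, and checks that $L'=L+kE$ has zero intersection with $f_g$, hence descends to some $\wt{L}\in\textrm{Pic}(Z/Y)$. One then verifies $\wt{L}\cdot C>0$ for every curve $C$ in every fiber of $h$ by lifting $C$ to a component $C'$ of the corresponding $f$-fiber not contained in $E$ and computing $\wt{L}\cdot C = L\cdot C' + k\,E\cdot C' > 0$. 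The Nakai--Moishezon / Kleiman criterion (relative version, \cite[Theorem~1.7.8]{Laz}) then gives $h$-ampleness of $\wt{L}$. This is where the specific geometry of the situation --- fibers of dimension at most one, smoothness of the blow-up center --- is actually used.
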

\begin{proof}
	An $f$-ample line bundle is also $g$-ample hence $g\colon X \to Z$ is a relatively projective birational morphism with fibers of dimension bounded by one. By Theorem \ref{thm_Danilov}, $g$ admits a decomposition into blow-ups along closed smooth codimension two centers. This  allows us to reduce by induction the proof that $h$ is relatively projective to the case when $g \in \Dec(f)$ is the blow-up of $Z$ along an irreducible $B \subset Z$. Let $E$ denote the exceptional divisor of $g$. Note that all fibers of $g$ over closed points of $B$ are numerically equivalent relatively over $Z$. We denote by $[f_g]$ their class in the group of numerical equivalence classes of relative 1-cycles on $X$. Let $L \in \textrm{Pic}(X/Y)$ be an $f$-ample divisor and let $k := L.f_g>0$. Consider $L' := L + kE$. Since $E.f_g =-1$, $L'$ intersects all fibers of $g$ trivially, i.e. $\oO_X(L') = g^* \oO_Z(\wt{L})$, for some $\wt{L} \in \textrm{Pic}(Z/Y)$. 
	
	Choose a closed point $y \in Y$ and an irreducible component $C$ of the fiber $h^{-1}(y)$. Since the dimension of fibers of $f$ is bounded by one, the preimage of $C$ in $X$ has dimension one. Let $C'$ be the component of $f^{-1}(y)$ such that $g|_{C'} \colon C' \to C$ is an isomorphism. $C'$ is not contained in $E$, hence $E.C' \geq 0$. As $L$ is $f$-ample, we have $L.C'>0$. It shows that $\wt{L}.C = L'.C'= L.C' + kE.C' >0$. By the numerical criterion of $h$-ampleness \cite[Theorem 1.7.8]{Laz}, $\wt{L}$ is $h$-ample.
\end{proof}

\subsection{$\Dec(f)$ as a distributive lattice}

For a relatively projective birational morphism $f\colon X\to Y$ of smooth algebraic spaces with dimension of fibers bounded by 1, we denote by $\textrm{Irr}(f)$ the set of irreducible components of the exceptional divisor for $f$. 
For $(g,h)\in \Dec(f)$, we consider $\Irr(g)$ as a subset of $\Irr(f)$. Strict transform along $g$ allows us also to view $\Irr(h)$ as a subset of $\Irr(f)$ as well. Then clearly $\Irr(f)$ is a disjoint union of $\Irr(g)$ and $\Irr(h)$.

Let $\ol{h} \colon \ol{Z} \to Y$ be the blow-up of $Y$ along $\bB_f$. The strict transform of the exceptional divisor of $\ol{h}$ to $X$ allows us to consider the set $\Irr(\bB_f)\simeq \Irr(\ol{h})$ of irreducible components of $\bB_f$ as a subset of $\Irr(f)$. 

\begin{LEM}\label{lem_inclusion_of_B_h}
	Let $(g, h)$ be an element of $\Dec(f)$. Then $\bB_h \subset \bB_f$. 
\end{LEM}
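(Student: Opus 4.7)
The plan is to derive the inclusion as a direct consequence of the maximality part of Proposition \ref{prop_Danilov} applied to $f$, using the subspace $\bB_h$ as the test center. Before invoking the proposition for $h$, I would first verify that its hypotheses hold: $h$ is birational between smooth algebraic spaces by assumption, it is relatively projective by Lemma \ref{lem_h_projective}, and the dimension of its fibers is bounded by $1$. The last point follows because $g$ is a proper birational morphism, hence surjective, so for any $y\in Y$ the map $g$ sends $f^{-1}(y)$ onto $h^{-1}(y)$; thus $\dim h^{-1}(y)\leq \dim f^{-1}(y)\leq 1$.

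With these hypotheses confirmed, Proposition \ref{prop_Danilov} applied to $h$ produces a factorisation
$$
h\colon Z \xrightarrow{\sigma} \wt{Z} \xrightarrow{\tau} Y,
$$
where $\tau\colon \wt{Z} \to Y$ is the blow-up of $Y$ along $\bB_h$. Composing with $g$ yields $f = \tau \circ (\sigma \circ g)$, exhibiting $f$ as factoring through the blow-up of $Y$ along $\bB_h$. By construction, $\bB_h$ is a disjoint union of smooth codimension two subspaces of $Y$, so it qualifies as a test subspace $W$ in the maximality statement of Proposition \ref{prop_Danilov}. That statement then forces $\bB_h$ to be a union of components of $\bB_f$, in particular $\bB_h \subset \bB_f$.

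I do not anticipate any genuine obstacle here, as the argument is a straightforward application of the maximality property already established for $f$; the only point requiring a moment's care is the reduction of the fiber-dimension condition for $h$ to that of $f$ via surjectivity of $g$ onto fibers of $h$.
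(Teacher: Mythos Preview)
Your proposal is correct and follows exactly the same route as the paper: factor $f$ through the blow-up of $Y$ along $\bB_h$ (via $h$) and invoke the maximality clause of Proposition~\ref{prop_Danilov}. The paper's proof is a one-line version of yours; your added verification that $h$ satisfies the hypotheses needed to define $\bB_h$ (relative projectivity from Lemma~\ref{lem_h_projective}, fiber dimension via surjectivity of $g$) is a welcome clarification the paper leaves implicit.
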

\begin{proof}
	It follows from Proposition \ref{prop_Danilov} because $f$ factors via $h\colon Z \to Y$ and $h$ factors via the blow-up of $Y$ along $\bB_h$.
\end{proof}

\begin{LEM}\label{lem_disjoint_or_comm_comp}
	If $f=h_1\circ g_1=h_2 \circ g_2$, then either $\bB_{h_1}$ and $\bB_{h_2}$ have a common component or they are disjoint.
\end{LEM}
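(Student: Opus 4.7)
The plan is to reduce the statement to the observation that both centers $\bB_{h_1}$ and $\bB_{h_2}$ sit inside the single space $\bB_f$ whose irreducible components are pairwise disjoint, and then argue that any component of $\bB_{h_i}$ must coincide with one of those of $\bB_f$ for dimension reasons.

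First, I would invoke Lemma \ref{lem_h_projective} to conclude that each $h_i\colon Z_i\to Y$ is itself a relatively projective birational morphism of smooth algebraic spaces with fiber dimension bounded by $1$, so $\bB_{h_i}\subset Y$ is defined and, by the consequence of Proposition \ref{prop_Danilov_local} already recorded in the text, is a disjoint union of smooth codimension-two algebraic subspaces of $Y$. Next, Lemma \ref{lem_inclusion_of_B_h} applied to $(g_i,h_i)\in\Dec(f)$ gives the inclusions $\bB_{h_1}\subset\bB_f$ and $\bB_{h_2}\subset\bB_f$, and again $\bB_f$ is a disjoint union of smooth codimension-two subspaces.

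Now I would compare components. Let $C$ be an irreducible component of $\bB_{h_i}$; it is a smooth irreducible closed subspace of $Y$ of codimension two, contained in $\bB_f$. Since the irreducible components of $\bB_f$ are pairwise disjoint, $C$ is contained in exactly one component $\wt C$ of $\bB_f$, and $\wt C$ is itself smooth, irreducible, closed, of codimension two. An inclusion of smooth irreducible closed subspaces of the same dimension is an equality, so $C=\wt C$. Hence every irreducible component of $\bB_{h_1}$ and of $\bB_{h_2}$ is an irreducible component of $\bB_f$.

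Finally, suppose $\bB_{h_1}\cap\bB_{h_2}\neq\emptyset$ and pick a point $y$ in the intersection. Then $y$ lies in some component $C_1$ of $\bB_{h_1}$ and some component $C_2$ of $\bB_{h_2}$; by the previous step $C_1$ and $C_2$ are components of $\bB_f$, and since the components of $\bB_f$ are pairwise disjoint the condition $y\in C_1\cap C_2$ forces $C_1=C_2$. This common component witnesses that $\bB_{h_1}$ and $\bB_{h_2}$ share an irreducible component, proving the dichotomy. I do not expect a substantial obstacle here; the only point requiring minimal care is the verification that $h_i$ satisfies the hypotheses needed to define $\bB_{h_i}$, which is immediate from Lemma \ref{lem_h_projective}.
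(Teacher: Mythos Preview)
Your proof is correct and follows essentially the same approach as the paper: both argue that $\bB_{h_1}$ and $\bB_{h_2}$ are unions of components of the smooth space $\bB_f$ (via Lemma \ref{lem_inclusion_of_B_h} and a dimension comparison), whence the dichotomy. The paper compresses this into a single sentence, while you spell out the dimension argument and the appeal to Lemma \ref{lem_h_projective} explicitly, but the content is the same.
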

\begin{proof}
	It follows from Lemma \ref{lem_inclusion_of_B_h} as both $\bB_{h_1}$ and $\bB_{h_2}$ are unions of components of smooth $\bB_f$.
\end{proof}

\begin{PROP}\label{prop_g_defined_by_Irr_g}
	An element $g\in \Dec(f)$ is uniquely determined by $\Irr(g)\subset \Irr(f)$.	
\end{PROP}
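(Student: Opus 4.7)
The plan is to show that, for $g \in \Dec(f)$ with $\Irr(g) = I$, the set-theoretic fibres of $g$ depend only on $I$, and then to construct the required isomorphism via Zariski's Main Theorem applied to the graph of two candidate decompositions inside the fibre product $Z_1 \times_Y Z_2$.

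For the first step, write $\Ex_I := \bigcup_{E \in I} E$. By Theorem \ref{thm_Danilov}, $g$ restricts to an isomorphism outside $\Ex(g) = \Ex_I$, so its fibres over $Z \setminus g(\Ex(g))$ are single points lying outside $\Ex_I$. For $z \in g(\Ex(g))$, Zariski's Main Theorem applied to the proper birational map $g$ with smooth target $Z$ shows that $g^{-1}(z)$ is connected and contained in $\Ex_I \cap f^{-1}(y)$, where $y = h(z)$. Since distinct fibres of $g$ are disjoint closed subsets, the decomposition $\Ex_I \cap f^{-1}(y) = \bigsqcup_{z} g^{-1}(z)$ exhibits the fibres as precisely the connected components of $\Ex_I \cap f^{-1}(y)$. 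Hence the relation $x \sim_g x'$ defined by $g(x) = g(x')$ admits the intrinsic description: $x\sim_g x'$ iff either $x = x'$, or $f(x)=f(x')$ and $x,x'$ lie in a common connected component of $\Ex_I \cap f^{-1}(f(x))$. In particular, if $\Irr(g_1) = \Irr(g_2)$, then $\sim_{g_1} = \sim_{g_2}$.

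For the second step, I would consider $(g_1,g_2) \colon X \to Z_1 \times_Y Z_2$, which is proper since both $Z_i$ are proper over $Y$ by Lemma \ref{lem_h_projective}, and let $W$ be its scheme-theoretic image. The projection $p_1 \colon W \to Z_1$ is proper; birational (on $X \setminus \Ex(f)$ the map $(g_1,g_2)$ is the graph of the canonical isomorphism between the corresponding dense opens of $Z_1$ and $Z_2$, and $p_1$ restricts to an isomorphism onto this open); and quasi-finite, for points of $W$ over $z_1 \in Z_1$ have the form $(z_1, g_2(x))$ with $x \in g_1^{-1}(z_1)$, and $\sim_{g_1} = \sim_{g_2}$ forces $g_2$ to be constant on $g_1^{-1}(z_1)$, yielding a single point. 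Since $Z_1$ is smooth hence normal, ZMT gives that $p_1$ is an isomorphism; symmetrically $p_2 \colon W \simto Z_2$, so $\alpha := p_2 \circ p_1^{-1}$ is the required isomorphism $Z_1 \to Z_2$ satisfying $\alpha g_1 = g_2$, and the identity $h_2 \alpha = h_1$ follows because $g_1$ is an epimorphism.

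The main technical care goes into Step 1, in particular the claim that $\Ex_I \cap f^{-1}(y)$ decomposes into the $g$-fibres as connected components; this rests on ZMT applied to $g$ and on the disjointness of distinct fibres. The remainder is a routine application of the algebraic-space version of ZMT (proper + quasi-finite + birational + normal target $\Rightarrow$ isomorphism).
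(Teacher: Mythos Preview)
Your proof is correct. Step 1 (the set-theoretic identification of fibres as the connected components of $\Ex_I \cap f^{-1}(y)$) coincides with the paper's key observation; the paper phrases it identically. The only point you pass over quickly is why the partition into closed connected fibres gives the connected components: this needs finiteness of the index set, which follows since each fibre has dimension $\geq 1$ while $\Ex_I \cap f^{-1}(y)$ has dimension $\leq 1$, so the image in $h^{-1}(y)$ is finite.

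The difference lies in Step 2. The paper argues directly that the rational map $g_2\circ g_1^{-1}\colon Z_1 \dashrightarrow Z_2$ extends to a morphism: it first defines the map on closed points using Step 1, checks continuity by hand, and then produces the ring homomorphism by restricting functions to the complement of the codimension-two locus $g_1(\Ex(g_1))$ and invoking normality of $Z_1$ (a Hartogs-type extension). Your approach via the scheme-theoretic image $W \subset Z_1 \times_Y Z_2$ of $(g_1,g_2)$ and Zariski's Main Theorem is more conceptual and arguably cleaner for algebraic spaces: it packages the pointwise computation and the extension step into a single appeal to ``finite $+$ birational onto normal $\Rightarrow$ isomorphism''. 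The paper's route is more elementary in that it avoids forming the fibre product and reasoning about the image, but both are short and the underlying content (Step 1 plus normality of the targets) is the same.
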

\begin{proof}
	Let $g_1\colon X\to Z_1$, $g_2\colon X \to Z_2$ be elements of $\Dec(f)$ such that $\Irr(g_1) = \Irr(g_2)$. We shall show that $g_2 \circ g_1^{-1} \colon Z_1 \dashrightarrow  Z_2$ is a regular morphism, hence an isomorphism with inverse $g_1 \circ g_2^{-1}$. 
		
	Let $z_1 \in Z_1$ be a closed point. If $z_1\notin g_1(\Ex(g_1))$, then $g_2 \circ g_1^{-1}$ is regular in a neighbourhood of $z_1$. If $z_1 \in g_1(\Ex(g_1))$, then set-theoretically $g_1^{-1}(z_1)$ is a connected component of $\Ex(g_1) \cap f^{-1}(h_1(z_1))$. Since set-theoretically $\Ex(g_1) = \Ex(g_2)$, $g_1^{-1}(z_1)$ is a connected component of $\Ex(g_2) \cap f^{-1}(h_1(z_1))$, i.e. it is a fiber of $g_2$ over some $z_2 \in Z_2$. Thus $g_2 g_1^{-1}(z_1) = z_2$, i.e. the map is defined on closed-points.
	
	Let $U \subset Z_2$ be an open neighbourhood of $z_2$ and let $\f$ be a regular function on $U$. Then $g_2^{-1}(U) \subset X$ is an open subset which contains the fiber $g_1^{-1}(z_1)$. Since $g_1$ is proper and surjective, there exists an open neighbourhood $V$ of $z_1$ contained in $g_1(g_2^{-1}(U))$. This proves that the map $g_2 \circ g_1^{-1}$ is continuous. An appropriate restriction of $\f|_{U \setminus g_2(\Ex(g_2))}$ can be viewed as a function on $V\setminus g_1(\Ex(g_1))$. It has a unique extension to a function $\ol{\f}$ on $V$ (because $Z_1$ is normal and $g_2(\Ex(g_2))$ is of codimension two). Then, $(\f, U) \mapsto (\ol{\f},V)$ defines a homomorphism of local rings $\oO_{Z_2,z_2} \to \oO_{Z_1, z_1}$.	
\end{proof}

\begin{PROP}\label{prop_h_1_and_h_2_don_int}
	Let $f=h_1\circ g_1 = h_2\circ g_2$ be decompositions such that $\bB_{h_1} \cap \bB_{h_2} = \emptyset$. Then $h_1(\Ex(h_1))\cap h_2(\Ex(h_2))= \emptyset$. 
\end{PROP}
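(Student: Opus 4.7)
The plan is to argue by contradiction: assume there is a closed point $y \in h_1(\Ex(h_1)) \cap h_2(\Ex(h_2))$. Passing to the henselization $Y_y$ of $Y$ at $y$ preserves smoothness, birationality, the factorisations $f_y = h_{i,y}\circ g_{i,y}$ for $i=1,2$, and the non-emptiness of the exceptional divisors $\Ex(f_y)$, $\Ex(h_{i,y})$. Moreover, the hypothesis $\bB_{h_1}\cap\bB_{h_2}=\emptyset$ descends to $\bB_{h_{1,y}}\cap\bB_{h_{2,y}}=\emptyset$, since intersection of closed subspaces commutes with flat base change and the construction of $\bB_{(-)}$ via the local multiplicity function $\nu_\xi(B_\lL)$ is compatible with henselization. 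Thus one may assume $Y$ itself is the spectrum of a henselian local ring with closed point $y$.

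In this setting Proposition \ref{prop_Danilov_local} applies to each of $f$, $h_1$ and $h_2$. Let $\tau\colon Y' \to Y$ be the blow-up at $y$ and $\mu$ the generic point of its exceptional divisor. The proposition yields points $\eta := f^{-1}\tau(\mu)\in X$ and $\eta_i := h_i^{-1}\tau(\mu)\in Z_i$, a unique component $D\subset \Ex(f)$ with $\eta \in D$, and unique components $D_i\subset \Ex(h_i)$ with $\eta_i \in D_i$. By the same proposition, $f(D)$ and $h_i(D_i)$ are smooth codimension-two subvarieties of $Y$, contained in $\bB_f$ and $\bB_{h_i}$ respectively, and $y$ lies on all three of them.

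The key computation is $h_1(D_1)=f(D)=h_2(D_2)$, which contradicts $\bB_{h_1}\cap \bB_{h_2}=\emptyset$. From $f=h_1 g_1$ the rational maps $g_1\circ f^{-1}\circ\tau$ and $h_1^{-1}\circ\tau$ from $Y'$ to $Z_1$ coincide, so evaluating at $\mu$ gives $g_1(\eta)=\eta_1$, and by properness $g_1(\ol{\{\eta\}}) = \ol{\{\eta_1\}}$. Thus the one-dimensional subvariety $\ol{\{\eta\}} \subset D$ is not contracted by $g_1$, so $D$ is not a component of $\Ex(g_1)$; by Theorem \ref{thm_Danilov} applied to $g_1$, this means $D$ is the strict transform of a component of $\Ex(h_1)$, namely of $g_1(D)$. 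Since $g_1(D)$ contains $\eta_1$, uniqueness of $D_1$ forces $g_1(D) = D_1$, and applying $h_1$ yields $h_1(D_1)=f(D)$; the symmetric argument gives $h_2(D_2)=f(D)$. The main subtlety is the bookkeeping of the various birational inverses and the identification $g_1(\eta)=\eta_1$; this is precisely what the henselian reduction buys us, since Proposition \ref{prop_Danilov_local} then applies simultaneously to $f$, $h_1$, $h_2$ and pins down a single distinguished component $D$ from which everything is derived.
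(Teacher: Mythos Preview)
Your approach is genuinely different from the paper's: the paper argues by induction on $|\Irr(f)|$, peeling off one blow-up at a time and using fiber products together with Lemma~\ref{lem_change_if_disjoint}, whereas you try to localise at a bad point $y$ and invoke Danilov's local Proposition~\ref{prop_Danilov_local} directly. The idea is attractive, but two steps do not go through as written.

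First, the claim that ``the construction of $\bB_{(-)}$ \ldots\ is compatible with henselization'' is not justified and is in fact false in general. The invariant $\bB_h$ is defined by a \emph{locally maximal} multiplicity condition: $\xi\in\bB_h$ iff $\nu_\xi(B_{\lL})>\nu_{\xi'}(B_{\lL})$ for all $\xi'$ with $\ol\xi\cap\ol{\xi'}\neq\emptyset$. Passing to the henselization at $y$ removes all codimension-two points whose closure misses $y$, so a point $\xi$ through $y$ that was blocked from $\bB_{h_i}$ by some higher-multiplicity neighbour $\xi'$ away from $y$ can suddenly acquire locally maximal multiplicity; thus one only gets $(\bB_{h_i})_y\subset\bB_{h_{i,y}}$, not equality. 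In particular, if $y\in h_i(\Ex(h_i))$ but $y\notin\bB_{h_i}$ (which is perfectly possible), then $(\bB_{h_i})_y=\emptyset$ while $\bB_{h_{i,y}}$ is not, and your transferred hypothesis $\bB_{h_{1,y}}\cap\bB_{h_{2,y}}=\emptyset$ simply does not follow from $\bB_{h_1}\cap\bB_{h_2}=\emptyset$. (Indeed, after henselization Lemma~\ref{lem_inclusion_of_B_h} and irreducibility of $\bB_{f_y}$ force $\bB_{h_{1,y}}=\bB_{h_{2,y}}=\bB_{f_y}$, so the localised hypothesis is \emph{always} violated; the question is whether this feeds back to the global $\bB_{h_i}$, and it need not.)

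Second, the step ``$\ol{\{\eta\}}$ is not contracted by $g_1$, so $D$ is not a component of $\Ex(g_1)$'' is unjustified when $\dim X\ge 3$. If $D\in\Irr(g_1)$ then $g_1(D)$ has codimension two, i.e.\ $\dim g_1(D)=\dim X-2\ge 1$, and a curve in $D$ can very well dominate $g_1(D)$ rather than lie in a fibre of $g_1|_D$. Thus ``curve not contracted'' does not force $D\notin\Irr(g_1)$. One can try to repair this by arguing directly that $f(D)=h_1(g_1(D))$ has the same codimension as $h_1(D_1)$ and using $\eta_1\in g_1(D)\cap D_1$, but this does not obviously yield $g_1(D)\subset D_1$ in dimension $\ge 4$, so the identification $h_1(D_1)=f(D)$ remains incomplete.
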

\begin{proof}
	We proceed by induction on $|\Irr(f)|$, the case $|\Irr(f)|=1$ being clear.
	
	Let $\tau_1$ be the blow-up of $Y$ along an irreducible component of $\bB_{h_1}$ and $\tau_2$ the blow-up of $Y$ along an irreducible component of $\bB_{h_2}$. Since $\bB_{h_1}\cap \bB_{h_2}=\emptyset$, the fiber product $\wt{Y} = Y_1 \times_Y Y_2$ is smooth. Morphisms $h_1$ and $h_2$ admit decompositions $h_1 = \tau_1 \circ h'_1$, $h_2 = \tau_2 \circ h'_2$. By the universal property of the fiber product, morphisms $h'_1 \circ g_1 \colon X \to Z_1$ and $h'_2 \circ g_2 \colon X \to Z_2$ give $\wt{f} \colon X \to \wt{Y}$:
	\[
	\xymatrix{&& X \ar[dll]_{g_1} \ar[drr]^{g_2} \ar[d]^{\wt{f}} && \\
		Z_1 \ar[dr]^{h'_1} \ar@/_2pc/[ddrr]_{h_1} & & \wt{Y} \ar[dl]_{\rho_1} \ar[dr]^{\rho_2} & & Z_2 \ar[dl]_{h'_2} \ar@/^2pc/[ddll]^{h_2} \\
		& Y_1 \ar[dr]^{\tau_1} && Y_2 \ar[dl]_{\tau_2} & \\
		&& Y&&}
	\]
	Assume that $\bB_{h'_1} \cap \bB_{\rho_1} \neq  \emptyset$. As $\bB_{\rho_1} \simeq \tau_1^{-1} \bB_{\tau_2}$ is irreducible, Lemma \ref{lem_disjoint_or_comm_comp} implies that $\bB_{\rho_1} \subset \bB_{h'_1}$, hence there exists $\f\colon Z_1 \to \wt{Y}$ such that $h'_1 = \rho_1 \circ \f$. Then, $h_1 = \tau_1 \circ h'_1 = \tau_1 \circ \rho_1 \circ \f  = \tau_2 \circ \rho_2 \circ \f$, i.e. $\bB_{\tau_2} \subset \bB_{h_1}$ which contradicts the assumption. 
	Thus, $h'_1 \circ g_1 = \rho_1 \circ \wt{f}$ are decompositions such that $\bB_{h'_1} \cap \bB_{\rho_1} = \emptyset$. As $|\Irr(\rho_1 \circ \wt{f})|= |\Irr(f)|-1$, the inductive hypothesis imply that $h'_1(\Ex(h'_1)) \cap \rho_1(\Ex(\rho_1)) = \emptyset$. Since $\tau_1$ is an isomorphism in the neighbourhood of $\bB_{\rho_1}$, it follows that $\tau_1 h'_1(\Ex(h'_1))$ is disjoint from $\tau_1 \rho_1\Ex(\rho_1) =\tau_2(\Ex(\tau_2))$. Since $h_1 (\Ex(h_1)) = \tau_1(\Ex(\tau_1)) \cup h_1(\Ex(h'_1))$, we conclude that $h_1(\Ex(h_1)) \cap \tau_2(\Ex(\tau_2)) = \emptyset$. 
	
	It implies that the fiber product $Z_1 \times_Y Y_2$ is smooth and $\tau_2$ is an isomorphism in the neighbourhood of $\ol{h}_1(\Ex(\ol{h}_1))$, for the projection $\ol{h}_1 \colon Z_1 \times_Y Y_2 \to Y_2$. 
	
	Let us assume that the intersection $\bB_{\ol{h}_1} \cap \bB_{h'_2} $ is not empty. In view of Lemma \ref{lem_disjoint_or_comm_comp}, $\bB_{\ol{h}_1}$ and $\bB_{h'_2}$ have a common component $B$, i.e. $\ol{h}_1$ and $h'_2$ factor via the blow up $\psi \colon W\to Y_2$ of $Y_2$ along $B$:
	\[
	\xymatrix{& W \ar[d]_\psi & \\
		Z_1 \times_Y Y_2 \ar[r]^(0.6){\ol{h}_1} \ar[ur] \ar[d] & Y_2 \ar[d]_{\tau_2} & Z_2 \ar[l]_{h'_2} \ar[dl]^{h_2} \ar[ul]_{\wt{h}_2} \\
		Z_1 \ar[r]^(0.6){h_1} & Y & }
	\]
	Note that $\psi(\Ex(\psi)) \subset \ol{h}_1(\Ex(\ol{h_1}))$ is disjoint form $\Ex(\tau_2)$. Hence, by Lemma \ref{lem_change_if_disjoint} below, there exists a decomposition $\tau_2 \circ \psi \colon W \xrightarrow{\ol{\tau}_2} \ol{Y}_2 \xrightarrow{\ol{\psi}} Y$ with $\Irr(\psi) = \Irr(\ol{\psi})$ and $\Irr(\tau_2)  = \Irr(\ol{\tau}_2)$. In particular, we have $\bB_{\ol{\psi}}= \tau_2 \bB_{\psi}$. It follows that $h_2 = \tau_2 \circ \psi \circ \wt{h}_2 = \ol{\psi} \circ \ol{\tau}_2 \circ \wt{h}_2$ factors via the blow up of $Y$ along $\bB_{\ol{\psi}}$. On the other hand, since $\tau_2$ is an isomorphism in the neighbourhood of $\ol{h}_1(\Ex(\ol{h}_1))$, the fact that $\ol{h}_1$ factors via the blow of $Y_2$ along $\bB_{\psi}$ implies that $h_1$ factors via the blow-up of $Y$ along $\tau_2\bB_{\psi}$. The uniqueness of $\bB_f$ (see Proposition \ref{prop_Danilov}) implies that $\tau_2\bB_{\psi} (= \bB_{\ol{\psi}})$ is a common component of $\bB_{h_1}$ and $\bB_{h_2}$ which contradicts our assumption.
	
	Thus $\bB_{\ol{h}_1} \cap \bB_{h'_2} = \emptyset$ and inductive hypothesis for morphism $\rho_2 \circ \wt{f}$ imply that $\ol{h}_1(\Ex(\ol{h}_1)) \cap h'_2(\Ex(h'_2))= \emptyset$. Then $h_1(\Ex(h_1))  = \tau_2 \ol{h}_1(\Ex(\ol{h}_1))$ is disjoint from $\tau_2 h'_2(\Ex(h'_2))$. It finishes the proof because $h_2(\Ex(h_2)) =\tau_2(\Ex(\tau_2)) \cup \tau_2 h'_2(\Ex(h'_2))$ and we have already proven that $h_1\Ex(h_1) \cap \tau_2(\Ex(\tau_2)) = \emptyset$. 
\end{proof}

\begin{LEM}\label{lem_change_if_disjoint}
	Let $(g, h)$ be an element of $\Dec(f)$ such that $\Ex(g)\cap g^{-1}(\Ex(h)) = \emptyset$. Then $\Ex(g)$ and $g^{-1}(\Ex(h))$ can be contracted in the opposite order, i.e. there exists an element $(\ol{h}, \ol{g}) \in \Dec(f)$ such that $\Irr(g)= \Irr(\ol{g})$ and $\Irr(h) = \Irr(\ol{h})$.
\end{LEM}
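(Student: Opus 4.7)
The approach is to induct on the number $n$ of blow-ups in a Danilov decomposition of $g$ (Theorem \ref{thm_Danilov}), peeling off the first blow-up of $g$ and transporting its centre to $Y$ via $h$.

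Write $g$ as a composition $X = X_n \to X_{n-1} \to \cdots \to X_0 = Z$ of blow-ups at smooth connected codimension two centres $B_i \subset X_{i-1}$. The base case $n=0$ is trivial: $\Ex(g)=\emptyset$, and one sets $\ol h := f$, $\ol g := \Id_Y$. For the inductive step, let $\pi\colon X_1 \to Z$ be the first blow-up, at $B:=B_1 \subset Z$, with exceptional divisor $E_1 \subset X_1$ whose strict transform $\tilde E_1 \subset X$ is a component of $\Ex(g)$. The hypothesis then gives $\tilde E_1 \cap g^{-1}(\Ex(h)) = \emptyset$; pushing down along $\pi$, this forces $B \cap \Ex(h) = \emptyset$ in $Z$. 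Since $h$ restricts to an isomorphism from $Z\setminus \Ex(h)$ onto $Y\setminus h(\Ex(h))$, the image $\tilde B := h(B) \subset Y$ is a smooth codimension two subspace disjoint from $h(\Ex(h))$.

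Next I would construct the intermediate space $Y_1' := \text{Bl}_{\tilde B} Y$ and lift $h \circ \pi$ to it. The ideal sheaf $I_{\tilde B}$ pulls back to $Z$ as $I_B$ (it becomes the unit ideal on $\Ex(h)$ by disjointness, and matches $I_B$ on $Z\setminus \Ex(h)$ via the $h$-isomorphism), and further pulls back to $X_1$ to an invertible sheaf. By the universal property of the blow-up, the composition $h \circ \pi \colon X_1 \to Y$ therefore factors as $X_1 \xrightarrow{h'} Y_1' \xrightarrow{\tau} Y$, where $\tau = \text{Bl}_{\tilde B}$ and $h'$ is a birational morphism of smooth algebraic spaces. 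Setting $g' \colon X \to X_1$ to be the composition of the remaining $n-1$ blow-ups and $f' := h' \circ g'$, one checks that $f'$ is a relatively projective birational morphism of smooth algebraic spaces with fibres of dimension $\lle 1$ (the fibre bound is inherited from $f = \tau \circ f'$, and an $f$-ample line bundle remains $f'$-ample since $f'$-fibres are contained in $f$-fibres).

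The inductive hypothesis then applies to $(g', h')\in \Dec(f')$: indeed, $\Ex(g')\subset \Ex(g)$ as divisors in $X$, and $\Ex(h')\subset X_1$ is canonically identified with $\Ex(h) \subset Z$ via $\pi$ over a neighbourhood of $\Ex(h)$, so $\Ex(g') \cap (g')^{-1}(\Ex(h')) = \emptyset$ is inherited from the original hypothesis. This yields $(\ol h, \ol g')\in \Dec(f')$ with $\ol h\colon X \to W$, $\ol g'\colon W \to Y_1'$, $\Irr(\ol h) = \Irr(h')$ and $\Irr(\ol g') = \Irr(g')$. Setting $\ol g := \tau \circ \ol g' \colon W \to Y$ produces a pair $(\ol h, \ol g) \in \Dec(f)$: $W$ is smooth by induction, and $\ol g \circ \ol h = \tau \circ \ol g' \circ \ol h = \tau \circ f' = h \circ \pi \circ g' = h \circ g = f$. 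Moreover $\Irr(\ol h) = \Irr(h')= \Irr(h)$ via the identification above, and since for any decomposition of $f$ one has the disjoint union $\Irr(f) = \Irr(\ol h) \sqcup \Irr(\ol g) = \Irr(g) \sqcup \Irr(h)$, it follows that $\Irr(\ol g) = \Irr(g)$.

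The main obstacle is the construction in the inductive step: extracting $B\cap \Ex(h)=\emptyset$ from the hypothesis and then rigorously establishing the factorisation $X_1 \to Y_1' \to Y$ via the universal property of blow-ups. Once these are settled, the inheritance of the disjointness hypothesis to $(g', h')$ and the final bookkeeping of irreducible components are routine verifications about the commutation of blow-ups at disjoint smooth centres.
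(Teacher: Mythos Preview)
Your induction is correct, but the paper takes a much more direct route. Rather than inducting on the number of blow-ups in $g$ and invoking the universal property of blow-ups at each stage, the paper constructs $\ol{Z}$ in one step by gluing: the disjointness hypothesis gives an open cover $X = U_1 \cup U_2$ with $U_1 = X \setminus \Ex(g)$ and $U_2 = X \setminus g^{-1}(\Ex(h))$, and $\ol{Z}$ is obtained by gluing $\ol{U}_1 := Y \setminus f(\Ex(g))$ to $U_2$ along $U_{12} = U_1 \cap U_2$ (identified with its image $f(U_{12}) \subset \ol{U}_1$). The morphisms $\ol{h}$ and $\ol{g}$ are then defined chart by chart in the evident way. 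This avoids Danilov's decomposition, the universal property of blow-ups, and any induction; smoothness of $\ol{Z}$ is immediate since both charts are open in smooth spaces.

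Your approach has the virtue of making the intermediate spaces very concrete (they are iterated blow-ups of $Y$), and it would generalise to settings where one already has a preferred factorisation of $g$ into elementary steps. But it requires several auxiliary checks---that the centre $B$ misses $\Ex(h)$, that the ideal pullback is as claimed, that $f'$ remains relatively projective with fibre bound, and that the disjointness hypothesis is inherited---all of which the gluing argument bypasses entirely. The paper's proof is essentially three lines.
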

\begin{proof}
	Since $g^{-1}(Ex(h))\cap Ex(g)=\emptyset $, we have an open covering $X=U_1\cup U_2$, where $U_1 = X\setminus {\rm Ex} (g)$ and
	$U_2 = X\setminus g^{-1}({\rm Ex}(h))$. Define $\ol{Z}$ by an open covering $\ol{Z} = \ol{U}_1\cup U_2$, where $\ol{U}_1 = Y\setminus f({\rm Ex} (g))$.
	Here $\ol{U}_1$ and $U_2$ are patched together via open subset $U_{12}=U_1\cap U_2$, which is embedded into $\ol{U}_1$ by means of the identification of $U_{12}$ with $f(U_{12})\subset \ol{U}_1$. The morphism $\ol{g} \colon X \to \ol{Z}$ is defined by $h$ on $U_1\simeq g(U_1 )$ and by identity on $U_2$,
	while $\ol{h} \colon \ol{Z} \to Y$ is the embedding on $\ol{U}_1$ and $f$ on $U_2$.
\end{proof}

Now we aim at constructing unions and intersections in the poset $\Dec(f)$. First, note that for any pair of elements $(g_1,h_1)$, $(g_2,h_2)$ of $\Dec(f)$ there exists a smooth algebraic space $Z_\cup$ and a morphism $\tau \colon Z_\cup \to Y$ such that both $h_1$ and $h_2$ factor via $\tau$, $h_1 = \tau \circ \f_1$, $h_2 = \tau \circ \f_2$ and $\bB_{\f_1} \cap \bB_{\f_2} =\emptyset$. To construct $Z_\cup$ and $\tau$ we iteratively blow-up $Y$ along common components of $\bB_{h_1}$ and $\bB_{h_2}$ until they do not meet. 

Then $\f_1(\Ex(\f_1))$ and $\f_2(\Ex(\f_2))$ do not meet by Proposition \ref{prop_h_1_and_h_2_don_int}. Hence, the fiber product $Z_\cap :=Z_1 \times_{Z_\cup} Z_2$ is smooth. We have a diagram:
\begin{equation}\label{eqtn_diag_cup_and_cap} 
\xymatrix{& X \ar@/_1pc/[ddl]_{g_1} \ar@/^1pc/[ddr]^{g_2} \ar[d]|{\xi } & \\
	& Z_{\cap} \ar[dl]^{\psi_1} \ar[dr]_{\psi_2}\ar[dd]|{\kappa}& \\
	Z_1 \ar[dr]_{\f_1} \ar@/_1pc/[ddr]_{h_1}&& Z_2 \ar[dl]^{\f_2} \ar@/^1pc/[ddl]^{h_2} \\
	& Z_{\cup} \ar[d]|{\tau} &\\
	& Y &}
\end{equation}
\begin{LEM}\label{lem_union_and_int}
	In the poset $\Dec(f)$ element $(\xi, \tau\circ\kappa)$ is the intersection of $(g_1,h_1)$ with $(g_2,h_2)$ while $(\kappa \circ \xi, \tau)$ is their union.
\end{LEM}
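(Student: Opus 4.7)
The plan is to first verify that $(\xi,\tau\circ\kappa)$ and $(\kappa\circ\xi,\tau)$ lie in $\Dec(f)$, then establish the bound inequalities, and finally prove the universal properties.

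For smoothness of $Z_\cap = Z_1 \times_{Z_\cup} Z_2$, I would invoke Proposition \ref{prop_h_1_and_h_2_don_int}, which gives $\f_1(\Ex(\f_1)) \cap \f_2(\Ex(\f_2)) = \emptyset$ in $Z_\cup$; at every point of $Z_\cup$ at least one $\f_i$ is a local isomorphism, so locally the fiber product is isomorphic to $Z_j$ for the other index $j$ and hence smooth. The morphism $\xi$ is well-defined because $\tau\f_1 g_1 = f = \tau\f_2 g_2$ combined with birationality of $\tau$ and separatedness of $Z_\cup$ forces $\f_1 g_1 = \f_2 g_2$, after which the fiber product universal property produces $\xi$. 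Birationality of all arrows in diagram (\ref{eqtn_diag_cup_and_cap}), together with Lemma \ref{lem_h_projective}, then places both candidate decompositions in $\Dec(f)$. The bound inequalities can be read off the diagram: $g_i = \psi_i \xi$ yields $(\xi,\tau\kappa) \preceq (g_i,h_i)$, while $\kappa\xi = \f_i g_i$ yields $(g_i,h_i) \preceq (\kappa\xi,\tau)$.

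For the meet property, suppose $(g',h') \preceq (g_i,h_i)$ with factorizations $g_i = \f'_i \circ g'$. The equalities $\f_1\f'_1 g' = \f_1 g_1 = \f_2 g_2 = \f_2\f'_2 g'$, combined with dominance of $g'$, force $\f_1\f'_1 = \f_2\f'_2 \colon Z' \to Z_\cup$ by separatedness of $Z_\cup$. The universal property of $Z_\cap$ yields a unique $\psi \colon Z' \to Z_\cap$ with $\psi_i \psi = \f'_i$; the identities $\psi_i(\psi g') = \f'_i g' = g_i = \psi_i \xi$ combined with a second appeal to the universal property of the fiber product force $\psi g' = \xi$. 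Hence $(g',h') \preceq (\xi,\tau\kappa)$.

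The main obstacle is the join property: given $(g_i,h_i) \preceq (g'',h'')$ with $g'' = \rho_i g_i$ and $h_i = h''\rho_i$, I must construct $\sigma \colon Z_\cup \to Z''$ satisfying $\sigma \f_i = \rho_i$. My plan is to cover $Z_\cup$ by the two opens $U_i := Z_\cup \setminus \f_i(\Ex(\f_i))$, on each of which $\f_i$ restricts to an isomorphism; by Proposition \ref{prop_h_1_and_h_2_don_int}, $U_1 \cup U_2 = Z_\cup$. Define $\sigma|_{U_i} := \rho_i \circ \f_i^{-1}$. To verify agreement on $U_1 \cap U_2$, I would note that on the dense open $V \subset U_1 \cap U_2$ where $\tau$ is an isomorphism (and hence so are both $\f_i$), both $\rho_i \f_i^{-1}$ coincide with the common morphism $V \xrightarrow{\tau|_V} W \xrightarrow{(f|_{f^{-1}(W)})^{-1}} f^{-1}(W) \xrightarrow{g''} Z''$, where $W \subset Y$ is the iso locus of $f$; so they agree on $V$, and separatedness of $Z''$ extends the equality to all of $U_1 \cap U_2$. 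Gluing yields $\sigma \colon Z_\cup \to Z''$, and the identity $\sigma \f_i = \rho_i$ holds on the dense open $Z_i \setminus \Ex(\f_i)$ by definition, hence everywhere by separatedness. It follows that $\sigma \kappa\xi = \sigma \f_i g_i = \rho_i g_i = g''$, i.e. $(\kappa\xi,\tau) \preceq (g'',h'')$, completing the proof.
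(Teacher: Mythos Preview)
Your argument is correct, and the overall architecture matches the paper's: verify the bound inequalities from diagram~(\ref{eqtn_diag_cup_and_cap}), use the universal property of $Z_\cap$ for the meet, and glue over the open cover $Z_\cup = U_1 \cup U_2$ for the join. The difference lies in how the key equalities of morphisms are established.

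The paper systematically invokes Proposition~\ref{prop_g_defined_by_Irr_g}: for the meet it deduces $\f_1\alpha_1 = \f_2\alpha_2$ from $\Irr(\f_1\alpha_1) = \Irr(\f_2\alpha_2)$, and for the join it first proves $\rho_1\psi_1 = \rho_2\psi_2$ on all of $Z_\cap$ via the same proposition, which then makes the overlap check on $U_1\cap U_2 \simeq Z_\cap\setminus(\Ex(\psi_1)\cup\Ex(\psi_2))$ immediate. You avoid Proposition~\ref{prop_g_defined_by_Irr_g} entirely: for the meet you use dominance of $g'$ and separatedness of $Z_\cup$, and for the join you check agreement on a dense open and extend by separatedness of $Z''$. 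Your route is more elementary but the join step requires one clarification: the set $V$ must be taken inside $\tau^{-1}(W)$ where $W$ is the isomorphism locus of $f$ in $Y$, not merely where $\tau$ is an isomorphism (the latter does not force $f$ to be invertible over $\tau(V)$, so $(f|_{f^{-1}(W)})^{-1}$ would be undefined). Since $f(\Ex(f))$ has codimension two in $Y$, this corrected $V$ is still dense in $U_1\cap U_2$, and the rest of your argument goes through.
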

\begin{proof}
	Let $(g,h)$ be an element in $\Dec(f)$ which is smaller than $(g_1, h_1)$ and $(g_2,h_2)$. If $g\colon X\to Z$ then $g_1 = \alpha_1\circ g$ and $g_2 = \alpha_2 \circ g$, for some $\alpha_1 \colon Z \to Z_1$, $\alpha_2\colon Z \to Z_2$. As $\f_1 \circ g_1 = \f_2\circ g_2$, it implies that $\Irr(\f_1 \circ \alpha_1) =\Irr(\kappa \circ \xi)\setminus \Irr(g) = \Irr(\f_2 \circ \alpha_2)$, i.e. $\f_1 \circ \alpha_1 = \f_2 \circ \alpha_2$ by Proposition \ref{prop_g_defined_by_Irr_g}. By the universal property of the fiber product, there exists $\alpha \colon Z \to Z_\cap$ such that $\alpha_1 = \psi_1 \circ \alpha$ and $\alpha_2 = \psi_2 \circ \alpha$. Then $\xi = \alpha \circ g$, i.e. $(g,h)\preceq(\xi, \tau \circ \kappa)$, which proves that $(\xi, \tau\circ \kappa) = (g_1,h_1)\cap (g_2,h_2)$.
	
	Let now $(\wt{g}, \wt{h})$ be an element which is bigger than $(g_1,h_1)$ and $(g_2, h_2)$. If $\wt{g} \colon X \to \wt{Z}$, then there exists maps $\rho_1\colon Z_1 \to \wt{Z}$ and $\rho_2 \colon Z_2 \to \wt{Z}$ such that $\wt{g} =\rho_1 \circ g_1 = \rho_2 \circ g_2$. Then $\Irr(\rho_1\circ \psi_1) =\Irr(\wt{g}) \setminus \Irr(\xi) =  \Irr(\rho_2\circ \psi_2)$, hence $\rho_1 \circ \psi_1 = \rho_2 \circ \psi_2$ by Proposition \ref{prop_g_defined_by_Irr_g}. 
	
	We have $\f_1(\Ex(\f_1)) \cap \f_2(\Ex(\f_2)) = \emptyset$, i.e. $Z_\cup$ is a union of two open sets $U_1 = Z_\cup\setminus \f_1(\Ex(\f_1))$ and $U_2 = Z_\cup\setminus \f_2(\Ex(\f_2))$. Morphisms $\f_1$, $\f_2$ and $\kappa$ induce isomorphisms $U_1 \simeq Z_1 \setminus \Ex(\f_1)$, $U_2 \simeq Z_2 \setminus \Ex(\f_2)$ and $U_1 \cap U_2 \simeq Z_\cap \setminus \{\Ex(\psi_1)\cup \Ex(\psi_2)\}$. The equality $\rho_1\circ \psi_1= \rho_2 \circ\psi_2$ implies that, analogously as in the proof of Lemma \ref{lem_change_if_disjoint}, one can glue $\rho_1|_{U_1}$ and $\rho_2|_{U_2}$ to get a morphism $\rho \colon Z_\cup \to \wt{Z}$ such that $\rho_1 = \rho \circ \f_1$ and $\rho_2 = \rho \circ \f_2$, which implies that $(\kappa \circ \xi, \tau) = (g_1,h_1)\cup (g_2,h_2)$.
\end{proof}

We put
\begin{align*}
&g_1\cap g_2 :=\xi,& &g_1\cup g_2 := \kappa \circ \xi.&
\end{align*}

\begin{THM}\label{thm_dec_f_poset_of_ideals}
	The sets of irreducible components of the union and intersection of elements $g_1$, $g_2$ of $\Dec(f)$
	satisfy
	\begin{align*}
	&\Irr(g_1\cap g_2) = \Irr(g_1)\cap\Irr(g_2),& &\Irr(g_1\cup g_2) = \Irr(g_1) \cup \Irr(g_2).&
	\end{align*}
	In particular, $\Dec(f)$ is a distributive lattice.
\end{THM}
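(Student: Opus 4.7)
The plan is to analyze the morphisms $\xi$ and $\kappa \circ \xi$ in diagram (\ref{eqtn_diag_cup_and_cap}) and identify their exceptional divisors by tracking strict transforms. Since $\Irr(f)$ decomposes as a disjoint union $\Irr(g) \sqcup \Irr(h)$ for every $(g,h) \in \Dec(f)$, applied to both $g_1 = \psi_1 \circ \xi$ and $g_2 = \psi_2 \circ \xi$ gives
\[
\Irr(g_1) = \Irr(\xi) \sqcup \Irr(\psi_1), \qquad \Irr(g_2) = \Irr(\xi) \sqcup \Irr(\psi_2),
\]
where $\Irr(\psi_i)$ is embedded in $\Irr(f)$ via strict transform along $\xi$. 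From this it is immediate that $\Irr(\xi) \subseteq \Irr(g_1) \cap \Irr(g_2)$, and similarly $\Irr(g_1) \cup \Irr(g_2) = \Irr(\xi) \cup \Irr(\psi_1) \cup \Irr(\psi_2)$, so the content of the theorem is to identify these right-hand sides correctly.

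The key geometric input is that by construction $\bB_{\f_1} \cap \bB_{\f_2} = \emptyset$, so Proposition \ref{prop_h_1_and_h_2_don_int} gives $\f_1(\Ex(\f_1)) \cap \f_2(\Ex(\f_2)) = \emptyset$ in $Z_\cup$. Consequently, $\f_1$ is an isomorphism in a neighbourhood of $\f_2(\Ex(\f_2))$, so the base-change projection $\psi_2\colon Z_\cap \to Z_2$ is an isomorphism over $\f_2^{-1}(U_1)$ where $U_1 = Z_\cup \setminus \f_1(\Ex(\f_1))$, and it identifies $\Ex(\psi_1)$ with $\Ex(\f_2)$. Symmetrically, $\psi_1$ identifies $\Ex(\psi_2)$ with $\Ex(\f_1)$. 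Passing through the strict transforms along $\xi$ and along $g_1,g_2$, this gives the identifications $\Irr(\psi_1) = \Irr(\f_2)$ and $\Irr(\psi_2) = \Irr(\f_1)$ inside $\Irr(f)$, and moreover $\Irr(\psi_1) \cap \Irr(\psi_2) = \emptyset$.

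The disjointness instantly yields the reverse inclusion $\Irr(g_1) \cap \Irr(g_2) \subseteq \Irr(\xi)$: if a component $E$ lies in both $\Irr(g_1)$ and $\Irr(g_2)$ but not in $\Irr(\xi)$, then its strict transform $\xi_*(E)$ is a divisor in $Z_\cap$ lying simultaneously in $\Irr(\psi_1)$ and $\Irr(\psi_2)$, contradicting $\Irr(\psi_1) \cap \Irr(\psi_2) = \emptyset$. For the union, writing $\kappa\circ\xi$ as $g_1 \cup g_2$ gives $\Irr(g_1\cup g_2) = \Irr(\xi) \sqcup \Irr(\f_1) \sqcup \Irr(\f_2)$, and combining the three identifications above shows this equals $\Irr(g_1) \cup \Irr(g_2)$.

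For distributivity, Proposition \ref{prop_g_defined_by_Irr_g} tells us that $g \mapsto \Irr(g)$ is an injection $\Dec(f) \hookrightarrow 2^{\Irr(f)}$, and the two formulas just proved show this injection intertwines $\cap,\cup$ on $\Dec(f)$ with the Boolean operations on $2^{\Irr(f)}$. Thus $\Dec(f)$ embeds as a sublattice of a distributive lattice and is therefore itself distributive. The main technical obstacle is the exceptional-divisor bookkeeping for the fibre product $Z_\cap = Z_1 \times_{Z_\cup} Z_2$: making sure that the disjointness of $\Ex(\f_1)$ and $\Ex(\f_2)$ really does translate into disjointness of $\Ex(\psi_1)$ and $\Ex(\psi_2)$ after taking strict transforms, and that the identifications $\Irr(\psi_1) = \Irr(\f_2)$, $\Irr(\psi_2) = \Irr(\f_1)$ inside $\Irr(f)$ are compatible with the previously chosen embeddings via strict transform.
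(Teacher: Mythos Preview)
Your proof is correct and follows essentially the same route as the paper's. Both arguments hinge on the disjointness $\f_1(\Ex(\f_1))\cap\f_2(\Ex(\f_2))=\emptyset$ from Proposition~\ref{prop_h_1_and_h_2_don_int}, use it to identify $\Irr(\psi_i)$ with $\Irr(\f_{3-i})$ inside $\Irr(f)$ (the paper phrases this as $\Ex(\kappa)=\Ex(\psi_1)\cup\Ex(\psi_2)=\Ex(\f_1)\cup\Ex(\f_2)$), and then deduce distributivity from the injectivity of $g\mapsto\Irr(g)$ given by Proposition~\ref{prop_g_defined_by_Irr_g}; the only difference is that the paper organizes the set computation via the complements $\Irr(g_i)=\Irr(g_1\cup g_2)\setminus\Irr(\f_i)$ rather than the decompositions $\Irr(g_i)=\Irr(\xi)\sqcup\Irr(\psi_i)$.
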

\begin{proof}
	In the notation of diagram (\ref{eqtn_diag_cup_and_cap}),  $\f_1(\Ex(\f_1)) \cap \f_2(\Ex(\f_2)) = \emptyset$. It follows that $\Irr(\f_1)\cap \Irr(\f_2) = \emptyset$. Then
	\begin{align*}
	&\Irr(g_1) \cup \Irr(g_2) = (\Irr(g_1\cup g_2)\setminus \Irr(\f_1)) \cup (\Irr(g_1\cup g_2)\setminus \Irr(\f_2)) = \Irr(g_1 \cup g_2),&\\
	&\Irr(g_1)\cap \Irr(g_2) = (\Irr(g_1\cup g_2)\setminus \Irr(\f_1))\cap (\Irr(g_1\cup g_2)\setminus \Irr(\f_2))& 
	\\&= \Irr(g_1 \cup g_2) \setminus \Irr(\kappa) = \Irr(g_1 \cap g_2),&
	\end{align*}
	since $\Ex(\kappa)$ for the fiber product $\kappa$ of maps $\f_1$ and $\f_2$ is the union $\Ex(\psi_1)\cup \Ex(\psi_2) = \Ex(\f_1)\cup \Ex(\f_2)$. 
	
	As an element $g$ of $\Dec(f)$ is uniquely determined by components of its exceptional divisor, see Proposition \ref{prop_g_defined_by_Irr_g}, the above proves that unions and intersections are defined via unions and intersections of subsets, hence the distributivity law holds.
\end{proof}

By Birkhoff's theorem, we get
\begin{COR}\label{cor_Dec_id_in_Con}
	Lattice $\Dec(f)$ is isomorphic to the lattice of lower ideals in $\Conn(f)$.
\end{COR}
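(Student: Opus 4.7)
The plan is to deduce the statement directly from Birkhoff's theorem, which was recalled at the beginning of Section 3: any finite distributive lattice $\lL$ is isomorphic to the lattice $I(JP(\lL))$ of lower ideals in its poset of join-prime elements. The heavy lifting has already been done in Theorem \ref{thm_dec_f_poset_of_ideals}, which establishes that $\Dec(f)$ is a finite distributive lattice with joins and meets compatible with unions and intersections of $\Irr(g) \subset \Irr(f)$. So applying Birkhoff gives an order isomorphism $\Dec(f) \simeq I(JP(\Dec(f)))$, and the only remaining task is to identify $JP(\Dec(f))$ with the set $\Conn(f)$ defined combinatorially by (\ref{eqtn_def_Conn_f}).

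For this identification I would invoke the abstract lemma stated just before (\ref{eqtn_def_Conn_f}), which says that a lower ideal in a poset $S$ is principal (equivalently, corresponds to a join-prime element of $I(S)$) if and only if it admits a unique maximal proper sub-ideal. Unwinding under the Birkhoff isomorphism, this reads: an element $(g,h)\in \Dec(f)$ is join-prime exactly when the set of elements of $\Dec(f)$ strictly below $(g,h)$ has a maximum. To match this with (\ref{eqtn_def_Conn_f}), I would simply observe that elements of $\Dec(g)$ with $s_g \neq g$ are in order-preserving bijection with elements of $\Dec(f)$ strictly smaller than $(g,h)$, via $(s_g, t_g)\mapsto (s_g, h\circ t_g)$; under this bijection the existence of a maximal $(s_g,t_g)$ with $s_g\neq g$ dominating all proper sub-decompositions of $g$ is literally the condition that the principal lower ideal $(g,h)$ has a unique maximal proper sub-ideal.

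There is no real obstacle here: the combinatorial structure of $\Dec(f)$ as a distributive lattice is the substantive content and has been established in Theorem \ref{thm_dec_f_poset_of_ideals}, while Birkhoff's theorem and the characterization of principal ideals are purely formal. The proof is therefore one line: combine Theorem \ref{thm_dec_f_poset_of_ideals} with Birkhoff's theorem, using (\ref{eqtn_def_Conn_f}) as the working definition of $\Conn(f)=JP(\Dec(f))$.
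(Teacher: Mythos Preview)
Your proposal is correct and follows exactly the paper's approach: the corollary is stated immediately after Theorem \ref{thm_dec_f_poset_of_ideals} with the one-line justification ``By Birkhoff's theorem, we get'', since $\Conn(f)$ is introduced at the start of the section as $JP(\Dec(f))$ and the characterisation (\ref{eqtn_def_Conn_f}) is derived from the abstract lemma on principal ideals precisely as you describe. Your extra care in spelling out the identification of $\Conn(f)$ with $JP(\Dec(f))$ via the bijection $\Dec(g)\to\{(g',h')\in\Dec(f)\mid (g',h')\preceq(g,h)\}$ is already implicit in the paper's exposition preceding the corollary.
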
 

\begin{REM}\label{rem_disjoint_div}
	Let $g_1\colon X \to Z_1$, $g_2\colon X\to Z_2$ be elements of $\Dec(f)$ and let $\psi_1$, $\psi_2$ be as in (\ref{eqtn_diag_cup_and_cap}), such that $g_1 = \psi_1 \circ (g_1 \cap g_2)$ and $g_2= \psi_2 \circ (g_1 \cap g_2)$. As $Z_\cap= Z_1 \times_{Z_\cup} Z_2$ and $\f_1(\Ex(\f_1))\cap \f_2(\Ex(\f_2)) = \emptyset$, the exceptional divisors of $\psi_1$ and $\psi_2$ are disjoint.
\end{REM}

\subsection{$\Conn(f)$ and irreducible components of the exceptional divisor}

Define a map 
\begin{align*}
& \alpha \colon \Conn(f) \to \Irr(f),& &\alpha((g,h)) = \Irr(\bB_g).& 
\end{align*}
In view of description (\ref{eqtn_Conn_f_via_B_g}) for $\Conn(f)$, map $\alpha$ is well-defined.

In the opposite direction, define $\beta \colon \Irr(f) \to \Conn(f)$ by:
$$
\beta(E) = \bigcap \{(g,h)\in \Dec(f)\,|\, E\in \Irr(g)\}.
$$ 
\begin{THM}\label{thm_Conn_f_is_comp_of_Ex_f}
	Map $\beta$ is well-defined and inverse to $\alpha$, hence bijection $\Conn(f) \simeq \Irr(f)$.
\end{THM}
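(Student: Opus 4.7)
My approach is to verify well-definedness of $\beta$, then check the two composition identities $\alpha \circ \beta = \Id$ and $\beta \circ \alpha = \Id$ separately, leveraging Theorem \ref{thm_dec_f_poset_of_ideals}, Proposition \ref{prop_Danilov}, and Lemma \ref{lem_inclusion_of_B_h}.

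For well-definedness, I will note that $S_E := \{(g',h') \in \Dec(f) \mid E \in \Irr(g')\}$ is non-empty (it contains $(f,\Id_Y)$) and closed under meets in $\Dec(f)$ by Theorem \ref{thm_dec_f_poset_of_ideals}, since $\Irr$ converts meets to intersections. As $\Dec(f)$ is finite, $S_E$ admits a unique minimum, which is $\beta(E)$; in particular $E \in \Irr(\beta(E))$. The main work is to upgrade $\beta(E) \in \Dec(f)$ to $\beta(E) \in \Conn(f)$. Writing $g_0 := \beta(E) \colon X \to Z_0$, I will assume towards a contradiction that $\bB_{g_0}$ has $n \geq 2$ components $B_1, \ldots, B_n$, which are disjoint since $\bB_{g_0}$ is smooth of codimension two. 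Proposition \ref{prop_Danilov} yields a factorization $g_0 = \bar h \circ g_1'$ where $\bar h$ is the blow-up along $\bB_{g_0}$, and since the $B_i$ are disjoint, $\bar h$ factors as $\bar h = h_i \circ \tilde h_i$ with $h_i$ the blow-up of $Z_0$ along the single $B_i$. This produces a strict predecessor $g^{(i)} := \tilde h_i \circ g_1' \in \Dec(f)$ of $g_0$ with $\Irr(g^{(i)}) = \Irr(g_0) \setminus \{F_i\}$, where $F_i$ denotes the exceptional divisor of $h_i$ (identified with its strict transform in $X$). Since $\Irr(g_0) = \Irr(g_1') \sqcup \{F_1, \ldots, F_n\}$ and $n \geq 2$, the element $E$ must lie in $\Irr(g^{(i)})$ for some $i$ (either $E \in \Irr(g_1')$, in which case any $i$ works, or $E = F_j$, in which case any $i \neq j$ works), contradicting the minimality of $g_0 = \beta(E)$ in $S_E$. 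Hence $n = 1$ and $\beta(E) \in \Conn(f)$ by (\ref{eqtn_Conn_f_via_B_g}). The identity $\alpha \circ \beta = \Id$ will then follow: with $n = 1$, the same minimality argument excludes $E \in \Irr(g_1')$, leaving $E = F_1 = \alpha(\beta(E))$.

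For $\beta \circ \alpha = \Id$, starting from $(g, h) \in \Conn(f)$ and $E := \alpha(g,h)$, the inclusion $\beta(E) \preceq (g,h)$ is immediate from $(g,h) \in S_E$. The reverse inequality amounts to showing that no strict predecessor $g'' \prec g$ has $E \in \Irr(g'')$. Every such $g''$ fits into $g = \psi \circ g''$ with $\psi$ not an isomorphism, and Lemma \ref{lem_inclusion_of_B_h} applied to $g$ and this decomposition forces $\bB_\psi \subseteq \bB_g$; since $\bB_g$ is irreducible and $\bB_\psi$ is non-empty, $\bB_\psi = \bB_g$. Proposition \ref{prop_Danilov} then lets me write $\psi = \bar h \circ \bar\psi$, giving $g = \bar h \circ (\bar\psi \circ g'')$. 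Because the lift of $g$ through $\bar h$ is unique (two lifts agree on the dense open complement of the exceptional locus and extend uniquely by separatedness), $\bar\psi \circ g''$ coincides with the canonical $g_1'$ from $g = \bar h \circ g_1'$, so $g'' \preceq g_1'$ and $\Irr(g'') \subseteq \Irr(g_1') = \Irr(g) \setminus \{E\}$. The main obstacle is precisely this final step: identifying the maximal strict predecessor of an element of $\Conn(f)$ with the canonical lift $g_1'$ through the blow-up of $\bB_g$, which requires simultaneously controlling $\bB_\psi$ for all non-trivial factorizations $g = \psi \circ g''$ and invoking the uniqueness of such lifts.
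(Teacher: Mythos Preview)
Your argument is correct. The well-definedness and $\alpha\circ\beta=\Id$ portions match the paper's proof almost exactly, only with more detail (you make explicit that $S_E$ is meet-closed and finite, whereas the paper simply writes $\beta(E)$ as the lattice intersection).

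The one genuine difference is in $\beta\circ\alpha=\Id$. The paper avoids your uniqueness-of-lifts argument entirely: given $g\in\Conn(f)$, $E=\alpha(g)$, and any $g'$ with $E\in\Irr(g')$, it considers $g\cap g'$. If $g\cap g'\neq g$, write $g=\varphi\circ(g\cap g')$ with $\varphi$ nontrivial; Lemma~\ref{lem_inclusion_of_B_h} gives $\bB_\varphi\subset\bB_g$, hence $\bB_\varphi=\bB_g$ and $E\in\Irr(\varphi)$, so $E\notin\Irr(g\cap g')$. But Theorem~\ref{thm_dec_f_poset_of_ideals} gives $\Irr(g\cap g')=\Irr(g)\cap\Irr(g')\ni E$, a contradiction. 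Thus $g\preceq g'$ for every such $g'$, i.e.\ $g=\beta(E)$.

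Your route instead shows that \emph{every} strict predecessor $g''\prec g$ satisfies $g''\preceq g_1'$, where $g_1'$ is the canonical lift through the blow-up of $\bB_g$; this explicitly identifies $(s_g,t_g)=(g_1',\bar h)$ in definition~(\ref{eqtn_def_Conn_f}). The paper's approach is shorter and uses only the lattice identity already established, whereas yours needs the extra (valid, since $\bar h$ is separated and $X$ is reduced) uniqueness step, but has the side benefit of pinning down the maximal proper predecessor concretely.
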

\begin{proof}
	Let $\beta(E)=(g_E,h_E)$. Since $g_E\colon X\to Z_E$ factors via the blow-up $\tau$ of $Z_E$ along $\bB_{g_E}$, $g_E = \tau \circ g'_E$ and $g'_E \precneqq g_E$, the definition of $\beta(E)$ implies that $\{E\} = \Irr(\tau) \subset \Irr(\bB_{g_E})$. Moreover, if $\bB_{g_E}$ had more than one component then $g_E$ would factor via the blow-up $\tau'$ of $Z_E$ along a component of $\bB_{g_E}$ different from $g_E(E)$, $g_E = \tau' \circ \wt{g}_E$. As $E\in \Irr(\wt{g}_E)$ and $\wt{g}_E \precneqq \beta(E)$ this contradicts the definition of $\beta(E)$. Thus, $g_E$ is an element of $\Conn(f)$.
	
	The above argument shows also that $E = \Irr(\bB_{g_E})$, hence $\alpha \circ \beta = \Id_{\Irr(f)}$. For the composition $\beta \circ \alpha$, let $g\in \Conn(f)$, $E =\alpha(g)$ and let $g' \in \Dec(f)$ be an element which contracts $E$. If $g\cap g' \neq g$, then there exists a non-trivial $\f$ such that $g = \f \circ (g\cap g')$. By Lemma  \ref{lem_inclusion_of_B_h}, we have $\Irr(\bB_\f)\subset \Irr(\bB_g) = \{E\}$, hence $E \notin \Irr(g\cap g')$. This contradicts equality $\Irr(g\cap g') = \Irr(g) \cap \Irr(g')$, hence $g\cap g' = g$, i.e. $g \preceq g'$. Since $g'$ was arbitrary, it follows that $g = \beta(E) = \beta\alpha(g)$.    
\end{proof}

In view of the theorem, the partial order on $\Conn(f)$ is transported by $\alpha$ to a partial order on $\Irr(f)$. Theorem \ref{thm_dec_f_poset_of_ideals} implies
\begin{COR}\label{cor_union_and_int_in_Dec}
	Map $\Dec(f) \to I(\Irr(f))$, $(g,h) \mapsto \Irr(g)$ induces a bijection of $\Dec(f)$ with the lattice of lower ideals in $\Irr(f)$.
\end{COR}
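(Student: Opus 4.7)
The plan is to deduce the corollary by composing the two isomorphisms already at our disposal. Corollary \ref{cor_Dec_id_in_Con} gives a lattice isomorphism $\Dec(f) \simeq I(\Conn(f))$ coming from Birkhoff's theorem, and Theorem \ref{thm_Conn_f_is_comp_of_Ex_f} furnishes a bijection $\alpha\colon \Conn(f) \to \Irr(f)$. Since the partial order on $\Irr(f)$ is \emph{defined} as the one transported from $\Conn(f)$ via $\alpha$, this bijection is automatically an isomorphism of posets, hence induces an isomorphism of lattices $I(\Conn(f)) \simeq I(\Irr(f))$. Composing yields a lattice isomorphism $\Dec(f) \simeq I(\Irr(f))$; what remains is to identify this composite with the map $(g,h) \mapsto \Irr(g)$.

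For this identification, recall that the Birkhoff isomorphism sends $g \in \Dec(f)$ to the lower ideal $\{s \in \Conn(f)\,|\, s \preceq g\}$, and then $\alpha$ sends this to $\{\alpha(s)\,|\, s \in \Conn(f),\, s \preceq g\} \subset \Irr(f)$. Thus it suffices to prove the set-theoretic equality
\begin{equation*}
\Irr(g) = \{E \in \Irr(f)\,|\, \beta(E) \preceq g\},
\end{equation*}
where $\beta$ is the inverse of $\alpha$ from Theorem \ref{thm_Conn_f_is_comp_of_Ex_f}. The inclusion $\supseteq$ is immediate: if $\beta(E) \preceq g$, writing $\beta(E) = g_E$ with $g = \psi \circ g_E$, we have $E = \alpha(\beta(E)) \in \Irr(g_E) \subset \Irr(g)$. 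The reverse inclusion $\subseteq$ follows directly from the definition $\beta(E) = \bigcap\{g' \in \Dec(f)\,|\, E \in \Irr(g')\}$: if $E \in \Irr(g)$ then $g$ belongs to the set being intersected, so $\beta(E) \preceq g$.

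The main (and essentially only) subtlety is the first inclusion, which relies on the compatibility of $\Irr$ with composition of birational morphisms in $\Dec(f)$, i.e. that $\Irr(g) = \Irr(g_E) \sqcup \Irr(\psi)$ when $g = \psi \circ g_E$; this was established at the beginning of Subsection 2.2. Once the two inclusions are in hand, the bijectivity and order-preservation follow, and the lattice operations transfer correctly because by Theorem \ref{thm_dec_f_poset_of_ideals} we have $\Irr(g_1 \cap g_2) = \Irr(g_1) \cap \Irr(g_2)$ and $\Irr(g_1 \cup g_2) = \Irr(g_1) \cup \Irr(g_2)$, which are precisely the lattice operations on $I(\Irr(f))$. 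This completes the identification.
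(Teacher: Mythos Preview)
Your proof is correct and follows essentially the same approach as the paper. The paper's own proof is the single clause ``Theorem \ref{thm_dec_f_poset_of_ideals} implies'', placed immediately after the observation that the order on $\Irr(f)$ is transported from $\Conn(f)$ via $\alpha$; you have simply made explicit the composition $\Dec(f)\simeq I(\Conn(f))\simeq I(\Irr(f))$ and the identification of this composite with $(g,h)\mapsto \Irr(g)$, which is exactly what the paper leaves to the reader.
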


Note that for $(g,h) \in \Dec(f)$, both embeddings $\Irr(g) \to \Irr(f)$ and $\Irr(h) \to \Irr(f)$ (the latter given by the strict transform along $g$) are morphisms of posets. In particular, for any $(g,h) \in \Dec(f)$, we have an isomorphism of partially ordered sets:
$$
\Irr(f)\setminus\Irr(g) \xrightarrow{\simeq} \Irr(h).
$$

Thus, for any $g \in \Dec(f)$, we have an isomorphism $\gamma_g \colon \Conn(f) \setminus \Conn(g) \to \Conn(h)$ satisfying the equality:
\begin{equation}\label{eqtn_iso_gamma}
g\cup g' = \gamma_g(g') \circ g.
\end{equation}

\section{\textbf{Filtrations and the standard \tr e on the null category}}\label{sec_the_st_tr}

In this section $X$, $Y$ are normal algebraic spaces over an algebraically closed field $k$ of characteristic 0 and $f\colon X\to Y$ is a proper morphism. We denote by $\cC_f$ the \emph{triangulated null category}:
\begin{equation}\label{eqtn_def_C_f}
\cC_f := \{ E \in \dD^b(X)\,|\, Rf_* E = 0\}.
\end{equation}
If $Y$ is smooth, $X$ Gorenstein and $Rf_* \oO_X \simeq \oO_Y$, 
$\cC_f$ is an \emph{admissible} subcategory of $\dD^b(X)$, i.e. the embedding $\iota_{f*} \colon \cC_f \to \dD^b(X)$ has both left and right adjoint functors. Indeed, since $\dD^b(Y) \simeq \Perf(Y)$, functor $Lf^*$ takes $\dD^b(Y)$ to $\dD^b(X)$ and we have a  formula for the functor $f^!$ right adjoint to $Rf_*$:
$$
f^!(-)\simeq Lf^*(-) \otimes^L f^!(\oO_Y),
$$
\emph{cf.} \cite[Lemma A.1]{BodBon}. $X$ is Gorenstein, hence $f^!(\oO_Y)$, which differs from the dualising complex on $X$ by a twist with a line bundle, is a perfect complex in $\dD^b(X)$. It follows that $f^!$ also takes $\dD^b(Y)$ to $\dD^b(X)$. As $Rf_*(\oO_X)\simeq \oO_Y$, functors $Lf^*$, $f^!$ are fully faithful and
\begin{equation}\label{eqtn_ff!=id} 
Rf_* \circ Lf^* \simeq \Id_{\dD^b(Y)} \simeq Rf_* \circ f^!.
\end{equation}
Hence, the cones of the adjunction morphisms lie in $\cC_f$ and adjoint functors to $\iota_{f*}$ can be defined by canonical triangles \cite{B}:
\begin{align}
&Lf^*Rf_* \to \textrm{Id} \to \iota_{f*}\iota_f^* \to Lf^*Rf_*[1],\label{eqtn_def_of_iota*}&\\
&\iota_{f*} \iota_f^! \to \textrm{Id} \to f^!Rf_* \to \iota_{f*}\iota_f^![1]. \label{eqtn_def_of_iota!}&
\end{align}

For a proper morphism $f\colon X \to Y$, we denote by $\omega_f^\bcdot$ the relative dualising complex:
$$
\omega_f^\bcdot = f^!\oO_Y.
$$
\begin{LEM}\label{lem_dual_preserv_C_f}
	Let $f\colon X \to Y$ be a proper morphism such that $Rf_* \oO_X = \oO_Y$. Then the duality functor $\mathscr{D}_f = R\Hom(-, \omega_f^\bcdot)$ preserves the category $\cC_f$.
\end{LEM}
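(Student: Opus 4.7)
The plan is to use Grothendieck relative duality. Take $E\in \cC_f$, i.e.\ $Rf_*E=0$, and show that $\mathscr{D}_f(E)=R\hHom_X(E,\omega_f^\bcdot)$ again satisfies $Rf_*\mathscr{D}_f(E)=0$, so that $\mathscr{D}_f(E)\in \cC_f$.

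The key step is the isomorphism
\[
Rf_*\, R\hHom_X(E,\, f^!\oO_Y)\;\simeq\; R\hHom_Y(Rf_*E,\, \oO_Y),
\]
which is the sheafified form of Grothendieck--Serre duality for the proper morphism $f$. Since $\omega_f^\bcdot=f^!\oO_Y$ by definition, the left hand side is precisely $Rf_*\mathscr{D}_f(E)$. Because $Rf_*E=0$ by the assumption $E\in \cC_f$, the right hand side vanishes, yielding $Rf_*\mathscr{D}_f(E)=0$ as required.

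Before invoking duality one should check that $\mathscr{D}_f(E)$ indeed lies in $\dD^b(X)$, so that the statement ``belongs to $\cC_f$'' makes sense. Under the hypotheses of the lemma, $\omega_f^\bcdot$ is a bounded complex with coherent cohomology; for $E\in \dD^b(X)$, boundedness of $R\hHom_X(E,\omega_f^\bcdot)$ follows by standard arguments (in the smooth/Gorenstein setting recalled just before the lemma, $\omega_f^\bcdot$ is actually a shifted line bundle, so $\mathscr{D}_f$ is an auto-equivalence of $\dD^b(X)$).

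The only subtle point is thus ensuring the applicability of Grothendieck duality in the generality stated (proper $f$ between normal algebraic spaces over a field of characteristic $0$); this is the form of relative duality for proper maps of Noetherian algebraic spaces, and for our purposes it is enough to quote it. Once this is in hand, the argument is a one-line application of duality combined with the defining property of $\cC_f$.
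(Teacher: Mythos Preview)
Your proposal is correct and matches the paper's proof essentially line for line: the paper simply applies Grothendieck duality to obtain $Rf_* R\hHom_X(E,\omega_f^\bcdot)\simeq R\hHom_Y(Rf_*E,\oO_Y)=0$ for $E\in\cC_f$. The paper omits your side discussion about boundedness of $\mathscr{D}_f(E)$, but the core argument is identical.
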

\begin{proof}
	The fact follows immediately from Grothendieck's duality applied to $E \in \cC_f$:
	\begin{equation*} 
	Rf_*R\hH om_X(E, \omega_f^\bcdot) \simeq Rf_* R\hH om_X(E, f^!\oO_Y) \simeq R\hHom_Y(Rf_*E, \oO_Y) = 0.\qedhere 
	\end{equation*}
\end{proof}

We consider the full subcategory of $\cC_f$: 
\begin{equation}\label{eqtn_def_A_f}
\mathscr{A}_f := \{ E\in \Coh(X)\,|\, Rf_*(E) = 0\}.
\end{equation}

\begin{LEM}\cite[Lemma 3.1]{Br1}\label{lem_A_f_is_a_heart}
	Let $f\colon X\to Y$ be a proper morphism with dimension of fibers  bounded by one. An object $E$ in $\dD^b(X)$ lies in $\cC_f$ if and only if $\hH^i(E) \in \mathscr{A}_f$, for all $i \in \mathbb{Z}$. Thus, the restriction of the standard \tr e on $\dD^b(X)$ defines a bounded \tr e on category $\cC_f$ with heart $\mathscr{A}_f$.
\end{LEM}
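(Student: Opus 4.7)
The plan is to exploit the fact that a proper morphism with fibres of dimension at most one has cohomological dimension at most one on coherent sheaves, so that $R^p f_* F = 0$ for $p \geq 2$ and any coherent $F$. Consequently, the Grothendieck spectral sequence $E_2^{p,q} = R^p f_* \hH^q(E) \Rightarrow R^{p+q} f_*(E)$ has only the two rows $p=0,1$, and the differential $d_2 \colon E_2^{p,q} \to E_2^{p+2,q-1}$ lands in zero. Hence it degenerates at $E_2$, yielding for each $n$ a short exact sequence
\[
0 \to R^1 f_* \hH^{n-1}(E) \to R^n f_*(E) \to f_* \hH^n(E) \to 0.
\]

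The backward implication is then immediate: if every $\hH^i(E)$ lies in $\mathscr{A}_f$, both outer terms vanish for all $n$, so $Rf_* E = 0$. For the forward implication I would argue by descending induction on the top non-vanishing cohomology. Let $m$ be maximal with $\hH^m(E) \neq 0$. Applying the above sequence with $n = m+1$ gives $R^1 f_* \hH^m(E) = 0$ (the middle term vanishes since $E\in\cC_f$, and $\hH^{m+1}(E)=0$). Applying it with $n = m$ shows that $f_* \hH^m(E)$ is a quotient of $R^m f_*(E) = 0$, hence also vanishes. Therefore $\hH^m(E) \in \mathscr{A}_f$. The distinguished triangle
\[
\tau^{\leq m-1} E \to E \to \hH^m(E)[-m] \to \tau^{\leq m-1} E[1]
\]
then gives $Rf_* \tau^{\leq m-1} E = 0$, so $\tau^{\leq m-1} E \in \cC_f$, and induction on the length of the cohomological support finishes the argument.

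For the second assertion, the equivalence just proved shows that the standard truncation functors $\tau^{\leq n}$, $\tau^{\geq n}$ on $\dD^b(X)$ preserve the property of having all cohomology sheaves in $\mathscr{A}_f$, hence they restrict to $\cC_f$ together with the truncation triangles. Thus $\cC_f \cap \dD^b(X)^{\leq 0}$ and $\cC_f \cap \dD^b(X)^{\geq 0}$ form a $t$-structure on $\cC_f$, with heart $\cC_f \cap \Coh(X) = \mathscr{A}_f$; boundedness is automatic since $\cC_f \subset \dD^b(X)$.

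The only slightly delicate point is the degeneration of the spectral sequence; everything else is routine provided one invokes the fibre-dimension hypothesis precisely at that step. I would write this up in a couple of lines together with the two-term exact sequence, and use it as the single engine driving both the characterisation of $\cC_f$ and the restriction of the $t$-structure.
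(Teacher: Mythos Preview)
Your argument is correct and is essentially the standard proof (the paper does not give its own proof but cites \cite[Lemma~3.1]{Br1}, where the same two-row spectral sequence is used). One small simplification: the induction in the forward direction is unnecessary. Once you have the short exact sequences
\[
0 \to R^1 f_* \hH^{n-1}(E) \to R^n f_*(E) \to f_* \hH^n(E) \to 0
\]
for all $n$, the hypothesis $Rf_*E=0$ kills the middle term for every $n$ simultaneously, so both $R^1 f_* \hH^{n-1}(E)=0$ and $f_* \hH^{n}(E)=0$ for all $n$; hence $\hH^n(E)\in\mathscr{A}_f$ directly, with no need to peel off the top cohomology and truncate.
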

It follows that under the assumptions of Lemma \ref{lem_A_f_is_a_heart}, category $\mathscr{A}_f$ is abelian. We call it the \emph{(abelian) null category} of $f$. We refer to the \tr e on $\cC_f$ given by Lemma \ref{lem_A_f_is_a_heart} as the \emph{standard} \tr e.
\begin{PROP}\cite[Proposition 2.9]{BodBon} \label{prop_g_is_exact}
	Let $f\colon X\to Y$ be a proper morphism with dimension of fibers bounded by one and let
	\[
	\xymatrix{X \ar[dr]^g \ar[dd]_f & \\ & Z \ar[dl]^h\\ Y&}
	\]
	be a decomposition of $f$. Then, for any $E \in \Coh(X)$ with $R^1f_* E = 0$, we have $R^1g_*E =0$. It follows that functor $g_*$ takes $\mathscr{A}_f$ to $\mathscr{A}_h$ and it is exact.
\end{PROP}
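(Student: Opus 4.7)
The plan is to exploit the Grothendieck spectral sequence for the composition $f = h \circ g$ to reduce to a vanishing of $h_*(R^1 g_* E)$, and then to promote this to the vanishing of $R^1 g_* E$ itself via a finiteness argument on the support of $R^1 g_* E$. Once this is done the assertions about $\mathscr{A}_f$ drop out quickly.

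First I would observe that fibers of $g$ sit inside fibers of $f$ (since $g^{-1}(g(x)) \subset f^{-1}(f(x))$), while fibers of $h$ are images under $g$ of fibers of $f$; both therefore have dimension at most one, so $R^i g_* = R^i h_* = 0$ for $i \geq 2$. The spectral sequence $R^p h_* R^q g_* E \Rightarrow R^{p+q} f_* E$ is consequently confined to the $2 \times 2$ square $p,q \in \{0,1\}$, and its only potentially nontrivial $d_2$-differential has target $R^2 h_* g_* E = 0$, so it degenerates at $E_2$. The resulting two-step filtration of $R^1 f_* E$ has graded pieces $R^1 h_*(g_* E)$ and $h_*(R^1 g_* E)$, and the hypothesis $R^1 f_* E = 0$ forces both to vanish.

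The crucial and only nontrivial step, which I expect to be the main obstacle, is to deduce $R^1 g_* E = 0$ from $h_*(R^1 g_* E) = 0$. Set $F_g := \{z \in Z : \dim g^{-1}(z) \geq 1\}$, a closed subset of $Z$ by upper semi-continuity of fiber dimension. The theorem on formal functions ensures that $R^1 g_* E$ is supported on $F_g$. The key geometric observation is that $h|_{F_g} : F_g \to Y$ has finite fibers: if some $h^{-1}(y) \cap F_g$ contained an irreducible curve $\Gamma$, then every point of $\Gamma$ would have a one-dimensional $g$-fiber, so $g^{-1}(\Gamma)$ would be at least two-dimensional and would sit inside $f^{-1}(y)$, contradicting $\dim f^{-1}(y) \leq 1$. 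Being proper with finite fibers, $h|_{F_g}$ is a finite morphism, and pushforward along a finite morphism is exact and faithful on coherent sheaves; hence $h_*(R^1 g_* E) = 0$ forces $R^1 g_* E = 0$.

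For the remaining assertions, take $E \in \mathscr{A}_f$, i.e.\ $Rf_* E = 0$. The filtration above immediately yields $h_* g_* E = f_* E = 0$ and $R^1 h_*(g_* E) = 0$, while the first part gives $R^1 g_* E = 0$; thus $Rh_*(g_* E) = 0$ and $g_* E \in \mathscr{A}_h$. Exactness of $g_* : \mathscr{A}_f \to \mathscr{A}_h$ then follows from the long exact sequence of higher direct images: for a short exact sequence $0 \to E_1 \to E_2 \to E_3 \to 0$ in $\mathscr{A}_f$, the vanishing $R^1 g_* E_1 = 0$ just proved truncates it to a short exact sequence $0 \to g_* E_1 \to g_* E_2 \to g_* E_3 \to 0$ in $\Coh(Z)$, which, lying in $\mathscr{A}_h$, is exact there too.
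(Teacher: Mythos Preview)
The paper does not supply its own proof of this proposition: it is quoted verbatim from \cite[Proposition~2.9]{BodBon}. So there is no in-paper argument to compare against, and your task reduces to whether your proposal stands on its own.

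Your argument is correct and is the natural one. The Leray spectral sequence for $f=h\circ g$ collapses as you say and yields $h_*(R^1g_*E)=0$; the point is then to upgrade this to $R^1g_*E=0$ by noting that $R^1g_*E$ is supported on the locus $F_g$ of positive-dimensional $g$-fibers, and that $h|_{F_g}$ is finite. This last step is exactly the mechanism the present paper isolates later as Lemma~\ref{lem_get_finite_map} (there stated for the image of the exceptional locus, but the content is identical). The deductions for $\mathscr{A}_f\to\mathscr{A}_h$ and exactness are routine once $R^1g_*E=0$ is known.

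Two small points you leave implicit. First, to conclude that $h|_{F_g}$ is \emph{proper} (hence finite) you need $g$ proper; this follows from cancellation once $Z$ is separated over $Y$, after which $g(X)$ is closed and $h|_{g(X)}$ is proper because $f$ is proper and $g\colon X\to g(X)$ is proper surjective. Second, your claim that fibers of $h$ have dimension $\le 1$ is literally true only on $g(X)$; but that is all you need, since both $g_*E$ and $R^1g_*E$ are supported there, so the vanishing of $R^2h_*(g_*E)$ and the finiteness argument go through unchanged.
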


\begin{LEM}\label{lem_g(O)_is_O}
	Let $f\colon X \to Y$ be a proper morphism with dimension of fibers bounded by one such that $Rf_* \oO_X = \oO_Y$. Let $f\colon X\xrightarrow{g} Z \xrightarrow{h} Y$ be a decomposition of $f$ with surjective $g$. If $Z$ is normal, then $Rg_* \oO_X = \oO_Z$ and $Rh_* \oO_Z = \oO_Y$.
\end{LEM}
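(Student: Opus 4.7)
The plan is to prove $Rg_*\oO_X=\oO_Z$ first and then deduce $Rh_*\oO_Z=\oO_Y$ formally by composition of derived pushforwards. I would begin by reducing to the sheaf $g_*\oO_X$. The hypothesis $Rf_*\oO_X=\oO_Y$ gives in particular $R^1f_*\oO_X=0$, and Proposition \ref{prop_g_is_exact} applied to $E=\oO_X$ then yields $R^1g_*\oO_X=0$. Since every fibre of $g$ is contained in the corresponding fibre of $f$, its dimension is at most one, so $R^{\ge 2}g_*\oO_X=0$ as well by cohomological dimension. Hence $Rg_*\oO_X\simeq g_*\oO_X$, a coherent $\oO_Z$-algebra.

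Next I would identify this algebra with $\oO_Z$ via the Stein factorisation $g=\pi\circ g'$, where $\pi\colon\widetilde Z\to Z$ is finite with $\pi_*\oO_{\widetilde Z}=g_*\oO_X$ and $g'_*\oO_X=\oO_{\widetilde Z}$. Surjectivity of $g$ and reducedness of $Z$ force the unit $\oO_Z\to g_*\oO_X$ to be injective, so $\pi$ is finite and surjective. In the birational setting in which this lemma is applied, $f$ is birational with fibre dimension at most one, so the chain $\dim X=\dim Y\le\dim Z\le\dim X$ forces $\dim Z=\dim X$ and makes both $g$ and $h$ birational; hence $\pi$ is finite and birational onto the normal algebraic space $Z$, and therefore an isomorphism by the usual integral-closure argument. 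This gives $g_*\oO_X=\oO_Z$.

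The second identity is then immediate:
\[
Rh_*\oO_Z=Rh_*(Rg_*\oO_X)=R(h\circ g)_*\oO_X=Rf_*\oO_X=\oO_Y.
\]
The main obstacle is the middle step: the identification $g_*\oO_X=\oO_Z$ genuinely uses that $g$ is generically an isomorphism, since otherwise the Stein cover need not collapse (a degree two cover of $\mathbb{P}^1$ would be a case in point); the argument therefore crucially combines the birational nature of $f$ with the normality of $Z$.
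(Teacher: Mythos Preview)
Your overall structure matches the paper's: first establish $R^{\ge 1}g_*\oO_X = 0$ via Proposition~\ref{prop_g_is_exact} and the fibre-dimension bound, then identify $g_*\oO_X$ with $\oO_Z$, then deduce $Rh_*\oO_Z=\oO_Y$ by composing derived pushforwards. The only substantive difference is in the middle step.

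The paper argues that the fibres of $f$ are connected (since $f_*\oO_X=\oO_Y$), hence so are those of $g$, and then invokes the standard fact that a proper surjective map with connected fibres onto a normal target has $g_*\oO_X=\oO_Z$. You instead run the Stein factorisation and collapse the finite part using birationality of $g$ and normality of $Z$.

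Your caveat is well taken: the statement as written does not assume $f$ birational, and without that (or some substitute) the conclusion $g_*\oO_X=\oO_Z$ can fail --- your own example of a double cover $\mathbb{P}^1\to\mathbb{P}^1$ with $Y$ a point satisfies all the stated hypotheses yet has $g_*\oO_X$ of rank two. The paper's deduction that fibres of $g$ are connected from those of $f$ is likewise not valid in that example. In the paper's applications $f$ is always birational, and there both arguments go through; your proof is correct for the lemma as it is used, and your explicit flagging of the needed hypothesis sharpens the paper's formulation.
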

\begin{proof}
	Since $f_*\oO_X = \oO_Y$, the fibers of $f$ are connected. Hence, so are the fibers of $h$ and $g$. Thus, as $Z$ is normal, $g_*(\oO_X) \simeq \oO_Z$ (cf. \cite[Lemma 4.1]{BodBon}). By Proposition \ref{prop_g_is_exact}, $R^1g_* \oO_X = 0$, i.e. $Rg_* \oO_X = \oO_Z$.
	Then: $Rh_*\oO_Z = Rh_* Rg_* \oO_X = Rf_* \oO_X = \oO_Y$.
\end{proof}

\subsection{$\Dec(f)$ and $\Dec(f)^\textrm{op}$-filtrations on the null-category}\label{ssec_exc_coll}

\begin{PROP}\label{prop_recoll_for_st_t-st_on_C_f}
	Let $f\colon X\to Y$ be a proper morphism with dimension of fibers bounded by one such that $Rf_*(\oO_X) = \oO_Y$. Let $f\colon X\xrightarrow{g} Z \xrightarrow{h} Y$ be a decomposition of $f$ with surjective $g$. Assume that algebraic space $Z$ is smooth and $X$ is Gorenstein. Then we have a recollement
	\[
	\xymatrix{\cC_g \ar[rr]|{\iota_*} && \cC_f \ar@<2ex>[ll]|{\iota^!} \ar@<-2ex>[ll]|{\iota^*}  \ar[rr]|{Rg_*} && \cC_h \ar@<-2ex>[ll]|{Lg^*} \ar@<2ex>[ll]|{g^!}}
	\]
\end{PROP}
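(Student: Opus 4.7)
The plan is to verify the three axioms (r1)--(r3) of recollement for the candidate diagram. The crucial preliminary input is Lemma \ref{lem_g(O)_is_O}: under our hypotheses ($Z$ smooth, hence normal, and $g$ surjective), it gives $Rg_*\oO_X \simeq \oO_Z$ and $Rh_*\oO_Z \simeq \oO_Y$. Combined with the projection formula this yields $Rg_* \circ Lg^* \simeq \Id_{\dD^b(Z)}$, and via the formula $g^!(-) \simeq Lg^*(-) \otimes \omega_g^\bcdot$ (with $\omega_g^\bcdot$ a line bundle in degree $0$ because $X$ is Gorenstein and $Z$ smooth) also $Rg_* \circ g^! \simeq \Id_{\dD^b(Z)}$. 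Consequently $Lg^*$ and $g^!$ are fully faithful on $\dD^b(Z)$, and the same argument as for $f$ at the start of Section \ref{sec_the_st_tr} shows that $\cC_g \subset \dD^b(X)$ is admissible with adjoints $\iota_g^*, \iota_g^!$ defined by triangles analogous to (\ref{eqtn_def_of_iota*}), (\ref{eqtn_def_of_iota!}).

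The next step is to check that every functor in the diagram restricts to the null categories. If $E \in \cC_g$ then $Rf_*E = Rh_*Rg_*E = 0$, so $\cC_g \subset \cC_f$ and $\iota_*$ is well defined. If $E \in \cC_f$ then $Rh_*(Rg_*E) = Rf_*E = 0$, so $Rg_*$ sends $\cC_f$ to $\cC_h$. If $F \in \cC_h$, then by the two isomorphisms above $Rf_*(Lg^*F) = Rh_*(Rg_*Lg^*F) = Rh_*F = 0$, and similarly $Rf_*(g^!F) = Rh_*F = 0$, so $Lg^*$ and $g^!$ map $\cC_h$ into $\cC_f$. The functors $\iota^*$ and $\iota^!$ are defined as the restrictions to $\cC_f$ of $\iota_g^*$ and $\iota_g^!$; applied to $E \in \cC_f$, each of the three terms in the defining triangles lies in $\cC_f$ (the first by the previous step, the middle term is $E$ itself), so the cone lands in $\cC_g$ as required.

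With these functors in place the axioms follow easily. Condition (r1) holds: $\iota_*$ is a fully faithful inclusion, and $Lg^*$, $g^!$ are fully faithful on $\dD^b(Z)$, hence on $\cC_h$, by the isomorphisms $Rg_* Lg^* \simeq \Id \simeq Rg_* g^!$. Condition (r2) is inherited from the adjunctions $(Lg^* \dashv Rg_* \dashv g^!)$ on the full derived categories and from $(\iota_g^* \dashv \iota_{g*} \dashv \iota_g^!)$, since all relevant images lie in the subcategories under consideration. For (r3), an object $E \in \cC_f$ satisfies $Rg_*E = 0$ if and only if $E \in \cC_g$, i.e.\ the kernel of $Rg_*\colon \cC_f \to \cC_h$ is exactly the essential image of $\iota_*$.

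The main substantive work is a single bookkeeping issue: the six functors live a priori on the ambient derived categories, and the content of the proposition is that they all descend compatibly to the null subcategories. The descent hinges on the two projection-type identities $Rg_*\oO_X \simeq \oO_Z$ and $Rg_* g^! \simeq \Id$, both ultimately consequences of Lemma \ref{lem_g(O)_is_O}. Once these are in hand, the admissibility of $\cC_g$ inside $\cC_f$ and the fully faithfulness of $Lg^*$, $g^!$ on $\cC_h$ are inherited from the corresponding statements on $\dD^b(X)$ and $\dD^b(Z)$.
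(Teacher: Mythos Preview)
Your proof is correct and follows essentially the same approach as the paper's own proof: invoke Lemma \ref{lem_g(O)_is_O} to obtain $Rg_*\oO_X \simeq \oO_Z$, deduce fully faithfulness of $Lg^*$ and $g^!$ via (\ref{eqtn_ff!=id}), check that all six functors restrict to the null categories, and verify the recollement axioms. The paper's version is more compressed but the logical structure is identical.
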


\begin{proof}
	Since $Z$ is smooth, $Lg^* \dD^b(Z) \subset \dD^b(X)$ and $g^!(-)\simeq Lg^*(-)\otimes g^!(\oO_Z)$. Since $X$ is Gorenstein, $g^!$ is a functor $\dD^b(Z) \to \dD^b(X)$. By Lemma \ref{lem_g(O)_is_O}, $Rg_*\oO_X \simeq \oO_Z$. It follows by (\ref{eqtn_ff!=id}) that functors $Lg^*$, $g^!$ are fully faithful, and so are their restrictions to $\cC_h$. Furthermore, $Rf_*Lg^*\cC_h \simeq Rh_*\cC_h \simeq 0$ and $Rf_*g^!\cC_h \simeq Rh_* \cC_h \simeq 0$, hence $Lg^*|_{\cC_h}$ and $g^!|_{\cC_h}$ take values in $\cC_f$. Triangles (\ref{eqtn_def_of_iota*}) and (\ref{eqtn_def_of_iota!}) prove existence of left and right adjoint functors to $\iota_*$. Finally, $\cC_g$ is the kernel of $Rg_*|_{\cC_f}$, hence the recollement.
\end{proof}

Let now $f\colon X \to Y$ be a relatively projective birational morphism of smooth algebraic spaces with dimension of fibers bounded by one. By Theorem \ref{thm_dec_f_poset_of_ideals}, we have a distributive lattice $\Dec(f)$ of smooth decompositions for $f$.

\begin{PROP}\label{prop_Conn(f)-filtration}
	Map $\textrm{Dec}(f) \to \textrm{Adm}(\cC_f)$, $(g,h) \mapsto \cC_g$, endows $\cC_f$ with a strict admissible  $\Dec(f)$-filtration.
\end{PROP}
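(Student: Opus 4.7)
My plan is to verify the axioms of a strict right admissible $\Dec(f)$-filtration directly, and then invoke Corollary \ref{cor_strict_left_is_admissible} to upgrade to a strict admissible one. Admissibility of each $\cC_g \subset \cC_f$ follows from Proposition \ref{prop_recoll_for_st_t-st_on_C_f} applied to the decomposition $f = h \circ g$ (using that $X$ is smooth hence Gorenstein, $Z$ smooth by hypothesis, and $Rg_*\oO_X = \oO_Z$ by Lemma \ref{lem_g(O)_is_O}). Order preservation is immediate: if $g_2 = \f \circ g_1$, then $Rg_{2*}E = R\f_* Rg_{1*}E$ vanishes whenever $Rg_{1*}E$ does. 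The boundary condition (Ri) is trivial since $\cC_{\Id_X} = 0$ and $\cC_f = \cC_f$.

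For any pair $g_1, g_2 \in \Dec(f)$, I would exploit diagram (\ref{eqtn_diag_cup_and_cap}) with $\xi = g_1 \cap g_2$, $\kappa \circ \xi = g_1 \cup g_2$, $g_i = \psi_i \circ \xi$, and $\kappa = \f_i \circ \psi_i$. The pivotal geometric input is Remark \ref{rem_disjoint_div}: $\Ex(\psi_1) \cap \Ex(\psi_2) = \emptyset$ in $Z_\cap$. For the intersection half of (Rii), given $E \in \cC_{g_1} \cap \cC_{g_2}$, put $F := R\xi_* E$; the vanishings $R\psi_{i*} F = 0$ show that $F$ is set-theoretically supported in $\Ex(\psi_1) \cap \Ex(\psi_2) = \emptyset$, forcing $F = 0$ and hence $E \in \cC_{g_1 \cap g_2}$. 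For strictness (iii), Proposition \ref{prop_recoll_for_st_t-st_on_C_f} applied to the decompositions $g_i = \psi_i \circ \xi$ and $g_1 \cup g_2 = \kappa \circ \xi$ identifies the Verdier quotients $\cC_{g_i}/\cC_{g_1 \cap g_2}$ with $\cC_{\psi_i}$ and $\cC_{g_1 \cup g_2}/\cC_{g_1 \cap g_2}$ with $\cC_\kappa$; inside $\cC_\kappa$ the subcategories $\cC_{\psi_1}, \cC_{\psi_2}$ have supports in the disjoint divisors $\Ex(\psi_1), \Ex(\psi_2) \subset Z_\cap$ and hence admit no nonzero Homs in either direction.

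The main technical obstacle is the perp half of (Rii): $\cC_{g_1 \cup g_2}^\perp = \cC_{g_1}^\perp \cap \cC_{g_2}^\perp$ inside $\cC_f$. The $\subset$ inclusion is formal from monotonicity. For the reverse, the recollement $\cC_{g_1} \to \cC_{g_1 \cup g_2} \to \cC_{\f_1}$ associated to $g_1 \cup g_2 = \f_1 \circ g_1$ (with the $\cC_{\f_1}$-piece embedded by $Lg_1^*$) reduces the claim to showing that $Lg_1^* A \in \cC_{g_2}$ for every $A \in \cC_{\f_1}$. The key computation is
\begin{equation*}
Rg_{2*} Lg_1^* A \;=\; R\psi_{2*} R\xi_* L\xi^* L\psi_1^* A \;=\; R\psi_{2*} L\psi_1^* A \;\simeq\; L\f_2^* R\f_{1*} A \;=\; 0,
\end{equation*}
where the second equality uses $R\xi_*\oO_X = \oO_{Z_\cap}$ (Lemma \ref{lem_g(O)_is_O}, as $Z_\cap$ is smooth hence normal) combined with the projection formula applied to the perfect complex $L\psi_1^* A$ on smooth $Z_\cap$; the isomorphism is flat base change along the Cartesian square $Z_\cap = Z_1 \times_{Z_\cup} Z_2$, valid because disjointness of $\f_1(\Ex(\f_1))$ and $\f_2(\Ex(\f_2))$ (Proposition \ref{prop_h_1_and_h_2_don_int}) makes the square Tor-independent (at each point of $Z_\cup$ at least one of $\f_1, \f_2$ is an isomorphism on a neighbourhood); and the final vanishing is $A \in \cC_{\f_1}$. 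Once (Ri), (Rii) and (iii) are all verified, Corollary \ref{cor_strict_left_is_admissible} delivers the strict admissible $\Dec(f)$-filtration.
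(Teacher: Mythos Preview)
Your proof is correct and follows essentially the same strategy as the paper's, with two cosmetic differences. First, you verify the axioms of a strict \emph{right} admissible $\Dec(f)$-filtration and then invoke Corollary \ref{cor_strict_left_is_admissible}, whereas the paper verifies the \emph{left} admissible axioms; the arguments are dual and use the same geometric input (Remark \ref{rem_disjoint_div}). Second, for the perp half of (Rii) you show directly that $\cC_{g_1\cup g_2}$ is generated by $\cC_{g_1}$ and $Lg_1^*\cC_{\f_1}\subset\cC_{g_2}$, which is a slightly cleaner reduction than the paper's identification of ${}^\perp\cC_g = Lg^*\cC_h$ and subsequent computation. Your justification of the base-change isomorphism $R\psi_{2*}L\psi_1^*\simeq L\f_2^*R\f_{1*}$ via Tor-independence (since at every point of $Z_\cup$ one of $\f_1,\f_2$ is locally an isomorphism, by Proposition \ref{prop_h_1_and_h_2_don_int}) is valid and is an alternative to the paper's Lemma \ref{lem_base_change_iso}, which proves the same isomorphism by an SOD argument.
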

\begin{proof}
	For $g \in \textrm{Dec}(f)$, category $\cC_g$ is an admissible subcategory of $\cC_f$. By Corollary \ref{cor_strict_left_is_admissible}, in order to check that $g\mapsto \cC_g$ is a strict admissible filtration it suffices to show that it is a strict left admissible $\Dec(f)$-filtration.
	
	We shall constantly use the notation of diagram (\ref{eqtn_diag_cup_and_cap}) with $g_1 \cap g_2 = \xi$.
	
	The minimal element in $\Dec(f)$ is $(\Id_X, f)$, the maximal one is $(f, \Id_Y)$. Condition (Li) of Section \ref{ssec_strict_poset_filt} is obviously satisfied. For a pair $g_1, g_2 \in \textrm{Dec}(f)$, clearly $\cC_{g_1\cap g_2} \subset \cC_{g_1} \cap \cC_{g_2}$. Now let $F\in \cC_{g_1} \cap \cC_{g_2}$. Then $F':=R(g_1\cap g_2)_* F$ lies in $\cC_{\psi_1}\cap \cC_{\psi_2}$. In particular, $F'$ is supported on $\Ex(\psi_1)\cap \Ex(\psi_2)$. It follows from Remark \ref{rem_disjoint_div} that $F'=0$, i.e. $F\in \cC_{g_1\cap g_2} $.
	
	For a decomposition $f=h\circ g$, the left orthogonal to $\cC_g$ in $\cC_f$ is $Lg^* \cC_h$. Thus, in order to check condition (Lii), it suffices to show that $L(g_1\cup g_2)^* \cC_{\tau} = Lg_1^* \cC_{h_1} \cap Lg_2^* \cC_{h_2}$, where $f= \tau \circ (g_1 \cup g_2)$. Since $L\f_1^* \cC_{\tau} \subset \cC_{h_1}$ and $L\f_2^* \cC_{\tau} \subset \cC_{h_2}$ , category $L(g_1\cup g_2)^* \cC_{\tau}$ is contained both in $Lg_1^* \cC_{h_1}$ and in $Lg_2^* \cC_{h_2}$. Let now $F$ be an object in $Lg_1^* \cC_{h_1} \cap Lg_2^* \cC_{h_2}$. Then $F\simeq L(g_1\cap g_2)^*R(g_1\cap g_2)_* F$, hence it is enough to show that $F':= R(g_1 \cap g_2)_* F$ is an object of $L\kappa^* \cC_{\tau}$. Since $F'$ lies in $L\psi_1^* \cC_{h_1} \cap L\psi_2^* \cC_{h_2}$, $F'\simeq L\psi_1^*R\psi_{1*}F' \simeq L\psi_2^*R\psi_{2*} L\psi_1^* R\psi_{1*} F'$. By Lemma \ref{lem_base_change_iso} below, the latter is isomorphic to $L\psi_2^*L\f_2^* R\f_{1*}R\psi_{1*}F' \simeq L\kappa^* R\kappa_* F'$, i.e. $F\in L\kappa^* \cC_{\tau}$.
		
	It remains to check that the filtration is strict. For $g_1, g_2$ in $\Dec(f)$, the quotient $\cC_{g_1 \cup g_2}/\cC_{g_1 \cap g_2}$ is equivalent to $\cC_{\kappa}$. Thus, the filtration is strict, if $\cC_{\psi_1}$ and $\cC_{\psi_2}$ are orthogonal subcategories of $\cC_{\kappa}$. By Remark \ref{rem_disjoint_div}, the support of every object in $\cC_{\psi_1}$ is disjoint from the support of every object in $\cC_{\psi_2}$. Hence, $\Hom(\cC_{\psi_1}, \cC_{\psi_2}) = 0 = \Hom(\cC_{\psi_2}, \cC_{\psi_1})$. 
\end{proof}

\begin{LEM}\label{lem_base_change_iso}
	In the notation of (\ref{eqtn_diag_cup_and_cap}) the base change $L\f_2^* R\f_{1*} \to R\psi_{2*} L\psi_1^*$ is an isomorphism of functors $\dD^b(Z_1) \to \dD^b(Z_2)$.
\end{LEM}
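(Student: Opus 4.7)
The plan is to reduce the base change isomorphism to situations where one of the horizontal arrows of the Cartesian square is an iso, exploiting the disjointness of exceptional images guaranteed by Remark \ref{rem_disjoint_div}. That remark (together with the construction of $Z_\cup$) yields $\f_1(\Ex(\f_1)) \cap \f_2(\Ex(\f_2)) = \emptyset$ in $Z_\cup$. Setting $U_1 := Z_\cup \setminus \f_1(\Ex(\f_1))$ and $U_2 := Z_\cup \setminus \f_2(\Ex(\f_2))$, we obtain an open cover $Z_\cup = U_1 \cup U_2$ on which $\f_1$, respectively $\f_2$, restricts to an isomorphism.

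Next, I would observe that verifying the base change map is an isomorphism is local on $Z_2$, hence on $Z_\cup$ via the open cover $\f_2^{-1}(U_1)$, $\f_2^{-1}(U_2)$. Because the open immersions $U_i \hookrightarrow Z_\cup$ are flat, flat base change commutes them past all four functors $L\f_i^*$, $R\f_{i*}$, $L\psi_i^*$, $R\psi_{i*}$; equivalently, pasting the open-immersion Cartesian squares with the original one shows that the restriction of the base change transformation to $\f_2^{-1}(U_i)$ is the base change transformation for the Cartesian square obtained by restricting to $U_i$. Thus it suffices to check the two restricted statements separately.

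Over $U_2$ the map $\f_2$ is an isomorphism, hence flat, and the assertion is the standard flat base change. Over $U_1$ the map $\f_1$ is an isomorphism, so its base change $\psi_2$ is an isomorphism as well; the restricted square becomes
\[
\xymatrix{\f_1^{-1}(U_1)\times_{U_1} \f_2^{-1}(U_1) \ar[r]^(0.72){\psi_2}_(0.72){\sim} \ar[d]_{\psi_1} & \f_2^{-1}(U_1) \ar[d]^{\f_2} \\ \f_1^{-1}(U_1) \ar[r]^(0.55){\f_1}_(0.55){\sim} & U_1}
\]
and both $L\f_2^* R\f_{1*}$ and $R\psi_{2*} L\psi_1^*$ collapse to the same pullback-then-pushforward composition through this pair of isomorphisms; the natural base change map is then visibly the identity. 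The main technical point to carry out carefully is the second step—the compatibility of the base change transformation with the pasting of Cartesian squares along open immersions—but this is routine bookkeeping with the adjunction units and counits and is the standard justification for such local reductions.
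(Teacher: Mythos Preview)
Your proof is correct but follows a genuinely different route from the paper's. The paper argues via the semi-orthogonal decomposition $\dD^b(Z_1) = \langle \cC_{\f_1}, L\f_1^*\dD^b(Z_\cup)\rangle$: on the piece $L\f_1^*\dD^b(Z_\cup)$ the base change is immediate, and on $\cC_{\f_1}$ both functors are shown to vanish separately (for $R\psi_{2*}L\psi_1^*$ this uses the disjointness to pin down the support and then that $\f_2$ is an isomorphism near that support). Your argument instead localises on $Z_2$ via the open cover $\f_2^{-1}(U_1)\cup\f_2^{-1}(U_2)$, so that over each piece one of $\f_1,\f_2$ is an isomorphism and flat base change applies directly. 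Your approach is somewhat more elementary and stays entirely within standard base-change bookkeeping; the paper's approach is more in the categorical spirit of the surrounding sections and makes the role of the null category $\cC_{\f_1}$ explicit. Both rely on the same geometric input $\f_1(\Ex(\f_1))\cap\f_2(\Ex(\f_2))=\emptyset$.
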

\begin{proof}
	First, note that both functors are zero on $\cC_{\f_1}$. For $L\f_2^* R\f_{1*}$ it is clear. Let now $E$ be an object of $\cC_{\f_1}$. Then $E$ is supported on $\Ex(\f_1)$. Since $\f_1(\Ex(\f_1)) \cap \f_2(\Ex(\f_2)) = \emptyset$, by Proposition \ref{prop_h_1_and_h_2_don_int}, and $Z_\cap$ is the fiber product of $\f_1$ and $\f_2$, we have 
	$\Ex(\psi_2) =\psi_1^{-1}(\Ex(\f_1))$. It follows that object $L\psi_1^*E$ is supported on $\Ex(\psi_2)$. Hence, the support of $R\psi_{2*}L\psi_1^*E$ is contained in $\psi_2(\Ex(\psi_2))$. As $\f_2$ is an isomorphism in a neighbourhood of $\psi_2(\Ex(\psi_2))$, vanishing $R\f_{2*}R\psi_{2*}L\psi_1^*E \simeq R\f_{1*}R\psi_{1*}L\psi_1^*E \simeq 0$ implies $R\psi_{2*}L\psi_1^*E \simeq 0$.
	
	It remains to check that the base change is an isomorphism on the orthogonal complement to $\cC_{\f_1} \subset \dD^b(Z_1)$, i.e. on $L\f_1^* \dD^b(Z_\cup)$, which is clear.
\end{proof}

\begin{PROP}\label{prop_right_dual_filt_on_C_f}
	Maps $\Dec(f)^{\textrm{op}} \to \textrm{Adm}(\cC_f)$, $(g,h)\mapsto Lg^* \cC_h$, $(g,h)\mapsto g^!\cC_h$ endow $\cC_f$ with strict admissible  $\Dec(f)^{\textrm{op}}$-filtrations which are right, respectively left, dual to the filtration of Proposition \ref{prop_Conn(f)-filtration}.
\end{PROP}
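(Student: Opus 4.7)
The plan is to reduce the statement to a combination of earlier results. By Proposition \ref{prop_Conn(f)-filtration}, the map $(g,h)\mapsto \cC_g$ is a strict admissible $\Dec(f)$-filtration of $\cC_f$, hence in particular both strict left and strict right admissible. Proposition \ref{prop_right_dual_filtr} applied to its right admissible structure produces the left dual, a strict left admissible $\Dec(f)^{\textrm{op}}$-filtration $(g,h)\mapsto \cC_g^\perp$; applied to its left admissible structure it produces the right dual, a strict right admissible $\Dec(f)^{\textrm{op}}$-filtration $(g,h)\mapsto {}^\perp \cC_g$, with strictness preserved throughout by that proposition.

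The remaining content is to identify these orthogonal complements with the subcategories $g^!\cC_h$ and $Lg^*\cC_h$ from the statement. For this I appeal to the recollement of Proposition \ref{prop_recoll_for_st_t-st_on_C_f}: for every $(g,h)\in\Dec(f)$ it exhibits $\cC_g$ as the closed piece of a recollement of $\cC_f$, with $Lg^*, g^!\colon \cC_h \to \cC_f$ as the two fully faithful embeddings of the open piece. The canonical pair of semi-orthogonal decompositions (\ref{eqtn_SOD_from_recoll}) attached to any recollement specialises in our setup to
\[
\cC_f = \langle \cC_g, Lg^*\cC_h\rangle = \langle g^!\cC_h, \cC_g\rangle,
\]
from which the identifications ${}^\perp \cC_g = Lg^*\cC_h$ and $\cC_g^\perp = g^!\cC_h$ follow by inspection. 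Functoriality in $(g,h)$ with respect to the reversed order on $\Dec(f)^{\textrm{op}}$, as well as the intersection and union conditions of (Lii) and (Rii), are inherited from the original filtration via the orthogonal construction of Proposition \ref{prop_right_dual_filtr}.

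The main technical step is the SOD identification just described; all of the content is funnelled through the recollement, which requires the smoothness and Gorenstein-ness hypotheses packaged into Proposition \ref{prop_recoll_for_st_t-st_on_C_f} together with the equality $Rg_*\oO_X \simeq \oO_Z$ supplied by Lemma \ref{lem_g(O)_is_O}. Once this geometric input is in place and the two SODs have been recorded, the rest is formal lattice bookkeeping. If one wishes to upgrade one-sided to two-sided admissibility of each individual $Lg^*\cC_h$ or $g^!\cC_h$ inside $\cC_f$, the adjoint functors required are precisely those already exhibited in the recollement, and Corollary \ref{cor_strict_left_is_admissible} then packages this into the strict admissible filtration structure.
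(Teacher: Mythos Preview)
Your overall strategy is the same as the paper's: identify ${}^\perp\cC_g = Lg^*\cC_h$ and $\cC_g^\perp = g^!\cC_h$ via the recollement SODs, invoke Proposition~\ref{prop_right_dual_filtr} for strictness and one-sided admissibility of the dual filtrations, then upgrade to two-sided admissibility via Corollary~\ref{cor_strict_left_is_admissible}. This is correct up until your last paragraph.

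The gap is in your claim that ``the adjoint functors required are precisely those already exhibited in the recollement.'' The recollement supplies the chain $Lg^* \dashv Rg_* \dashv g^!$, so the inclusion $Lg^*\cC_h \hookrightarrow \cC_f$ acquires a \emph{right} adjoint (namely $Lg^*Rg_*$) but not a left one; dually, $g^!\cC_h \hookrightarrow \cC_f$ acquires only a \emph{left} adjoint. To apply Corollary~\ref{cor_strict_left_is_admissible} you need each $Lg^*\cC_h$ (resp.\ $g^!\cC_h$) to be \emph{admissible}, i.e.\ to have the other adjoint too, and the recollement alone does not deliver this. The paper fills this by observing that, since $X$ is Gorenstein and $g^!(-)\simeq Lg^*(-)\otimes\omega_g$, the functor $Lg^*$ has the further left adjoint $Rg_*((-)\otimes\omega_g)$, whose kernel is $\cC_g\otimes\omega_g^{-1}$; this yields the additional SOD $\cC_f=\langle Lg^*\cC_h,\ \cC_g\otimes\omega_g^{-1}\rangle$ and hence left admissibility of $Lg^*\cC_h$. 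The $g^!$ case is handled symmetrically via $\cC_f=\langle \cC_g\otimes\omega_g,\ g^!\cC_h\rangle$. Once you insert this one extra SOD, your argument goes through and matches the paper's.
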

\begin{proof}
	As $\cC_g \otimes \omega_g^{-1}$ is the kernel of the functor $Rg_*(-\otimes \omega_g)$ left adjoint to $Lg^*$, category $\cC_f$ admits an SOD $\cC_f = \langle Lg^*\cC_h, \cC_g\otimes \omega_g^{-1}\rangle$, i.e. subcategory $Lg^*\cC_h \subset \cC_f$ is admissible.
	
	Since the $g \mapsto \cC_g$ filtration is strict admissible by Proposition \ref{prop_Conn(f)-filtration}, the filtration $g \mapsto Lg^*\cC_h$ is strict and left admissible (see Proposition \ref{prop_right_dual_filtr}). By Corollary \ref{cor_strict_left_is_admissible}, the map $g \mapsto Lg^* \cC_h$ yields a strict admissible $\Dec(f)^{\textrm{op}}$-filtration of $\cC_f$.
	
	The statement for the $g\mapsto g^!\cC_h$ filtration follows from the SOD $\cC_f = \langle\cC_g \otimes \omega_g, g^!\cC_h \rangle$.
\end{proof}

\begin{REM}\label{rem_filtr_on_D_X}
	One can extend lattice $\Dec(f)$ to a lattice $\Dec^+(f)$ by adding a new maximal element $1^+$. The $\Dec(f)$-filtration on $\cC_f$ described in Proposition \ref{prop_Conn(f)-filtration} can be extended to a strict admissible $\Dec^+(f)$-filtration on $\dD^b(X)$ by assigning the category $\dD^b(X)$ to $1^+$. Its right and left dual $\Dec^+(f)^\textrm{op}$-filtrations map $1^+$ to 0 and $g\colon X\to Z$ in $\Dec(f)$ to $Lg^*\dD^b(Z) \subset \dD^b(X)$, respectively $g^! \dD^b(Z)\subset \dD^b(X)$. 
\end{REM}
\subsection{The standard \tr e on the null-category is glued}

By Lemma \ref{lem_A_f_is_a_heart}, the standard \tr e on $\dD^b(X)$ restricts to the standard \tr e on $\cC_f$ with heart $\mathscr{A}_f$. We shall show that the \tr e on $\cC_f$ is glued via the $\Dec(f)$-filtration of Proposition \ref{prop_Conn(f)-filtration}. First, we show that there is a recollement of the hearts of standard \tr es under weaker assumptions on morphism $f$ and its decomposition.

\begin{LEM}\label{lem_g!_in_A_f}
	Let $f\colon X \to Y$ be a proper morphism with dimension of fibers bounded by 1 and $Rf_* \oO_X = \oO_Y$. Let $f\colon X\xrightarrow{g} Z \xrightarrow{h} Y$ be a decomposition of $f$ with surjective $g$ and normal $Z$. Then, for any $E\in \cC_h$ and any $k \in \mathbb{Z}$, sheaves $L^k g^*E$, $\hH^k g^! E$ lie in $\mathscr{A}_f$.
\end{LEM}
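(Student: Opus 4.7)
The plan is to reduce both claims to the vanishing $Rf_*(Lg^*E)=Rf_*(g^!E)=0$ at the derived level, and then descend this vanishing to every cohomology sheaf via the one-dimensional-fibre hypothesis.

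\emph{Step 1 (vanishing of $Rf_*$).} Lemma \ref{lem_g(O)_is_O} applies to $f=h\circ g$ and yields $Rg_*\oO_X\simeq \oO_Z$. The projection formula then gives
\[
Rg_* Lg^*E \;\simeq\; E\otimes^{L} Rg_*\oO_X \;\simeq\; E,
\]
so $Rf_*(Lg^*E)\simeq Rh_*Rg_*Lg^*E\simeq Rh_*E=0$. Similarly, applying Grothendieck duality to $\oO_X$,
\[
Rg_* g^!E \;\simeq\; Rg_* R\hHom_X(\oO_X, g^!E) \;\simeq\; R\hHom_Z(Rg_*\oO_X, E) \;\simeq\; E,
\]
whence $Rf_*(g^!E)\simeq Rh_*E=0$ as well.

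\emph{Step 2 (descent to cohomology sheaves).} Because the fibres of $f$ have dimension at most one, $R^pf_*=0$ for $p\ge 2$. For any one-sidedly bounded $F\in \dD(X)$ the convergent spectral sequence $R^pf_*\hH^q(F)\Rightarrow \hH^{p+q}(Rf_*F)$ then collapses to short exact sequences
\[
0\longrightarrow R^1f_*\hH^{i-1}(F)\longrightarrow \hH^i(Rf_*F)\longrightarrow f_*\hH^i(F)\longrightarrow 0.
\]
The complex $Lg^*E$ is bounded above, since $E\in \dD^b(Z)$ and $Lg^*$ is right $t$-exact; the complex $g^!E$ is bounded below, since $Rg_*$ has cohomological dimension at most one and hence by adjunction $g^!\dD^{\ge n}\subset \dD^{\ge n-1}$. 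In both situations $Rf_*F=0$ combined with the exact sequences above forces $f_*\hH^i(F)=R^1f_*\hH^{i-1}(F)=0$ for every $i$, i.e.\ every $\hH^i(F)\in \mathscr{A}_f$. Taking $F=Lg^*E$ and $F=g^!E$ yields the lemma.

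\emph{Main obstacle.} The crucial input is Lemma \ref{lem_g(O)_is_O}, which is precisely what turns the projection formula and Grothendieck duality into the clean identity $Rg_* Lg^*E\simeq E\simeq Rg_* g^!E$. Apart from this, the only technical point is ensuring one-sided boundedness of $g^!E$ so that the spectral sequence converges; this is handled by the cohomological dimension bound on $Rg_*$ that comes from the assumption on fibre dimensions.
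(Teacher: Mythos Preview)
Your proof is correct and follows essentially the same approach as the paper. Step~1 reproduces the content of formula~(\ref{eqtn_ff!=id}) applied to $g$ (which the paper invokes together with Lemma~\ref{lem_g(O)_is_O}), and Step~2 is a direct spectral-sequence argument for what the paper obtains by citing Lemma~\ref{lem_A_f_is_a_heart}; your added care about one-sided boundedness of $Lg^*E$ and $g^!E$ is not strictly needed here, since the two-row spectral sequence degenerates at $E_2$ regardless, but it does no harm.
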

\begin{proof} 
	The decomposition $Rf_* = Rh_*\circ Rg_*$ yields
	$Rf_* Lg^*(E) = Rh_* Rg_* Lg^*(E) = Rh_* (E) = 0$ and $Rf_* g^!(E) = Rh_* Rg_* g^!(E) = Rh_* (E) = 0$, by Lemma \ref{lem_g(O)_is_O} and (\ref{eqtn_ff!=id}). Hence, both $L^kg^* E$ and $\hH^k g^!\, E$ are objects of $\aA_f$, see Lemma \ref{lem_A_f_is_a_heart}.
\end{proof}

We say that a diagram of functors between abelian categories
\[
\xymatrix{\aA_0 \ar[r]|{i_*} & \aA \ar[r]|{j^*} \ar@<2ex>[l]|{i^!} \ar@<-2ex>[l]|{i^*}& \aA_1 \ar@<2ex>[l]|{j_*} \ar@<-2ex>[l]|{j_!}}
\]
is an \emph{abelian recollement} (cf. \cite{PsaVit}) if conditions (r1), (r2) and (r3) as in Subsection \ref{ssec_strict_poset_filt} are satisfied. Note that hearts of glued \tr es for a triangulated recollement (\ref{eqtn_recollement}) comprise an abelian recollement. However, not all abelian recollements come from triangulated ones.

\begin{PROP}\label{prop_abelian_recollement}
	Let $f\colon X \xrightarrow{g} Z \xrightarrow{h} Y$ be as in Lemma \ref{lem_g!_in_A_f}. Categories $\mathscr{A}_g$, $\mathscr{A}_f$ and $\mathscr{A}_h$ fit into an abelian recollement
	\begin{equation}\label{eqtn_abel_recoll}
	\xymatrix{\mathscr{A}_g \ar[rr]|{i_*}& & \mathscr{A}_f \ar[rr]|{g_*} \ar@<2ex>[ll]|{i^!} \ar@<-2ex>[ll]|{i^*} && \mathscr{A}_h \ar@<2ex>[ll]|{\hH^0g^!} \ar@<-2ex>[ll]|{g^*}}
	\end{equation}
	where $i^*:= \hH^0 \iota_{g}^*|_{\mathscr{A}_f}$ and $i^!:= \hH^0 \iota_g^!|_{\mathscr{A}_f}$, for functors $\iota_g^* \colon \dD^-(X) \to \cC_g^-$, $\iota_g^! \colon \dD^+(X) \to \cC_g^+$ defined as in (\ref{eqtn_def_of_iota*}), (\ref{eqtn_def_of_iota!}).
\end{PROP}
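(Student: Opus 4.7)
My plan proceeds in four stages: well-definedness of each functor in diagram (\ref{eqtn_abel_recoll}); the kernel condition (r3); the fundamental unit/counit identities yielding fully faithfulness (r1) for $g^{*}$ and $\hH^{0} g^{!}$; and the adjunctions (r2).

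For well-definedness, exactness of $g_{*} \colon \mathscr{A}_{f} \to \mathscr{A}_{h}$ is Proposition \ref{prop_g_is_exact}; the $k=0$ case of Lemma \ref{lem_g!_in_A_f} places $g^{*} F$ and $\hH^{0} g^{!} F$ in $\mathscr{A}_{f}$ for $F \in \mathscr{A}_{h}$; and $\mathscr{A}_{g} \subseteq \mathscr{A}_{f}$ follows from $Rf_{*} = Rh_{*} \circ Rg_{*}$ (Lemma \ref{lem_g(O)_is_O}). I define $i^{*}$ and $i^{!}$ by restricting the triangles (\ref{eqtn_def_of_iota*}) and (\ref{eqtn_def_of_iota!}) (with $f$ replaced by $g$) to $E \in \mathscr{A}_{f}$ and taking $\hH^{0}$. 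Condition (r3) is then immediate: Proposition \ref{prop_g_is_exact} gives $R^{1} g_{*} E = 0$ for $E \in \mathscr{A}_{f}$, so $g_{*} E = Rg_{*} E$ and $g_{*} E = 0 \iff E \in \cC_{g} \cap \Coh(X) = \mathscr{A}_{g}$.

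The heart of the proof is the pair of identities $g_{*} g^{*} F \simeq F$ and $g_{*} \hH^{0} g^{!} F \simeq F$ for $F \in \mathscr{A}_{h}$, together with the higher vanishings $g_{*} L^{k} g^{*} F = 0$ for $k > 0$ and $g_{*} \hH^{q} g^{!} F = 0$ for $q \neq 0$. All four come from a single device: the Grothendieck spectral sequence $E_{2}^{p,q} = R^{p} g_{*} \hH^{q} K \Rightarrow \hH^{p+q}(Rg_{*} K)$ applied to $K = Lg^{*} F$ and $K = g^{!} F$. In both cases $Rg_{*} K = F$ (by projection formula combined with $Rg_{*} \oO_{X} = \oO_{Z}$ when $K = Lg^{*}F$, and by the derived adjunction counit when $K = g^{!} F$). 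Three ingredients collapse the spectral sequence to its $p = 0$ column: fibres of $g$ have dimension $\leq 1$, so $R^{p} g_{*} = 0$ for $p \geq 2$; Lemma \ref{lem_g!_in_A_f} places every $\hH^{q} K$ in $\mathscr{A}_{f}$; and Proposition \ref{prop_g_is_exact} then gives $R^{1} g_{*} \hH^{q} K = 0$. Hence $g_{*} \hH^{q} K = \hH^{q} F$, which equals $F$ for $q=0$ and vanishes otherwise. This delivers (r1) for $g^{*}$ and $\hH^{0} g^{!}$ via the unit and counit being isomorphisms, and simultaneously yields the auxiliary fact $\hH^{q} g^{!} F \in \mathscr{A}_{g}$ for $q \neq 0$.

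For the adjunctions (r2), $g^{*} \dashv g_{*}$ is the standard coherent-sheaf adjunction, and $i^{*} \dashv i_{*} \dashv i^{!}$ follows formally from the defining triangles together with the fact that $g_{*}G = 0$ for $G \in \mathscr{A}_{g}$. The adjunction $g_{*} \dashv \hH^{0} g^{!}$ begins from the derived adjunction
\[
\Hom_{\mathscr{A}_{h}}(g_{*} E, F) = \Hom_{\dD^{b}(Z)}(Rg_{*} E, F) = \Hom_{\dD^{b}(X)}(E, g^{!} F), \qquad E \in \mathscr{A}_{f},
\]
so the remaining step is the identification $\Hom_{\dD^{b}(X)}(E, g^{!} F) = \Hom_{X}(E, \hH^{0} g^{!} F)$. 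I expect this to be the main technical obstacle. Via the spectral sequence $\Ext^{p}_{X}(E, \hH^{q} g^{!} F) \Rightarrow \Hom^{p+q}_{\dD^{b}(X)}(E, g^{!} F)$, contributions to total degree zero arise from $p = -q \geq 0$: the $q > 0$ terms vanish automatically, while the $q < 0$ terms must be killed using the just-established $\hH^{q} g^{!} F \in \mathscr{A}_{g}$ (hence supported on $\Ex(g)$) together with a suitable orthogonality statement between $\mathscr{A}_{g}$ and the part of $\mathscr{A}_{f}$ accessible to $E$.
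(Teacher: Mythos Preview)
Your approach is essentially the paper's, just more elaborately spelled out (your spectral sequence for $g_*\hH^q K = \hH^q F$ is exactly the statement that $Rg_*$ is $t$-exact on $\cC_f$, which is how the paper phrases it). The one place you go astray is the final step, which you flag as ``the main technical obstacle'': it is not an obstacle at all, and the orthogonality statement you reach for is unnecessary.

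The point you are missing is that $g^!|_{\cC_h}$ is \emph{left $t$-exact} for the standard \tr es. This is formal: $Rg_*|_{\cC_f}$ is right $t$-exact (indeed $t$-exact, by Proposition~\ref{prop_g_is_exact}), and the right adjoint of a right $t$-exact functor is always left $t$-exact. Hence for $F\in\mathscr{A}_h$ one has $g^!F\in\cC_f^{\gge 0}$, i.e.\ $\hH^q g^!F=0$ for $q<0$ --- not merely $\hH^q g^!F\in\mathscr{A}_g$ as you concluded. With this in hand, for $E\in\mathscr{A}_f$ the truncation triangle $\hH^0 g^!F \to g^!F \to \tau^{\gge 1}g^!F$ and the axiom $\Hom(\dD^{\lle 0},\dD^{\gge 1})=0$ give $\Hom_{\dD(X)}(E,g^!F)=\Hom_X(E,\hH^0 g^!F)$ directly; no spectral sequence, no support argument. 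This is the content of the paper's ``by construction'': a $t$-exact functor between hearts automatically has $\hH^0$ of the derived right adjoint as its abelian right adjoint.
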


\begin{proof}
	By construction, functors $(i^* \dashv i_* \dashv i^!)$ and $(g^*\dashv g_* \dashv \hH^0 g^!)$ are adjoint. $\mathscr{A}_g\subset \mathscr{A}_f$ and, by Proposition \ref{prop_g_is_exact}, $g_*|_{\mathscr{A}_f} = Rg_*|_{\mathscr{A}_f}$, hence  $\mathscr{A}_g$ is the kernel of $g_*$. By Lemma \ref{lem_g(O)_is_O}, $Rg_* \oO_X \simeq \oO_Z$, i.e. formula (\ref{eqtn_ff!=id}) yields $Rg_*Lg^* \simeq \Id_{\dD^b(Z)} \simeq Rg_*g^!$. By Lemma \ref{lem_g!_in_A_f}, both $g^*$ and $\hH^0 g^!$ map $\mathscr{A}_h$ to $\mathscr{A}_f$. 
	Since $Rg_*$ is $t$-exact on $\cC_f$ by Proposition \ref{prop_g_is_exact}, we have $g_* g^* \simeq \textrm{Id}_{\aA_h} \simeq g_* \hH^0g^!$, i.e. functors $g^*$ and $\hH^0 g^!$ are fully faithful.
\end{proof}

If we further require that $Z$ is smooth and $X$ is Gorenstein then the standard \tr e on $\cC_f$ is glued:

\begin{LEM}\label{lem_st_is_glued}
	Let $f\colon X \xrightarrow{g} Z \xrightarrow{h}Y$ be as in Lemma \ref{lem_g!_in_A_f}. Assume that algebraic space $Z$ is smooth and $X$ is Gorenstein. Then the standard \tr e on $\cC_f$ is glued from the standard \tr es on $\cC_g$ and $\cC_h$ via recollement of Proposition \ref{prop_recoll_for_st_t-st_on_C_f}.
\end{LEM}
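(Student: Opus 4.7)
The plan is to show that the standard $t$-structure on $\cC_f$ and the $t$-structure glued from the standard $t$-structures on $\cC_g$ and $\cC_h$ via the recollement of Proposition \ref{prop_recoll_for_st_t-st_on_C_f} have the same heart, and conclude by uniqueness of a bounded $t$-structure with a prescribed heart. By definition, the glued $t$-structure has aisles
\[
\cC_f^{\lle 0,\,gl} = \{E \in \cC_f \mid Rg_*E \in \cC_h^{\lle 0},\; \iota^*E \in \cC_g^{\lle 0}\},\qquad \cC_f^{\gge 0,\,gl} = \{E \in \cC_f \mid Rg_*E \in \cC_h^{\gge 0},\; \iota^!E \in \cC_g^{\gge 0}\},
\]
so its heart $\hH$ consists of $E \in \cC_f$ with $Rg_*E \in \mathscr{A}_h$, $\iota^*E \in \cC_g^{\lle 0}$ and $\iota^!E \in \cC_g^{\gge 0}$. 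I would prove $\hH = \mathscr{A}_f$ by double inclusion.

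For $\mathscr{A}_f \subset \hH$, take $E \in \mathscr{A}_f$. By Proposition \ref{prop_g_is_exact}, $Rg_*E = g_*E \in \mathscr{A}_h$. The triangles (\ref{eqtn_def_of_iota*}) and (\ref{eqtn_def_of_iota!}) read
\[
Lg^*Rg_*E \to E \to \iota_*\iota^*E,\qquad \iota_*\iota^!E \to E \to g^!Rg_*E.
\]
Here $Rg_*E \in \mathscr{A}_h$ is a sheaf, so $Lg^*Rg_*E$ lies in nonpositive degrees and $g^!Rg_*E$ in nonnegative degrees — note that each cohomology sheaf even lies in $\mathscr{A}_f$ by Lemma \ref{lem_g!_in_A_f}, though here only the amplitudes matter. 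Combined with $E \in \mathscr{A}_f$ concentrated in degree $0$, the long exact sequences force $\iota_*\iota^*E$ to sit in nonpositive degrees and $\iota_*\iota^!E$ in nonnegative degrees, hence $\iota^*E \in \cC_g^{\lle 0}$ and $\iota^!E \in \cC_g^{\gge 0}$ by Lemma \ref{lem_A_f_is_a_heart}. So $E \in \hH$.

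For the reverse inclusion $\hH \subset \mathscr{A}_f$, take $E \in \hH$. In the triangle $Lg^*Rg_*E \to E \to \iota_*\iota^*E$, both outer terms now lie in $\dD^b(X)^{\lle 0}$: the leftmost because $Rg_*E \in \mathscr{A}_h$ and $Lg^*$ is right $t$-exact, the rightmost because $\iota^*E \in \cC_g^{\lle 0}$ and $\iota_*$ is $t$-exact by Lemma \ref{lem_A_f_is_a_heart}. Hence $E \in \dD^b(X)^{\lle 0}$. Symmetrically, the triangle $\iota_*\iota^!E \to E \to g^!Rg_*E$ places $E$ in $\dD^b(X)^{\gge 0}$. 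Therefore $E$ is a sheaf, and since $E \in \cC_f$ it lies in $\mathscr{A}_f$.

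Having shown $\hH = \mathscr{A}_f$, both $t$-structures are bounded (the standard one trivially, the glued one because it is glued from bounded $t$-structures on $\cC_g$ and $\cC_h$ via Lemma \ref{lem_A_f_is_a_heart}) and share a heart, so they coincide. The only point requiring a little care — and the main obstacle I anticipate — is the verification that $\iota^*E \in \cC_g^{\lle 0}$ and $\iota^!E \in \cC_g^{\gge 0}$ for $E \in \mathscr{A}_f$, which is where the smoothness of $Z$ and Gorensteinness of $X$ enter (via Lemma \ref{lem_g!_in_A_f}, to control the cohomological amplitudes of $Lg^*$ and $g^!$).
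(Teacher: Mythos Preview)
Your argument is correct, but the paper takes a much shorter route. Recall that the glued \tr e via a recollement (\ref{eqtn_recollement}) is characterised as the \emph{unique} \tr e on $\dD$ for which both $i_*$ and $j^*$ are $t$-exact (this is how \cite{BBD} defines it). In the recollement of Proposition \ref{prop_recoll_for_st_t-st_on_C_f}, the role of $i_*$ is played by the inclusion $\iota_*\colon \cC_g \to \cC_f$, which is trivially $t$-exact because the standard \tr es on $\cC_g$ and $\cC_f$ are both restrictions of the same standard \tr e on $\dD^b(X)$; the role of $j^*$ is played by $Rg_*\colon \cC_f\to \cC_h$, which is $t$-exact by Proposition \ref{prop_g_is_exact}. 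That is the whole proof.

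Your heart-comparison argument recovers the same conclusion, at the cost of chasing both adjunction triangles. One small imprecision: you assert that $g^!Rg_*E$ lies in nonnegative degrees and cite Lemma \ref{lem_g!_in_A_f}, but that lemma only says the cohomology sheaves of $g^!$ applied to an object of $\cC_h$ land in $\mathscr{A}_f$ --- it gives no amplitude bound. The correct justification is that $g^!\colon \cC_h \to \cC_f$ is right adjoint to $Rg_*\colon \cC_f\to \cC_h$, and the latter is (right) $t$-exact by Proposition \ref{prop_g_is_exact}, so $g^!$ is left $t$-exact. Notice that once you have this, you are already holding exactly the two $t$-exactness statements the paper uses, so the detour through hearts and the uniqueness of bounded \tr es with a given heart becomes unnecessary.
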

\begin{proof}
	Since the \tr es on $\cC_g$ and $\cC_f$ are obtained by restriction from $\dD^b(X)$, the embedding $\iota_*$ is $t$-exact. The $t$-exactness of $Rg_*$ follows from Proposition \ref{prop_g_is_exact}. 
	Hence, the standard \tr e is glued.
\end{proof}

Let $f\colon X\to Y$ be a relatively projective birational morphism of smooth algebraic spaces with dimension of fibers bounded by one. Morphism $g\in \Conn(f)$ admits a decomposition $g\colon X\xrightarrow{g'}Z' \xrightarrow{s} Z$, for the blow-up $s$ of $Z$ along $\bB_g$. Since $\Irr(g')=\Irr(g)\setminus \Irr(\bB_g)$, element $g'$ is maximal among all elements in $\Dec(f)$ strictly smaller than $g$. It follows that $\cC_{g'}$ is the category $\cC_{<\{g\}}$ as in Section \ref{ssec_gluin_of_tr} and 
$$
\cC_g^o:=\cC_g/\cC_{<\{g\}}
$$
is equivalent to $\cC_{s}$, see Proposition \ref{prop_recoll_for_st_t-st_on_C_f}. In particular, $\cC_g^o$ is endowed with the \tr e transported from the standard \tr e on $\cC_{s}$. 

Morphism $s$ fits into a diagram:
\begin{equation}\label{eqtn_diagram_single_blow-up}
\xymatrix{E_g \ar[r]^{\zeta} \ar[d]_{p} & Z' \ar[d]^{s} \\ \bB_g \ar[r]^{j} & Z.}
\end{equation}
Then $E_g \simeq \mathbb{P}(N_{Z/\bB_g})$ is the projectivisation of the normal bundle and $p$ is the projection. Define
\begin{equation}\label{eqtn_theta} 
\theta_g(-):=R\zeta_{*}Lp^*(-) \otimes \oO_{Z'}(E_g) \colon \dD^b(\bB_g) \to \dD^b(Z').
\end{equation} 
By \cite[Theorem 4.3]{Orl}, $\theta_g$ induces an equivalence $\dD^b(\bB_g) \xrightarrow{\simeq} \cC_{s}$.
\begin{LEM}\label{lem_stn_t-str_on_W_and_C_f}
	Functor $\theta_g$, (\ref{eqtn_theta}), is $t$-exact when both $\dD^b(\bB_g)$ and $\cC_{s}$ are endowed with the standard \tr es.
\end{LEM}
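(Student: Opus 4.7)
My plan is to reduce $t$-exactness to exactness at the level of hearts. By Lemma~\ref{lem_A_f_is_a_heart}, the standard \tr e on $\cC_{s}$ is the restriction of the one on $\dD^b(Z')$, with heart $\mathscr{A}_{s} = \cC_{s} \cap \Coh(Z')$. So it will be enough to show that $\theta_g$ restricts to an exact functor $\Coh(\bB_g) \to \mathscr{A}_{s}$: for then $\hH^i(\theta_g(F^\bullet)) \simeq \theta_g(\hH^i F^\bullet) \in \mathscr{A}_{s}$ for any $F^\bullet \in \dD^b(\bB_g)$, which yields $t$-exactness.

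First I will observe that each constituent of $\theta_g$ is exact at the sheaf level. By Danilov's theorem (Theorem~\ref{thm_Danilov}), $s$ is the blow-up of $Z$ along the smooth codimension-$2$ subspace $\bB_g$, so $p\colon E_g \to \bB_g$ is a $\mathbb{P}^1$-bundle, in particular flat, and $Lp^* = p^*$ is exact. The closed immersion $\zeta$ is affine, hence $R\zeta_* = \zeta_*$ is exact on $\Coh$. Tensoring with the line bundle $\oO_{Z'}(E_g)$ is exact. Therefore $\theta_g|_{\Coh(\bB_g)}\colon \Coh(\bB_g) \to \Coh(Z')$ is an exact functor.

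Next I will verify that its image lies in $\mathscr{A}_{s}$ by computing $Rs_*\theta_g(F) $ for $F\in \Coh(\bB_g)$. Using the commutativity $s\zeta = jp$ of diagram \eqref{eqtn_diagram_single_blow-up} and the projection formula,
$$
Rs_*\theta_g(F) \;\simeq\; R(jp)_*\bigl(p^*F \otimes \zeta^*\oO_{Z'}(E_g)\bigr) \;\simeq\; Rj_*\bigl(F \otimes Rp_*\oO_{E_g}(E_g)\bigr).
$$
Under the identification $E_g \simeq \mathbb{P}(N_{\bB_g/Z})$, the line bundle $\oO_{E_g}(E_g)$ is the relative tautological bundle $\oO_p(-1)$ of the $\mathbb{P}^1$-bundle $p$, and $Rp_*\oO_p(-1)=0$. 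Hence the right-hand side vanishes, so $\theta_g(F) \in \mathscr{A}_{s}$, completing the reduction from the first paragraph.

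No step looks like a genuine obstacle: the only non-bookkeeping input is the standard identification $\oO_{E_g}(E_g) \simeq \oO_p(-1)$ for the exceptional divisor of the blow-up of a smooth codimension-$2$ centre together with the cohomological vanishing $Rp_*\oO_p(-1)=0$, both of which are classical.
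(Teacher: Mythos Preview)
Your proof is correct and follows essentially the same approach as the paper: both arguments observe that $\theta_g$ is the composite of $Lp^*$ for the flat map $p$, $R\zeta_*$ for the closed embedding $\zeta$, and a line-bundle twist, each of which is $t$-exact for the standard \tr es. Your additional verification that $Rs_*\theta_g(F)=0$ (so that the image lands in $\mathscr{A}_s$) is fine but redundant: the paper already records, immediately before the lemma, that $\theta_g$ induces an equivalence $\dD^b(\bB_g)\xrightarrow{\simeq}\cC_s$ by Orlov's theorem, and Lemma~\ref{lem_A_f_is_a_heart} then gives $\theta_g(\Coh(\bB_g))\subset \cC_s\cap\Coh(Z')=\mathscr{A}_s$ automatically.
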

\begin{proof}
	We use the notation of (\ref{eqtn_diagram_single_blow-up}). Functor $\theta_g$ is the composite of $Lp^*$, for $p$ flat, $R\zeta_*$, for a closed embedding $\zeta$, and tensor product with a line bundle, hence it is $t$-exact.
\end{proof}

It follows that category $\cC_g^o$, associated to a join-prime element of $\Dec(f)$, i.e. $g\in \Conn(f)$, is equivalent to $\dD^b(\bB_g)$.

\begin{PROP}
	The standard \tr e on $\cC_f$ is glued via the strict $\Dec(f)$-filtration of Proposition \ref{prop_Conn(f)-filtration} from the standard \tr es on $\dD^b(\bB_g)$, for all $g\in \Conn(f)$.
\end{PROP}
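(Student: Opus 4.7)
The plan is to reduce the statement to iterated two-step gluing via Lemma \ref{lem_st_is_glued} by choosing a suitable linear refinement of $\Conn(f)$ and invoking the full-order case of Theorem \ref{thm_gluing_via_poset} together with Remark \ref{rem_full_order}. Fix an ordering $c_1,\ldots,c_n$ of $\Conn(f)$ compatible with its poset structure, so that $c_j\prec c_i$ forces $j<i$. Under Birkhoff's identification $\Dec(f)\simeq I(\Conn(f))$ this yields a chain of lower ideals $g_i := \{c_1,\ldots,c_i\}$ in $\Dec(f)$, realised as birational morphisms $g_i \colon X\to Z_i$ with $g_i = \varphi_i \circ g_{i-1}$ for a birational $\varphi_i \colon Z_{i-1}\to Z_i$. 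Since $Rf_*\oO_X=\oO_Y$, Lemma \ref{lem_g(O)_is_O} gives $Rg_{i,*}\oO_X=\oO_{Z_i}$, and Proposition \ref{prop_recoll_for_st_t-st_on_C_f} produces a recollement $\cC_{g_{i-1}} \subset \cC_{g_i} \to \cC_{\varphi_i}$ inside $\cC_f$.

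Next I would identify each subquotient $\cC_{g_i}/\cC_{g_{i-1}}$ with $\dD^b(\bB_{c_i})$. By the strict admissibility of the $\Dec(f)$-filtration (Proposition \ref{prop_Conn(f)-filtration}) combined with Proposition \ref{prop_strict_right_admissible_filtr}, one has $\cC_{g_i}/\cC_{g_{i-1}} \simeq \cC_{c_i}/\cC_{c_i \cap g_{i-1}}$. Compatibility of the chosen full order with the poset order forces $c_i\cap g_{i-1} = I_{<\{c_i\}}$, so $\cC_{g_i}/\cC_{g_{i-1}} \simeq \cC_{c_i}^o$; combined with the t-exact equivalence $\theta_{c_i}\colon \dD^b(\bB_{c_i}) \xrightarrow{\sim} \cC_{c_i}^o$ from Lemma \ref{lem_stn_t-str_on_W_and_C_f}, this gives the desired identification at the level of t-structures.

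Finally I would induct on $i$ to prove that the standard t-structure on $\cC_{g_i}$ is obtained by iteratively gluing the standard t-structures on $\dD^b(\bB_{c_1}),\ldots,\dD^b(\bB_{c_i})$ along the chain $0\subset \cC_{g_1}\subset\cdots\subset \cC_{g_i}$. The inductive step applies Lemma \ref{lem_st_is_glued} to $g_i = \varphi_i \circ g_{i-1}$, whose hypotheses hold because $X$ is smooth (hence Gorenstein), $Z_{i-1}$ is smooth, and the fibres of $g_i$ have dimension at most one. Taking $i=n$, the standard t-structure on $\cC_f$ is realised as the iterated gluing along our chain, which by the full-order case proof of Theorem \ref{thm_gluing_via_poset} is the unique such system, and by Remark \ref{rem_full_order} coincides with the t-structure glued via the entire $\Dec(f)$-filtration. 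The main non-formal step is the identification $\cC_{g_i}/\cC_{g_{i-1}} \simeq \cC_{c_i}^o$, which rests on the strictness established in Proposition \ref{prop_Conn(f)-filtration}; the rest is assembled from lemmas already at hand.
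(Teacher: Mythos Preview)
Your approach is correct and uses the same two key ingredients as the paper (Lemma~\ref{lem_st_is_glued} and Lemma~\ref{lem_stn_t-str_on_W_and_C_f}), but it takes a detour that the paper avoids. You fix one linear refinement of $\Conn(f)$, verify gluing along that chain, and then invoke Remark~\ref{rem_full_order} to pass from the chain to the whole $\Dec(f)$-filtration; this forces you to identify each subquotient $\cC_{g_i}/\cC_{g_{i-1}}$ with $\cC_{c_i}^o$ via strictness (Proposition~\ref{prop_strict_right_admissible_filtr}). The paper instead observes that Lemma~\ref{lem_st_is_glued} already gives the gluing property for \emph{every} triple $g_1\preceq g_2\preceq g_3$ in $\Dec(f)$, i.e.\ the system of standard \tr es on all $\cC_{[g_1,g_2]}\simeq\cC_{\f}$ directly satisfies the characterising property in Theorem~\ref{thm_gluing_via_poset}; the uniqueness clause then finishes at once. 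This saves the subquotient identification entirely.

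One small point you gloss over: the identification $\cC_{g_i}/\cC_{g_{i-1}}\simeq\cC_{c_i}^o$ from Proposition~\ref{prop_strict_right_admissible_filtr} is a priori only an equivalence of triangulated categories, and you need it to match the standard \tr es on $\cC_{\varphi_i}$ and on $\cC_{s_i}$. This is true --- the equivalence is realised by inclusion $\cC_{c_i}\hookrightarrow\cC_{g_i}$ followed by $R(g_{i-1})_*$, and both are $t$-exact for the standard \tr es by Proposition~\ref{prop_g_is_exact} (or equivalently by the $t$-exactness of $\iota_*$ and $Rg_*$ already used in Lemma~\ref{lem_st_is_glued}) --- but it deserves a sentence. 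Once said, your argument is complete; the paper's route simply makes this step unnecessary.
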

\begin{proof}
	By Lemma \ref{lem_stn_t-str_on_W_and_C_f}, the standard \tr e on $\dD^b(\bB_g)$ corresponds under $\theta_g$ to the standard \tr e on $\cC_{s}$. 
	By Lemma \ref{lem_st_is_glued}, for any $g_1\prec g_2\in \Dec(f)$ with $g_2=\f \circ g_1$ and a decomposition $\f = \f_1\circ \f_2$, the \tr e on $\cC_\f$ is glued via the recollement with respect to subcategory $\cC_{\f_2}$ from the standard \tr es on $\cC_{\f_2}$ and $\cC_{\f_1}$. Hence, conditions on the glued \tr e of Theorem \ref{thm_gluing_via_poset} are satisfied.
\end{proof}

\section{\textbf{Tilting relative generators}}\label{sec_local_gen}

We shall describe a \tr e on $\cC_f$ glued from the standard \tr es on $\dD^b(\bB_g)$, for $g\in \Conn(f)$, via the left dual $\Dec(f)^{\textrm{op}}$-filtration of Proposition \ref{prop_right_dual_filt_on_C_f}. We shall show that this \tr e is induced by a tilting relative  generator over $Y$. First we discuss this notion and the resulted \tr e.

\subsection{Generators in stacks of subcategories in $\dD^b(X)$}

For a quasi-compact, quasi-separated algebraic space, we denote by $\dD(X)$ the unbounded derived category of $\oO_X$-modules with quasi-coherent cohomology. We consider algebraic spaces with their etale topology. By abuse of terminology we will speak about open subsets meaning open sets in the etale topology.

Let $f\colon X \to Y$ be a proper morphism of quasi-compact, quasi-separated algebraic spaces. A \emph{stack $\mathfrak{C}$ of subcategories of} $\dD(X)$ over $Y$ is the data of a full triangulated subcategory $\cC_U \subset \dD(f^{-1}(U))$, for any open $U\subset Y$, satisfying conditions:
\begin{itemize}
	\item[(S1)] for any open embedding $U_1\subset U$ and $E \in \cC_U$, the restriction $E|_{f^{-1}(U_1)}$ lies in $\cC_{U_1}$,
	\item[(S2)] for any open $U\subset Y$, any $\eE\in \Perf(U)$ and $E \in \cC_U$, object $Lf^*\eE\otimes E$ is in $\cC_U$,
	\item[(S3)] for any open $U\subset Y$, an object $E\in \dD(f^{-1}(U))$ is in $\cC_U$ if there exists an open covering $U = \bigcup_{i\in I} U_i$ such that $E|_{f^{-1}(U_i)} \in \cC_{U_i}$, for all $i\in I$. 
\end{itemize} 

We say that $T \in \cC_Y$ is \emph{tilting over $Y$} if $\mathscr{A}_T:= Rf_* R\hHom_X(T,T)$ is isomorphic to its zero'th cohomology sheaf $A_T :=\hH^0(X, \mathscr{A}_T)$. We say that $T$ is a \emph{relative generator for $\mathfrak{C}$ over $Y$} if, for any affine open $U \subset Y$, category $\cC_U$ is \emph{split generated} by $T|_{f^{-1}(U)}$, i.e. equivalent to the smallest triangulated subcategory of $\dD(f^{-1}(U))$ containing $T|_{f^{-1}(U)}$ and closed under direct summands.

We shall replace $\mathscr{A}_T$ with a suitable DG algebra and prove
\begin{THM}\label{thm_relative_tilting}
	Let $\mathfrak{C}$ be a stack of subcategories of $\dD(X)$ over $Y$ and $T \in \cC_Y$ a relative generator for $\mathfrak{C}$ over $Y$. Then $\mathfrak{C}$ is equivalent to the stack of perfect $\mathscr{A}_T$-modules, i.e. for any open $U\subset Y$, category  $\cC_U$ is equivalent to the full subcategory $\Perf(\mathscr{A}_T|_{U})$ of perfect objects in the derived category of sheaves of right $\mathscr{A}_T|_U$-modules.
\end{THM}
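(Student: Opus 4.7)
The plan is to construct, for each open $U \subset Y$, a functor
$$
\Phi_U \colon \cC_U \to \Perf(\mathscr{A}_T|_U),\qquad \Phi_U(E)\, \simeq\, Rf_* R\hHom_X(T|_{f^{-1}(U)}, E),
$$
show that each $\Phi_U$ is an equivalence, and finally check that these equivalences assemble into a morphism of stacks over $Y$. Before any of this can make sense as a genuine functor into modules, I first would replace $\mathscr{A}_T$ by a strict sheaf of DG algebras: pick a h-injective resolution $T\to I$ in a DG enhancement of $\dD(X)$ so that $\tT := f_*\hHom_X(I,I)$ is an actual sheaf of DG algebras on $Y$ quasi-isomorphic to $\mathscr{A}_T$. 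The tilting hypothesis $\mathscr{A}_T\simeq A_T$ then says $\tT$ is formal, quasi-isomorphic as a sheaf of DG algebras on $Y$ to the ordinary sheaf $A_T$; this identification lets us move freely between DG-modules over $\tT|_U$ and (perfect) modules over $A_T|_U$.

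The heart of the argument is the affine case. Fix an affine open $U=\Spec R\subset Y$. Then by assumption $T|_{f^{-1}(U)}$ split-generates $\cC_U$, and by construction $\Phi_U(T|_{f^{-1}(U)})\simeq \mathscr{A}_T|_U$. A direct calculation using projection formula and stack axiom (S2) shows that, for any perfect $\eE$ on $U$ and any integer $n$,
$$
\Hom_{\cC_U}(T|_{f^{-1}(U)}, Lf^*\eE\otimes T|_{f^{-1}(U)}[n]) \,\simeq\, \Hom_{\Perf(\mathscr{A}_T|_U)}(\mathscr{A}_T|_U,\, \eE\otimes \mathscr{A}_T|_U[n]),
$$
i.e. $\Phi_U$ is fully faithful on the smallest triangulated subcategory containing $T|_{f^{-1}(U)}$ and closed under twists by $Lf^*\Perf(U)$; since $\cC_U$ is split-generated by $T|_{f^{-1}(U)}$, this implies $\Phi_U$ is fully faithful on all of $\cC_U$. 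Essential surjectivity is then automatic: the image is a thick subcategory of $\Perf(\mathscr{A}_T|_U)$ containing the generator $\mathscr{A}_T|_U$.

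To upgrade this to the stated stack-level equivalence, I would verify that $\Phi$ commutes with restriction along open embeddings $U'\subset U$, which is a formal consequence of flat base change together with the projection formula. Given stack axiom (S3) for $\mathfrak{C}$ and the analogous Zariski/\'etale descent for $\Perf(\mathscr{A}_T)$ (a sheaf of DG algebras has a well-behaved stack of perfect modules), the affine-local equivalences glue: for a general open $U\subset Y$ cover by affines $\{U_i\}$, full faithfulness and essential surjectivity on each $U_i$ plus compatibility on overlaps force $\Phi_U$ to be an equivalence.

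The main obstacle is the DG/$A_\infty$ bookkeeping rather than any conceptual point. Classical tilting produces an equivalence with modules over $A_T$ in the absolute affine case (cf.\ \cite{VdB,SpVdB,KruPloSos}); what is new here is the global, relative statement, so the delicate part is choosing the DG enhancement functorially enough that (a) $\Phi_U$ really is a functor (not merely defined object-by-object), (b) restriction to open subsets strictly commutes with $\Phi$ rather than only up to quasi-isomorphism, and (c) the descent for perfect modules over a sheaf of DG algebras is actually available in the level of generality we need. Once the enhancement is fixed, the two body paragraphs above become routine.
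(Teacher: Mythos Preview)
Your overall architecture --- choose a DG model, define $\Phi_U(E)=Rf_*R\hHom(T|_{f^{-1}(U)},E)$, prove full faithfulness by unwinding on the generator, then glue --- matches the paper's, and your comments about the DG bookkeeping being the real burden are on point. Two issues deserve attention.

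First, a misreading: the theorem does \emph{not} assume $T$ is tilting. The hypothesis is only that $T$ is a relative generator; $\mathscr{A}_T$ is a genuine sheaf of DG algebras and need not be formal. Fortunately you do not actually use formality in your argument, so this is a cosmetic error --- just delete the sentence about ``the tilting hypothesis $\mathscr{A}_T\simeq A_T$'' and work with DG modules throughout.

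Second, and more substantively, your essential surjectivity over affines is too fast. You assert that the image of $\Phi_U$ is a thick subcategory containing the generator $\mathscr{A}_T|_U$, hence all of $\Perf(\mathscr{A}_T|_U)$. But look at the paper's definition: a perfect $\mathscr{A}_T$-module is one that is \emph{locally} in the subcategory generated by $\mathscr{A}_T$. Even when $U$ is affine, it is not immediate that ``locally generated by $\mathscr{A}_T|_U$'' coincides with ``globally in the thick subcategory of $\mathscr{A}_T|_U$'', because $\mathscr{A}_T$ is a sheaf of DG algebras (built from an $h$-injective resolution) with no a priori quasi-coherence. The paper does not try to prove this directly. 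Instead it constructs an explicit left adjoint $\Psi_T(M)=f^{-1}(\ol{M})\otimes_{f^{-1}\mathscr{A}_T}\wt{T}$ (using $h$-flat resolutions), observes that $\Psi_T$ and $\Phi_T$ are mutually inverse locally, and then shows the global adjunction unit $M\to\Phi_T\Psi_T M$ is an isomorphism by the following trick: the cone $M'$ satisfies $\Hom_{\mathscr{A}_T}(M',\mathscr{A}_T\otimes^L_Y\eE)=0$ for every $\eE\in\Perf(Y)$ (via projection formula), which translates to $\Hom_Y(\eE^\vee,(M')^\vee)=0$ for all $\eE$, whence $(M')^\vee=0$ by compact generation of $\dD_{\textrm{qc}}(Y)$, whence $M'=0$ by biduality for perfect modules. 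This argument handles essential surjectivity for \emph{arbitrary} open $U$ in one stroke, with no separate gluing step needed. Your descent argument for passing from affine to general $U$ would then be redundant --- but as written, you would still owe the reader the missing lemma in the affine case.
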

To simplify the exposition, we shall assume that the open $U\subset Y$ is equal to $Y$.

We shall use an appropriate DG enhancement \cite{BK} for $\cC_Y$ to define $\mathscr{A}_T$, functor $\Phi_T \colon \cC_Y \to \Perf(\mathscr{A}_T)$ and its left adjoint. We then check that $\Phi_T$ is fully faithful and essentially surjective. When $T$ is tilting over $Y$, we give sufficient conditions for category $\Perf(\mathscr{A}_T)$ to be equivalent to the bounded derived category $\dD^b(A_T)$ of coherent $A_T$-modules.

Let $\wt{\cC}_Y$ be the DG category of complexes of $h$-injective $\oO_X$-modules quasi-isomorphic to objects in $\cC_Y$. Let $\wt{T}$ be an $h$-injective resolution for $T$. We define a sheaf
$$
\mathscr{A}_T:=f_*\hHom_X(\wt{T}, \wt{T})
$$
of DG algebras on $Y$. Denote by $\textrm{Mod}(\mathscr{A}_T)$ the DG category of sheaves of right $\mathscr{A}_T$--DG modules and by $\pPerf(\mathscr{A}_T)$ its full DG subcategory of perfect complexes, i.e. DG modules which locally are quasi-isomorphic to objects in the full subcategory of $\textrm{Mod}(\mathscr{A}_T)$ generated by $\mathscr{A}_T$.
Denote by $\dD(\mathscr{A}_T)$ the derived category of $\textrm{Mod}(\mathscr{A}_T)$ and by $\Perf(\mathscr{A}_T)\subset \dD(\mathscr{A}_T)$ the full subcategory of perfect complexes.

Further, denote by 
\begin{equation}\label{eqtn_functor_Phi_T}
	\Phi_T \colon \cC_Y \to \Perf(\mathscr{A}_T)
\end{equation}
the functor induced by
$$
\wt{\Phi}_T(-):=f_*\hHom_X(\wt{T},-) \colon \wt{\cC}_Y \to \pPerf(\mathscr{A}_T).
$$
Since, for any affine $U \subset Y$, category $\cC_U$ is generated by $T|_{f^{-1}(U)}$ and $\Phi_T(T) \simeq \mathscr{A}_T$, $f_*\hHom(\wt{T},\wt{E})$ is indeed an object of $\pPerf(\mathscr{A}_T)$, for any $\wt{E} \in \wt{\cC}_Y$.

To define the functor left adjoint to $\Phi_T$, we recall that any sheaf $M$ of $\mathscr{A}_T$--DG modules admits an $h$-flat resolution $\ol{M}$, \cite[Theorem 1.3.3]{Riche}. We define $\Psi_T\colon \Perf(\mathscr{A}_T) \to \cC_Y$ as the functor induced by 
$$
\wt{\Psi}_T\colon \pPerf_h(\mathscr{A}_T) \to \wt{\cC}_Y,\quad \Psi_T(\ol{M}) = f^{-1}(\ol{M}) \otimes_{f^{-1}(\mathscr{A}_T)} \wt{T},
$$
where $\pPerf_h(\mathscr{A}_T)$ is the DG category of $h$-flat $\mathscr{A}_T$--DG modules quasi-isomorphic to objects of $\pPerf(\mathscr{A}_T)$. Above, the $f^{-1}(\mathscr{A}_T)$-module structure on $\wt{T}$ is given by the composite
$$
f^{-1}f_*\hHom_X(\wt{T},\wt{T}) \otimes \wt{T} \to \hHom_X(\wt{T},\wt{T}) \otimes \wt{T} \to \wt{T}
$$
of the adjunction counit and the evaluation map.

On the category generated by $\mathscr{A}_T$, functor 
$\wt{\Psi}_T$ is uniquely defined by equality $\wt{\Psi}_T(\mathscr{A}_T) = \wt{T}$.
Since any $M \in \Perf(\mathscr{A}_T)$ is locally an object of the category generated by $\mathscr{A}_T$, $\Psi_T(M)$ is locally an object of $\mathfrak{C}$. Then (S3) implies that $\Psi_T(M) \in \cC_Y$, i.e. $\Psi_T$ is well-defined.

For $h$-flat $\ol{M}$ and $\wt{E} \in \wt{\cC}_Y$, canonical morphism
$\hHom_X(f^{-1}(\ol{M}) \otimes_{f^{-1}(\mathscr{A}_T)} \wt{T}, \wt{E}) \to \hHom_{f^{-1}(\mathscr{A}_T)}(f^{-1}(\ol{M}), \hHom_X(\wt{T}, \wt{E}))$ gives $f_*\hHom_X(f^{-1}(\ol{M}) \otimes_{f^{-1}(\mathscr{A}_T)} \wt{T}, \wt{E}) \to f_*\hHom_{f^{-1}(\mathscr{A}_T)}(f^{-1}(\ol{M}), \hHom_X(\wt{T}, \wt{E}))$. By adjunction, we have 
$$
f_*\hHom_{f^{-1}(\mathscr{A}_T)}(f^{-1}(\ol{M}), \hHom_X(\wt{T}, \wt{E})) \simeq \hHom_{\mathscr{A}_T}(\ol{M}, f_* \hHom_X(\wt{T}, \wt{E})).
$$
Thus, for any $M \in \Perf(\mathscr{A}_T)$, $E\in \cC_Y$, we get 
$$
\kappa_{M,E} \colon Rf_* R\hHom_X(\Psi_T(M), E) \to R\hHom_{\mathscr{A}_T}(M, \Phi_T(E)).
$$ 

Locally $\Psi_T$  is inverse to $\Phi_T$, in particular left adjoint. It follows that $\kappa_{M,E}$ is a quasi-isomorphism, for any $M \in \Perf(\mathscr{A}_T)$, $E \in \cC_Y$. As global $\Hom$ is the hypercohomology group of $Rf_*R\hHom$, this implies that functor $\Psi_T$ is left adjoint to $\Phi_T$.

\begin{LEM}\label{lem_Phi_T_ff}
	Functor $\Phi_T$ is fully faithful.
\end{LEM}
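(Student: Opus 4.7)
Fully faithfulness of $\Phi_T$ is equivalent to the adjunction counit $\epsilon\colon \Psi_T \Phi_T \to \Id_{\cC_Y}$ being a natural isomorphism, since $\Psi_T$ is left adjoint to $\Phi_T$. My plan is to verify this counit is an isomorphism by a local-to-global argument on $Y$.

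First, I would establish that both $\Phi_T$ and $\Psi_T$ commute with restriction to open subsets $U \subset Y$. The open embedding $U \hookrightarrow Y$ is flat, so $h$-injectivity (respectively $h$-flatness) of the chosen resolutions is preserved under restriction to $f^{-1}(U)$, and $Rf_*$ satisfies flat base change; hence one has natural isomorphisms $(\Phi_T E)|_U \simeq \Phi_{T|_{f^{-1}(U)}}(E|_{f^{-1}(U)})$ and the analogous formula for $\Psi_T$ using the fact that $f^{-1}$ and the relevant tensor product commute with restriction. Consequently, the restriction of $\epsilon_E$ to $f^{-1}(U)$ coincides with the counit of the corresponding local adjunction on $U$.

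Second, I would invoke the assumption that $T|_{f^{-1}(U)}$ split-generates $\cC_U$ for every affine $U \subset Y$. On the generator itself, $\Phi_T(T) \simeq \mathscr{A}_T$ and $\Psi_T(\mathscr{A}_T) \simeq \wt{T}$ by the very definition of $\wt{\Psi}_T$ on the subcategory generated by $\mathscr{A}_T$, and under these identifications $\epsilon_T$ is the identity. Because $\epsilon$ is a natural transformation between exact functors, the full subcategory of $\cC_U$ on which $\epsilon$ is an isomorphism is triangulated and closed under direct summands; hence it exhausts all of $\cC_U$. Therefore the cone of $\epsilon_E$ restricts to zero on $f^{-1}(U)$ for every affine open $U$ of a cover of $Y$, so this cone vanishes, $\epsilon_E$ is an isomorphism, and taking hypercohomology gives the required bijection $\Hom_{\cC_Y}(E,F) \simeq \Hom_{\Perf(\mathscr{A}_T)}(\Phi_T E, \Phi_T F)$.

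The main technical obstacle is justifying rigorously the local compatibility statement in the first step, namely that the global DG model of $\mathscr{A}_T$ and the fixed $h$-injective and $h$-flat resolutions restrict in a way that identifies the restrictions of $\Phi_T$ and $\Psi_T$ with the locally constructed versions, rather than being merely naturally quasi-isomorphic to them. This requires invoking functoriality of the chosen resolutions together with flat base change for $Rf_*\hHom$, but once these are set up the remainder of the argument is a standard devissage along a split generator.
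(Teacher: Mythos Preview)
Your argument is correct and close in spirit to the paper's, but the paper takes a slightly different route that avoids the technical obstacle you flag. Rather than working with the adjunction counit $\Psi_T\Phi_T\to\Id$ and checking it restricts to the local counit, the paper constructs directly, at the DG level, a morphism of sheaves
\[
\alpha_{E_1,E_2}\colon Rf_*R\hHom_X(E_1,E_2)\longrightarrow R\hHom_{\mathscr{A}_T}(\Phi_T(E_1),\Phi_T(E_2))
\]
induced by composition with $\wt{\Phi}_T$. Since $\alpha$ is a morphism of complexes of sheaves on $Y$, being a quasi-isomorphism is automatically a local question; one checks $\alpha_{T,T}$ is a quasi-isomorphism and then unwinds on an affine $U$ using that $T|_{f^{-1}(U)}$ generates $\cC_U$. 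Taking zero'th hypercohomology of $\alpha$ gives the required bijection on $\Hom$'s.

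What this buys over your approach is exactly the avoidance of the compatibility check you identify: you never need to argue that the global $\Psi_T\Phi_T$ restricts to the locally defined $\Psi_{T_U}\Phi_{T_U}$, nor that the restriction of the global counit agrees with the local one, because the object whose local triviality you test is a single explicit sheaf map. Your approach works too, and the underlying d\'evissage along the generator is the same; the paper's version just packages the local-to-global step more cleanly by choosing a sheaf-level witness from the outset.
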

\begin{proof}
	For $\wt{E}_1, \wt{E}_2\in \wt{\cC}_Y$, we aim to construct a morphism $\alpha_{\wt{E}_1, \wt{E}_2} \colon f_* \hHom_X(\wt{E}_1, \wt{E}_2) \to \hHom_{\mathscr{A}_T}(\wt{\Phi}_T(\wt{E}_1), \wt{\Phi}_T(\wt{E}_2))$ and show that it is a quasi-isomorphism.
	
	Composition of morphisms gives, for any $\wt{E}_1, \wt{E}_2\in \wt{\cC}_Y$, a map $\alpha'_{\wt{E}_1, \wt{E}_2} \colon \hHom_X(\wt{E}_1, \wt{E}_2) \to \hHom_X(\hHom_X(\wt{T}, \wt{E}_1), \hHom_X(\wt{T}, \wt{E}_2))$. To get $\alpha_{\wt{E}_1, \wt{E}_2}$, we note that, for any open subset $U\subset Y$, a section of $f_* \hHom_X(\hHom_X(\wt{T}, \wt{E}_1), \hHom_X(\wt{T}, \wt{E}_2))(U)$ is a morphism $\f \colon \hHom_X(\wt{T},\wt{E}_1)|_{f^{-1}(U)} \to \hHom_X(\wt{T}, \wt{E}_2)|_{f^{-1}(U)}$. Then $f_*\f$ gives $\wt{\Phi}_T(\wt{E}_1)|_{U} \to \wt{\Phi}_T(\wt{E}_2)|_U$. As $f_* \f$ respects the right $\mathscr{A}_T|_U$-module structure, we get a map $f_* \hHom_X(\hHom_X(\wt{T}, \wt{E}_1), \hHom_X(\wt{T}, \wt{E}_2)) \to \hHom_{\mathscr{A}_T}(\wt{\Phi}_T(\wt{E}_1), \wt{\Phi}_T(\wt{E}_2))$. Composite with $f_* \alpha'$ yields the desired $\alpha_{\wt{E}_1, \wt{E}_2}$.
	
	By considering $h$-injective resolutions, we get $\alpha_{E_1,E_2} \colon Rf_*R\hHom_X(E_1,E_2) \to R\hHom_{\mathscr{A}_T}(\Phi_T(E_1), \Phi_T(E_2))$, for any pair $E_1, E_2 \in \cC_Y$. Clearly, $\alpha_{T,T}$ is a quasi-isomorphism. Since, for any open affine $U\subset Y$, objects $E_1|_{f^{-1}(U)}$, $E_2|_{f^{-1}(U)}$ lie in the category generated by $T|_{f^{-1}(U)}$, we get by unwinding that $\alpha_{E_1,E_2}$ is locally a quasi-isomorphism. By taking global sections, we conclude that  $\Hom_X(E_1,E_2) = \Hom_{\mathscr{A}_T}(\Phi_T(E_1), \Phi_T(E_2))$.
\end{proof}

\begin{proof}[Proof of Theorem \ref{thm_relative_tilting}]
	In view of Lemma \ref{lem_Phi_T_ff}, it suffices to show that $\Phi_T$ is essentially surjective. We check that the adjunction unit $\Id \to \Phi_T \Psi_T$ is an isomorphism when applied to any object of $\Perf(\mathscr{A}_T)$. For any $M \in \Perf(\mathscr{A}_T)$, the cone $M'$ of the adjunction unit $M \to \Phi_T\Psi_TM$ satisfies $\Hom_{\aA_T}(M', \Phi_T(E)) =0$, for any $E\in \cC_Y$. We shall show that this implies $M' \simeq 0$.

	Let $\eE$ be an object in $\Perf(Y)$. Then $T\otimes^L_X Lf^* \eE \in \cC_Y$ and the projection formula 
	\begin{align*}
		&Rf_*R\hHom_X(T, T \otimes^L_X Lf^* \eE) \simeq Rf_*(R\hHom_X(T,T)\otimes^L_X Lf^*\eE) \simeq Rf_*R\hHom_X(T, T) \otimes^L_Y \eE
	\end{align*} 
	implies $ \Phi_T(T\otimes^L_X Lf^* \eE)\simeq \mathscr{A}_T\otimes^L_Y \eE$. Hence $\Hom_{\mathscr{A}_T}(M', \mathscr{A}_T\otimes^L_Y \eE)\simeq 0$, for any $\eE \in \Perf(Y)$. 
	
	We have $R\hHom_{\mathscr{A}_T}(M', \mathscr{A}_T \otimes^L_Y \eE) \simeq R\hHom_{\mathscr{A}_T}(M', \mathscr{A}_T) \otimes^L_Y \eE$. It follows that  $R\hHom_{\mathscr{A}_T}(M', \mathscr{A}_T \otimes^L_Y \eE) \simeq R\hHom_Y(\eE^\vee, M'^{\vee})$, where $M'^\vee :=R\hHom_{\mathscr{A}_T}(M', \mathscr{A}_T)$ denotes the dual $\mathscr{A}_T$--DG module. By taking zero'th hypercohomology, we get $\Hom_Y(\eE^\vee, M'^\vee)=0$, for any $\eE\in \Perf(Y)$. As $\dD_{\textrm{qc}}(Y)$ is compactly generated (see \cite{BvdB}, and \cite[Appendix B]{ToeVaq} for algebraic spaces), it follows that $M'^\vee = 0$. Vanishing of $M'$ follows from the fact that for any $M'\in \Perf(\mathscr{A}_T)$, we have $(M'^\vee)^\vee \simeq M'$. 
\end{proof}
 
\begin{PROP}\label{prop_Perf=Db}
	Let $f\colon X\to Y$ be a proper morphism of algebraic spaces over a field with smooth $X$ and let $\mathfrak{C}$ be a stack of subcategories of $\dD^b(X)$ over $Y$ such that, for any open $U\subset Y$, category $\cC_U \subset \dD^b(f^{-1}(U))$ is left (or right) admissible. Let $T\in \cC_Y$ be a tilting relative generator for $\mathfrak{C}$ over $Y$ and $A_T= \hH^0(Rf_* R\hHom_X(T,T))$. Then $\Perf(\mathscr{A}_T) \simeq \dD^b(A_T)$.
\end{PROP}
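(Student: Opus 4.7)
The plan is to reduce via the tilting hypothesis to the ordinary sheaf of algebras $A_T$, and then to show that $\Perf(A_T)$ can be identified with $\dD^b(A_T)$ through the equivalence $\Phi_T \colon \cC_Y \simeq \Perf(\mathscr{A}_T)$ of Theorem \ref{thm_relative_tilting}. Since $T$ is tilting, $\mathscr{A}_T$ is quasi-isomorphic to $A_T$ as a DG algebra, so $\Perf(\mathscr{A}_T) \simeq \Perf(A_T)$. Furthermore, smoothness of $X$ makes $T$ perfect, so that $\hHom_X(T,T) \in \Coh(X)$, and properness of $f$ then renders $A_T = f_* \hHom_X(T,T)$ a coherent sheaf of $\oO_Y$-algebras.

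For the inclusion $\Perf(A_T) \subseteq \dD^b(A_T)$, I would argue directly using $\Phi_T$: for any $E \in \cC_Y \subset \dD^b(X)$, perfectness of $T$ yields $R\hHom_X(T,E) \in \dD^b(\Coh(X))$, and properness of $f$ gives $\Phi_T(E) = Rf_* R\hHom_X(T,E) \in \dD^b(\Coh(Y))$; combined with the natural $A_T$-module structure, this places $\Phi_T(E)$ in $\dD^b(A_T)$, and hence $\Perf(A_T) = \Phi_T(\cC_Y) \subseteq \dD^b(A_T)$.

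For the reverse inclusion $\dD^b(A_T) \subseteq \Perf(A_T)$, the plan is to show that $A_T$ has locally finite global dimension, so that every coherent $A_T$-module admits a finite projective resolution. Affine-locally on $Y$, $\cC_U \simeq \Perf(A_T|_U)$ is admissible in $\dD^b(f^{-1}(U))$ with $f^{-1}(U)$ smooth, and the main obstacle is converting the finite homological dimension of the ambient derived category into a \emph{uniform} global-dimension bound on $A_T|_U$. One feasible route is to extend $\Psi_T$ to a functor $\dD^b(A_T|_U) \to \dD^b(f^{-1}(U))$ using $h$-flat resolutions of $A_T|_U$-modules, then use smoothness of $f^{-1}(U)$ to see that the image is automatically perfect on the space, and finally apply admissibility of $\cC_U$ together with stack axiom (S3) to conclude that the image lies in $\cC_U$, so that the source module corresponds under $\Phi_T^{-1}$ to an object of $\Perf(A_T|_U)$.
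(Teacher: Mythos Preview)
Your reduction from $\mathscr{A}_T$ to $A_T$ and the inclusion $\Perf(A_T) \subset \dD^b(A_T)$ are correct and agree with the paper. The gap is in the reverse inclusion. You propose to extend $\Psi_T$ to a functor $\dD^b(A_T|_U) \to \dD^b(f^{-1}(U))$ via $h$-flat resolutions, but an $h$-flat resolution of a coherent $A_T$-module is typically unbounded, so a priori the extended $\Psi_T$ only lands in $\dD^-_{qc}(f^{-1}(U))$. Smoothness of $f^{-1}(U)$ identifies $\dD^b$ with $\Perf$, but it does \emph{not} push an object of $\dD^-$ into $\dD^b$; showing that $\Psi_T(M)$ is bounded for every coherent $A_T$-module $M$ is equivalent to showing that $A_T$ has finite global dimension, which is precisely what you set out to prove. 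Concretely, if $\Ext^i_{A_T}(M, A_T) \neq 0$ for infinitely many $i$, then $\Hom_X(\Psi_T(M), T[i]) \neq 0$ for infinitely many $i$, so $\Psi_T(M) \notin \dD^b(X)$. The admissibility hypothesis cannot help until the object is already known to lie in $\dD^b(f^{-1}(U))$.

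The paper breaks this circularity by working in the other direction, through representability rather than through $\Psi_T$. After reducing to affine $Y = \Spec R$, smoothness of $X$ supplies a \emph{strong generator} for $\dD^b(X)$ \cite{BvdB, Toen1}, and left (or right) admissibility of $\cC_Y \subset \dD^b(X)$ lets one push this generator into $\cC_Y$ via the adjoint $\iota^*$ (or $\iota^!$), yielding a strong generator for $\cC_Y$. For $M \in \dD^b(A_T)$, properness of $f$ over $R$ makes $\Hom_{A_T}(\chi\Phi_T(-), M) \colon \cC_Y \to \textrm{mod--}R$ a locally finite cohomological functor; Rouquier's representability theorem \cite[Corollary~4.18]{Rou2} then represents it by some $B \in \cC_Y$, and one checks $\chi\Phi_T(B) \simeq M$, hence $M \in \Perf(A_T)$. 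The missing ingredient in your sketch is exactly this strong-generation--plus--representability step, which converts smoothness of $X$ into the needed finiteness for $A_T$ without ever having to control $\Psi_T$ on non-perfect modules.
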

\begin{proof} 
	In view of \cite[Proposition 1.5.6]{Riche}, there exists an equivalence  $\chi \colon \dD(\mathscr{A}_T) \to \dD(A_T)$. The fact that $\chi$ induces isomorphism $\Perf(\mathscr{A}_T) \simeq \dD^b(A_T)$ is local over $Y$, hence we can assume that $Y = \Spec R$ is affine.

	Since $X$ is smooth, $\dD^b(X)$ admits a strong generator $E$, see \cite[Theorem 3.1.4]{BvdB} for smooth schemes over a field and \cite[Corollary 4.8]{Toen1} for algebraic spaces. If we denote by $\iota^*\colon \dD^b(X) \to \cC_Y$ the functor left adjoint to the embedding $\cC_Y \to \dD^b(X)$, then $\iota^*E$ is a strong generator for $\cC_Y$, and similarly for the right adjoint functor.
	
	Any object $M\in \dD^b(A_T)$ defines a locally finite cohomological functor $$
	\Hom_{A_T}(\chi\Phi_T(-),M)\colon \cC_Y \to \textrm{mod--}R,
	$$
	i.e. for any $E\in \cC_Y$, $\bigoplus_{i}\Hom_{A_T}(\chi\Phi_T(E),M[i])$ is a finitely generated $R$-module. Since $X$ is proper over $\Spec \,R$, $\bigoplus_i \Hom(E_1, E_2[i])$ is a finitely generated $R$-module, for any pair $E_1, E_2$ of objects in $\dD^b(X)$, in particular, in $\cC_Y$. Then \cite[Corollary 4.18]{Rou2} implies that $\Hom_{A_T}(\chi\Phi_T(-),M)\simeq \Hom_X(-,B)$ is representable by some $B\in \cC_Y$. The identity morphism of $B$ gives $\f \colon \chi \Phi_T(B) \to M$. As functor $\chi \Phi_T$ is fully faithful, the induced morphism $\Hom_{A_T}(\chi \Phi_T(E),\chi\Phi_T(B)) \to \Hom_{A_T}(\chi \Phi_T(E),M)$ is an isomorphism, for any $E\in \cC_Y$. It follows that the cone of $\f$ is orthogonal to $\Perf(A_T)$. Since $\dD(A_T)$ is perfectly generated \cite[Theorem 1.5.10]{Lur}, we conclude that the cone is zero and $\f$ is an isomorphism. The inverse inclusion $\chi\Perf(\mathscr{A}_T) = \Perf(A_T) \subset \dD^b( A_T)$ is clear.
\end{proof}

Proposition \ref{prop_Perf=Db} implies that, for a proper morphism $f\colon X \to Y$ of smooth algebraic spaces and a stack $\mathfrak{C}$ of subcategories of $\dD^b(X)$, a tilting relative generator $T\in \cC_Y$ induces a \tr e on $\cC_Y$. In fact, we can consider two \tr es, the $T$-\emph{projective} one where $T$ is a projective generator locally over $Y$ and the $T$-\emph{injective} one where $T$ is an injective generator locally over $Y$. They are defined as:
\begin{equation}\label{eqtn_t-str_on_stack}
\begin{aligned}
\cC^{\lle 0}_{Y\,\textrm{pro }T}&=\{E \in \cC_Y\,|\, Rf_*R\hHom_X(T,E) \in \dD^b(\aA_T)^{\lle 0}\},\\
\cC^{\gge 1}_{Y\,\textrm{pro }T}&=\{E \in \cC_f\,|\, Rf_*R\hHom_X(T, E) \in \dD^b(\aA_T)^{\gge 1}\},\\
\cC^{\lle 0}_{Y\,\textrm{inj }T}&=\{E \in \cC_Y\,|\, Rf_*R\hHom_X(E, T) \in \dD^b(\aA_T^\textrm{op})^{\gge 0}\},\\
\cC^{\gge 1}_{Y\,\textrm{inj }T}&=\{E \in \cC_f\,|\, Rf_*R\hHom_X(E, T) \in \dD^b(\aA_T^\textrm{op})^{\lle -1}\}.
\end{aligned}
\end{equation}

\subsection{A tilting relative  generator for the null category}

Let $X$, $Y$ be smooth algebraic spaces over an algebraically closed field of characteristic zero, $f\colon X \to Y$ a relatively projective birational morphism  with dimension of fibers bounded by one and let $(g,h)$ be an element of $\Dec(f)$. In this section we shall construct a tilting relative  generator for $\cC_g$ over $Y$.
\begin{LEM}
	Let $(g,h)$ be an element of $\Dec(f)$. The assignment to any open $U\subset Y$ the category $\cC_{g|_{f^{-1}(U)}} \subset \dD^b(f^{-1}(U))$ defines a stack $\mathfrak{C}_g$ of subcategories of $\dD^b(X)$ over $Y$. 
\end{LEM}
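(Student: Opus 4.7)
The plan is to verify the three stack axioms (S1)--(S3) for the assignment $U \mapsto \cC_{g|_{f^{-1}(U)}}$. All three will follow from standard tools: flat base change along open immersions and the projection formula applied to the factorisation $f = h \circ g$. I would first note that each $\cC_{g|_{f^{-1}(U)}}$ is a full triangulated subcategory of $\dD^b(f^{-1}(U))$ by construction, being the kernel of the exact functor $R(g|_{f^{-1}(U)})_*$.

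For (S1), given an open embedding $U_1 \subset U$, the restriction squares
$$
\xymatrix{f^{-1}(U_1) \ar[r] \ar[d]_{g|_{f^{-1}(U_1)}} & f^{-1}(U) \ar[d]^{g|_{f^{-1}(U)}} \\ h^{-1}(U_1) \ar[r] & h^{-1}(U)}
$$
are Cartesian with the horizontal arrows being etale open immersions, and flat base change gives $\bigl(R(g|_{f^{-1}(U)})_* E\bigr)\big|_{h^{-1}(U_1)} \simeq R(g|_{f^{-1}(U_1)})_*(E|_{f^{-1}(U_1)})$, so the right-hand side vanishes whenever $E \in \cC_{g|_{f^{-1}(U)}}$. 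For (S2), the equality $Lf^* \eE = Lg^*Lh^*\eE$ together with perfectness of $Lh^*\eE$ on $h^{-1}(U)$ allows the projection formula to give
$$
R(g|_{f^{-1}(U)})_*\bigl(Lf^*\eE \otimes^L E\bigr) \simeq Lh^*\eE \otimes^L R(g|_{f^{-1}(U)})_* E = 0,
$$
so the tensor product remains in $\cC_{g|_{f^{-1}(U)}}$. For (S3), vanishing of a complex is a local property on $h^{-1}(U)$ in the etale topology; since $\{h^{-1}(U_i)\}$ covers $h^{-1}(U)$, the same base-change identity as in (S1) reduces the global vanishing of $R(g|_{f^{-1}(U)})_* E$ to the given local vanishings on each $U_i$. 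Boundedness descends from a finite sub-cover by quasi-compactness of $U$ (and hence of $h^{-1}(U)$).

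I expect no real obstacle here: the only non-formal ingredient is flat base change for proper morphisms of algebraic spaces in the etale topology, which is classical, and the projection formula, which applies since $\eE$ (hence $Lh^*\eE$) is perfect. The verification is essentially bookkeeping, which is presumably why the authors present this as a lemma rather than a theorem.
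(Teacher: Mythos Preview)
Your proposal is correct and follows essentially the same approach as the paper: the paper also verifies (S1) by flat base change along the open immersion $h^{-1}(U_1)\hookrightarrow h^{-1}(U)$, dispatches (S2) in one line via the projection formula, and argues (S3) by noting that vanishing of $Rg_*$ is local over $Z$ and hence over $Y$. Your write-up is slightly more explicit (spelling out $Lf^*\eE = Lg^*Lh^*\eE$ for (S2) and adding the boundedness remark for (S3)), but the underlying argument is identical.
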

\begin{proof}
	We verify conditions (S1)-(S3).
	
	Let $U_1 \subset U$ be open subsets of $Y$ and $V_1:=h^{-1}(U_1)$, $V:=h^{-1}(U)$ their preimages in $W$. Since the embedding $j\colon V_1 \to V$ is flat, we have $Lj^*R(g|_{g^{-1}(V)})_* \simeq R(g|_{g^{-1}(V_1)})_*Li^*$, for the embedding $i\colon g^{-1}(V_1) \to g^{-1}(V)$. It follows that, for any $E \in \cC_{g|_{g^{-1}(V)}}$, we have $R(g|_{g^{-1}(V_1)})_* E|_{g^{-1}(V_1)} = 0$, i.e. condition (S1) is satisfied.
	
	Condition (S2) follows immediately from the projection formula. Finally, (S3) holds because vanishing of $Rg_*$ is local in $W$, hence also in $Y$.
\end{proof}

For a sheaf $\fF$ on $X$ and a divisor $D\subset X$, denote by $\textrm{res}^{\fF}_{D}$ the  restriction morphism 
\begin{equation}\label{eqtn_restr_morp}
\textrm{res}^{\fF}_{D} \colon \fF\to \fF\otimes \oO_{D}.
\end{equation}

For a birational morphism $g\colon X \to Z$ with smooth $Z$, denote by $\omega_g =g^!(\oO_Z) = \omega_{X/Z}$ the relative canonical sheaf. We define \emph{discrepancy divisor} $D_g \subset X$ as zeroes of the canonical section $s_g$ of $\omega_g$ (given by the adjunction). We have:
\begin{equation}\label{eqtn_dis_of_f}
0 \to \oO_X \xrightarrow{s_g} g^! \oO_Z \xrightarrow{\psi_g} \omega_g|_{D_g} \to 0
\end{equation}
Since $s_g$ is a section of a line bundle, we have $\psi_g = \textrm{res}^{\omega_g}_{D_g}$. For a decomposition $f
\colon X\xrightarrow{g} Z \xrightarrow{h} Y$, we have $\omega_f = g^* \omega_h \otimes \omega_g$.

We define the \emph{discrepancy sheaf} of $g$ as $\omega_X|_{D_g} \simeq \omega_g|_{D_g} \otimes g^*(\omega_Z)$.

Let $f\colon X \to Y$ be a projective birational morphism of smooth algebraic spaces with dimension of fibers bounded by one. For $g\in \Dec(f)$, consider a decomposition $f\colon X \xrightarrow{g} Z \xrightarrow{h} Y$. Define
\begin{equation}\label{eqtn_def_of_T_ff}
	T_g^f := \bigoplus_{g' \in \Conn(g)} \omega_X|_{D_{g'}}, \quad \ol{T}_g^f := \bigoplus_{g'\in \Conn(f)\setminus \Conn(g)} \omega_X|_{D_{g'}}.
\end{equation}
Then
\begin{equation*}
T_f:=T^f_f = T^f_g \oplus \ol{T}^f_g.
\end{equation*}

We shall show that $T_f\in \cC_f$ is a tilting relative  generator for $\mathfrak{C}_f$ over $Y$ and, more generally, $T_g \in \cC_g$ is a tilting relative  generator for $\mathfrak{C}_g$ over $Y$.

\begin{LEM}\label{lem_discr_in_C_g}
	Object $\omega_X|_{D_g}$ lies in $\cC_g$. 
\end{LEM}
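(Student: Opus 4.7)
The plan is to reduce the claim to the vanishing of $Rg_*(\omega_g|_{D_g})$ via the projection formula, and then extract this vanishing directly from the discrepancy sequence (\ref{eqtn_dis_of_f}).

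First, since $\omega_X \simeq \omega_g \otimes g^*\omega_Z$, we have
\[
\omega_X|_{D_g} \simeq \omega_g|_{D_g} \otimes g^*\omega_Z.
\]
Here $\omega_Z$ is a line bundle, so the projection formula gives $Rg_*(\omega_X|_{D_g}) \simeq Rg_*(\omega_g|_{D_g}) \otimes \omega_Z$. Thus it suffices to show $Rg_*(\omega_g|_{D_g}) = 0$, which would imply $\omega_X|_{D_g} \in \cC_g$.

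Now apply $Rg_*$ to the short exact sequence (\ref{eqtn_dis_of_f}):
\[
Rg_*(\oO_X) \xrightarrow{Rg_*(s_g)} Rg_*(g^!\oO_Z) \to Rg_*(\omega_g|_{D_g}) \to Rg_*(\oO_X)[1].
\]
By Lemma \ref{lem_g(O)_is_O}, $Rg_*\oO_X \simeq \oO_Z$ (here $Z$ is smooth, $X$ is smooth, $g$ surjective birational with connected fibers, so the lemma applies once we note $Rg_*\oO_X = \oO_Z$ for a birational morphism of smooth spaces with one-dimensional fibers). By the isomorphism (\ref{eqtn_ff!=id}) applied to $g$, we also have $Rg_*(g^!\oO_Z) \simeq \oO_Z$. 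Thus $Rg_*(s_g)$ is a morphism $\oO_Z \to \oO_Z$, and to finish the proof I need to verify it is an isomorphism.

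For this one observes that $s_g$ is the unit of the adjunction $(Rg_*, g^!)$ applied to $\oO_Z$: indeed $\omega_g = g^!\oO_Z$ and $s_g$ is the canonical section, and the two available identifications are compatible. Then by one of the triangular identities for the $(Rg_* \dashv g^!)$ adjunction, the composite
\[
\oO_Z \simeq Rg_*\oO_X \xrightarrow{Rg_*(s_g)} Rg_*(g^!\oO_Z) \xrightarrow{\varepsilon} \oO_Z
\]
equals the identity on $\oO_Z$, where $\varepsilon$ is the counit; since $\varepsilon$ is the isomorphism of (\ref{eqtn_ff!=id}), the map $Rg_*(s_g)$ must be an isomorphism as well. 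Hence $Rg_*(\omega_g|_{D_g}) = 0$ as required. The only nontrivial point is the identification of $s_g$ with the adjunction unit, which is a routine check using that $\omega_g$ is a line bundle and $D_g$ is the zero locus of its canonical trivializing section.
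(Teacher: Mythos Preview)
Your proof is correct and follows essentially the same approach as the paper: reduce to $Rg_*(\omega_g|_{D_g})=0$ via the projection formula, then apply $Rg_*$ to the discrepancy sequence (\ref{eqtn_dis_of_f}). The paper's proof is terse and simply asserts that the vanishing follows from applying $Rg_*$ to (\ref{eqtn_dis_of_f}); you have spelled out the one point the paper leaves implicit, namely that $Rg_*(s_g)$ is an isomorphism, by invoking the triangle identity for $Rg_*\dashv g^!$. Note that the paper actually defines $s_g$ as $\f_{Rg_*,\oO_Z}$ (see the proof of Lemma~\ref{lem_epi_of_discr}), so the relevant identity is (\ref{eqtn_property_of_f_F}) rather than the bare triangle identity, but the two descriptions of $s_g$ agree up to the canonical isomorphism $g^!Rg_*\oO_X\simeq g^!\oO_Z$, which is exactly the routine check you allude to.
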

\begin{proof}
	For $g\colon X \to Z$, we have $\omega_X|_{D_g} \simeq Lg^*(\omega_Z) \otimes \omega_g|_{D_g}$. The statement follows by the projection formula and vanishing of $Rg_* \omega_g|_{D_g}$, which follows by applying $Rg_*$ to (\ref{eqtn_dis_of_f}).
\end{proof}

It follows from Lemma \ref{lem_discr_in_C_g} that $T^f_g \in \cC_g$.

Recall that , for any $g\in \Dec(f)$, we have an isomorphism $\gamma_g \colon \Conn(f) \setminus \Conn(g) \to \Conn(h)$ as in (\ref{eqtn_iso_gamma}).
\begin{PROP}\label{prop_rez_of_omega_f_D_g}
	For $g\in \Dec(f)$ and $g_2 \in \Conn(f) \setminus \Conn(g)$, we have  $Rg_* (\omega_X|_{D_{g_2}}) = \omega_Z|_{D_{\gamma_g(g_2)}}$. In particular, $Rg_*(T_f) = T_h$.
\end{PROP}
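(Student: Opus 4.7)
The plan is to apply $Rg_*$ to a short exact sequence on $X$ that captures $\omega_X|_{D_{g_2}}$ and to recognise the output as the analogous sequence on $Z$. Put $W := Z_\cup$, $\gamma := \gamma_g(g_2) = \f_1 \colon Z \to W$, and work with the factorisations $g = \psi_1 \circ \xi$, $g_2 = \psi_2 \circ \xi$, $\xi = g \cap g_2$, coming from diagram (\ref{eqtn_diag_cup_and_cap}). Tensoring the structure sheaf sequence of $D_{g_2}$ by $\omega_X$ and using $\omega_X = g_2^*\omega_{Z_2} \otimes \omega_{g_2}$ together with $\omega_{g_2} = \oO_X(D_{g_2})$ will produce
\[
0 \to g_2^*\omega_{Z_2} \to \omega_X \to \omega_X|_{D_{g_2}} \to 0.
\]
The target on $Z$ is the analogous sequence for $\gamma$, namely $0 \to \omega_Z(-D_\gamma) \to \omega_Z \to \omega_Z|_{D_\gamma} \to 0$.

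I will compute the two relevant pushforwards. For $Rg_*\omega_X$: Lemma \ref{lem_g(O)_is_O} gives $Rg_*\oO_X = \oO_Z$, so Grothendieck duality yields $Rg_*\omega_g = R\hHom_Z(Rg_*\oO_X, \oO_Z) = \oO_Z$, and the projection formula applied to $\omega_X = g^*\omega_Z \otimes \omega_g$ delivers $Rg_*\omega_X = \omega_Z$. For $Rg_*(g_2^*\omega_{Z_2})$: applying Lemma \ref{lem_g(O)_is_O} to $\xi$ and the projection formula reduces this to $R\psi_{1*}L\psi_2^*\omega_{Z_2}$. Since $Z_\cap = Z \times_W Z_2$ is Cartesian with $\f_1(\Ex(\f_1)) \cap \f_2(\Ex(\f_2)) = \emptyset$, Lemma \ref{lem_base_change_iso}, applied with the roles of $g_1$ and $g_2$ interchanged, provides a base change isomorphism $R\psi_{1*}L\psi_2^* \simeq L\gamma^* R\f_{2*}$. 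Running the previous calculation for $\f_2$ in place of $g$ yields $R\f_{2*}\omega_{Z_2} = \omega_W$, so $Rg_*(g_2^*\omega_{Z_2}) = \gamma^*\omega_W = \omega_Z \otimes \omega_\gamma^{-1} = \omega_Z(-D_\gamma)$, concentrated in degree zero.

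Since both pushforwards live in degree zero, applying $Rg_*$ to the short exact sequence above collapses to $0 \to \omega_Z(-D_\gamma) \to \omega_Z \to Rg_*(\omega_X|_{D_{g_2}}) \to 0$, and comparison with the analogous sequence for $\gamma$ identifies $Rg_*(\omega_X|_{D_{g_2}}) \simeq \omega_Z|_{D_\gamma}$, which is the main assertion. For the in particular statement, I will split $T_f = T_g^f \oplus \ol{T}_g^f$ as in (\ref{eqtn_def_of_T_ff}); every summand of $T_g^f$ is of the form $\omega_X|_{D_{g'}}$ with $g' \preceq g$, so it lies in $\cC_{g'} \subset \cC_g$ by Lemma \ref{lem_discr_in_C_g}, giving $Rg_*T_g^f = 0$. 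Combining the main statement with the bijection $\gamma_g \colon \Conn(f)\setminus\Conn(g) \to \Conn(h)$ then identifies $Rg_*\ol{T}_g^f$ with $\bigoplus_{g'' \in \Conn(h)}\omega_Z|_{D_{g''}} = T_h$. The principal technical input is the base change step; its correctness relies on the fact that both the hypotheses and the proof of Lemma \ref{lem_base_change_iso} are manifestly symmetric under exchanging the two factors of the fiber product, so the needed direction of base change is available.
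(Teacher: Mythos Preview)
Your argument is correct and reaches the same conclusion, but via a genuinely different second step than the paper. Both of you start from the sequence $0 \to g_2^*\omega_{Z_2} \to \omega_X \to \omega_X|_{D_{g_2}} \to 0$. The paper then factors $g = \psi_1\circ\xi$ and pushes down in two stages: first $R\xi_*$ produces $\omega_{Z_\cap}|_{D_{\psi_2}}$, and then $R\psi_{1*}$ is trivial because $\Ex(\psi_1)\cap\Ex(\psi_2)=\emptyset$ (Remark~\ref{rem_disjoint_div}) makes $\psi_1$ an isomorphism near the support. You instead compute $Rg_*$ of each term directly, using Grothendieck duality for $Rg_*\omega_X=\omega_Z$ and the symmetric form of Lemma~\ref{lem_base_change_iso} for $Rg_*(g_2^*\omega_{Z_2})$. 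Your route is a clean algebraic substitute for the paper's geometric disjointness observation; it is slightly heavier machinery but perfectly valid, and your remark that Lemma~\ref{lem_base_change_iso} is symmetric in the two factors is correct.

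One small point that both you and the paper leave implicit: after the computation you have an exact sequence $0\to\gamma^*\omega_W\to\omega_Z\to Rg_*(\omega_X|_{D_{g_2}})\to 0$, and identifying the cokernel with $\omega_Z|_{D_\gamma}$ requires the pushed-forward map to be the canonical inclusion rather than some other section of $\omega_\gamma$. This is not automatic from the mere isomorphism type of the two terms. It follows either from naturality of the canonical section under pushforward (via the factorisation $s_{g_2}=\xi^*(s_{\psi_2})\cdot s_\xi$ coming from Proposition~\ref{prop_map_f_g}), or from the observation that the cokernel is supported on $\Ex(\gamma)$ and any effective divisor on $Z$ linearly equivalent to $D_\gamma$ with support in $\Ex(\gamma)$ must equal $D_\gamma$, since a rational function invertible on $Z\setminus\Ex(\gamma)\simeq W\setminus\gamma(\Ex(\gamma))$ extends to a unit on the smooth $W$.
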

\begin{proof}
	We use the notation of (\ref{eqtn_diag_cup_and_cap}) with $g=g_1$, $\psi= \psi_1$ and $\f = \f_1$. 
	 
	As $g_2\notin \Conn(g)$, morphism $\psi_2$ is not the identity. Applying $R\xi_*$ to the sequence
	$$
	0 \to g_2^* \omega_{Z_2} \to \omega_X \to \omega_X|_{D_{g_2}} \to 0
	$$
	yields $R\xi_* (\omega_X|_{D_{g_2}}) = \omega_{Z_{\cap}}|_{D_{\psi_2}}$. Thus, $Rg_* (\omega_X|_{D_{g_2}}) \simeq R\psi_* R\xi_*(\omega_X|_{D_{g_2}}) \simeq R\psi_* (\omega_{Z_{\cap}}|_{D_{\psi_2}})$.  Since, by Remark \ref{rem_disjoint_div}, $\Ex(\psi) \cap \Ex(\psi_2) = \emptyset$, $R \psi_* (\omega_{Z_\cap}|_{D_{\psi_2}}) \simeq \omega_{Z}|_{D_{\f}}$.
	 
	Since $\gamma_g$ as in (\ref{eqtn_iso_gamma}) is a bijection and, by Lemma \ref{lem_discr_in_C_g}, $Rg_* (\omega_X|_{D_{g'}}) =0$, for any $g' \in \Conn(g)$, it follows that
	$Rg_* T_f = T_h$.
\end{proof}

By Lemma \ref{lem_discr_in_C_g}, $Rg_* T^f_g =0$. Hence, Proposition  \ref{prop_rez_of_omega_f_D_g} implies that 
$$
Rg_* \ol{T}^f_g = T_h.
$$

\begin{PROP}\label{prop_T_ff_no_higher_ext}
	Object $Rf_* R\hHom_X(T_f, T_f)$ is a pure sheaf on $Y$.
\end{PROP}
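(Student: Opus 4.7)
The plan is to establish the sharper statement that, for every ordered pair $g_1,g_2\in\Conn(f)$, the complex $Rf_*R\hHom_X(\omega_X|_{D_{g_1}},\omega_X|_{D_{g_2}})$ sits in cohomological degree zero. Since $\omega_X$ is a line bundle the two twists cancel to give
\[
R\hHom_X(\omega_X|_{D_{g_1}},\omega_X|_{D_{g_2}})\simeq R\hHom_X(\oO_{D_{g_1}},\oO_{D_{g_2}}),
\]
and the Koszul resolution $[\oO_X(-D_{g_1})\xrightarrow{\cdot s_{g_1}}\oO_X]$ of $\oO_{D_{g_1}}$ identifies this with the two-term complex
\[
[\oO_{D_{g_2}}\xrightarrow{\cdot s_{g_1}|_{D_{g_2}}}\oO_X(D_{g_1})|_{D_{g_2}}]
\]
in degrees $0$ and $1$. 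Purity then amounts to showing that after applying $Rf_*$ the resulting hypercohomology sheaves vanish outside degree zero.

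I would argue by induction on $|\Conn(f)|$. The base case $|\Conn(f)|=1$ is when $f$ is a single blow-up of a smooth codimension-two centre $B\subset Y$; then $D=D_f$ is the projective bundle $\mathbb{P}(N_{B/Y})$ with $\oO_X(D)|_D$ restricting to $\oO(-1)$ on each $\mathbb{P}^1$-fibre, so a direct $\mathbb{P}^1$-bundle push-forward yields $Rf_*\oO_D\simeq \oO_B$ and $Rf_*(\oO_X(D)|_D)\simeq 0$, and the push-forward of the Koszul complex collapses onto the sheaf $\oO_B$ on $Y$. For the inductive step, fix $g\in\Conn(f)$ with $g\prec f$, write $f=h\circ g$, and split $T_f=T_g^f\oplus \ol{T}_g^f$ as in (\ref{eqtn_def_of_T_ff}). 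Proposition \ref{prop_rez_of_omega_f_D_g} supplies $Rg_*\ol{T}_g^f\simeq T_h$ and $Rg_*T_g^f=0$. The outer diagonal block $Rh_*Rg_*R\hHom_X(\ol{T}_g^f,\ol{T}_g^f)$ reduces via the projection formula to $Rh_*R\hHom_Z(T_h,T_h)$, which is pure by the inductive hypothesis applied to $h$ (since $|\Conn(h)|<|\Conn(f)|$). The inner diagonal block $Rg_*R\hHom_X(T_g^f,T_g^f)$ is pure by the inductive hypothesis applied to $g$, producing a sheaf on $Z$ supported on the codimension-two locus $g(\Ex(g))$; the further application of $Rh_*$ preserves its concentration in degree zero because $h$ restricted to $g(\Ex(g))$ is quasi-finite, hence finite.

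The most delicate step is the off-diagonal blocks $Rf_*R\hHom_X(\omega_X|_{D_{g'}},\omega_X|_{D_{g''}})$ with $g'\in\Conn(g)$ and $g''\in\Conn(f)\setminus\Conn(g)$, and symmetrically. The divisors $D_{g'}$ and $D_{g''}$ generally do meet inside $X$, so a direct support argument on $X$ does not suffice. The crucial geometric input is Remark \ref{rem_disjoint_div}: after $Rg_*$ the images of these supports decouple on $Z$, which is exactly the strictness clause \textbf{(iii)} of the $\Dec(f)$-filtration in Proposition \ref{prop_Conn(f)-filtration}. Pushing forward the two-term Koszul complex via $Rg_*$ and combining this disjointness with the projection formula kills the relevant higher direct images, so each off-diagonal block contributes at most a sheaf supported away from the diagonal pieces, and the whole $Rf_*R\hHom_X(T_f,T_f)$ remains concentrated in degree zero.
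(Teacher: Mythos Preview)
Your inductive framework is appealing, but two of the three blocks in the inductive step are not actually handled by the arguments you give.

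For the outer diagonal block you claim that $Rh_*Rg_*R\hHom_X(\ol{T}_g^f,\ol{T}_g^f)\simeq Rh_*R\hHom_Z(T_h,T_h)$ ``via the projection formula''. But the projection formula applies when one of the two arguments is of the form $Lg^*(-)$ (or $g^!(-)$), and $\ol{T}_g^f$ is neither: Proposition~\ref{prop_pull_back_T_hh} shows that $\ol{T}_g^f$ is a nontrivial extension of $\bigoplus_{g'}\omega_X|_{D_{g\cap g'}}\in\cC_g$ by $Lg^*T_h$. So the identification $Rg_*R\hHom_X(\ol{T}_g^f,\ol{T}_g^f)\simeq R\hHom_Z(T_h,T_h)$ is not available for free; the cross terms in the resulting $2\times 2$ filtration land you back in exactly the off-diagonal problem you are trying to avoid.

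For the off-diagonal blocks, invoking Remark~\ref{rem_disjoint_div} and ``strictness of the filtration'' does not give what you need. The disjointness in Remark~\ref{rem_disjoint_div} takes place on the fibre product $Z_\cap$ in diagram~(\ref{eqtn_diag_cup_and_cap}), not on $Z$ after $Rg_*$; and strictness is a statement about orthogonality in the Verdier quotient $\cC_{g_1\cup g_2}/\cC_{g_1\cap g_2}$, which does not translate into a purity statement for $Rf_*R\hHom_X(-,-)$ on $X$ itself. In particular, when $g'\preceq g$ and $g''\not\preceq g$ but $g'\prec g''$, the off-diagonal term $Rf_*R\hHom_X(\omega_X|_{D_{g'}},\omega_X|_{D_{g''}})$ is genuinely nonzero (it computes to $Rf_*\oO_{D_{g'}}$ in the paper's analysis), so no ``decoupling'' argument can make it vanish; one must actually compute it.

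The paper's proof is not inductive. It treats each ordered pair $(g_1,g_2)\in\Conn(f)^2$ directly: if $g_1,g_2$ are not comparable one first uses the short exact sequence of Lemma~\ref{lem_epi_of_discr} together with the disjointness from Remark~\ref{rem_disjoint_div} to reduce to the comparable case $g_1\cap g_2\preceq g_2$; then for comparable pairs one uses the sequences of Lemmas~\ref{lem_embed_of_discrep}, \ref{lem_epi_of_discr} and the vanishing in Lemma~\ref{lem_twisted_discr_in_C_g} to reduce the two $R\hHom$'s to $Rf_*\oO_{D_{g_2}}$ and to $Rh_{2*}(\omega_\f^{-1}\otimes Rg_{2*}\oO_{D_{g_2}})$, each of which is a pure sheaf (the last one because $h_2$ is finite on the support, Lemma~\ref{lem_get_finite_map}). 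Your Koszul rewriting is correct and your base case is fine, but the inductive step would need essentially the same exact-sequence computations as the paper to close the gaps.
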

\begin{proof}
	Let $(g_1,h_1)$, $(g_2,h_2)$ be elements of $\Conn(f)$. We show that 
	\begin{equation}\label{eqtn_formula_1} 
	Rf_* R\hHom_X(\omega_X|_{D_{g_1}}, \omega_{X}|_{D_{g_2}}) \simeq Rf_* R\hHom_X(\omega_f|_{D_{g_1}}, \omega_f|_{D_{g_2}})
	\end{equation} 
	is a pure sheaf on $Y$.
	
	First, assume that $g_1$, $g_2$ are not comparable. In the notation of (\ref{eqtn_diag_cup_and_cap}) with $g_1 \cap g_2 = \xi$, Lemma \ref{lem_epi_of_discr} below implies an exact sequence
	\begin{equation}\label{eqtn_ses_1}
	0 \to (g_1\cap g_2)^* (\omega_h|_{D_{\psi_1}}) \to \omega_f|_{D_{g_1}} \to \omega_{f}|_{D_{g_1\cap g_2}} \to 0,
	\end{equation}
	where $f= h \circ (g_1 \cap g_2)$. We show that morphism $Rf_* R\hHom_X(\omega_f|_{D_{(g_1\cap g_2)}}, \omega_f|_{D_{g_2}}) \to Rf_*R\hHom_X(\omega_f|_{D_{g_1}}, \omega_f|_{D_{g_2}})$ obtained by applying $Rf_*R\hHom_X(-, \omega_f|_{D_g})$ to (\ref{eqtn_ses_1}) is an isomorphism. To this end, we check that $Rf_* R\hHom((g_1\cap g_2)^* (\omega_h|_{D_{\psi_1}}), \omega_f|_{D_{g_2}})$ is isomorphic to zero. We consider a short exact sequence:
	$$
	0 \to g_2^*\omega_{Z_2/Y} \to \omega_f \to \omega_f|_{D_{g_2}} \to 0.
	$$
	Applying $R(g_1\cap g_2)_*(-)$ to it yields $R(g_1\cap g_2)_* \omega_f|_{D_{g_2}} \simeq \omega_h|_{D_{\psi_2}}$. It follows that $Rf_* R\hHom((g_1\cap g_2)^* (\omega_h|_{D_{\psi_1}}), \omega_f|_{D_{g_2}}) \simeq Rh_*(\omega_{h}|_{D_{\psi_1}}, \omega_h|_{D_{\psi_2}}) =0$, since the supports of the two sheaves are disjoint, see Remark \ref{rem_disjoint_div}. We have thus reduced the question calculating cohomology of (\ref{eqtn_formula_1}) to the case of comparable elements $g_1\cap g_2 \preceq g_2$ of $\Dec(f)$.
	
	Now, we show that (\ref{eqtn_formula_1}) is a pure sheaf, for $g_1, g_2\in \Dec(f)$ comparable.
	
	Assume $g_1 \succeq g_2$, i.e. $g_1 \colon X \xrightarrow{g_2} Z_2 \xrightarrow{\f} Z_1$. It follows from Lemma \ref{lem_twisted_discr_in_C_g} below that the cone $Rf_* R\hHom_X(\oO_X, \omega_{g_1}|_{D_{g_2}})$ of map $Rf_* R\hHom_X(\omega_{g_1}|_{D_{g_1}}, \omega_{g_1}|_{D_{g_2}})
	\to Rf_*R\hHom_X(\omega_{g_1}, \omega_{g_1}|_{D_{g_2}})$ is zero. We use the resulting isomorphism to calculate
	\begin{align*}
	&Rf_* R\hHom_X(\omega_f|_{D_{g_1}}, \omega_f|_{D_{g_2}}) \simeq Rf_* R\hHom_X(\omega_{g_1}|_{D_{g_1}}, \omega_{g_1}|_{D_{g_2}}) \\
	&\simeq Rf_*R\hHom_X(\omega_{g_1}, \omega_{g_1}|_{D_{g_2}}) \simeq Rf_*(\oO_{D_{g_2}}).
	\end{align*}
	Since $Rf_* \oO_X$ is a pure sheaf, $\oO_{D_{g_2}}$ is a quotient of $\oO_X$, and $f$ has dimension of fibers bounded by one, then $Rf_*(\oO_{D_{g_2}})$ is a pure sheaf.
	
	Next, we calculate $Rf_* R\hHom_X(\omega_f|_{D_{g_2}}, \omega_f|_{D_{g_1}})$. Lemma \ref{lem_twisted_discr_in_C_g} below implies vanishing of $Rf_*R\hHom_X(\omega_{g_1}|_{D_{g_2}}, g_2^!(\omega_{\f}|_{D_{\f}})) = Rh_{2*}R \hHom_X(Rg_{2*}\omega_{g_1}|_{D_{g_2}},\omega_{\f}|_{D_{\f}})$. Then, exact sequence given by Lemma \ref{lem_embed_of_discrep} proves that morphism $Rf_*R\hHom_X(\omega_{g_1}|_{D_{g_2}}, \omega_{g_2}|_{D_{g_2}}) \to Rf_* R\hHom_X(\omega_{g_1}|_{D_{g_2}}, \omega_{g_1}|_{D_{g_1}})$ is an isomorphism. Thus, we have
	\begin{align*}
	& Rf_* R\hHom_X(\omega_f|_{D_{g_2}}, \omega_f|_{D_{g_1}}) \simeq Rf_* R\hHom_X(\omega_{g_1}|_{D_{g_2}}, \omega_{g_1}|_{D_{g_1}})\\
	&\simeq Rf_* R\hHom_X(\omega_{g_1}|_{D_{g_2}}, \omega_{g_2}|_{D_{g_2}}) \simeq Rf_*R\hHom_X(\omega_{g_1}, \omega_{g_2}|_{D_{g_2}}).
	\end{align*}
	The last isomorphism follows from an exact sequence 
	$$
	0 \to Lg_2^*(\omega_{\f}) \to \omega_{g_1} \to \omega_{g_1}|_{D_{g_2}} \to 0
	$$
	and vanishing of $Rf_* R\hHom_X(Lg_2^*(\omega_{\f}), \omega_{g_2}|_{D_{g_2}})\simeq Rh_{2*} R\hHom_X(\omega_{\f}, Rg_{2*}\omega_{g_2}|_{D_{g_2}})$, see Lemma \ref{lem_discr_in_C_g}. 
	
	Thus, to finish the proof, we need to show that
	$$
	Rf_* R\hHom_X(\omega_{g_1}, \omega_{g_2}|_{D_{g_2}}) \simeq Rf_* R\hHom_X(Lg_2^*(\omega_{\f}), \oO_{D_{g_2}}) \simeq Rh_{2*}(\omega_{\f}^{-1} \otimes Rg_{2*}(\oO_{D_{g_2}}))
	$$
	is a pure sheaf, where $h_2$ is such that $f\colon X\xrightarrow{g_2}Z_2 \xrightarrow{h_2}Y$.
	
	Since $Rg_{2*}\oO_X \simeq \oO_{Z_2}$, object $Rg_{2*}(\oO_{D_{g_2}})$ is a pure sheaf on $Z_2$. Moreover, it is supported on a closed subscheme $W_2\subset Z_2$ such that the fiber of $g_2$ over every closed point of $W_2$ is one dimensional.
	
	Let $\ol{h}_2=h_2|_{W_2}\colon W_2 \to Y$ be the induced map. Since $\omega_{\f}^{-1} \otimes Rg_{2*}(\oO_{D_{g_2}})$ is supported on $W_2$, we have $Rh_{2*}(\omega_{\f}^{-1} \otimes Rg_{2*}(\oO_{D_{g_2}}))\simeq R\ol{h}_{2*}(\omega_{\f}^{-1} \otimes Rg_{2*}(\oO_{D_{g_2}}))$. As $\omega_{\f}^{-1} \otimes Rg_{2*}(\oO|_{D_{g_2}})$, the derived tensor product of locally free sheaf $\omega_{\f}^{-1}$ with sheaf $Rg_{2*}(\oO_{D_{g_2}})$, is a pure sheaf, so is its push-forward $R\ol{h}_{2*}(\omega_{\f}^{-1}\otimes Rg_{2*}(\oO_{D_{g_2}}))$ by a finite map $\ol{h}_2$, see Lemma \ref{lem_get_finite_map} below. 
\end{proof}

\begin{LEM}\label{lem_get_finite_map}
	Let $g\colon X\to Z$ be an element of $\Dec(f)$. Let $W \subset Z$ be the image of the exceptional locus of $g$. Then the map $\ol{h}:=h|_W\colon W \to Y$ is finite.
\end{LEM}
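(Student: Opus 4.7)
The plan is to show that $\ol{h}$ is both proper and quasi-finite, from which finiteness follows by the algebraic-space version of Zariski's Main Theorem. Properness is immediate: $W \subset Z$ is a closed subspace and $h\colon Z \to Y$ is proper (it is a factor of the proper morphism $f$ through the surjection $g$), so $\ol{h} = h|_W$ is proper. Thus the crux is showing that every fiber of $\ol{h}$ is zero-dimensional.

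First I would analyse the components of $W$. Write $\Ex(g) = \bigcup_i E_i$ as the decomposition into irreducible divisors, so $\dim E_i = \dim X - 1$. Because $g$ is birational with fibers of dimension $\le 1$ and each $E_i$ is contracted, the proper surjection $g|_{E_i}\colon E_i \to W_i := g(E_i)$ has generic fiber of dimension $\ge 1$; since it cannot exceed $1$, it equals $1$. Hence $\dim W_i = \dim X - 2$, so $W = \bigcup_i W_i$ has pure codimension two in $Z$. By upper semi-continuity of fiber dimension applied to the proper surjection $g|_{E_i}$, every fiber of $g|_{E_i}$ has dimension at least $1$.

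Now suppose for contradiction that some fiber $\ol{h}^{-1}(y)$ has positive dimension, and choose an irreducible closed $C \subset \ol{h}^{-1}(y)$ with $\dim C \ge 1$. Then $C$ lies in some component $W_i$. Consider $F := (g|_{E_i})^{-1}(C) \subset E_i$: it surjects onto $C$ and every one of its fibers over $C$ has dimension $\ge 1$, so the fiber-dimension inequality gives $\dim F \ge \dim C + 1 \ge 2$. On the other hand $F \subset g^{-1}(h^{-1}(y)) = f^{-1}(y)$, and by hypothesis $\dim f^{-1}(y) \le 1$. This contradiction forces every fiber of $\ol{h}$ to be zero-dimensional, and combining with properness yields finiteness.

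The main delicate point is the claim that every fiber of $g|_{E_i}$ (not only the generic one) has dimension at least $1$, which I would justify via upper semi-continuity of fiber dimension for the proper surjection $g|_{E_i}$. Once this is in place, the rest is a direct dimension count against the assumption $\dim f^{-1}(y) \le 1$, together with the standard fact that a proper quasi-finite morphism of Noetherian algebraic spaces is finite. No subtlety arises from working with algebraic spaces, since all the ingredients invoked (upper semi-continuity of fiber dimension, Zariski's Main Theorem) are available in that generality.
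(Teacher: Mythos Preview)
Your proof is correct and follows essentially the same approach as the paper: establish properness of $\ol{h}$ as a restriction of the proper map $h$ to a closed subspace, then argue that fibers of $\ol{h}$ are zero-dimensional by contradiction with the bound $\dim f^{-1}(y)\le 1$, and conclude via the proper-plus-quasi-finite criterion. The paper's version is terser: it simply asserts that $g^{-1}(w)$ is one-dimensional for every $w\in W$ (implicitly by Zariski's Main Theorem, since $w$ lies under the exceptional locus) and then observes that a one-dimensional fiber of $\ol{h}$ would force a two-dimensional fiber of $f$; your argument spells out this fiber-dimension step more carefully by restricting to the irreducible components $E_i$ and invoking upper semi-continuity, which is a perfectly good and slightly more explicit justification of the same point.
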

\begin{proof}
	Map $\ol{h}$ is a restriction of $h$ to a closed subset, hence it is proper. Thus, by \cite[Theorem 8.11.1]{EGAIV}, in order to show that it is finite it suffices to check that it has finite fibers.
	
	Note that, for any $w \in W$, the fiber $g^{-1}(w)$ is of dimension one. If the fiber of $\ol{h}$ over $y\in h(W)$ would have dimension one, then the fiber of $\ol{h} \circ g$ over $y$, i.e. the fiber of $f$, would have dimension two, which contradicts the assumptions.
\end{proof}

\begin{LEM}\label{lem_twisted_discr_in_C_g}
	For $g_1 \succeq g_2 \in \Dec(f)$, $\omega_{g_1}|_{D_{g_2}} \in \cC_{g_2}$.
\end{LEM}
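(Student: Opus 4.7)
The plan is to reduce the statement to the single vanishing $Rg_{2*}(\omega_{g_2}|_{D_{g_2}})=0$ via the projection formula, and then deduce this vanishing from the defining triangle (\ref{eqtn_dis_of_f}) for the discrepancy divisor.

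First, since $g_1 \succeq g_2$ in $\Dec(f)$, by definition there exists $\varphi\colon Z_2 \to Z_1$ with $g_1 = \varphi \circ g_2$, where the intermediate space $Z_2$ is smooth. The standard formula for relative canonical sheaves under composition gives
\[
\omega_{g_1} \simeq Lg_2^*(\omega_\varphi) \otimes \omega_{g_2},
\]
since $\omega_\varphi$ is a line bundle (so $Lg_2^* = g_2^*$). Restricting to $D_{g_2}$ and pushing forward, the projection formula yields
\[
Rg_{2*}\bigl(\omega_{g_1}|_{D_{g_2}}\bigr) \simeq Rg_{2*}\bigl(Lg_2^*(\omega_\varphi) \otimes \omega_{g_2}|_{D_{g_2}}\bigr) \simeq \omega_\varphi \otimes Rg_{2*}\bigl(\omega_{g_2}|_{D_{g_2}}\bigr).
\]
So the proof reduces to showing $Rg_{2*}(\omega_{g_2}|_{D_{g_2}}) = 0$.

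For this, apply $Rg_{2*}$ to the canonical short exact sequence (\ref{eqtn_dis_of_f}) for $g_2$, namely
\[
0 \to \oO_X \xrightarrow{s_{g_2}} \omega_{g_2} \to \omega_{g_2}|_{D_{g_2}} \to 0.
\]
By Lemma \ref{lem_g(O)_is_O} applied to the decomposition $f = h_2 \circ g_2$ (so $Rg_{2*}\oO_X \simeq \oO_{Z_2}$) and by (\ref{eqtn_ff!=id}) for $g_2 \colon X \to Z_2$ (which gives $Rg_{2*}\omega_{g_2} = Rg_{2*}g_2^!\oO_{Z_2} \simeq \oO_{Z_2}$), the outer two terms push forward to $\oO_{Z_2}$, and the induced map between them is the identity (it is the unit of the adjunction $Rg_{2*} \dashv g_2^!$ applied to $\oO_{Z_2}$, followed by the counit). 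The long exact sequence therefore forces $Rg_{2*}(\omega_{g_2}|_{D_{g_2}}) = 0$, completing the proof.

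The only possible subtlety is checking that the map $Rg_{2*}\oO_X \to Rg_{2*}\omega_{g_2}$ obtained from $s_{g_2}$ indeed corresponds to the identity of $\oO_{Z_2}$ under the identifications above; this is a formal consequence of the construction of $s_{g_2}$ as the adjunction unit. No further geometric input is needed.
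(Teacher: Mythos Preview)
Your proof is correct and follows essentially the same route as the paper's: decompose $\omega_{g_1}|_{D_{g_2}}\simeq g_2^*(\omega_\varphi)\otimes \omega_{g_2}|_{D_{g_2}}$, apply the projection formula, and use the vanishing $Rg_{2*}(\omega_{g_2}|_{D_{g_2}})=0$ obtained from the triangle (\ref{eqtn_dis_of_f}). The only difference is that the paper leaves this last vanishing implicit (it was already noted in the proof of Lemma~\ref{lem_discr_in_C_g}), whereas you spell it out; your remark that $Rg_{2*}(s_{g_2})$ becomes the identity under the identifications is exactly formula (\ref{eqtn_property_of_f_F}) applied to $F=Rg_{2*}$.
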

\begin{proof}
	Let $g_1 = \f \circ g_2$. Then the statement follows by the projection formula from the decomposition $\omega_{g_1}|_{D_{g_2}}\simeq g_2^*(\omega_{\f}) \otimes \omega_{g_2}|_{D_{g_2}}$ and vanishing of $Rg_{2*} \omega_{g_2}|_{D_{g_2}}$.
\end{proof}

Note that the discrepancy sheaf $\omega_X|_{D_g}$ of $g\colon X \to Z$ has a locally free resolution consisting of two invertible sheaves:
$$
0 \to g^*\omega_Z \to \omega_X \to \omega_X|_{D_g} \to 0.
$$
Since a pull back of a non-zero morphism of invertible sheaves is an injective morphism, for any surjective $\f \colon \wt{X} \to X$, we have
$$
L\f^*(\omega_X|_{D_g}) \simeq \f^*(\omega_X|_{D_g}).
$$
If $\wt{X}$ is Gorenstein, i.e. if $\omega_{\wt{X}}$ is an invertible sheaf, then smoothness of $X$ implies that
$$
\f^! (\omega_X|_{D_g}) \simeq L\f^*(\omega_X|_{D_g}) \otimes^L \f^!(\oO_X) \simeq \f^*(\omega_X|_{D_g}) \otimes \f^!(\oO_X)
$$
is also a pure sheaf.

For $ g \in \Dec(f)$, $g' \in \Dec(g)$ and decomposition $g \colon X \xrightarrow{g'} Z' \xrightarrow{h'} Z$, we have a canonical morphism $\alpha_{gg'}\colon \omega_{g'}|_{D_{g'}}\to \omega_{g}|_{D_g}$ induced by the canonical morphism $g'^!(\oO_{Z'}) \to g^!(\oO_Z)$.

\begin{LEM}\label{lem_embed_of_discrep}
	For $ (g,h) \in \Dec(f)$ and $(g',h') \in \Dec(g)$, we have a short exact sequence:
	\begin{equation}\label{eqtn_embed_of_discrep}
	0 \to \omega_{g'}|_{D_{g'}} \xrightarrow{\alpha_{gg'}} \omega_g|_{D_{g}}\to g'^!\omega_{h'}|_{D_{h'}}\to 0.
	\end{equation}
\end{LEM}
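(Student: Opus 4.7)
The plan is to derive the sequence (\ref{eqtn_embed_of_discrep}) from the snake lemma applied to a commutative diagram combining the discrepancy sequences (\ref{eqtn_dis_of_f}) for $g$ and $g'$. The first step is to record the factorisation of canonical sections: since $g^! = g'^! \circ h'^!$, the adjunction unit $s_g\colon \oO_X \to g^!(\oO_Z) = \omega_g$ decomposes as $s_g = \alpha_{gg'} \circ s_{g'}$, where $\alpha_{gg'}\colon \omega_{g'} = g'^!(\oO_{Z'}) \to g'^!(\omega_{h'}) = \omega_g$ is precisely $g'^!$ applied to the canonical section $s_{h'}\colon \oO_{Z'} \to \omega_{h'}$. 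As a morphism of line bundles on $X$, $\alpha_{gg'}$ is the twist by the effective divisor $g'^{*}(D_{h'})$, so it is injective (the pull-back of a non-zero section of the line bundle $\omega_{h'}$ stays non-zero because $g'$ is dominant between integral spaces) with cokernel $\omega_g|_{g'^{*}(D_{h'})}$.

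Next I would stack the two discrepancy sequences into the diagram
$$
\xymatrix{0 \ar[r] & \oO_X \ar[r]^{s_{g'}} \ar@{=}[d] & \omega_{g'} \ar[r] \ar[d]^{\alpha_{gg'}} & \omega_{g'}|_{D_{g'}} \ar[r] \ar[d] & 0 \\
0 \ar[r] & \oO_X \ar[r]^{s_g} & \omega_g \ar[r] & \omega_g|_{D_g} \ar[r] & 0}
$$
which commutes thanks to $s_g = \alpha_{gg'} \circ s_{g'}$. Since the left vertical is the identity and the middle vertical is injective, the snake (nine) lemma shows that the right vertical is injective and has cokernel isomorphic to the cokernel of $\alpha_{gg'}$, yielding
$$
0 \to \omega_{g'}|_{D_{g'}} \xrightarrow{\,\alpha_{gg'}\,} \omega_g|_{D_g} \to \omega_g|_{g'^{*}(D_{h'})} \to 0.
$$

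The remaining step is to identify the cokernel with $g'^! \omega_{h'}|_{D_{h'}}$. Using $g'^!(-) \simeq Lg'^*(-) \otimes \omega_{g'}$, I would observe that the canonical sequence for $h'$ gives a length-one locally free resolution $0 \to \oO_{Z'} \xrightarrow{s_{h'}} \omega_{h'} \to \omega_{h'}|_{D_{h'}} \to 0$, and that its $g'^*$ pull-back remains exact because the section $s_{h'}$ has zero locus of codimension one in $Z'$, so $g'^* s_{h'}$ is still a non-zero section of a line bundle on the integral scheme $X$. Therefore $Lg'^* \omega_{h'}|_{D_{h'}} \simeq g'^*(\omega_{h'})|_{g'^{*}(D_{h'})}$, and twisting by the line bundle $\omega_{g'}$ (which, since the sheaf is supported on $g'^{*}(D_{h'})$, is absorbed into the restriction) yields $g'^! \omega_{h'}|_{D_{h'}} \simeq \omega_g|_{g'^{*}(D_{h'})}$, matching the cokernel found in step two.

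No step is truly an obstacle; the only points to handle carefully are the adjunction identity $s_g = \alpha_{gg'} \circ s_{g'}$ (which makes the diagram commute) and the verification that $Lg'^*$ coincides with $g'^*$ on $\omega_{h'}|_{D_{h'}}$, both of which rely only on the fact that $g$, $g'$, $h'$ are birational morphisms of smooth algebraic spaces with line-bundle relative dualising sheaves admitting non-vanishing canonical sections.
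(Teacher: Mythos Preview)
Your proof is correct and follows essentially the same route as the paper. The paper presents the argument as a single $3\times 3$ diagram obtained by applying $g'^!$ to the discrepancy sequence (\ref{eqtn_dis_of_f}) for $h'$, whose middle column is your map $\alpha_{gg'}=g'^!(s_{h'})$ and whose bottom two rows are exactly your stacked discrepancy sequences; your separate identification of the cokernel as $g'^!\omega_{h'}|_{D_{h'}}$ is precisely the statement that $g'^!$ preserves exactness of that sequence, which the paper uses implicitly.
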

\begin{proof}
Applying $g'^!$ to short exact sequence (\ref{eqtn_dis_of_f}) for $h' \colon Z' \to Z$ yields a diagram
\[
\xymatrix{& g'^! \di{h'} \ar[r]^{\simeq} & g'^! \di{h'} \\
	\oO_X \ar[r] & g'^!h'^! \oO_{Z} \ar[r] \ar[u] & \di{g} \ar[u]\\
	\oO_X \ar[r]  \ar[u]^{\simeq} & g'^! \oO_{Z'} \ar[r] \ar[u] & \di{g'} \ar[u]}
\]
with exact rows and columns. The right column gives the result. 
\end{proof}
\begin{LEM}\label{lem_epi_of_discr}
	For $(g,h)\in \Dec(f)$ and $(g',h')\in \Dec(g)$, we have a short exact sequence:
	\begin{equation}\label{eqtn_restr_of_discr}
	0 \to g'^*(\omega_{h\circ h'}|_{D_{h'}}) \to \omega_f|_{D_g} \xrightarrow{\textrm{res}_{D_{g'}}^{\omega_f|_{D_g}} } \omega_f|_{D_{g'}} \to 0.
	\end{equation}
\end{LEM}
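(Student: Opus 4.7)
My plan is to realize the sequence as the restriction-to-a-subdivisor sequence attached to the decomposition $D_g = D_{g'} + (g')^{*}D_{h'}$, twisted by $\omega_f$.

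First, I would record the basic geometric identity relating the three discrepancy divisors. Since $g = h' \circ g'$, the composition formula $g^{!} \simeq (g')^{!}(h')^{!}$ gives $\omega_g \simeq (g')^{*}\omega_{h'} \otimes \omega_{g'}$, with the canonical section $s_g$ corresponding to $(g')^{*}(s_{h'}) \cdot s_{g'}$ under this identification. Hence as effective Cartier divisors on the smooth space $X$ one has $D_g = D_{g'} + (g')^{*}D_{h'}$, where $(g')^{*}D_{h'}$ is the effective Cartier divisor cut out by the section $(g')^{*}(s_{h'})$ of the line bundle $(g')^{*}\omega_{h'} \simeq \oO_X((g')^{*}D_{h'})$.

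Next, for any sum $D = D_1 + D_2$ of effective Cartier divisors on $X$ the inclusion of ideal sheaves $\oO_X(-D) = \oO_X(-D_1)\cdot \oO_X(-D_2) \subset \oO_X(-D_2)$ yields the standard short exact sequence
$$0 \to \oO_{D_1}(-D_2) \to \oO_D \to \oO_{D_2} \to 0.$$
Applying this with $D_1 = (g')^{*}D_{h'}$ and $D_2 = D_{g'}$ and tensoring by the line bundle $\omega_f$ (which preserves exactness) produces
$$0 \to \omega_f \otimes \oO_{(g')^{*}D_{h'}}(-D_{g'}) \to \omega_f|_{D_g} \to \omega_f|_{D_{g'}} \to 0.$$
The right-hand surjection is, by construction, induced by the structural surjection $\oO_{D_g} \to \oO_{D_{g'}}$ associated to the closed embedding $D_{g'} \hookrightarrow D_g$, so it coincides with $\mathrm{res}^{\omega_f|_{D_g}}_{D_{g'}}$ from \eqref{eqtn_restr_morp}.

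Finally, it remains to match the kernel with $(g')^{*}(\omega_{h \circ h'}|_{D_{h'}})$. From $f = h \circ h' \circ g'$ the same composition formula gives $\omega_f \simeq \omega_{g'} \otimes (g')^{*}\omega_{h \circ h'}$, so that $\omega_f \otimes \oO_X(-D_{g'}) \simeq (g')^{*}\omega_{h \circ h'}$, using $\omega_{g'} \simeq \oO_X(D_{g'})$. Restricting to the Cartier divisor $(g')^{*}D_{h'}$ and using that pullback along $g'$ is exact on the defining sequence of $D_{h'}$ (because $\oO_{Z'}(-D_{h'})$ is invertible), we identify
$$\omega_f \otimes \oO_{(g')^{*}D_{h'}}(-D_{g'}) \simeq (g')^{*}\omega_{h \circ h'} \otimes (g')^{*}\oO_{D_{h'}} \simeq (g')^{*}\bigl(\omega_{h\circ h'}|_{D_{h'}}\bigr),$$
which is the desired left term. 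The only delicate bookkeeping is the compatibility of canonical sections under composition of $(-)^{!}$ ensuring the divisor equality $D_g = D_{g'} + (g')^{*}D_{h'}$ on the nose; everything else is a routine divisorial computation on smooth spaces.
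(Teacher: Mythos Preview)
Your proof is correct and takes a somewhat more direct route than the paper's. Both arguments rest on the same key fact: the decomposition $s_g = \f_{Rg'_*,\omega_{h'}} \circ (g')^*(s_{h'})$ of the canonical section of $\omega_g$, equivalently the Cartier divisor identity $D_g = D_{g'} + (g')^*D_{h'}$. The paper establishes this via Proposition~\ref{prop_map_f_g}(ii) from the appendix on mutations, then feeds it into the bottom-left square of a $3\times 3$ diagram~(\ref{eqtn_diagram_for_beta}) whose right column yields the sequence for $\omega_g$ rather than $\omega_f$; a final twist by $g^*\omega_h$ produces the stated result. You instead invoke the divisor identity directly, apply the standard $D_1+D_2$ sequence $0 \to \oO_{D_1}(-D_2)\to\oO_D\to\oO_{D_2}\to 0$ with $D_1=(g')^*D_{h'}$ and $D_2=D_{g'}$, twist by $\omega_f$, and identify the kernel using $\omega_f\otimes\oO_X(-D_{g'})\simeq (g')^*\omega_{h\circ h'}$. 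The paper's approach has the virtue of deriving the section compatibility from first principles in the appendix rather than asserting it; your approach is shorter and makes the divisorial nature of the computation transparent. You correctly flag the compatibility of canonical sections as the one non-formal step, which is precisely what Proposition~\ref{prop_map_f_g}(ii) supplies.
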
 
\begin{proof}
	Recall from Appendix \ref{sec_canonical_morp} that there exists a canonical morphism of functors $\f_{Rg_*} \colon Lg^* \to g^!$. In particular, $s_g = \f_{Rg_*,\oO_{Z}}\colon \oO_X \to g^!(\oO_Z)$.
	  
	Since $s_{h'} = \f_{Rh'_*, \oO_Z}$, Proposition \ref{prop_map_f_g}(ii) applied to decomposition $Rg_* = Rh'_*\circ Rg'_*$ implies that the bottom left square of diagram
	\begin{equation}\label{eqtn_diagram_for_beta}
	\xymatrix{& \omega_g|_{D_{g'}}\ar[r]^{\textrm{id}} & \omega_g|_{D_{g'}}\\
		\oO_X \ar[r]^{s_g} & \omega_g \ar[r]^{\psi_g} \ar[u]^{\textrm{res}_{D_{g'}}^{\omega_g} } & \di{g} \ar[u]_{\textrm{res}_{D_{g'}}^{\di{g}}} \\
		\oO_X \ar[r]^{g'^*(s_{h'})} \ar[u]^{\textrm{id}} & g'^* \omega_{h'} \ar[r]^{g'^*(\psi_{h'})} \ar[u]|{\f_{Rg'_*, \omega_{h'}}} & g'^* \di{h'} \ar[u] }
	\end{equation}
	commutes. Map $\f_{Rg'_*, \omega_{h'}} \colon g'^*\omega_{h'} \to \omega_g$ is a morphism of line bundles with zeroes along $D_{g'}$, hence its cokernel is $\textrm{res}_{D_{g'}}^{\omega_g}$. As $\psi_g = \textrm{res}_{D_g}^{\omega_g}$ and a composition of restriction morphisms is a restriction morphism, the upper right square of the above diagram commutes. It follows that (\ref{eqtn_diagram_for_beta}) is commutative with exact rows and columns. Sequence (\ref{eqtn_restr_of_discr}) is obtained as the tensor product of the right column of (\ref{eqtn_diagram_for_beta}) with $g^*(\omega_h) \simeq g'^*(h'^* \, \omega_h)$.
\end{proof}

\begin{LEM}\label{lem_pull_back_of_discr}	
	For $(g\colon X \to Z, h\colon Z \to Y)\in \Dec(f)$ and $g'\in \Conn(f)\setminus \Conn(g)$ let $\f \in \Conn(f)$ be such that $g\cup g' = \f \circ g$. Then sequence
	$$
	0 \to Lg^*(\omega_Z|_{D_{\f}}) \to \omega_X|_{D_{g'}} \xrightarrow{\textrm{res}_{D_{g\cap g'}}^{\omega_X|_{D_{g'}}}} \omega_X|_{D_{g\cap g'}} \to 0
	$$
	is exact, where $\omega_{X}|_{D_{\Id}}:=0$.
\end{LEM}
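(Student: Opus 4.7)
The plan is to derive the sequence by applying Lemma \ref{lem_epi_of_discr} to $g'\in \Dec(f)$ through its sub-decomposition via $g\cap g'$, and then identify the resulting kernel using the fibered square in diagram (\ref{eqtn_diag_cup_and_cap}). Adopting that notation, I set $\xi := g\cap g'$, so that $g = \psi_1\circ \xi$ and $g' = \psi_2\circ \xi$; the morphism $\f \colon Z \to Z_\cup$ of the statement is the one with $\kappa = \f\circ \psi_1 = \f_2\circ \psi_2$ and $Z_\cap = Z \times_{Z_\cup} Z_2$. Let $h_{g'}\colon Z_2\to Y$ denote the unique morphism with $f = h_{g'}\circ g'$.

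Since $(g', h_{g'})\in \Dec(f)$ and $(\xi, \psi_2)\in \Dec(g')$, Lemma \ref{lem_epi_of_discr} applies and produces the short exact sequence
\begin{equation*}
0 \to \xi^*(\omega_{h_{g'}\circ \psi_2}|_{D_{\psi_2}}) \to \omega_f|_{D_{g'}} \xrightarrow{\textrm{res}^{\omega_f|_{D_{g'}}}_{D_\xi}} \omega_f|_{D_{\xi}} \to 0.
\end{equation*}
I would then tensor this sequence with the invertible sheaf $f^*\omega_Y$ and invoke the identities $\omega_X = f^*\omega_Y\otimes \omega_f$ and $\omega_{Z_\cap} = (h_{g'}\circ \psi_2)^*\omega_Y \otimes \omega_{h_{g'}\circ \psi_2}$, together with $D_\xi = D_{g\cap g'}$, rewriting it as
\begin{equation*}
0 \to \xi^*(\omega_{Z_\cap}|_{D_{\psi_2}}) \to \omega_X|_{D_{g'}} \xrightarrow{\textrm{res}^{\omega_X|_{D_{g'}}}_{D_{g\cap g'}}} \omega_X|_{D_{g\cap g'}} \to 0.
\end{equation*}

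The hard step will be to identify the kernel $\xi^*(\omega_{Z_\cap}|_{D_{\psi_2}})$ with $Lg^*(\omega_Z|_{D_\f})$. By Proposition \ref{prop_h_1_and_h_2_don_int} the images $\f(\Ex(\f))$ and $\f_2(\Ex(\f_2))$ are disjoint in $Z_\cup$, and by Remark \ref{rem_disjoint_div} one has $\Ex(\psi_1)\cap \Ex(\psi_2) = \emptyset$. Since $D_{\psi_2}\subset \Ex(\psi_2)$ and $D_\f \subset \Ex(\f)$, these disjointness facts together with the fibered-square structure force $\psi_1$ to restrict to an isomorphism in an open neighborhood of $D_{\psi_2}$, with $\psi_1^{-1}(D_\f) = D_{\psi_2}$ as Cartier divisors, and $\omega_{\psi_1}$ is trivial on that neighborhood. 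Hence $\omega_{Z_\cap}|_{D_{\psi_2}} \simeq \psi_1^*(\omega_Z)|_{D_{\psi_2}} \simeq \psi_1^*(\omega_Z|_{D_\f})$. Furthermore, the two-term locally free resolution $0 \to \f^*\omega_{Z_\cup} \to \omega_Z \to \omega_Z|_{D_\f} \to 0$ pulls back under $\psi_1$ to a nonzero, hence injective, map of line bundles on the smooth space $Z_\cap$, so $L\psi_1^* = \psi_1^*$ on $\omega_Z|_{D_\f}$; the same argument yields $L\xi^* = \xi^*$ on $\omega_{Z_\cap}|_{D_{\psi_2}}$. Composing via $g = \psi_1\circ \xi$ then gives $Lg^*(\omega_Z|_{D_\f}) \simeq \xi^*(\omega_{Z_\cap}|_{D_{\psi_2}})$, and substitution into the preceding sequence completes the proof. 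The edge case $\xi = \Id$ is covered by the convention $\omega_X|_{D_\Id} := 0$: the sequence degenerates to an isomorphism $Lg^*(\omega_Z|_{D_\f}) \simeq \omega_X|_{D_{g'}}$, consistent with the fact that in this situation $g$ is an isomorphism in a neighborhood of $\Ex(g')$.
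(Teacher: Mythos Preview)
Your argument is correct and follows essentially the same route as the paper: apply Lemma~\ref{lem_epi_of_discr} to the sub-decomposition $(\xi,\psi_2)\in\Dec(g')$, tensor with $f^*\omega_Y$, and then identify the kernel $\xi^*(\omega_{Z_\cap}|_{D_{\psi_2}})$ with $Lg^*(\omega_Z|_{D_\f})$ using that $\psi_1$ is an isomorphism near $\Ex(\psi_2)$. The only cosmetic difference is in how that last identification is justified: the paper writes $\omega_{Z_\cap}|_{D_{\psi'}}\simeq L\psi^*R\psi_*(\omega_{Z_\cap}|_{D_{\psi'}})$ and invokes Proposition~\ref{prop_rez_of_omega_f_D_g} to compute $R\psi_*(\omega_{Z_\cap}|_{D_{\psi'}})\simeq \omega_Z|_{D_\f}$, whereas you identify the Cartier divisor $\psi_1^{-1}(D_\f)=D_{\psi_2}$ directly from the fibered square; both are the same local-isomorphism observation, and your version has the mild advantage of treating the case $g\cap g'=\Id_X$ uniformly rather than separately.
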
 
\begin{proof}
	If $g\cap g'= \Id_X$, morphism $g$ is an isomorphism in the neighbourhood of $D_{g'}$, by Remark \ref{rem_disjoint_div}, hence $\omega_X|_{D_{g'}} \simeq Lg^*Rg_* \omega_{X}|_{D_{g'}} \simeq Lg^* \omega_{Z}|_{D_\f}$, see Proposition \ref{prop_rez_of_omega_f_D_g}.
	
	Assume that $g\cap g' \neq \Id_X$. Morphisms $g$, $g'$ fit into diagram (\ref{eqtn_diag_cup_and_cap}) with $g_1=g$ and $g_2 = g'$. We put $\psi:= \psi_1$ and $\psi' := \psi_2$. Tensor product of (\ref{eqtn_restr_of_discr}) for $g \cap g' = \xi \in \Dec(g')$ with $f^*\omega_Y$ yields a short exact sequence
	\begin{equation*}\label{eqtn_restr_of_twisted_discr}
	0 \to L(g\cap g')^*(\omega_{Z_{\cap}}|_{D_{\psi'}}) \to \omega_X|_{D_{g'}} \to \omega_X|_{D_{g\cap g'}} \to 0. 
	\end{equation*}
	
	We show that $Lg^*(\omega_Z|_{D_{\f}}) \simeq L(g\cap g')^*(\omega_{Z_{\cap}}|_{D_{\psi'}})$. 
	
	It follows from Remark \ref{rem_disjoint_div} that $\psi$ is an isomorphism in the neighbourhood of $\Ex(\psi')$. Hence, $\omega_{Z_{\cap}}|_{D_{\psi'}} \simeq L\psi^* R\psi_* (\omega_{Z_{\cap}}|_{D_{\psi'}})$. Since $Lg^* = L(g\cap g')^* \circ L\psi^*$, the statement follows from isomorphisms $R\psi_* \omega_{Z_{\cap}}|_{D_{\psi'}} \simeq Rg_* \omega_{X}|_{D_{g'}} \simeq \omega_Z|_{D_{\f}}$, both given by Proposition \ref{prop_rez_of_omega_f_D_g}; the first for the pair $g\cap g'$, $g'$ of elements of $\Dec(f)$.
\end{proof}
 
\begin{PROP}\label{prop_pull_back_T_hh}
	For $(g,h)\in \Dec(f)$, sequence
	\begin{equation}\label{eqtn_pull_back_T}
	0 \to Lg^*(T_h) \to \ol{T}^f_g \to \bigoplus_{g'\in \Conn(f)\setminus \Conn(g)} \omega_X|_{D_{g\cap g'}} \to 0
	\end{equation}
	is exact. 
\end{PROP}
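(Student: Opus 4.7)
The strategy is to assemble the sequence as a direct sum of the sequences provided by Lemma \ref{lem_pull_back_of_discr}, one for each $g' \in \Conn(f)\setminus\Conn(g)$, and then to identify the left-hand summands with $Lg^*(T_h)$ via the bijection $\gamma_g$ of (\ref{eqtn_iso_gamma}).

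More precisely, for each $g' \in \Conn(f)\setminus\Conn(g)$, let $\f_{g'} \in \Conn(h)$ be the unique element with $g \cup g' = \f_{g'} \circ g$; by definition, $\f_{g'} = \gamma_g(g')$. Lemma \ref{lem_pull_back_of_discr} then yields a short exact sequence
$$
0 \to Lg^*(\omega_Z|_{D_{\f_{g'}}}) \to \omega_X|_{D_{g'}} \xrightarrow{\textrm{res}} \omega_X|_{D_{g\cap g'}} \to 0.
$$
(Here $Lg^* = g^*$ on these sheaves because $g$ is surjective and $\omega_Z|_{D_{\f_{g'}}}$ admits a two-term locally free resolution, as noted in the paragraph preceding Lemma \ref{lem_embed_of_discrep}.)

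Taking the direct sum over all $g' \in \Conn(f)\setminus\Conn(g)$ preserves exactness, giving
$$
0 \to \bigoplus_{g'} Lg^*(\omega_Z|_{D_{\f_{g'}}}) \to \bigoplus_{g'} \omega_X|_{D_{g'}} \to \bigoplus_{g'} \omega_X|_{D_{g \cap g'}} \to 0.
$$
The middle term is $\ol{T}^f_g$ by definition (\ref{eqtn_def_of_T_ff}), and the right-hand term is the claimed sum. Since $\gamma_g$ is a bijection between $\Conn(f)\setminus\Conn(g)$ and $\Conn(h)$ by Theorem \ref{thm_Conn_f_is_comp_of_Ex_f} and the discussion preceding (\ref{eqtn_iso_gamma}), the left-hand term is
$$
\bigoplus_{g'} Lg^*(\omega_Z|_{D_{\gamma_g(g')}}) \simeq Lg^*\!\Bigl(\bigoplus_{g_h \in \Conn(h)} \omega_Z|_{D_{g_h}}\Bigr) = Lg^*(T_h),
$$
which completes the identification. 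No step is really an obstacle here; the work has already been done in Lemma \ref{lem_pull_back_of_discr} and in establishing the bijection $\gamma_g$, so the proof amounts to bookkeeping and one reindexing.
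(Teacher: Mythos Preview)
Your proof is correct and follows essentially the same approach as the paper: both take the direct sum of the short exact sequences from Lemma \ref{lem_pull_back_of_discr} over all $g'\in\Conn(f)\setminus\Conn(g)$ and identify the left-hand side with $Lg^*(T_h)$. The paper phrases the identification via Proposition \ref{prop_rez_of_omega_f_D_g} (which says $Rg_*(\omega_X|_{D_{g'}})=\omega_Z|_{D_{\gamma_g(g')}}$), while you invoke the bijection $\gamma_g$ directly; these are the same step.
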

\begin{proof}
	By Proposition \ref{prop_rez_of_omega_f_D_g} any direct summand of $T_h$ is of the form $Rg_* \omega_X|_{D_{g'}}$, for some $g'\in \Conn(f)\setminus \Conn(g)$. Short exact sequence (\ref{eqtn_pull_back_T}) is then the direct sum of the short exact sequences of Lemma \ref{lem_pull_back_of_discr}, for all $g'\in \Conn(f)\setminus \Conn(g)$.
\end{proof}

\begin{THM}\label{thm_tilting_in_C_f}
	For $g\in \Dec(f)$, object $T_g \in \cC_g$ is a tilting relative  generator over $Y$.
\end{THM}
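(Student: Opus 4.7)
The theorem asserts two properties of $T_g$: it is \emph{tilting}, i.e.\ $Rf_*R\hHom_X(T_g,T_g)$ is a pure sheaf on $Y$, and it is a \emph{relative generator} for $\mathfrak{C}_g$ over $Y$, meaning $T_g|_{f^{-1}(U)}$ split-generates $\cC_{g|_{f^{-1}(U)}}$ for every affine open $U\subset Y$.

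\emph{Tilting.} Since $\Conn(g)\subset\Conn(f)$, the arguments inside the proof of Proposition~\ref{prop_T_ff_no_higher_ext} already establish that $Rf_*R\hHom_X(\omega_X|_{D_{g_1}},\omega_X|_{D_{g_2}})$ is a pure sheaf on $Y$ for every pair $g_1,g_2\in\Conn(g)$; summing over such pairs gives the tilting assertion.

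\emph{Relative generation.} The statement is local on $Y$, so we may assume $Y$ is affine. Applying Proposition~\ref{prop_Conn(f)-filtration} to $g$ in place of $f$ equips $\cC_g$ with a strict admissible $\Dec(g)$-filtration. Choose a linear extension $s_1<\cdots<s_n$ of the poset $\Conn(g)$, and for $0\le k\le n$ let $g^{(k)}\in\Dec(g)$ correspond to the lower ideal $\{s_1,\dots,s_k\}$, so $g^{(0)}=\Id_X$ and $g^{(n)}=g$. Each step factors as $g^{(k)}=\phi_k\circ g^{(k-1)}$, where $\phi_k$ is the single blow-up of the smooth connected codimension-two centre $B_k:=\bB_{s_k}$ in the target $Z_k$ of $g^{(k)}$. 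Proposition~\ref{prop_recoll_for_st_t-st_on_C_f} gives a recollement
\begin{equation*}
\cC_{g^{(k-1)}}\hookrightarrow\cC_{g^{(k)}}\xrightarrow{Rg^{(k-1)}_*}\cC_{\phi_k},
\end{equation*}
with SOD $\cC_{g^{(k)}}=\langle\cC_{g^{(k-1)}},Lg^{(k-1)*}\cC_{\phi_k}\rangle$; Lemma~\ref{lem_stn_t-str_on_W_and_C_f} identifies $\cC_{\phi_k}\simeq\dD^b(B_k)$ via the Orlov functor $\theta_{\phi_k}$.

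The geometric key is that $Rg^{(k-1)}_*(\omega_X|_{D_{s_k}})=\theta_{\phi_k}(\omega_{Z_k}|_{B_k})$: applying Proposition~\ref{prop_rez_of_omega_f_D_g} to $g^{(k-1)}\preceq g^{(k)}$ gives $Rg^{(k-1)}_*(\omega_X|_{D_{s_k}})=\omega_{Z_{k-1}}|_{D_{\phi_k}}$, and a direct computation using $\omega_{Z_{k-1}}=\phi_k^*\omega_{Z_k}\otimes\oO_{Z_{k-1}}(E)$ together with $\oO_{Z_{k-1}}(E)|_E=\oO_{\mathbb{P}(N_{Z_k/B_k})}(-1)$ identifies this with $\theta_{\phi_k}(\omega_{Z_k}|_{B_k})$, a line bundle on $B_k$. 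Moreover, the fibre bound on $f$ forces $B_k\to Y$ to be quasi-finite, hence finite by properness, for a one-dimensional fibre of $B_k\to Y$ would pull back through the $\mathbb{P}^1$-bundle exceptional divisor of $\phi_k$ to a two-dimensional fibre of $f$. With $Y$ affine, $B_k$ is thus affine, and a line bundle on a smooth affine noetherian scheme split-generates its bounded derived category.

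We now show by induction on $k$ that $\cC_{g^{(k)}}\subset\langle T_g\rangle_{\mathrm{thick}}$. The base $k=0$ is trivial. For the step, the SOD reduces the problem to $Lg^{(k-1)*}\cC_{\phi_k}\subset\langle T_g\rangle_{\mathrm{thick}}$. Since $\cC_{\phi_k}$ is split-generated by $Rg^{(k-1)}_*(\omega_X|_{D_{s_k}})$ (by the above and $Y$ being affine), it suffices to put $Lg^{(k-1)*}Rg^{(k-1)}_*(\omega_X|_{D_{s_k}})$ into $\langle T_g\rangle_{\mathrm{thick}}$. The adjunction triangle
\begin{equation*}
Lg^{(k-1)*}Rg^{(k-1)}_*(\omega_X|_{D_{s_k}})\to\omega_X|_{D_{s_k}}\to C
\end{equation*}
has cone $C\in\cC_{g^{(k-1)}}\subset\langle T_g\rangle_{\mathrm{thick}}$ by the inductive hypothesis, and $\omega_X|_{D_{s_k}}$ is a direct summand of $T_g$; the conclusion follows.

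\emph{Main obstacle.} The delicate point is that, while Orlov's equivalence $\cC_{\phi_k}\simeq\dD^b(B_k)$ is global, the line bundle $\omega_{Z_k}|_{B_k}$ split-generates $\dD^b(B_k)$ only when $B_k$ is affine. This is secured precisely by the finiteness of $B_k\to Y$—itself a consequence of the fibre-dimension hypothesis on $f$—once we have reduced to affine $Y$; the inductive mechanism then channels local-over-$Y$ information through the global SOD of each recollement step.
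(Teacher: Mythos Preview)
Your proof is correct and reaches the same conclusion as the paper's, but the inductive organisation is different and worth noting. The paper peels off a single blow-up $g'$ at the \emph{bottom} of $g$, writes $g=h'\circ g'$, and invokes Proposition~\ref{prop_pull_back_T_hh} (the short exact sequence for $Lg'^*T_{h'}$) to conclude that $Lg'^*T_{h'}$ lies in the thick hull of $T_g$; the inductive hypothesis is applied to $h'$. You instead build up from $\Id_X$ to $g$ through a full chain $g^{(0)}\prec\cdots\prec g^{(n)}=g$, and at each step use only the adjunction triangle $Lg^{(k-1)*}Rg^{(k-1)}_*(\omega_X|_{D_{s_k}})\to\omega_X|_{D_{s_k}}\to C$ together with Proposition~\ref{prop_rez_of_omega_f_D_g}. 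This bypasses Proposition~\ref{prop_pull_back_T_hh} and Lemma~\ref{lem_pull_back_of_discr} entirely, making your argument slightly more economical in its prerequisites. The base case---that $\omega_{Z_{k-1}}|_{D_{\phi_k}}$ generates $\cC_{\phi_k}$ because $B_k$ is affine over affine $Y$---is the same in both proofs (the paper isolates it as Lemma~\ref{lem_generating_base_case}, and your finiteness argument for $B_k\to Y$ is Lemma~\ref{lem_get_finite_map}). The paper's route has the virtue that it produces Proposition~\ref{prop_pull_back_T_hh} as a byproduct, which is of independent use; yours is more direct if one only wants the theorem.
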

First, we consider the case when $g=s\colon X \to Z$ is the blow-up of $Z$ along a smooth irreducible $\bB_g$ of codimension two, see diagram (\ref{eqtn_diagram_single_blow-up}). 
\begin{LEM}\label{lem_generating_base_case}
	For $g=s\in \Conn(f)$ as above, $\omega_X|_{D_g}$ is a tilting relative  generator over $Y$.
\end{LEM}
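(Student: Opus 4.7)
The plan is to transport the statement to $\bB_g$ via the Orlov equivalence $\theta_g\colon \dD^b(\bB_g) \xrightarrow{\sim} \cC_g$ from Lemma~\ref{lem_stn_t-str_on_W_and_C_f}, where it becomes a simple statement about a line bundle.

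The first step is to identify $\omega_X|_{D_g}$ with $\theta_g(\omega_Z|_{\bB_g})$. Since $g=s$ is the blow-up of a smooth codimension-two centre, $D_g$ coincides with $E_g = \mathbb{P}(N_{Z/\bB_g})$, and $\omega_X \simeq s^*\omega_Z \otimes \oO_X(E_g)$ with $\oO_X(E_g)|_{E_g} \simeq \oO_{E_g}(-1)$. Restricting along $\zeta\colon E_g \hookrightarrow X$, using $s\circ \zeta = j \circ p$ from diagram (\ref{eqtn_diagram_single_blow-up}), and applying the projection formula gives
$$
\omega_X|_{D_g} \simeq \zeta_*\bigl( p^*(\omega_Z|_{\bB_g}) \otimes \oO_{E_g}(-1)\bigr) = \theta_g(\omega_Z|_{\bB_g}).
$$
Because $\theta_g$ is defined by pull-push along the morphisms of diagram (\ref{eqtn_diagram_single_blow-up}), it is compatible with restriction to open subsets of $Z$, hence, via $h$, with open subsets of $Y$. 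Thus $\theta_g$ matches the stack $\mathfrak{C}_g$ on $Y$ with the stack $U \mapsto \dD^b((h|_{\bB_g})^{-1}(U))$, and the claim reduces to showing that $\omega_Z|_{\bB_g}$ is a tilting relative generator for this latter stack.

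For the tilting property, I would combine full-faithfulness of $\theta_g$ with Lemma~\ref{lem_get_finite_map}: the former yields
$$
Rf_*R\hHom_X(\omega_X|_{D_g}, \omega_X|_{D_g}) \simeq R(h|_{\bB_g})_*R\hHom_{\bB_g}(\omega_Z|_{\bB_g}, \omega_Z|_{\bB_g}) \simeq R(h|_{\bB_g})_*\oO_{\bB_g},
$$
and the latter tells us that $h|_{\bB_g}$ is finite (applied with $W = \bB_g$), hence affine, so $R(h|_{\bB_g})_*\oO_{\bB_g}$ is a pure sheaf on $Y$. For the relative generator property, fix an affine open $U \subset Y$; finiteness of $h|_{\bB_g}$ makes $(h|_{\bB_g})^{-1}(U)$ affine, and smoothness of $\bB_g$ makes it regular. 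On any smooth affine scheme $\Spec A$, a line bundle $L$ split-generates $\Perf(A)$: $L^{-1}$ is a direct summand of some $A^{\oplus n}$, hence $A \simeq L \otimes L^{-1}$ is a summand of $L^{\oplus n}$, and $A$ classically generates $\Perf(A)$. Applied to the restriction of $\omega_Z|_{\bB_g}$ to $(h|_{\bB_g})^{-1}(U)$ and transported through $\theta_g$, this yields split-generation of $\cC_{g|_{f^{-1}(U)}}$ by $\omega_X|_{D_g}|_{f^{-1}(U)}$.

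The main obstacle is not any deep calculation but rather the bookkeeping required to check that $\theta_g$ genuinely transports the stack $\mathfrak{C}_g$ to the stack of bounded derived categories on open subsets of $\bB_g$; once this compatibility is in place, the tilting and generator conditions become formal consequences of the finiteness of $h|_{\bB_g}$ and the smoothness of $\bB_g$.
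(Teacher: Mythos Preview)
Your approach is correct and essentially the same as the paper's. Both arguments transport the question through Orlov's equivalence $\theta_g$ and then invoke Lemma~\ref{lem_get_finite_map} to conclude that $\bB_g$ is affine over affine opens of $Y$, hence smooth affine, where a line bundle generates. The paper makes one small simplification you did not: it first twists by the autoequivalence $g^*\omega_Z \otimes (-)$ of $\cC_g$ to replace $\omega_X|_{D_g}$ by $\omega_g|_{D_g} = \oO_E(E) = \theta_g(\oO_{\bB_g})$, so that the object on the $\bB_g$ side is the structure sheaf rather than the line bundle $\omega_Z|_{\bB_g}$; your direct identification $\omega_X|_{D_g} \simeq \theta_g(\omega_Z|_{\bB_g})$ and your observation that any line bundle split-generates on a smooth affine scheme achieve the same end. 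The paper also omits the tilting verification from this lemma, relying instead on Proposition~\ref{prop_T_ff_no_higher_ext} (of which your argument via finiteness of $h|_{\bB_g}$ is a direct special case).
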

\begin{proof}
	We shall assume that $Y = \Spec(R)$ is affine and check that $\omega_X|{D_g}$ generates $\cC_g$.
	
	Since $g^*\omega_Z\otimes(-) \colon \cC_g \to \cC_g$ is an equivalence, $\omega_X|_{D_g}\simeq g^*\omega_Z \otimes \omega_g|_{D_g}$ generates $\cC_g$ if and only if $\omega_g|_{D_g} =\oO_E(E)= R\zeta_*Lp^*(\oO_{\bB_g})\otimes \oO_X(E)$ does. As the codimension of $\bB_g$ is 2, functor $R\zeta_*Lp^*(-)\otimes \oO_X(E)\colon \dD^b(\bB_g) \to \cC_g$ is an equivalence (see \cite[Theorem 4.3]{Orl}, \cite[Proposition 3.2]{BonOrl}). Thus, it suffices to check that $\oO_{\bB_g}$ generates $\dD^b(\bB_g)$ over $Y$. 
	
	Note that, by Lemma \ref{lem_get_finite_map} morphism $h|_{\bB_g} \colon \bB_g \to Y$ is affine. Since $Y$ is affine, so is $\bB_g$. As $\bB_g$ is smooth, we have $\dD^b(\bB_g)=\langle \oO_{\bB_g} \rangle$, which concludes the proof.
\end{proof}

\begin{proof}[Proof of Theorem \ref{thm_tilting_in_C_f}]
	Note that $T_g$ is a direct summand of $T_f$. Hence, it follows from 
	Proposition \ref{prop_T_ff_no_higher_ext} that $Rf_* R\hHom_X(T_g, T_g)$ is a pure sheaf on $Y$. Therefore, it suffices to check that, for an open affine $U\subset Y$ and $V:=f^{-1}(U)$, category $\cC_{g|_V}$ is generated by $T_g|_V$. Since $g$ factors through $g' \in \Conn(g)$, Lemma \ref{lem_discr_in_C_g} implies that $R(g|_{V})_* \, T_g|_{V} = 0$. Thus, the triangulated subcategory of $\dD^b(V)$ generated by $T_g|_{V}$ is contained in $\cC_{g|_{V}}$. 
	
	Let $Y$ be affine and $g\in \Dec(f)$. We prove by induction on $|\Conn(g)|$ that $T_g$ generates $\cC_g$. If $|\Conn(g)|=1$, the statement follows from Lemma \ref{lem_generating_base_case}.
	If $|\Conn(g)|>1$, then we consider a decomposition $g\colon X \xrightarrow{g'} Z' \xrightarrow{h'} Z$ such that $g'$ is the blow-up of a smooth irreducible $W'\subset Z'$. 
	According to Proposition \ref{prop_pull_back_T_hh} applied to $(g',h') \in \Dec(g)$, sequence
	$$
		0 \to Lg'^*(T_{h'}) \to \ol{T}_{g'}^g \to \bigoplus_{\wt{g}\in \Conn(g)\setminus \{g'\}} \omega_X|_{D_{\wt{g}\cap g'}} \to 0
	$$
	is exact. 
	Since intersection $\wt{g} \cap g'$ of $\wt{g}$ and $g'$ in $\Dec(g)$ equals $g'$ or $\Id_X$, for any $\wt{g} \in \Conn(g)$, the last object of this sequence is a direct sum of copies of $T_{g'}$. As this object together with $\ol{T}_{g'}^g$ are direct summands of $T_g$, object $Lg'^*(T_{h'})$ is split generated by $T_g$.

	Decomposition $g=h'\circ g'$ yields SOD $\cC_g = \langle \cC_{g'}, Lg'^*\cC_{h'} \rangle$. 
	By the inductive hypothesis $Lg'^*\cC_{h'}\subset \cC_g$ is split generated by $Lg'^{*} T_{h'}$, hence by $T_g$. The other component $\cC_{g'}$ of the SOD is split generated by $T_g$, because it is generated by $T_{g'}=\omega_X|_{D_{g'}}$ by Lemma \ref{lem_generating_base_case}. 
\end{proof}

\subsection{Tilting relative generators for $\dD^b(X)$}\label{ssec_tiliting_object}

Define

$$
T_X = T_{X,f} := \omega_X \oplus T_f =\omega_X \oplus \bigoplus_{g\in \Conn(f)}\omega_{X}|_{D_g}.
$$

\begin{PROP}\label{prop_T_Xf_no_higher_ext}
	$Rf_*R\hHom_X(T_{X,f},T_{X,f})$ is a pure sheaf on $Y$.
\end{PROP}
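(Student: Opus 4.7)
The plan is to split $T_{X,f} = \omega_X \oplus T_f$ and handle the four blocks of the bifunctor $Rf_*R\hHom_X(-,-)$ separately. The block $Rf_*R\hHom_X(T_f,T_f)$ is pure by Proposition \ref{prop_T_ff_no_higher_ext}, so the new work consists of the diagonal term $Rf_*R\hHom_X(\omega_X,\omega_X)$ together with the two mixed terms.

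For the diagonal $\omega_X$-term, $Rf_*R\hHom_X(\omega_X,\omega_X) = Rf_*\oO_X$, which is $\oO_Y$ by Danilov's decomposition of $f$ into blow-ups along smooth codimension-two centers (Theorem \ref{thm_Danilov}); this is a pure sheaf. For the $\Hom(\omega_X,\omega_X|_{D_g})$ term, the identification $\omega_X^{-1}\otimes \omega_X|_{D_g} \simeq \oO_{D_g}$ gives $Rf_*\oO_{D_g}$, and purity follows by the same argument already used at the end of the proof of Proposition \ref{prop_T_ff_no_higher_ext}: $\oO_{D_g}$ is a quotient of $\oO_X$, the sheaf $Rf_*\oO_X$ is pure (equal to $\oO_Y$), and the dimension of fibres is bounded by $1$, which forces $R^{\geq 2}f_*=0$ and kills the only possible other cohomology.

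The term that requires a little calculation is the remaining mixed one, $Rf_*R\hHom_X(\omega_X|_{D_g},\omega_X)$. I would compute it by dualising the length-two locally free resolution
\[
0 \to g^*\omega_Z \to \omega_X \to \omega_X|_{D_g} \to 0,
\]
using $g^*\omega_Z \simeq \omega_X\otimes \omega_g^{-1}$. This identifies $R\hHom_X(\omega_X|_{D_g},\omega_X)$ with the two-term complex $[\oO_X \xrightarrow{s_g}\omega_g]$ placed in degrees $0,1$, where $s_g$ is the canonical section of $\omega_g = \oO_X(D_g)$; hence $R\hHom_X(\omega_X|_{D_g},\omega_X) \simeq \omega_g|_{D_g}[-1]$. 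Applying $Rf_*$ reduces the question to the vanishing $Rf_*\omega_g|_{D_g}=0$, which I would establish by pushing forward the triangle $\oO_X \to \omega_g \to \omega_g|_{D_g}$ along $g$: Grothendieck duality gives $Rg_*\omega_g = Rg_*g^!\oO_Z = R\hHom_Z(Rg_*\oO_X,\oO_Z) = \oO_Z$, the induced map $\oO_Z\to \oO_Z$ is the identity by the adjunction, and so $Rg_*(\omega_g|_{D_g}) = 0$, whence $Rf_*(\omega_g|_{D_g})=0$. Consequently this block is identically zero, which is trivially pure.

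Putting the four blocks together yields the statement. The only genuinely non-routine ingredient is identifying $R\hHom_X(\omega_X|_{D_g},\omega_X)$ with $\omega_g|_{D_g}[-1]$ and checking that the map $Rg_*\oO_X \to Rg_*\omega_g$ induced by $s_g$ is the identity; everything else is bookkeeping reducing to results already established (Proposition \ref{prop_T_ff_no_higher_ext}, Lemma \ref{lem_discr_in_C_g}, Theorem \ref{thm_Danilov}).
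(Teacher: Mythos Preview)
Your proof is correct and follows the same decomposition as the paper: split $T_{X,f}=\omega_X\oplus T_f$ and handle the four blocks separately, citing Proposition~\ref{prop_T_ff_no_higher_ext} for the $T_f$--$T_f$ block and treating the $\Hom(\omega_X,\omega_X|_{D_g})$ block exactly as the paper does.

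The only point of divergence is the $\Hom(\omega_X|_{D_g},\omega_X)$ block. The paper disposes of it in one line via Grothendieck duality,
\[
Rf_*R\hHom_X(\omega_X|_{D_g},\omega_X)\simeq R\hHom_Y(Rf_*(\omega_X|_{D_g}),\omega_Y)\simeq 0,
\]
invoking Lemma~\ref{lem_discr_in_C_g} for the vanishing of $Rf_*(\omega_X|_{D_g})$. You instead compute $R\hHom_X(\omega_X|_{D_g},\omega_X)\simeq\omega_g|_{D_g}[-1]$ on $X$ and then push forward, checking $Rg_*(\omega_g|_{D_g})=0$ directly from the triangle $\oO_X\to\omega_g\to\omega_g|_{D_g}$. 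This is correct but essentially reproves the content of Lemma~\ref{lem_discr_in_C_g} (whose proof is exactly that calculation); you could have cited that lemma and saved yourself the work. The paper's route is slightly slicker, but both arrive at the same vanishing.
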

\begin{proof}
	In view of Proposition \ref{prop_T_ff_no_higher_ext}, it suffices to show that $Rf_* R\hHom_X(\omega_X, \omega_X|_{D_g})$ and $Rf_* R\hHom_X(\omega_X|_{D_g}, \omega_X)$ is a pure sheaf, for any $g \in \Conn(f)$.
	
	Object
	\begin{align*}
	Rf_*R \hHom_X(\omega_X, \omega_X|_{D_g}) \simeq Rf_* \oO_{D_g}
	\end{align*} 
	is a pure sheaf as $\oO_{D_g}$ is a quotient of $\oO_X$ and $R^1f_* \oO_X =0$.
	
	Moreover, 
	\begin{align*}
	Rf_*R \hHom_X(\omega_X|_{D_g}, \omega_X) \simeq R\hHom(Rf_*(\omega_X|_{D_g}), \omega_Y) \simeq 0,
	\end{align*}
	by Lemma \ref{lem_discr_in_C_g}.
\end{proof}

Clearly, $\dD^b(X)$ is a stack of subcategories of $\dD^b(X)$ over $Y$. We have:
\begin{THM}\label{thm_tilting_in_D(X)}
	Object $T_{X,f}\in \dD^b(X)$ is a tilting relative  generator for $\dD^b(X)$ over $Y$.
\end{THM}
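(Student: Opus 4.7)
The plan is to verify the two defining conditions of a tilting relative generator: the tilting condition and relative generation over $Y$.

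The tilting condition, that $Rf_*R\hHom_X(T_{X,f}, T_{X,f})$ is a pure sheaf on $Y$, is exactly the content of Proposition \ref{prop_T_Xf_no_higher_ext}, so nothing remains to be done there.

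For relative generation, I would fix an affine open $U \subset Y$, set $V := f^{-1}(U)$, and show that $T_{X,f}|_V$ split-generates $\dD^b(V)$. My starting point is the SOD
$$
\dD^b(V) = \langle \cC_{f|_V},\; L(f|_V)^*\dD^b(U) \rangle,
$$
which exists because $V$ is Gorenstein, $U$ is smooth and $R(f|_V)_*\oO_V \simeq \oO_U$ (by flat base change), so $\cC_{f|_V} \subset \dD^b(V)$ is admissible, as recalled at the start of Section \ref{sec_the_st_tr}. The first summand $\cC_{f|_V}$ is split-generated by $T_f|_V$ by Theorem \ref{thm_tilting_in_C_f}, and $T_f$ is a direct summand of $T_{X,f}$. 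Since $U$ is a regular affine, $\dD^b(U)$ is split-generated by $\oO_U$, hence also by $\omega_U$, and so $L(f|_V)^*\dD^b(U)$ is split-generated by $f^*\omega_Y|_V$. The problem thus reduces to putting $f^*\omega_Y|_V$ into the triangulated subcategory split-generated by $T_{X,f}|_V$.

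The key step is to use the discrepancy sequence. I would twist the tautological sequence $0 \to \oO_X \xrightarrow{s_f} \omega_f \to \omega_f|_{D_f} \to 0$ by the line bundle $f^*\omega_Y$ to obtain the short exact sequence
$$
0 \to f^*\omega_Y \to \omega_X \to \omega_X|_{D_f} \to 0.
$$
The middle term $\omega_X$ is a direct summand of $T_{X,f}$, and by Lemma \ref{lem_discr_in_C_g} applied to $g=f$ the right-hand term $\omega_X|_{D_f}$ lies in $\cC_f$, hence $\omega_X|_{D_f}|_V \in \cC_{f|_V}$ is split-generated by $T_f|_V$ and therefore by $T_{X,f}|_V$. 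Consequently $f^*\omega_Y|_V$ sits in a triangle both of whose other terms are split-generated by $T_{X,f}|_V$, so it is split-generated as well. This finishes the verification.

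I do not anticipate any serious obstacle: the argument is a formal combination of the SOD coming from admissibility of $\cC_f$, the already-proved Theorem \ref{thm_tilting_in_C_f}, and the discrepancy sequence. The only slightly subtle point is remembering that $\omega_X|_{D_f}$ belongs to $\cC_f$ rather than to $Lf^*\dD^b(Y)$, which is precisely what Lemma \ref{lem_discr_in_C_g} provides and which is what lets $\omega_X$ serve as the bridge between the two components of the SOD.
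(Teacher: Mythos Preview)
Your proof is correct, but it takes a slightly longer route than the paper's. The paper uses the other SOD
\[
\dD^b(X) = \langle f^!\dD^b(Y),\; \cC_f \rangle
\]
rather than $\langle \cC_f,\; Lf^*\dD^b(Y)\rangle$. Since $\omega_X = f^!\omega_Y$ is already a summand of $T_{X,f}$ and generates $f^!\dD^b(Y)$ over affine $Y$, the proof is finished in one line once Theorem~\ref{thm_tilting_in_C_f} handles $\cC_f$. Your choice of SOD forces you to reach $f^*\omega_Y$ instead, which is not a summand of $T_{X,f}$, so you need the extra step through the discrepancy sequence $0 \to f^*\omega_Y \to \omega_X \to \omega_X|_{D_f} \to 0$ and the observation that $\omega_X|_{D_f}\in\cC_f$. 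This works perfectly well, but the detour is avoidable: had you picked the $f^!$-side SOD, the bridge object $\omega_X$ would sit directly in one of the two pieces rather than straddling them.
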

\begin{proof}
	In view of Proposition \ref{prop_T_Xf_no_higher_ext}, it suffices to check that $T_{X,f}$ generates $\dD^b(X)$ if $Y$ is affine. Since $Rf_* \oO_X \simeq \oO_Y$, functor $f^!$ is fully faithful and we have a SOD 
	$$
	\dD^b(X) = \langle f^! \dD^b(Y), \cC_f \rangle.
	$$
	In view of Theorem \ref{thm_tilting_in_C_f}, $T_f$ generates $\cC_f$. As $\omega_X = f^!(\omega_Y)$, the first summand of $T_{X,f}$, generates $f^!\dD^b(Y)$, the statement follows.
\end{proof}

Finally, note that, for $f\colon X \to Y$ as above, the relative duality functor $\mathscr{D}_f(-)=R\hHom_X(-, \omega_f^\bcdot)$ is local over $Y$. By Lemma \ref{lem_dual_preserv_C_f}, $\mathscr{D}_f(\cC_f) \subset \cC_f$. Then $\mathscr{D}_f(T_f) \in \cC_f$, $\mathscr{D}_f(T_{X,f}) \in \dD^b(X)$ are tilting  relative generators over $Y$. It will be convenient for us to use the functor $\mathscr{D}_X(-)= R\hHom_X(-,\omega_X)$ instead. Functor $\mathscr{D}_X$ differs from $\mathscr{D}_f$ by a twist with $Lf^* \omega_Y$, hence it also satisfies $\mathscr{D}_X(\cC_f) \subset \cC_f$.
Define 
\begin{align*} 
&S_f:= \mathscr{D}_X(T_f),& &S_{X,f}=S_X:= \mathscr{D}_X(T_{X,f}).& 
\end{align*} 
A simple calculation for these objects shows
\begin{align*} 
&S_f=\bigoplus_{g\in \Conn(f)} \omega_g|_{D_g}[-1],& &S_{X,f}= \oO_X \oplus S_f.&
\end{align*} 
\begin{COR}\label{cor_dual_tilting}
	Objects $S_{X,f}$ and $S_f$ are tilting relative  generators over $Y$ for $\cC_f$ and for $\dD^b(X)$ respectively.
\end{COR}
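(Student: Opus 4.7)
\medskip

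\noindent\textbf{Proof proposal.}
The plan is to exploit the fact that, since $X$ is smooth and $\omega_X$ is invertible, $\mathscr{D}_X(-)=R\hHom_X(-,\omega_X)$ is an involutive contravariant auto-equivalence of $\dD^b(X)$, which is local on $X$ and hence over $Y$. Moreover, $\mathscr{D}_X$ preserves the subcategory $\cC_f$: indeed $\mathscr{D}_X$ differs from the relative duality $\mathscr{D}_f$ only by the twist $(-)\otimes Lf^*\omega_Y^{-1}$, an auto-equivalence of $\dD^b(X)$ that clearly respects $\cC_f$, so Lemma \ref{lem_dual_preserv_C_f} transfers to $\mathscr{D}_X$.

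The tilting property will then be inherited from $T_f$ and $T_{X,f}$ via the biduality identity
\[
R\hHom_X(\mathscr{D}_X A,\mathscr{D}_X B)\simeq R\hHom_X(B,A),\qquad A,B\in\dD^b(X).
\]
Applying $Rf_*$ and specialising to $(A,B)=(T_f,T_f)$ and $(A,B)=(T_{X,f},T_{X,f})$ yields
\[
Rf_*R\hHom_X(S_f,S_f)\simeq Rf_*R\hHom_X(T_f,T_f),\qquad Rf_*R\hHom_X(S_{X,f},S_{X,f})\simeq Rf_*R\hHom_X(T_{X,f},T_{X,f}).
\]
The right-hand sides are pure sheaves on $Y$ by Propositions \ref{prop_T_ff_no_higher_ext} and \ref{prop_T_Xf_no_higher_ext}, so the left-hand sides are as well, establishing that $S_f$ and $S_{X,f}$ are tilting over $Y$.

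For relative generation, locality of $\mathscr{D}_X$ on $X$ implies that for any open $U\subset Y$ with preimage $V:=f^{-1}(U)$ one has a natural identification $\mathscr{D}_X(-)|_V\simeq\mathscr{D}_V((-)|_V)$, with $\mathscr{D}_V=R\hHom_V(-,\omega_V)$; this is again an involutive anti-equivalence of $\dD^b(V)$ preserving $\cC_{f|_V}$ by the same argument. Since any contravariant auto-equivalence takes split generators to split generators, Theorem \ref{thm_tilting_in_D(X)} (which gives that $T_{X,f}|_V$ split generates $\dD^b(V)$) and Theorem \ref{thm_tilting_in_C_f} applied with $g=f$ (which gives that $T_f|_V$ split generates $\cC_{f|_V}$) imply that $S_{X,f}|_V=\mathscr{D}_V(T_{X,f}|_V)$ and $S_f|_V=\mathscr{D}_V(T_f|_V)$ split generate $\dD^b(V)$ and $\cC_{f|_V}$ respectively. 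The only point requiring a little care is the compatibility of $\mathscr{D}_X$ with restriction to etale opens, which is routine since $\omega_X|_V\simeq\omega_V$; no substantial obstacle arises, and the corollary reduces to bookkeeping applied to the tilting statements already established for $T_f$ and $T_{X,f}$.
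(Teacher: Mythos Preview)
Your argument is correct and is precisely the paper's own: the text preceding the corollary observes that $\mathscr{D}_f$ (hence $\mathscr{D}_X$, which differs from it by the twist $Lf^*\omega_Y$) is an involutive anti-equivalence local over $Y$ and preserving $\cC_f$, so it carries the tilting relative generators $T_f$ and $T_{X,f}$ to tilting relative generators $S_f$ and $S_{X,f}$ for $\cC_f$ and $\dD^b(X)$ respectively. Note that the corollary as stated has the two categories swapped; both you and the paper's preceding discussion use the correct association $S_f\in\cC_f$, $S_{X,f}\in\dD^b(X)$.
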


\section{\textbf{A system of \tr e on $\dD^b(X)$ related to $f$}}

\subsection{T-structures on the null-category and on $\dD^b(X)$: one blow-up}

Since $X$ is smooth, it follows from Theorem \ref{thm_tilting_in_D(X)} that we have the $T_X$-projective and $T_X$-injective \tr es on $\dD^b(X)$ defined as in (\ref{eqtn_t-str_on_stack}). 
Similarly, in view of Theorem \ref{thm_tilting_in_C_f}, the null category $\cC_f$ admits the $T_f$-projective and the $T_f$-injective \tr es (\ref{eqtn_t-str_on_stack}). Also we have $S_X$-projective and $S_X$-injective \tr es on $\dD^b(X)$, and similarly for $\cC_f$.

\begin{REM}\label{rem_line_bundle_pro}
	Note that, for a smooth algebraic space $X$ and $f=\Id_X$, any line bundle $\lL\in \dD^b(X)$ is a tilting relative  object over $X$. Since tensor product with a line bundle is a $t$-exact functor in the standard \tr e and the $\oO_X$-projective \tr e is the standard one, then so is the $\lL$-projective \tr e, for any $\lL \in \textrm{Pic}(X)$. 	
	
	Define the \emph{dual \tr e} on $\dD^b(X)$ as the one induced by applying the duality $R\hHom(-, \oO_X) \colon \dD^b(X)^\textrm{op} \to \dD^b(X)$ to the standard \tr e on $\dD^b(X)$, \emph{cf.} \cite{Bon2}. Then  the $\lL$-injective \tr e is the dual one on $\dD^b(X)$, for any $\lL\in \textrm{Pic}(X)$.
\end{REM}

Consider the case of $f$ being the blow-up of $Y$ along a smooth $\bB_f$ of codimension two. Then, in the notation of (\ref{eqtn_diagram_single_blow-up}) for $s=f$, we have an equivalence
\begin{align*} 
&\theta \colon \dD^b(\bB_f) \xrightarrow{\simeq} \cC_f,& &\theta(-)= R\zeta_*Lp^*(-)\otimes \oO_X(E).
\end{align*} 
The following two Propositions describe the $T_f$-projective and $T_f$-injective \tr es on $\cC_f$ under the above equivalence.
\begin{PROP}\label{prop_dual_t-str_on_W_and_C_f}
	Equivalence $\theta$ is $t$-exact when $\dD^b(\bB_f)$ is endowed with the dual \tr e and $\cC_f$ with the 
	$T_f$-injective \tr e. 
\end{PROP}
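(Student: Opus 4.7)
The plan is to identify $T_f=\omega_X|_E$ as $\theta$ applied to a line bundle on $\bB_f$, compute the complex $Rf_*R\hHom_X(\theta(-),T_f)$ on $Y$ explicitly in terms of $\bB_f$, and match the resulting condition with the one defining the dual \tr e via Remark \ref{rem_line_bundle_pro}.

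For the blow-up $f$ of the smooth codimension-two centre $\bB_f$ the discrepancy formula $\omega_f=\oO_X(E)$ gives $\omega_X|_E\simeq p^{*}(\omega_Y|_{\bB_f})\otimes\oO_E(-1)$, where $\oO_E(-1)=\oO_E(E)=N_{E/X}$ is the tautological bundle on $E\simeq\pp(N_{\bB_f/Y}^{\vee})$. Unwinding $\theta(G)=R\zeta_*(Lp^{*}G\otimes\oO_E(-1))$ identifies $T_f=\theta(\omega_Y|_{\bB_f})$. Next, for arbitrary $F\in\dD^b(\bB_f)$, I would apply relative Grothendieck duality $R\hHom_X(R\zeta_*A,B)\simeq R\zeta_*R\hHom_E(A,\zeta^!B)$, the Koszul-type identity $\zeta^!\zeta_*M\simeq M\oplus M\otimes\oO_E(E)[-1]$ (which follows from $0\to\oO_X(-E)\to\oO_X\to\zeta_*\oO_E\to 0$ together with $\zeta^!\oO_X\simeq\oO_E(E)[-1]$), the projection formula along $p$, and the $\pp^1$-bundle vanishings $Rp_*\oO_E\simeq\oO_{\bB_f}$ and $Rp_*\oO_E(-1)=0$. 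The last vanishing kills the extra Koszul summand, yielding
\[
Rf_*R\hHom_X(\theta(F),T_f)\;\simeq\;j_*R\hHom_{\bB_f}(F,\omega_Y|_{\bB_f}).
\]

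Setting $F=\omega_Y|_{\bB_f}$ in this identity gives $\aA_T\simeq j_*\oO_{\bB_f}$, a commutative sheaf of rings on $Y$ supported on $\bB_f$, and $j_*$ therefore induces a $t$-exact equivalence $\dD^b(\aA_T^{\textrm{op}})\simeq\dD^b(\bB_f)$ for the standard \tr es. Unwinding the definition (\ref{eqtn_t-str_on_stack}) of the $T_f$-injective \tr e, $\theta(F)\in\cC_f^{\gge 1}$ iff $R\hHom_{\bB_f}(F,\omega_Y|_{\bB_f})\in\dD^b(\bB_f)^{\lle -1}$. Since tensoring by the line bundle $\omega_Y|_{\bB_f}$ is $t$-exact in the standard \tr e, this is equivalent to $R\hHom_{\bB_f}(F,\oO_{\bB_f})\in\dD^b(\bB_f)^{\lle -1}$, which by Remark \ref{rem_line_bundle_pro} is precisely the condition for $F$ to lie in the $\gge 1$-part of the dual \tr e on $\dD^b(\bB_f)$; the argument for $\lle 0$ is identical.

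The main technical step is setting up the duality formula $\zeta^!\zeta_*M\simeq M\oplus M\otimes\oO_E(E)[-1]$ with correct sign and shift conventions, together with careful bookkeeping of the line bundles $\omega_X|_E$, $\oO_E(E)$, and $p^{*}(\omega_Y|_{\bB_f})$; after that, the decisive cancellation $Rp_*\oO_E(-1)=0$ leaves the clean identification above, and the rest is a formal comparison of \tr es.
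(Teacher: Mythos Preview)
Your proof is correct and complete. The explicit formula
\[
Rf_*R\hHom_X(\theta(F),T_f)\;\simeq\;j_*R\hHom_{\bB_f}(F,\omega_Y|_{\bB_f})
\]
does the job, and your derivation via Grothendieck duality, the Koszul identity $\zeta^!\zeta_*M\simeq M\oplus M\otimes\oO_E(E)[-1]$ (which really is a direct sum here, since the connecting map in the resolution $\oO_X(-E)\to\oO_X$ restricts to zero on $E$), and the vanishing $Rp_*\oO_E(-1)=0$ is sound.

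The paper takes a different, shorter route. Rather than computing the sheaf $Rf_*R\hHom_X(\theta(F),T_f)$ explicitly, it first absorbs the line-bundle twists to reduce to checking that $Rf_*R\hHom_X(R\zeta_*Lp^*F,\oO_E)$ is a pure sheaf, then observes that $\oO_E\simeq R\zeta_*Lp^*\oO_{\bB_f}$, passes to an affine neighbourhood in $Y$ (so $\bB_f$ is affine too), and invokes full faithfulness of $R\zeta_*Lp^*$ as a black box to get $\Ext^p_X(R\zeta_*Lp^*F,R\zeta_*Lp^*\oO_{\bB_f})\simeq\Ext^p_{\bB_f}(F,\oO_{\bB_f})$, which vanishes for $p\neq 0$ precisely when $F$ lies in the heart of the dual \tr e. Since $\theta$ is an equivalence, matching hearts suffices for $t$-exactness.

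Your approach is more explicit and yields a sheaf-level identity rather than just Ext-vanishing; the paper's is quicker because it packages the $\pp^1$-bundle calculation into the single statement that $R\zeta_*Lp^*$ is fully faithful. In effect, your computation \emph{re-proves} that full faithfulness (the key cancellation $Rp_*\oO_E(-1)=0$ is exactly what drives it), while the paper simply cites it.
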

\begin{proof}
	Consider $F\in \dD^b(\bB_f)$ such that $R\hHom_W(F, \oO_{\bB_f})$ is a pure sheaf. We need to check that $Rf_*R\hHom_X(R\zeta_*Lp^*(F)\otimes \oO_X(E), \omega_X|_{E})$ is a pure sheaf on $Y$. Since $\omega_X = \oO_X(E)\otimes f^*\omega_Y$, the projection formula yields
	\begin{align*}
	&Rf_*R\hHom_X(R\zeta_*Lp^*(F)\otimes^L_X \oO_X(E), \omega_X|_{E})\\
	&\simeq Rf_*R\hHom_X(R\zeta_*Lp^*(F)\otimes^L_X \oO_X(E), \oO_E(E)) \otimes^L_Y \omega_Y,
	\end{align*}
	hence, it suffices to check that $Rf_*R\hHom_X(R\zeta_*Lp^*(F), \oO_E)$ is a pure sheaf on $Y$. The statement is local on $Y$, therefore we can assume that $Y$, hence also $\bB_f$ are affine. In view of isomorphism $\oO_E \simeq R\zeta_*Lp^*(\oO_{\bB_f})$, it suffices to show that $\Ext^p_X(R\zeta_*Lp^*(F),R\zeta_*Lp^*(\oO_{\bB_f}))=0$, for $p\neq 0$. The statement follows from the fact that $R\zeta_*Lp^*$ is fully faithful \cite[Proposition 3.2]{BonOrl}.
\end{proof}

Similarly, one shows 
\begin{PROP}\label{prop_st_t-str_on_W_and_C_f}
	Equivalence $\theta$ is $t$-exact when $\dD^b(\bB_f)$ is endowed with the standard \tr e and $\cC_f$ with the $T_f$-projective \tr e. 
\end{PROP}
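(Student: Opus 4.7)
The plan is to mimic the proof of Proposition~\ref{prop_dual_t-str_on_W_and_C_f}, interchanging the roles of ``source'' and ``target'' in the Hom. Since $\bB_f$ is irreducible, $\Conn(f)=\{f\}$ and $D_f=E$, so $T_f=\omega_X|_E$. By Theorem~\ref{thm_relative_tilting} and Proposition~\ref{prop_Perf=Db}, the $T_f$-projective \tr e on $\cC_f$ corresponds under $\Phi_{T_f}$ to the standard \tr e on $\dD^b(\aA_{T_f})$. Thus it suffices to show that the composite $\Phi_{T_f}\circ \theta\colon \dD^b(\bB_f)\to \dD^b(\aA_{T_f})$ is $t$-exact for standard \tr es on both sides.

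First I would identify $\theta^{-1}(T_f)$ as the line bundle $\omega_Y|_{\bB_f}$. Using $f\zeta=jp$ from (\ref{eqtn_diagram_single_blow-up}) and $\omega_X=f^*\omega_Y\otimes\oO_X(E)$, the projection formula gives
$$\theta(\omega_Y|_{\bB_f}) \;=\; R\zeta_*Lp^*(\omega_Y|_{\bB_f})\otimes\oO_X(E) \;=\; \zeta_*\bigl(f^*\omega_Y|_E\otimes\oO_E(E)\bigr) \;=\; \omega_X|_E \;=\; T_f.$$
Next, since the assertion is local over $Y$, I would upgrade full faithfulness of $\theta$ (which follows from \cite[Proposition 3.2]{BonOrl} up to the autoequivalence of tensoring by $\oO_X(E)$) to the sheaf-level identity
$$Rf_*R\hHom_X(\theta(G),\theta(F)) \;\simeq\; Rj_*R\hHom_{\bB_f}(G,F),\qquad G,F\in\dD^b(\bB_f).$$
Specializing to $G=\omega_Y|_{\bB_f}$ collapses the right-hand side to $j_*\bigl(F\otimes(\omega_Y|_{\bB_f})^{-1}\bigr)$, because $j$ is a closed embedding.

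Taking $F=\omega_Y|_{\bB_f}$ in the above display identifies $\aA_{T_f}\simeq j_*\oO_{\bB_f}$, so $j_*$ induces a $t$-exact equivalence $\dD^b(\bB_f)\simeq \dD^b(\aA_{T_f})$ for standard \tr es. The composition
$$\Phi_{T_f}\circ \theta\colon F \;\longmapsto\; j_*\bigl(F\otimes(\omega_Y|_{\bB_f})^{-1}\bigr)$$
is then the tensor with a line bundle (which is $t$-exact on $\dD^b(\bB_f)$) followed by this equivalence; hence it is $t$-exact, which is exactly the desired claim about $\theta$.

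The main obstacle is the sheaf-level identity in the second step: full faithfulness of $\theta$ gives only the identity of global $\Ext$-groups. To promote it to $Rf_*R\hHom_X$ on $Y$, I would restrict to arbitrary affine opens $U\subseteq Y$ and reduce to global computations over $f^{-1}(U)$; combined with the projection formula for $f\zeta=jp$ and the standard vanishings $Rp_*\oO_E=\oO_{\bB_f}$, $Rp_*\oO_E(E)=0$ (the latter because $\oO_E(E)|_{\textrm{fiber}}\simeq \oO_{\pp^1}(-1)$ as $p$ is a $\pp^1$-bundle), this yields the required identification and concludes the proof.
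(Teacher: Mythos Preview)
Your proposal is correct and follows essentially the same route as the paper. The paper's proof is literally ``Similarly, one shows'', meaning: mimic Proposition~\ref{prop_dual_t-str_on_W_and_C_f} with $T_f$ placed in the first argument of $R\hHom$ rather than the second, localize to affine $Y$ so that $\bB_f$ is affine, and use full faithfulness of $R\zeta_*Lp^*$ to identify $\Ext^p_X(T_f,\theta(F))$ with $\Ext^p_{\bB_f}(\omega_Y|_{\bB_f},F)$. Your write-up makes the same argument more explicitly by first identifying $\theta^{-1}(T_f)=\omega_Y|_{\bB_f}$ and then describing $\Phi_{T_f}\circ\theta$ as $j_*\bigl(-\otimes(\omega_Y|_{\bB_f})^{-1}\bigr)$; the sheaf-level identity you flag as the ``main obstacle'' is exactly what the paper handles by passing to affine opens, and your proposed justification via the projection formula for $f\zeta=jp$ together with $Rp_*\oO_E=\oO_{\bB_f}$ and $Rp_*\oO_E(E)=0$ is a clean direct verification of it.
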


However, for an arbitrary $f$ neither of the four \tr es on $\cC_f$ induced by $T_f$ and $S_f$ is the standard or the dual one. 

\begin{EXM}
	Consider the contraction $f\colon X \to Y$ of a divisor in a smooth surface with two rational components $C_1$, $C_2$ with intersection numbers $C_1^{2}=-2$, $C_2^2=-1$, $C_1.C_2=1$. Then $T_f \simeq  \oO_{C_2}(C_2) \oplus \oO_{C_1+2C_2}(C_1 + 2C_2)$ and $\Ext^1_X(\oO_{C_2}(C_2), \oO_{C_1}(-1))$ has dimension 1, i.e. sheaf $\oO_{C_1}(-1)\in \cC_f$ does not lie in the heart of the $T_f$-projective \tr e on $\cC_f$. 
\end{EXM}

\subsection{Duality and gluing}

Let $\tT$ be a triangulated category with a dualising functor $\mathscr{D}\colon \tT^{\textrm{op}} \to \tT$. For $\aA\subset \tT$, we denote by $i_{\aA*}$ both embeddings $\aA \to \tT$ and $\aA^{\textrm{op}} \to \tT^{\textrm{op}}$. An SOD $\tT = \langle \aA, \bB\rangle$ gives an SOD $\tT = \langle \mathscr{D}(\bB^{\textrm{op}}), \mathscr{D}(\aA^{\textrm{op}}) \rangle$. If we denote by $i_{\aA}^*$ the functor left adjoint to the embedding $i_{\aA*}\colon \aA \to \tT$ and by $i_{\bB}^!$ the functor right adjoint to the embedding $i_{\bB*}\colon \bB\to \tT$ ,then $i_{\aA}^* \mathscr{D} \colon \tT \to \aA^{\textrm{op}}$ is right adjoint to the embedding $i_{\mathscr{D}(\aA^{\textrm{op}})*} = \mathscr{D} i_{\aA*}$ and $i_{\bB}^! \mathscr{D}\colon \tT \to \bB^{\textrm{op}}$ is left adjoint to $i_{\mathscr{D}(\bB^{\textrm{op}})*} = \mathscr{D} i_{\bB*}$.

If $\bB \subset \tT$ is admissible, i.e. we have SOD's $\tT = \langle \aA, \bB \rangle = \langle \bB, \cC \rangle$, then $\tT$ admits SOD's $\tT = \langle \mathscr{D}(\cC^{\textrm{op}}), \mathscr{D}(\bB^{\textrm{op}}) \rangle = \langle \mathscr{D}(\bB^{\textrm{op}}), \mathscr{D}(\aA^{\textrm{op}}) \rangle$, i.e. $\mathscr{D}(\bB^{\textrm{op}}) \subset \tT$ is admissible. Thus, the recollement
\begin{equation}\label{eqtn_recol_on_T}
\xymatrix{\bB \ar[rr]|{i_{\bB*}} && \tT \ar@<-2ex>[ll]|{i_{\bB}^*} \ar@<2ex>[ll]|{i_{\bB}^!} \ar[rr]|{j^*} && \tT/\bB \ar@<-2ex>[ll]|{i_{\cC *}}  \ar@<2ex>[ll]|{i_{\aA*}}  }
\end{equation}
yields the $\mathscr{D}$-dual recollement
\begin{equation}\label{eqtn_dual_recol_on_T}
\xymatrix{\bB^{\textrm{op}} \ar[rr]|{\mathscr{D}i_{\bB*}} && \tT \ar@<-2ex>[ll]|{i_{\bB}^! \mathscr{D}} \ar@<2ex>[ll]|{i_{\bB}^* \mathscr{D}} \ar[rr]|{j^*\mathscr{D}} && (\tT/\bB)^{\textrm{op}}. \ar@<-2ex>[ll]|{\mathscr{D}i_{\aA *}}  \ar@<2ex>[ll]|{\mathscr{D}i_{\cC*}}  }
\end{equation}
\begin{PROP}\label{prop_glued_and_dual}
Assume that $\bB$ and $\tT/\bB$ are endowed with \tr es $(\bB^{\lle 0}, \bB^{\gge 1})$, $((\tT/\bB )^{\lle 0}, (\tT/\bB )^{\gge 1})$. Let $(\tT^{\lle 0}, \tT^{\gge 1})$ be the \tr e on $\tT$ glued via recollement (\ref{eqtn_recol_on_T}). Then the $\mathscr{D}$-dual \tr e on $\tT$, $(\mathscr{D}(\tT^{\gge 0}), \mathscr{D}(\tT^{\lle -1}))$, is glued from corresponding \tr es on $\bB^{\textrm{op}}$ and $(\tT/\bB)^{\textrm{op}}$ via recollement (\ref{eqtn_dual_recol_on_T}).	
\end{PROP}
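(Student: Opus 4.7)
The plan is a direct term-by-term comparison of the two glued t-structures on $\tT$, using only that $\mathscr{D}$ is an involution (i.e.\ $\mathscr{D}^2\simeq \Id$, as is standard for duality functors), so that $T\in \mathscr{D}(\mathcal{S})$ if and only if $\mathscr{D}(T)\in \mathcal{S}$ for any full subcategory $\mathcal{S}\subset \tT$.

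First I would record the two glued t-structures explicitly. From (\ref{eqtn_recol_on_T}):
\[
\tT^{\lle 0} = \{T : i_\bB^*T\in \bB^{\lle 0},\ j^*T\in (\tT/\bB)^{\lle 0}\},\quad \tT^{\gge 1} = \{T : i_\bB^!T\in \bB^{\gge 1},\ j^*T\in (\tT/\bB)^{\gge 1}\}.
\]
For (\ref{eqtn_dual_recol_on_T}), the crucial bookkeeping is that the left adjoint to the embedding $\mathscr{D}i_{\bB*}$ is $i_\bB^!\mathscr{D}$ (the functor drawn at the $-2$ex position), and the left adjoint to $j^*\mathscr{D}$ is $\mathscr{D}i_{\aA*}$. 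The dual t-structure on $\bB^{\textrm{op}}$ has non-positive part $\bB^{\gge 0}$ and positive part $\bB^{\lle -1}$ (viewed as subcategories of $\bB^{\textrm{op}}$), and analogously for $(\tT/\bB)^{\textrm{op}}$. Hence the t-structure glued via (\ref{eqtn_dual_recol_on_T}) has $\lle 0$-aisle $\{T : i_\bB^!\mathscr{D}T\in \bB^{\gge 0},\ j^*\mathscr{D}T\in (\tT/\bB)^{\gge 0}\}$ and $\gge 1$-aisle $\{T : i_\bB^*\mathscr{D}T\in \bB^{\lle -1},\ j^*\mathscr{D}T\in (\tT/\bB)^{\lle -1}\}$.

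Next I would verify the chain: $T\in \mathscr{D}(\tT^{\gge 0})$ iff $\mathscr{D}(T)\in \tT^{\gge 0}=\tT^{\gge 1}[-1]$, iff $i_\bB^!\mathscr{D}(T)\in \bB^{\gge 1}[-1]=\bB^{\gge 0}$ and $j^*\mathscr{D}(T)\in (\tT/\bB)^{\gge 0}$, which is exactly the glued $\lle 0$-aisle coming from the dual recollement. A symmetric computation with the formula for $\tT^{\lle 0}$ identifies $\mathscr{D}(\tT^{\lle -1})$ with the glued $\gge 1$-aisle.

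The argument is formal and has no genuine obstacle: its content is simply that passing to the opposite category swaps $i^*$ with $i^!$ and $j_!$ with $j_*$ in a recollement, and the gluing procedure commutes with this swap. The only care required is in aligning the degree conventions: the specific shifts appearing in the statement of the dual t-structure $(\mathscr{D}(\tT^{\gge 0}), \mathscr{D}(\tT^{\lle -1}))$ are exactly what is needed to match the $\gge 0$ (resp.\ $\lle -1$) aisles on $\bB^{\textrm{op}}$ and $(\tT/\bB)^{\textrm{op}}$ coming from the original t-structures $(\bB^{\lle 0},\bB^{\gge 1})$ and $((\tT/\bB)^{\lle 0}, (\tT/\bB)^{\gge 1})$.
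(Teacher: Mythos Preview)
Your argument is correct and is essentially the same formal duality computation as the paper's, just organized differently: the paper verifies that the functors $\mathscr{D}i_{\bB*}$ and $j^*\mathscr{D}$ are $t$-exact (which characterizes the glued $t$-structure), while you directly compare the aisle formulas on both sides. Your version is arguably a bit more explicit, since the paper only checks $t$-exactness on hearts and leaves the extension to full aisles implicit, whereas your aisle-by-aisle comparison handles this directly.
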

\begin{proof} Since $(\tT^{\lle 0}, \tT^{\gge 1})$ is glued, for any $B\in \bB^{\lle 0}\cap \bB^{\gge 0}$ and any $T \in \tT^{\lle 0} \cap \tT^{\gge 0}$, object $i_{\bB*}(B)$ lies in $\tT^{\lle 0} \cap \tT^{\gge 0}$ and $j^*(T)$ lies in $(\tT/\bB )^{\lle 0} \cap (\tT/\bB )^{\gge 0}$. It follows that $\mathscr{D}i_{\bB*}(B)$ lies in $\mathscr{D}(\tT^{\gge 0}) \cap \mathscr{D}(\tT^{\lle 0})$ and, for any $T' \in \mathscr{D}(\tT^{\gge 0}) \cap \mathscr{D}(\tT^{\lle 0}) = \mathscr{D}(\tT^{\lle 0} \cap \tT^{\gge 0})$, object $j^*\mathscr{D}(T')$ lies in $(\tT/\bB )^{\lle 0} \cap (\tT/\bB )^{\gge 0}$, i.e. functors $\mathscr{D}i_{\bB*}$ and $j^*\mathscr{D}$ are $t$-exact.
\end{proof} 
\begin{REM}\label{rem_duality_restr}
	If duality $\mathscr{D}$ restricts to a duality $\bB^{\textrm{op}} \to \bB$, then recollement (\ref{eqtn_recol_on_T}) is self dual. In this case, $\mathscr{D}(\aA^{\textrm{op}}) \simeq \cC$ and the duality $\mathscr{D}i_{\aA*}\colon \aA^{\textrm{op}}\to \cC$ composed with the mutation $i_{\aA}^* i_{\cC*}\colon \cC \xrightarrow{\simeq} \aA$ gives duality $\mathscr{D}_q$ on $\tT/\bB \simeq \aA$ such that $j^*\mathscr{D} = \mathscr{D}_q j^*$.
\end{REM}

Let $f\colon X \to Y$ be a birational morphism of smooth algebraic spaces. Since $Y$ is smooth, functors $Lf^*$ and $f^!$ are related via $f^!(-) = Lf^*(-) \otimes \omega_f$, \emph{cf.} \cite[Lemma A.1]{BodBon}. It follows that $f^!\dD^b(Y) \simeq Lf^*\dD^b(Y) \otimes \omega_f$. Twists by powers of $\omega_f$ of the SOD's $\dD^b(X) = \langle f^! \dD^b(Y), \cC_f \rangle = \langle \cC_f, Lf^* \dD^b(Y) \rangle$ give an infinite chain of SOD's
\begin{equation*}
\ldots = \langle \cC_f \otimes \omega_f, f^! \dD^b(Y)\rangle = \langle f^!\dD^b(Y), \cC_f \rangle = \langle \cC_f, Lf^* \dD^b(Y) \rangle = \langle Lf^*\dD^b(Y), \cC_f \otimes \omega_{f}^{-1} \rangle = \ldots
\end{equation*}
They yield in turn an infinite sequence of recollements for $\dD^b(X)$.

Since $S_X = \mathscr{D}_f(T_X)$ and $S_f = \mathscr{D}_f(T_f)$, the $T_X$-projective and $S_X$-injective \tr es are $\mathscr{D}_f$-dual, and so are $T_{f}$-injective and $S_f$-projective ones.
\begin{COR}\label{cor_dual_t-str_on_D_X}
	If the $T_X$-injective (resp. projective) \tr e on $\dD^b(X)$ is glued w.r.t. subcategory $\bB\subset \dD^b(X)$ then the $S_X$-projective (resp. injective) \tr e is glued w.r.t. subcategory $\mathscr{D}_f(\bB^{\textrm{op}}) \subset \dD^b(X)$. Similarly for the \tr es on $\cC_f$ induced by $T_f$ and $S_f$. 
\end{COR}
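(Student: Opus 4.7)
The plan is to deduce both statements from Proposition \ref{prop_glued_and_dual} applied to the relative duality $\mathscr{D}_f$. The preparatory paragraphs preceding the corollary already do most of the work: the identity $S_X=\mathscr{D}_f(T_X)$ together with the natural isomorphism $R\hHom_X(E,T_X)\simeq R\hHom_X(\mathscr{D}_f(T_X),\mathscr{D}_f(E))$ and the identification $\aA_{S_X}\simeq \aA_{T_X}^{\mathrm{op}}$ show, directly from the definitions (\ref{eqtn_t-str_on_stack}), that the $T_X$-injective \tr e on $\dD^b(X)$ is $\mathscr{D}_f$-dual to the $S_X$-projective one; interchanging the two arguments in $R\hHom_X$ gives the analogous duality between the $T_X$-projective and $S_X$-injective \tr es. I would include this verification as a short sub-lemma before invoking Proposition \ref{prop_glued_and_dual}, as it is the only computation in the argument.

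Given this, if the $T_X$-injective \tr e is glued with respect to an admissible $\bB\subset \dD^b(X)$ via recollement (\ref{eqtn_recol_on_T}), then Proposition \ref{prop_glued_and_dual} yields that its $\mathscr{D}_f$-dual \tr e, namely the $S_X$-projective one, is glued via the $\mathscr{D}_f$-dual recollement (\ref{eqtn_dual_recol_on_T}). By the observation in the opening paragraph of Subsection 5.2, this dual recollement is precisely the recollement with respect to the admissible subcategory $\mathscr{D}_f(\bB^{\mathrm{op}})\subset \dD^b(X)$. The case of $T_X$-projective versus $S_X$-injective is verbatim the same argument with the roles of aisle and coaisle interchanged.

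For the null-category statement, the same argument applies, the only additional input being Lemma \ref{lem_dual_preserv_C_f}: because $\mathscr{D}_f$ restricts to a duality on $\cC_f$, any admissible $\bB\subset \cC_f$ yields an admissible $\mathscr{D}_f(\bB^{\mathrm{op}})\subset \cC_f$, and Proposition \ref{prop_glued_and_dual} can be invoked inside $\cC_f$ with $T_f$, $S_f$ replacing $T_X$, $S_X$. I do not expect any serious obstacle; the one piece of bookkeeping to watch is aligning the projective/injective labels with the $[1]$-shift and the passage to the opposite algebra in (\ref{eqtn_t-str_on_stack}), so that the $\mathscr{D}_f$-dual of a projective-type \tr e is genuinely the injective-type \tr e for the dual generator and not off by a shift. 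Once that is verified, the corollary is a purely formal consequence of Proposition \ref{prop_glued_and_dual}.
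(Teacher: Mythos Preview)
Your proposal is correct and follows the same approach as the paper: the preceding sentence records that the $T_X$- and $S_X$-based \tr es are $\mathscr{D}_f$-dual (via $S_X=\mathscr{D}_f(T_X)$ and the definitions (\ref{eqtn_t-str_on_stack})), and the corollary is then an immediate application of Proposition \ref{prop_glued_and_dual}. Your extra care in spelling out the $R\hHom$ identification and in invoking Lemma \ref{lem_dual_preserv_C_f} for the $\cC_f$ case is exactly the bookkeeping the paper leaves implicit.
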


Duality $\mathscr{D}_f$ preserves $\cC_f \subset \dD^b(X)$, Lemma \ref{lem_dual_preserv_C_f}. Remark \ref{rem_duality_restr} implies that the quotient category $\dD^b(Y) \simeq \dD^b(X)/\cC_f$ admits a duality $\mathscr{D}_q$. It is actually the standard duality $\mathscr{D}_Y = R\Hom(-, \oO_Y)$, because the Grothendieck-Verdier duality implies that $Rf_* \mathscr{D}_f \simeq \mathscr{D}_Y Rf_*$. Hence, if a \tr e on $\dD^b(X)$ is glued with respect to subcategory $\cC_f$ from \tr es on $\cC_f$ and $\dD^b(Y)$ then its $\mathscr{D}_f$-dual is glued with respect to subcategory $\cC_f$ from the $\mathscr{D}_f$-dual \tr e on $\cC_f$ and the $\mathscr{D}_Y$-dual \tr e on $\dD^b(Y)$.

\subsection{The gluing properties for the \tr es}\label{ssec_gluing_prop_for_t_str}

Now we shall describe how our new \tr es are glued via various recollements.

\begin{LEM}\label{lem_t-exact_func_C_f_to_D(X)}
	Functor $\iota_{f*} \colon \cC_f\to \dD^b(X)$  is $t$-exact for the $T_f$-injective \tr e on $\cC_f$ and the $T_X$-injective \tr e on $\dD^b(X)$.
\end{LEM}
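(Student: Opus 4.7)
The strategy is to show that, for every $E \in \cC_f$, the object $Rf_*R\hHom_X(\iota_{f*}E, T_X)$ that governs the $T_X$-injective \tr e coincides, together with its module structure up to $t$-structure data, with the object $Rf_*R\hHom_X(E, T_f)$ that governs the $T_f$-injective one. The key vanishing is $Rf_*R\hHom_X(E, \omega_X) = 0$ for $E\in\cC_f$: since $X$ and $Y$ are smooth, the formula $f^!(-) = Lf^*(-) \otimes \omega_f$ together with $\omega_f = \omega_X \otimes f^*\omega_Y^{-1}$ yields $\omega_X \simeq f^!\omega_Y$, and Grothendieck--Verdier duality then gives
$$
Rf_*R\hHom_X(E,\omega_X) \simeq R\hHom_Y(Rf_*E,\omega_Y) = 0.
$$
Combined with the splitting $T_X = \omega_X \oplus T_f$ of Subsection \ref{ssec_tiliting_object}, this yields a canonical isomorphism $Rf_*R\hHom_X(\iota_{f*}E, T_X) \simeq Rf_*R\hHom_X(E, T_f)$ in $\dD(Y)$.

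The next step is to verify that this isomorphism identifies the two $t$-structures on $\dD^b(\aA_{T_X}^{\textrm{op}})$ and $\dD^b(\aA_{T_f}^{\textrm{op}})$ in play. The direct summand inclusion $T_f \hookrightarrow T_X$ produces orthogonal idempotents $e_{T_f}, e_{\omega_X} \in A_{T_X} = \hH^0(\aA_{T_X})$ satisfying $e_{T_f} A_{T_X} e_{T_f} \simeq A_{T_f}$. By the vanishing above, the right $\aA_{T_X}^{\textrm{op}}$-module $Rf_*R\hHom_X(\iota_{f*}E, T_X)$ is acyclic in the $\omega_X$-direction, so it is annihilated by $e_{\omega_X}$ and is uniquely determined, including its $t$-degree, by its $e_{T_f}$-summand, which recovers $Rf_*R\hHom_X(E, T_f)$ as an $\aA_{T_f}^{\textrm{op}}$-module. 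Using Proposition \ref{prop_Perf=Db} to identify the relevant $t$-structures with the standard ones on $\dD^b(A_{T_X}^{\textrm{op}})$ and $\dD^b(A_{T_f}^{\textrm{op}})$, whose aisles are cut out by vanishing of the underlying cohomology sheaves on $Y$, one obtains
$$
Rf_*R\hHom_X(\iota_{f*}E, T_X) \in \dD^b(\aA_{T_X}^{\textrm{op}})^{\gge 0} \quad \Longleftrightarrow \quad Rf_*R\hHom_X(E, T_f) \in \dD^b(\aA_{T_f}^{\textrm{op}})^{\gge 0},
$$
and similarly for the $\lle -1$ coaisle. By the defining formulas (\ref{eqtn_t-str_on_stack}) this says exactly that $\iota_{f*}E$ lies in the $\lle 0$ (resp. $\gge 1$) part of the $T_X$-injective \tr e on $\dD^b(X)$ iff $E$ lies in the $\lle 0$ (resp. $\gge 1$) part of the $T_f$-injective \tr e on $\cC_f$.

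The main bookkeeping obstacle is the compatibility of the $t$-structures via the idempotent restriction. However, once one replaces the perfect derived categories by $\dD^b(A^{\textrm{op}})$ through Proposition \ref{prop_Perf=Db}, this amounts to the standard observation that cohomology as abelian sheaves on $Y$ is unchanged when one passes from an $A_{T_X}^{\textrm{op}}$-module on which $e_{\omega_X}$ acts trivially to its $e_{T_f}$-summand regarded as an $A_{T_f}^{\textrm{op}}$-module; no deeper $t$-structural input is needed beyond the vanishing established in the first step.
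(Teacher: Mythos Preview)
Your argument is correct and follows essentially the same route as the paper: both use the splitting $T_X = \omega_X \oplus T_f$ and the Grothendieck--Verdier vanishing $Rf_*R\hHom_X(E,\omega_X) \simeq R\hHom_Y(Rf_*E,\omega_Y) = 0$ for $E \in \cC_f$ to reduce the $T_X$-injective condition on $\iota_{f*}E$ to the $T_f$-injective condition on $E$. Your additional paragraph on idempotents and module-structure compatibility is correct but not needed, since the aisles in (\ref{eqtn_t-str_on_stack}) are defined via the \emph{standard} \tr e on $\dD^b(\aA_T^{\textrm{op}})$, which is detected purely by the cohomology sheaves of the underlying complex on $Y$; the paper's proof silently relies on this and is accordingly one line long.
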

\begin{proof}
	Since $T_{X,f} = T_f \oplus \omega_X$,  in order to check that $A$ lies in the heart of the $T_f$-injective \tr e 
	it suffices, by (\ref{eqtn_t-str_on_stack}), to calculate $Rf_*R\hHom_X(\iota_{f*}A, \omega_X)$. As $A\in \cC_f$, we have $Rf_*R\hHom_X(\iota_{f*}A, \omega_X) \simeq R\hHom_Y(Rf_* \iota_{f*}A, \omega_Y) =0$, hence the result. 
\end{proof}
For $g\colon X \to Z$ in $\Dec(f)$, object $T_g\in \cC_f$ is relatively tilting both over $Z$ and over $Y$, c.f. Theorem \ref{thm_tilting_in_C_f}. Thus, the heart of the $T_g$-projective \tr e on $\cC_g$ reads 
\begin{equation}\label{eqtn_heart_on_C_g} 
\begin{aligned}
\cC_{g\, \textrm{pro }T_g}^{\lle 0} \cap \cC_{g\, \textrm{pro }T_g}^{\gge 0} &= \{E \in \cC_g\,|\, Rg_* R\hHom_X(\iota_{g*} T_g, \iota_{g*} E) \, \textrm{is a pure sheaf}\} \\
&=\{E \in \cC_g\,|\, Rf_* R\hHom_X(\iota_{g*} T_g, \iota_{g*} E) \, \textrm{is a pure sheaf}\}.  
\end{aligned}
\end{equation} 
Indeed, cohomology sheaves of the complex $Rg_*R\hHom_X(\iota_{g*} T_g, \iota_{g*}E)$ are supported on the image $W$ of the exceptional locus of $g$, for any $E\in \cC_g$. By Lemma \ref{lem_get_finite_map}, the map $h|_W$ is finite. It follows that $Rg_*R\hHom_X(\iota_{g*}T_g, \iota_{g*}E)$ is a pure sheaf if and only if so is $Rh_* Rg_* R\hHom_X(\iota_{g*}T_g, \iota_{g*}E)$.

\begin{LEM}\label{lem_t-exact_func_D(X)_to_C_f}
	For a decomposition $f\colon X \xrightarrow{g}Z \xrightarrow{h} Y$, 
	\begin{itemize}
		\item[(i)] functor $\iota_g^! \colon \dD^b(X) \to \cC_g$ and its restriction to $\cC_f$ are $t$-exact, for the $T_{X,f}$-projective \tr e on $\dD^b(X)$, $T_f$-projective \tr e on $\cC_f$ and the $T_g$-projective \tr e on $\cC_g$. 
		\item[(ii)] Functor $\iota_g^*\colon \dD^b(X) \to \cC_g$ and its restriction to $\cC_f$ are $t$-exact, for the $T_{X,f}$-injective \tr e on $\dD^b(X)$, $T_f$-injective \tr e on $\cC_f$ and the $T_g$-injective \tr e on $\cC_g$.
	\end{itemize} 
\end{LEM}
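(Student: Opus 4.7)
The plan is to realize $\iota_g^!$ and $\iota_g^*$ via the two admissible semi-orthogonal decompositions of $\dD^b(X)$ in which $\cC_g$ appears as the right, respectively left, factor, and then to exploit the characterization (\ref{eqtn_t-str_on_stack}) of all four $t$-structures as the location of an $Rf_* R\hHom_X$-complex. In each case the argument reduces to the single vanishing $Rg_* T_g = 0$, which follows from $T_g \in \cC_g$ (Lemma \ref{lem_discr_in_C_g}), combined with the corresponding Grothendieck-Verdier or $Lg^*\dashv Rg_*$ identity.

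For (i), the semi-orthogonal decomposition
\[
\dD^b(X) = \langle g^!\dD^b(Z),\, \cC_g\rangle,
\]
valid because $\Hom_X(A, g^!F) = \Hom_Z(Rg_*A, F)$ vanishes for $A \in \cC_g$, produces for every $E \in \dD^b(X)$ a functorial triangle
\[
g^! Rg_* E \to E \to \iota_{g*}\iota_g^! E.
\]
Applying $Rf_* R\hHom_X(T_g, -)$ and invoking the relative Grothendieck-Verdier identity $Rg_* R\hHom_X(T_g, g^!F) \simeq R\hHom_Z(Rg_* T_g, F)$ together with $Rg_* T_g = 0$, the first term dies and we obtain a natural isomorphism
\[
Rf_* R\hHom_X(T_g, E) \simeq Rf_* R\hHom_X(T_g, \iota_{g*}\iota_g^! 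E).
\]
Since $T_g$ is a direct summand of $T_{X,f}$, the left-hand side is a direct summand of $Rf_* R\hHom_X(T_{X,f}, E)$ as a complex on $Y$. Hence if $E$ lies in the $\lle 0$ (respectively $\gge 0$) part of the $T_{X,f}$-projective $t$-structure then so does $Rf_* R\hHom_X(T_g, E)$, and via the isomorphism above together with (\ref{eqtn_t-str_on_stack}) this places $\iota_g^! E$ in the $\lle 0$ (respectively $\gge 0$) part of the $T_g$-projective $t$-structure on $\cC_g$. The restriction to $\cC_f$ runs identically: for $E \in \cC_f$ one has $Rg_* E \in \cC_h$ by Proposition \ref{prop_recoll_for_st_t-st_on_C_f}, so $g^! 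Rg_* E \in \cC_f$ and the whole triangle lives inside $\cC_f$, while $T_g$ is also a summand of $T_f$.

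Part (ii) is the evident dual: use the SOD $\dD^b(X) = \langle \cC_g,\, Lg^*\dD^b(Z)\rangle$ (valid since $\Hom_X(Lg^*F, A) = \Hom_Z(F, Rg_*A) = 0$ for $A\in \cC_g$) and the corresponding triangle $\iota_{g*}\iota_g^* E \to E \to Lg^* Rg_* E$. Apply $Rf_* R\hHom_X(-, T_g)$ and invoke the sheafified $Lg^*\dashv Rg_*$ identity $Rg_* R\hHom_X(Lg^*F, T_g) \simeq R\hHom_Z(F, Rg_* T_g) = 0$ to kill the right-hand term, obtaining $Rf_* R\hHom_X(E, T_g) \simeq Rf_* R\hHom_X(\iota_{g*}\iota_g^* E, T_g)$. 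The injective variant of (\ref{eqtn_t-str_on_stack}), applied to the summand $T_g$ of $T_{X,f}$ (and of $T_f$), then gives $t$-exactness of $\iota_g^*$, and the restriction to $\cC_f$ is handled exactly as in (i). The only real difficulty is the bookkeeping: one must pair each adjoint with the correct SOD and keep $T_g$ on the correct side of $R\hHom_X$; once done, the whole proof rests on the single observation $Rg_* T_g = 0$.
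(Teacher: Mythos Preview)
Your proof is essentially the paper's: both apply $Rf_*R\hHom_X(T_g,-)$ (respectively $Rf_*R\hHom_X(-,T_g)$) to the canonical triangle (\ref{eqtn_def_of_iota!}) (respectively (\ref{eqtn_def_of_iota*})) for $g$, kill the $g^!Rg_*$ (respectively $Lg^*Rg_*$) term via $Rg_*T_g=0$, and conclude by noting $T_g$ is a summand of $T_{X,f}$ and $T_f$.

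One bookkeeping slip, harmless for the argument: both of your triangles are written in the wrong order. From $\langle g^!\dD^b(Z),\cC_g\rangle$ one gets $\iota_{g*}\iota_g^! E \to E \to g^!Rg_* E$ (this is exactly (\ref{eqtn_def_of_iota!})), and from $\langle \cC_g, Lg^*\dD^b(Z)\rangle$ one gets $Lg^*Rg_*E \to E \to \iota_{g*}\iota_g^*E$ (this is (\ref{eqtn_def_of_iota*})); you have each reversed. Since all you use is that one vertex has vanishing $Rf_*R\hHom_X$ against $T_g$, the conclusion is unaffected, but given your closing remark that ``the only real difficulty is the bookkeeping'', it is worth getting these straight.
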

\begin{proof}
	We consider the case of functors defined on $\dD^b(X)$. The proofs of $t$-exactness of their restrictions to $\cC_f$ are analogous.
	 
	In view of description (\ref{eqtn_heart_on_C_g}), to check that $\iota_g^!$ is $t$-exact it suffices to calculate $Rf_*R\hHom_X(T_g, \iota_g^!(A))$, for $A$ in the heart of the $T_X$-projective \tr e on $\dD^b(X)$.
	
	Note that, for $E \in \cC_g$, applying $Rg_*R\hHom_X(\iota_{g*}E, -)$ to a triangle of functors (\ref{eqtn_def_of_iota!}) with $f$ replaced by $g$ and applied to object $A$, yields an isomorphism $Rg_*R\hHom_X(\iota_{g*}E, \iota_{g*}\iota_g^!A) \simeq Rg_*R\hHom_X(\iota_{g*}E, A)$. Since $T_g$ is a direct summand of $T_{X,f}$, for $A \in \dD^b(X)_{\textrm{pro }T_{X}}^{\lle 0} \cap \dD^b(X)_{\textrm{pro }T_{X}}^{\gge 0}$, object $Rf_*R\hHom_X(\iota_{g*}T_g, \iota_{g*}\iota_g^!A) \simeq Rf_* R\hHom_X(\iota_{g*}T_g, A)$ is a pure sheaf, i.e. $\iota_g^!$ is $t$-exact.
		
	Analogously, triangle (\ref{eqtn_def_of_iota*}) yields $Rg_*R\hHom_X(A, \iota_{g*}E) \simeq Rg_*R\hHom_X(\iota_{g*}\iota_g^*A, E)$. As $T^g_g$ is a direct summand of $T_{X,f}$, the statement follows.
\end{proof}

\begin{LEM}\label{lem_t-exact_func_D(X)_to_D(Y)}
	Functor $Rf_* \colon \dD^b(X) \to \dD^b(Y)$ is $t$-exact for the $T_X$-injective \tr e on $\dD^b(X)$ and the dual \tr e on $\dD^b(Y)$.
\end{LEM}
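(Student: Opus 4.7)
The plan is to exploit the fact that $\omega_X$ is a direct summand of the tilting relative generator $T_{X,f} = \omega_X \oplus T_f$, combined with Grothendieck duality and the smoothness of $Y$.

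First I would re-interpret the aisles of the $T_{X,f}$-injective \tr e, defined in (\ref{eqtn_t-str_on_stack}), in terms of ordinary cohomology sheaves on $Y$. Under the equivalence of Proposition \ref{prop_Perf=Db}, the standard \tr e on $\dD^b(\aA_{T_{X,f}}^{\mathrm{op}})$ is detected by the cohomology of the underlying complex of $\oO_Y$-modules, since the $\aA_{T_{X,f}}^{\mathrm{op}}$-action is extra data that does not alter the abelian-group structure of $\hH^i$. Hence $E\in \dD^b(X)^{\lle 0}_{\mathrm{inj}\,T_{X,f}}$ is equivalent to the vanishing $\hH^i(Rf_*R\hHom_X(E,T_{X,f}))=0$ for $i<0$, and the $\gge 1$ aisle is characterised analogously.

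Next, since $\omega_X$ is a direct summand of $T_{X,f}$, the same cohomological bounds pass from $Rf_*R\hHom_X(E,T_{X,f})$ to its direct summand $Rf_*R\hHom_X(E,\omega_X)$. Using $\omega_X = f^!\omega_Y$ (which holds because $Y$ is smooth and $f^!\oO_Y=\omega_f=\omega_X\otimes Lf^*\omega_Y^{-1}$), Grothendieck duality then gives the chain
\[
Rf_*R\hHom_X(E,\omega_X) \simeq Rf_*R\hHom_X(E, f^!\omega_Y) \simeq R\hHom_Y(Rf_*E,\omega_Y) \simeq \mathscr{D}_Y(Rf_*E)\otimes \omega_Y,
\]
where $\mathscr{D}_Y(-):=R\hHom_Y(-,\oO_Y)$ and the last isomorphism uses that $\omega_Y$ is a line bundle on the smooth space $Y$. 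Since tensoring with the line bundle $\omega_Y$ is $t$-exact for the standard \tr e on $\dD^b(Y)$, the cohomological bounds transfer to $\mathscr{D}_Y(Rf_*E)$. By the definition of the dual \tr e on $\dD^b(Y)$ (Remark \ref{rem_line_bundle_pro}), these bounds on $\mathscr{D}_Y(Rf_*E)$ are precisely the statement that $Rf_*E$ lies in the corresponding aisle of the dual \tr e. Running the argument separately for $\dD^b(X)^{\lle 0}_{\mathrm{inj}\,T_{X,f}}$ and $\dD^b(X)^{\gge 1}_{\mathrm{inj}\,T_{X,f}}$ then yields the $t$-exactness of $Rf_*$.

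I expect no serious obstacle; the proof is essentially formal, driven by the two observations that $\omega_X$ is a summand of $T_{X,f}$ and that $\omega_X=f^!\omega_Y$. The only point that requires care is the identification of the $t$-structure on $\dD^b(\aA_{T_{X,f}}^{\mathrm{op}})$ with the vanishing of ordinary cohomology sheaves on $Y$, which follows from the equivalence with $\dD^b(A_{T_{X,f}}^{\mathrm{op}})$ of Proposition \ref{prop_Perf=Db} together with the fact that $\hH^i$ of a complex of sheaves of right $A_{T_{X,f}}^{\mathrm{op}}$-modules agrees with $\hH^i$ of the underlying complex of $\oO_Y$-modules.
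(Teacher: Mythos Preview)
Your proposal is correct and is essentially the same argument as the paper's, only more detailed: the paper's proof is a single sentence invoking the description (\ref{eqtn_t-str_on_stack}) and Grothendieck duality $R\hHom_Y(Rf_*(-),\omega_Y)\simeq Rf_*R\hHom_X(-,\omega_X)$. Your extra step of passing from $\omega_Y$ to $\oO_Y$ via a line-bundle twist is fine but unnecessary, since by Remark~\ref{rem_line_bundle_pro} the dual \tr e on $\dD^b(Y)$ is equally the $\omega_Y$-injective one, so the bounds on $R\hHom_Y(Rf_*E,\omega_Y)$ already characterise it directly.
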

\begin{proof}
	This follows from description (\ref{eqtn_t-str_on_stack}) and  the local duality $R\hHom_Y(Rf_*(-), \omega_Y) \simeq Rf_*R\hHom_X(-, \omega_X)$.
\end{proof}

\begin{LEM}\label{lem_t-exact_func_D(Y)_to_D(X)}
	For a decomposition $f\colon X \xrightarrow{g}Z \xrightarrow{h} Y$, \begin{itemize} 
		\item[(i)] functor $Lg^*\colon \dD^b(Z) \to \dD^b(X)$ and its restriction $Lg^* \colon \cC_h \to \cC_f$ are $t$-exact, for the $T_{Z,h}$-injective \tr e on $\dD^b(Z)$, the $T_{X,f}$-injective \tr e on $\dD^b(X)$, the $T_h$-injective \tr e on $\cC_h$ and the $T_f$-injective \tr e on $\cC_f$. 
		\item[(ii)] Functor $g^!\colon \dD^b(Z) \to \dD^b(X)$ and its restriction $g^! \colon \cC_h \to \cC_f$ are $t$-exact, for the $T_{Z,h}$-projective \tr e on $\dD^b(Z)$, the $T_{X,f}$-projective \tr e on $\dD^b(X)$, the $T_h$-projective \tr e on $\cC_h$ and the $T_f$-projective \tr e on $\cC_f$.
	\end{itemize} 
\end{LEM}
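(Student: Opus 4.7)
The plan is to reduce both claims to the single computation $Rg_* T_{X,f} \simeq T_{Z,h}$ (and its restriction $Rg_* T_f \simeq T_h$), and then apply the standard adjunction isomorphisms for $Rg_*$ together with the cohomology-sheaf descriptions in (\ref{eqtn_t-str_on_stack}) and (\ref{eqtn_heart_on_C_g}).

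First I would split $T_{X,f} = \omega_X \oplus T_g \oplus \ol{T}_g^f$, following the decomposition $\Conn(f) = \Conn(g) \sqcup (\Conn(f) \setminus \Conn(g))$ and (\ref{eqtn_def_of_T_ff}), and compute $Rg_*$ on each summand. Lemma \ref{lem_g(O)_is_O} combined with (\ref{eqtn_ff!=id}) yields $Rg_* \omega_X \simeq \omega_Z$ since $\omega_X \simeq g^!\omega_Z$; Lemma \ref{lem_discr_in_C_g} gives $Rg_* T_g = 0$; and Proposition \ref{prop_rez_of_omega_f_D_g} gives $Rg_* \ol{T}_g^f \simeq T_h$. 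Summing these yields $Rg_* T_{X,f} \simeq \omega_Z \oplus T_h = T_{Z,h}$, and the same argument restricted to $\cC_f$ gives $Rg_* T_f \simeq T_h$.

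For part (i), the adjunction $Lg^* \dashv Rg_*$ internalises to the isomorphism $Rg_* R\hHom_X(Lg^* E, F) \simeq R\hHom_Z(E, Rg_* F)$. Applied with $F = T_{X,f}$, it gives
\[
Rf_* R\hHom_X(Lg^* E, T_{X,f}) \simeq Rh_* R\hHom_Z(E, Rg_* T_{X,f}) \simeq Rh_* R\hHom_Z(E, T_{Z,h}).
\]
By (\ref{eqtn_t-str_on_stack}), membership of $E$ in $\dD^b(Z)^{\lle 0}_{\textrm{inj }T_{Z,h}}$ (resp.\ $\gge 1$) is controlled by cohomology-sheaf vanishing on $Y$ of the right side, while membership of $Lg^* E$ in the corresponding $T_{X,f}$-injective aisle is controlled by the identical vanishing on the left side; thus $Lg^*$ is $t$-exact. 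The restriction $Lg^* \colon \cC_h \to \cC_f$ is handled by the same computation with $(T_f, T_h)$ replacing $(T_{X,f}, T_{Z,h})$, using the purity criterion for the heart via (\ref{eqtn_heart_on_C_g}) and the finite-map reduction explained there. For part (ii), the dual adjunction $Rg_* \dashv g^!$ internalises to $Rg_* R\hHom_X(F, g^! E) \simeq R\hHom_Z(Rg_* F, E)$, which with $F = T_{X,f}$ yields
\[
Rf_* R\hHom_X(T_{X,f}, g^! E) \simeq Rh_* R\hHom_Z(T_{Z,h}, E),
\]
and the analogous argument, now applied to the projective description in (\ref{eqtn_t-str_on_stack}), establishes $t$-exactness of $g^!$; the null-category restriction is again treated by the substitution $(T_{X,f}, T_{Z,h}) \rightsquigarrow (T_f, T_h)$.

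I expect no real obstacle beyond the identification $Rg_* T_{X,f} \simeq T_{Z,h}$. This compatibility is precisely what motivates the particular form of the tilting generators $T_{X,f}$ and $T_f$ as sums over $\Conn(f)$ of discrepancy sheaves, and once Proposition \ref{prop_rez_of_omega_f_D_g} is available, the remainder of the argument is a direct formal consequence of adjunction and the definition of the four \tr es.
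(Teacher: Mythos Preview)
Your proposal is correct and follows essentially the same approach as the paper: both reduce to the identification $Rg_* T_{X,f}\simeq T_{Z,h}$ (equivalently $Rg_*\omega_X\simeq\omega_Z$ together with $Rg_* T_f\simeq T_h$ from Proposition~\ref{prop_rez_of_omega_f_D_g}) and then invoke the adjunction isomorphisms for $Lg^*\dashv Rg_*$ and $Rg_*\dashv g^!$ against the descriptions in (\ref{eqtn_t-str_on_stack}). One small remark: your appeal to (\ref{eqtn_heart_on_C_g}) and the finite-map reduction is unnecessary for the null-category restriction, since the $T_h$-injective and $T_h$-projective \tr es on $\cC_h$ are already defined via $Rh_*$, so the same adjunction computation applies verbatim.
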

\begin{proof}
By local duality we have $Rf_*R\hHom_X(Lg^*(-), \omega_X) \simeq Rh_*R\hHom_Z(-,\omega_Z)$ and $Rf_*R\hHom_X(Lg^*(-), T_f) \simeq Rh_*R\hHom_Z(-, T_h)$, c.f. Proposition \ref{prop_rez_of_omega_f_D_g}. It follows that functor $Lg^*$ and its restriction to $\cC_h$ are $t$-exact. Similarly, $Rf_*R\hHom_X(\omega_X,g^!(-)) \simeq Rh_*R\hHom_Z(\omega_Z, -)$ and $Rf_*R\hHom_X(T_f, g^!(-)) \simeq Rh_*R\hHom_Z(T_h,-)$, which proves that $g^!$ and its restriction to $\cC_h$ are $t$-exact.
\end{proof}

\begin{PROP}\label{prop_inj_t-str_D(X)_glued}
	The $T_{X,f}$-injective \tr e on $\dD^b(X)$ is glued: 
	\begin{itemize}
		\item[(i)] from the $T_{Z,h}$-injective \tr e on $\dD^b(Z)$ and 
		the $T_g$-injective \tr e on  $\cC_{g}$ via recollement $\dD^b(Z) \xrightarrow{Lg^*} \dD^b(X) \xrightarrow{\iota_g^*} \cC_g$, for any decomposition $f\colon X \xrightarrow{g}Z \xrightarrow{h} Y$. It is the \tr e glued from the dual \tr es on $\dD^b(Y)$ and $\dD^b(\bB_g)$ via the strict $\Dec^+(f)^\textrm{op}$-filtration $g\mapsto Lg^* \dD^b(Z)$ of Remark \ref{rem_filtr_on_D_X}.
		\item[(ii)]  
		from the $T_f$-injective \tr e on $\cC_{f}$ and the dual \tr e on $\dD^b(Y)$ via recollement $\cC_f \xrightarrow{\iota_{f*}} \dD^b(X) \xrightarrow{Rf_*} \dD^b(Y)$.
	\end{itemize}	
\end{PROP}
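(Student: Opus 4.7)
For both parts the strategy is the same: identify the recollement explicitly, verify that the two distinguished functors $i_*$ and $j^*$ are $t$-exact for the specified t-structures, and invoke uniqueness of the Beilinson--Bernstein--Deligne glued t-structure. The iterated assertion in (i) is then deduced by combining this with Theorem \ref{thm_gluing_via_poset}.

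Part (ii) is the easier of the two. Admissibility of $\cC_f \subset \dD^b(X)$ gives the recollement $\cC_f \xrightarrow{\iota_{f*}} \dD^b(X) \xrightarrow{Rf_*} \dD^b(Y)$ with $i_* = \iota_{f*}$ and $j^* = Rf_*$. T-exactness of $\iota_{f*}$ from the $T_f$-injective t-structure on $\cC_f$ to the $T_{X,f}$-injective on $\dD^b(X)$ is Lemma \ref{lem_t-exact_func_C_f_to_D(X)}, while t-exactness of $Rf_*$ into the dual t-structure on $\dD^b(Y)$ is Lemma \ref{lem_t-exact_func_D(X)_to_D(Y)}; uniqueness of the glued t-structure then concludes.

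For the single-recollement version of (i), the chain of $\omega_g$-twisted SODs from the previous subsection gives $\dD^b(X) = \langle \cC_g, Lg^* \dD^b(Z) \rangle = \langle Lg^* \dD^b(Z), \cC_g \otimes \omega_g^{-1} \rangle$, which exhibits $Lg^* \dD^b(Z) \subset \dD^b(X)$ as admissible; the resulting recollement has $i_* = Lg^*$ and $j^* = \iota_g^*$, with the second identification following from $\ker \iota_g^* = {}^\perp \cC_g = Lg^* \dD^b(Z)$ via triangle (\ref{eqtn_def_of_iota*}). T-exactness of $Lg^*$ is Lemma \ref{lem_t-exact_func_D(Y)_to_D(X)}(i) and of $\iota_g^*$ is Lemma \ref{lem_t-exact_func_D(X)_to_C_f}(ii); uniqueness again concludes.

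For the iterated claim, Remark \ref{rem_filtr_on_D_X} supplies the strict admissible $\Dec^+(f)^{\textrm{op}}$-filtration $g \mapsto Lg^* \dD^b(Z)$ on $\dD^b(X)$, and Theorem \ref{thm_gluing_via_poset} provides a unique glued t-structure from t-structures on the minimal subquotients at join-primes of $\Dec^+(f)^{\textrm{op}}$. Iterating the single-recollement case along any chain refining the filtration identifies the $T_{X,f}$-injective t-structure with this glued one. The remaining, and most delicate, step is to match the minimal pieces with the announced outer t-structures: the join-prime at $1^+$ gives $Lf^* \dD^b(Y) \simeq \dD^b(Y)$ with its $\omega_Y$-injective t-structure, equal to the dual by Remark \ref{rem_line_bundle_pro}; each join-prime $g \in \Conn(f)$ gives $\cC_g^o \simeq \dD^b(\bB_g)$ with the dual t-structure under the equivalence $\theta_g$, by Proposition \ref{prop_dual_t-str_on_W_and_C_f}. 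Once these identifications are in place, Theorem \ref{thm_gluing_via_poset} guarantees independence of the iteration path.
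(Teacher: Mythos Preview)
Your proof is correct and follows essentially the same approach as the paper: verify $t$-exactness of $Lg^*$, $\iota_g^*$, $\iota_{f*}$, $Rf_*$ via Lemmas \ref{lem_t-exact_func_C_f_to_D(X)}, \ref{lem_t-exact_func_D(X)_to_C_f}(ii), \ref{lem_t-exact_func_D(X)_to_D(Y)}, \ref{lem_t-exact_func_D(Y)_to_D(X)}(i), invoke uniqueness of the glued $t$-structure (Theorem \ref{thm_gluing_via_poset}), and identify the minimal pieces through Remark \ref{rem_line_bundle_pro} and Proposition \ref{prop_dual_t-str_on_W_and_C_f}. Your write-up is more explicit about setting up the recollements and matching the join-prime subquotients, but the logical skeleton is identical to the paper's.
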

\begin{proof} Functors $Lg^*$, $\iota_g^*$, $\iota_{f*}$, $Rf_*$ are t-exact by Lemmas \ref{lem_t-exact_func_C_f_to_D(X)}, \ref{lem_t-exact_func_D(X)_to_C_f}, \ref{lem_t-exact_func_D(X)_to_D(Y)} and \ref{lem_t-exact_func_D(Y)_to_D(X)}. Uniqueness of the glued \tr e (see Theorem \ref{thm_gluing_via_poset}) implies that the $T_{X,f}$-injective \tr e is glued from the $\omega_Y$-injective \tr e on $\dD^b(Y)$ and $T_{f_i}$-injective \tr es on $\cC_{f_i}$. The statement follows from Remark \ref{rem_line_bundle_pro} and Proposition \ref{prop_dual_t-str_on_W_and_C_f}.
\end{proof}

\begin{PROP}
	For any decomposition $f\colon X \xrightarrow{g}Z \xrightarrow{h} Y$, the $T_{X,f}$-projective \tr e on $\dD^b(X)$ is glued from the $T_{Z,h}$-projective \tr e on $\dD^b(Z)$ and 
	the $T_g$-projective \tr e on $\cC_{g}$, via recollement $\dD^b(Z) \xrightarrow{g^!} \dD^b(X) \xrightarrow{\iota_g^!} \cC_g$.
\end{PROP}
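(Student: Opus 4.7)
The plan is to mirror the argument of Proposition \ref{prop_inj_t-str_D(X)_glued}. First I identify the recollement in the statement with the one associated to the admissible subcategory $\cC_g\subset\dD^b(X)$: the displayed arrows $g^!$ and $\iota_g^!$ are the right-adjoint functors $j_*$ and $i^!$ respectively, while the central functors determining the glue are $i_*=\iota_{g*}$ and $j^*=Rg_*$. By the Beilinson--Bernstein--Deligne characterization of glued $t$-structures together with the uniqueness clause in Theorem \ref{thm_gluing_via_poset}, the $t$-structure on $\dD^b(X)$ glued from the given $t$-structures on $\cC_g$ and $\dD^b(Z)$ via this recollement is the unique $t$-structure for which both $\iota_{g*}$ and $Rg_*$ are $t$-exact.

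Next I would invoke Lemma \ref{lem_t-exact_func_D(X)_to_C_f}(i), which yields $t$-exactness of $\iota_g^!$ for the $T_{X,f}$-projective $t$-structure on $\dD^b(X)$ and the $T_g$-projective $t$-structure on $\cC_g$, together with Lemma \ref{lem_t-exact_func_D(Y)_to_D(X)}(ii), which yields $t$-exactness of $g^!$ for the $T_{Z,h}$-projective $t$-structure on $\dD^b(Z)$ and the $T_{X,f}$-projective $t$-structure on $\dD^b(X)$. The general adjunction principle that, for $L\dashv R$, $L$ is right $t$-exact if and only if $R$ is left $t$-exact, applied to $\iota_{g*}\dashv\iota_g^!$ and $Rg_*\dashv g^!$, immediately gives right $t$-exactness of both $\iota_{g*}$ and $Rg_*$. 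The complementary left $t$-exactness is then obtained by unfolding the semi-orthogonal triangle $\iota_{g*}\iota_g^!E\to E\to Lg^*Rg_*E$ associated with the SOD $\dD^b(X)=\langle\cC_g,Lg^*\dD^b(Z)\rangle$ and checking, using the description (\ref{eqtn_t-str_on_stack}) of the hearts together with the short exact sequence of Proposition \ref{prop_pull_back_T_hh}, that the two outer terms lie in the $\gge 0$ part of the $T_{X,f}$-projective $t$-structure whenever the input does.

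The main obstacle I foresee is this last verification of left $t$-exactness of $\iota_{g*}$. Concretely, for $E$ in the heart of the $T_g$-projective $t$-structure on $\cC_g$, one must show that $Rf_*R\hHom_X(\ol{T}^f_g,\iota_{g*}E)$ is concentrated in non-negative degrees. Applying $Rf_*R\hHom_X(-,\iota_{g*}E)$ to the short exact sequence of Proposition \ref{prop_pull_back_T_hh}, and using $Rg_*\iota_{g*}E=0$ to conclude $Rf_*R\hHom_X(Lg^*T_h,\iota_{g*}E)=0$, reduces the task to the analogous statement for the summands $\omega_X|_{D_{g\cap g'}}$, which follows from the purity techniques developed in the proof of Proposition \ref{prop_T_ff_no_higher_ext}. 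With this in hand, the uniqueness clause of Theorem \ref{thm_gluing_via_poset} identifies the $T_{X,f}$-projective $t$-structure as the required BBD glue.
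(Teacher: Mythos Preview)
You have misidentified the roles of the functors in the recollement. In the paper's notation, when a recollement is written $\dD_0\xrightarrow{a}\dD\xrightarrow{b}\dD_1$, the arrow $a$ is the fully faithful embedding $i_*$ and $b$ is the quotient functor $j^*$; compare Proposition \ref{prop_recoll_for_st_t-st_on_C_f} or Proposition \ref{prop_inj_t-str_D(X)_glued}(ii). Thus in the displayed recollement $\dD^b(Z)\xrightarrow{g^!}\dD^b(X)\xrightarrow{\iota_g^!}\cC_g$ the admissible subcategory is $g^!\dD^b(Z)\subset\dD^b(X)$, with $i_*=g^!$ and $j^*=\iota_g^!$ (here $j_!=\iota_{g*}$, since $\cC_g={}^\perp(g^!\dD^b(Z))$ by the triangle (\ref{eqtn_def_of_iota!})). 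The BBD characterisation therefore requires exactly that $g^!$ and $\iota_g^!$ be $t$-exact, and this is precisely what Lemmas \ref{lem_t-exact_func_D(X)_to_C_f}(i) and \ref{lem_t-exact_func_D(Y)_to_D(X)}(ii) give you. The paper's proof is this single sentence.

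Your reading takes $\dD_0=\cC_g$ via $i_*=\iota_{g*}$ and $j^*=Rg_*$, i.e.\ the recollement with respect to the subcategory $\cC_g$. That is a \emph{different} recollement, and the paper does \emph{not} assert that the $T_{X,f}$-projective \tr e is glued with respect to it (contrast the injective case, Proposition \ref{prop_inj_t-str_D(X)_glued}, or the $S_{X,f}$-projective case, Proposition \ref{prop_pro_tr_on_X}, where both gluings are stated and proved separately). Consequently your ``main obstacle'' --- establishing left $t$-exactness of $\iota_{g*}$ for the projective \tr es --- is not needed at all, and your sketched argument for it is incomplete: the reduction via Proposition \ref{prop_pull_back_T_hh} leaves you with terms $Rf_*R\hHom_X(\omega_X|_{D_{g\cap g'}},\iota_{g*}E)$ where $g\cap g'$ is an arbitrary element of $\Dec(g)$, not necessarily join-prime, so $\omega_X|_{D_{g\cap g'}}$ is not a direct summand of $T_g$ and the appeal to ``purity techniques'' from Proposition \ref{prop_T_ff_no_higher_ext} does not apply to a general $E$ in the heart.
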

\begin{proof}
It follows from Lemmas \ref{lem_t-exact_func_D(X)_to_C_f} and \ref{lem_t-exact_func_D(Y)_to_D(X)} that functors $g^!$ and $\iota_g^!$ are $t$-exact.
\end{proof}
\begin{PROP}\label{prop_final_tr_on_C_f}
	For any $(g,h)\in \Dec(f)$, the $T_f$-injective \tr e on $\cC_{f}$ is glued from the $T_h$-injective \tr e on $\cC_h$ and the $T_g$-injective \tr e on $\cC_g$, via recollement $\cC_h \xrightarrow{Lg^*} \cC_f \xrightarrow{\iota_g^*} \cC_g$.
	It coincides with the \tr e glued from dual \tr es on $\dD^b(\bB_g)$ via the strict $\Dec(f)^\textrm{op}$-filtration $g \mapsto Lg^* \cC_h$ of 
	Proposition \ref{prop_right_dual_filt_on_C_f}.
\end{PROP}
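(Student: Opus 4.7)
The proof splits into two parts corresponding to the two assertions of the proposition.

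For the first assertion, the plan is to verify that the two functors appearing in the recollement are t-exact for the indicated t-structures. By Lemma \ref{lem_t-exact_func_D(Y)_to_D(X)}(i), the functor $Lg^*\colon \cC_h \to \cC_f$ is t-exact for the $T_h$-injective and $T_f$-injective t-structures; by Lemma \ref{lem_t-exact_func_D(X)_to_C_f}(ii), the functor $\iota_g^*\colon \cC_f \to \cC_g$ is t-exact for the $T_f$-injective and $T_g$-injective t-structures. Since a t-structure glued via a recollement is characterised uniquely as the one making the embedding and the quotient t-exact (the BBD gluing characterisation), this identifies the $T_f$-injective t-structure with the one glued via the recollement $\cC_h \xrightarrow{Lg^*} \cC_f \xrightarrow{\iota_g^*} \cC_g$ from the $T_h$- and $T_g$-injective t-structures.

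For the second assertion, I proceed by induction on $|\Conn(f)|$. The base case $|\Conn(f)| = 1$ reduces to Proposition \ref{prop_dual_t-str_on_W_and_C_f}: in this case $f$ itself is the blow-up of a smooth irreducible $\bB_f$ and $\theta_f\colon \dD^b(\bB_f) \xrightarrow{\simeq} \cC_f$ is t-exact from the dual to the $T_f$-injective t-structure. For the inductive step, choose $g_1 \in \Conn(f)$ minimal; by \eqref{eqtn_Conn_f_via_B_g}, the morphism $g_1\colon X \to Z_1$ is itself a single blow-up along a smooth irreducible $\bB_{g_1}$. Writing $f = h_1 \circ g_1$, the first part expresses the $T_f$-injective t-structure as glued from the $T_{h_1}$-injective t-structure on $\cC_{h_1}$ and the $T_{g_1}$-injective t-structure on $\cC_{g_1}$, and Proposition \ref{prop_dual_t-str_on_W_and_C_f} identifies the latter with the dual t-structure on $\dD^b(\bB_{g_1})$ via $\theta_{g_1}$. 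Applying the induction hypothesis to $h_1$ (where $|\Conn(h_1)| = |\Conn(f)| - 1$ via the bijection $\gamma_{g_1}$) exhibits the $T_{h_1}$-injective t-structure as glued via a strict $\Dec(h_1)^{\textrm{op}}$-filtration from dual t-structures on the various $\dD^b(\bB_{g'})$, $g' \in \Conn(h_1)$.

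Concatenating these glue-steps produces an iterated gluing of the $T_f$-injective t-structure from dual t-structures on $\dD^b(\bB_g)$ for $g \in \Conn(f)$. By Theorem \ref{thm_gluing_via_poset} together with the independence assertion of Remark \ref{rem_full_order}, this iterated gluing coincides with the unique t-structure glued via the strict $\Dec(f)^{\textrm{op}}$-filtration of Proposition \ref{prop_right_dual_filt_on_C_f}, completing the proof. The main bookkeeping obstacle I anticipate is checking that the iterative peeling-off of a minimal $g_1 \in \Conn(f)$ at each step produces a full order on $JP(\Dec(f)^{\textrm{op}})$ compatible with its partial order: under the standard Birkhoff anti-isomorphism $\Conn(f)^{\textrm{op}} \simeq JP(\Dec(f)^{\textrm{op}})$, a minimal element of $\Conn(f)$ corresponds to a maximal join-prime of $\Dec(f)^{\textrm{op}}$, which is precisely the type of element removed at the outermost recollement in the inductive proof of Theorem \ref{thm_gluing_via_poset}, so the two constructions align.
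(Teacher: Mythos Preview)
Your proposal is correct and follows essentially the same approach as the paper. The first part is identical: both you and the paper invoke Lemmas \ref{lem_t-exact_func_D(Y)_to_D(X)}(i) and \ref{lem_t-exact_func_D(X)_to_C_f}(ii) to get $t$-exactness of $Lg^*$ and $\iota_g^*$, then conclude by the BBD characterisation of the glued \tr e.

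For the second part, your inductive argument is valid but slightly more elaborate than necessary. The paper's proof is more direct: having established in the first part that the $T_f$-injective \tr e is glued via the recollement w.r.t.\ $Lg^*\cC_h$ for \emph{every} $(g,h)\in\Dec(f)$, one has verified exactly the defining property in Theorem~\ref{thm_gluing_via_poset} for the \tr e glued via the $\Dec(f)^{\mathrm{op}}$-filtration. Uniqueness then forces the $T_f$-injective \tr e to coincide with the glued one, and Proposition~\ref{prop_dual_t-str_on_W_and_C_f} identifies the \tr es on the minimal subquotients with the dual \tr es on $\dD^b(\bB_g)$. Your peeling-off-a-minimal-element induction and the bookkeeping with $JP(\Dec(f)^{\mathrm{op}})$ essentially reprove part of Theorem~\ref{thm_gluing_via_poset}; it is not wrong, but the paper's route avoids it by observing that the first assertion already supplies the full system of gluing compatibilities.
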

\begin{proof}
	The $t$-exactness of $Lg^*$ and $\iota_g^*$, assured by Lemmas \ref{lem_t-exact_func_D(Y)_to_D(X)} and \ref{lem_t-exact_func_D(X)_to_C_f}, implies that the \tr e on $\cC_f$ is glued via recollement w.r.t. subcategory $Lg^*\cC_h$. It follows that it is glued via the strict $\Dec(f)^\textrm{op}$-filtration $g \mapsto Lg^* \cC_h$ from the $T_{f_i}$-injective \tr es on $\cC_{f_i}$. The statement follows from Proposition \ref{prop_dual_t-str_on_W_and_C_f}. 
\end{proof}
It follows form Propositions \ref{prop_inj_t-str_D(X)_glued} and \ref{prop_final_tr_on_C_f} that the $T_{X,f}$-injective \tr e on $\dD^b(X)$ is glued via recollement with respect to subcategory $Lg^* \cC_h$, for any $(g,h) \in \Dec(f)$.

Recall from Corollary \ref{cor_dual_tilting} that $\dD^b(X)$ and $\cC_f$ have tilting relative  objects $S_{X,f}$, respectively $S_f$. Then, in view of Corollary \ref{cor_dual_t-str_on_D_X}, Proposition \ref{prop_inj_t-str_D(X)_glued} implies
\begin{PROP}\label{prop_pro_tr_on_X}
	The $S_{X,f}$-projective \tr e on $\dD^b(X)$ is glued 
	\begin{itemize}
		\item[(i)] from the $S_g$-projective \tr e on $\cC_{g}$ and the $S_{Z,h}$-projective \tr e on $\dD^b(Z)$ via recollement $\dD^b(Z)\xrightarrow{g^!} \dD^b(X) \xrightarrow{\iota_g^!} \cC_g$, for any decomposition $f\colon X \xrightarrow{g}Z \xrightarrow{h} Y$. It is the \tr e glued from the standard \tr e on $\dD^b(Y)$ and the shift by -1 of the standard \tr es on $\dD^b(\bB_g)$ via the strict $\Dec^+(f)^\textrm{op}$-filtration $g\mapsto g^!\dD^b(Z)$ of Remark \ref{rem_filtr_on_D_X}.
		\item[(ii)] from the $S_f$-projective \tr e on $\cC_{f}$ and the standard \tr e on $\dD^b(Y)$ via recollement $\cC_f \xrightarrow{\iota_{f*}} \dD^b(X) \xrightarrow{Rf_*} \dD^b(Y)$.
	\end{itemize}
\end{PROP}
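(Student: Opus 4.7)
The plan is to derive this proposition as the $\mathscr{D}_f$-dual of Proposition \ref{prop_inj_t-str_D(X)_glued}. The key preliminary observation is that $S_{X,f} = \mathscr{D}_X(T_{X,f})$ while $\mathscr{D}_X$ and $\mathscr{D}_f$ differ by a twist with $Lf^*\omega_Y$, a line bundle pulled back from $Y$, which is $t$-exact for the glued \tr es in question (they are characterised via $Rf_*R\hHom_X(-,\cdot)$, linear in such twists). Hence the $S_{X,f}$-projective \tr e is $\mathscr{D}_f$-dual to the $T_{X,f}$-injective \tr e, and analogously $S_f$-projective is $\mathscr{D}_f$-dual to $T_f$-injective on $\cC_f$. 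Corollary \ref{cor_dual_t-str_on_D_X} then reduces each assertion to identifying the $\mathscr{D}_f$-duals of the admissible subcategories appearing in Proposition \ref{prop_inj_t-str_D(X)_glued}.

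For (ii), since $\mathscr{D}_f$ preserves $\cC_f$ (Lemma \ref{lem_dual_preserv_C_f}), the recollement with respect to $\cC_f$ is self-dual, and by Grothendieck--Verdier duality $Rf_*\mathscr{D}_f \simeq \mathscr{D}_Y Rf_*$, so the induced duality on $\dD^b(X)/\cC_f \simeq \dD^b(Y)$ is $\mathscr{D}_Y$; this sends the dual \tr e on $\dD^b(Y)$ to the standard one. For (i), I would establish the identity $\mathscr{D}_f \circ Lg^* \simeq g^!\circ \mathscr{D}_h$ via $\omega_f^\bcdot \simeq g^!\omega_h^\bcdot$ and the formula $R\hHom_X(Lg^*E, g^!F) \simeq g^!R\hHom_Z(E,F)$ (which holds because $g$ has finite Tor-dimension and $g^!(-) \simeq Lg^*(-)\otimes\omega_g$). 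This yields $\mathscr{D}_f((Lg^*\dD^b(Z))^{\textrm{op}}) = g^!\dD^b(Z)$, and the resulting dual recollement is the one in the statement, with the \tr e on $\dD^b(Z)$ becoming the $\mathscr{D}_h$-dual of $T_{Z,h}$-injective, i.e. $S_{Z,h}$-projective, and the \tr e on $\cC_g$ becoming the $\mathscr{D}_X$-dual of $T_g$-injective, i.e. $S_g$-projective (using smoothness of $Z$ so $\mathscr{D}_h$ is a genuine duality).

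For the identification as the \tr e glued from $\dD^b(Y)$ and the $\dD^b(\bB_g)$'s via the filtration $g \mapsto g^!\dD^b(Z)$, I would apply duality to the second assertion of Proposition \ref{prop_inj_t-str_D(X)_glued}(i). Remark \ref{rem_filtr_on_D_X} together with Proposition \ref{prop_right_dual_filtr} says that $g \mapsto g^!\dD^b(Z)$ is precisely the $\mathscr{D}_f$-dual $\Dec^+(f)^{\textrm{op}}$-filtration to $g \mapsto Lg^*\dD^b(Z)$, and Theorem \ref{thm_gluing_via_poset} (uniqueness of the glued \tr e) reduces the claim to checking the \tr es on the minimal subquotients. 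On $\dD^b(Y)$ the $\mathscr{D}_Y$-dual of the dual \tr e is the standard \tr e. On $\cC_g^o \simeq \dD^b(\bB_g)$, under the equivalence $\theta_g$ of (\ref{eqtn_theta}) we have $\theta_g(\oO_{\bB_g}) = \oO_E(E) = \omega_g|_{D_g}$, so $S_g = \omega_g|_{D_g}[-1]$ corresponds to $\oO_{\bB_g}[-1]$; the $\oO_{\bB_g}[-1]$-projective \tr e has heart $\textrm{Coh}(\bB_g)[1]$, which is the shift by $-1$ of the standard \tr e. The main obstacle I anticipate is carefully verifying that the $\mathscr{D}_f$-duality interacts correctly with the $\theta_g$-identification on each minimal subquotient — in particular, confirming the shift by $-1$ compatibly with the dualities on $\cC_s$ and $\dD^b(\bB_g)$ (which differ from each other by Serre duality on $\bB_g$ up to a twist and shift); but this follows from Propositions \ref{prop_st_t-str_on_W_and_C_f} and \ref{prop_dual_t-str_on_W_and_C_f} combined with the explicit description of $S_g$.
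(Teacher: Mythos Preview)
Your approach is correct and is precisely the paper's own argument: the paper derives this proposition in one sentence by invoking Corollary \ref{cor_dual_t-str_on_D_X} to dualise Proposition \ref{prop_inj_t-str_D(X)_glued}. You have supplied considerably more detail than the paper does --- in particular the explicit identification $\mathscr{D}_f\circ Lg^* \simeq g^!\circ \mathscr{D}_h$ giving $\mathscr{D}_f\bigl((Lg^*\dD^b(Z))^{\textrm{op}}\bigr) = g^!\dD^b(Z)$, and the computation on the minimal subquotients via $\theta_g$ --- all of which is correct and matches what the paper leaves implicit (the shift by $-1$ is stated without proof in the sentence preceding Proposition \ref{prop_proj_tr_on_C_f} and in Example \ref{exm_1-perverse}).
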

\begin{EXM}\label{exm_1-perverse}
	For the blow-up $f$ of $Y$ along a smooth subscheme $W$, the $S_{X,f}$-projective \tr e on $\dD^b(X)$ is glued from the $S_f$-projective \tr e on $\cC_f$ and the standard \tr e on $\dD^b(Y)$ via recollement with respect to subcategory $\cC_f$. For a single blow-up $f$, we have $S_f =T_f[-1]\otimes f^*(\omega_Y)^{-1}$. Then Proposition \ref{prop_st_t-str_on_W_and_C_f} implies that the $S_f$-projective \tr e on $\cC_f$ is the standard \tr e shifted by -1. It follows that the $S_{X,f}$-projective \tr e on $\dD^b(X)$ is the \tr e of 1-perverse sheaves, with heart ${}^1\textrm{Per}(X/Y)$, as defined by Bridgeland in \cite{Br1}.
\end{EXM} 

Since, for a single blow-up $f_i$, the $S_{f_i}$-projective \tr e differs by the shift by -1 from the $T_{f_i}$-projective, by dualising Proposition \ref{prop_final_tr_on_C_f}, we get
\begin{PROP}\label{prop_proj_tr_on_C_f}
	For any $(g,h)\in \Dec(f)$, the $S_f$-projective \tr e on $\cC_{f}$ is glued from the $S_h$-projective \tr e on $\cC_h$ and the $S_g$-projective \tr e on $\cC_{g}$, via recollement $\cC_h \xrightarrow{g^!} \cC_f \xrightarrow{\iota_g^!} \cC_g$.
	It is the \tr e glued from the shift by -1 of the standard \tr es on $\dD^b(\bB_g)$ via the strict $\Dec(f)^\textrm{op}$-filtration $g \mapsto g^! \cC_h$ of Proposition \ref{prop_right_dual_filt_on_C_f}.
\end{PROP}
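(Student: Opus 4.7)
The plan is to dualise Proposition \ref{prop_final_tr_on_C_f} using the relative duality $\mathscr{D}_f = R\hHom_X(-,\omega_f^\bcdot)$, which restricts to a contravariant equivalence on $\cC_f$ by Lemma \ref{lem_dual_preserv_C_f}. Since $S_f$ agrees with $\mathscr{D}_f(T_f)$ up to an invertible twist by $Lf^*\omega_Y^{-1}$ (which is $t$-exact in every \tr e under consideration, cf.\ Remark \ref{rem_line_bundle_pro}), the $S_f$-projective \tr e on $\cC_f$ is the $\mathscr{D}_f$-dual of the $T_f$-injective \tr e, and likewise for the analogous \tr es on $\cC_g$ and $\cC_h$.

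First I would identify the dualised subcategory underlying the recollement. Relative Grothendieck duality for the birational morphism $g$ yields a natural isomorphism $\mathscr{D}_f \circ Lg^* \simeq g^! \circ \mathscr{D}_h$, as follows from $g^!(-) \simeq Lg^*(-) \otimes \omega_g$ together with the projection formula. Combined with the fact that $\mathscr{D}_h$ preserves $\cC_h$, this gives $\mathscr{D}_f(Lg^*\cC_h) = g^!\cC_h$. Applying Corollary \ref{cor_dual_t-str_on_D_X} and Proposition \ref{prop_glued_and_dual} to the gluing described in Proposition \ref{prop_final_tr_on_C_f} then shows that the $S_f$-projective \tr e on $\cC_f$ is glued via the recollement with respect to the admissible subcategory $g^!\cC_h \subset \cC_f$ from the $S_g$-projective \tr e on $\cC_g$ and the $S_h$-projective \tr e on $\cC_h$. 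Under this dualisation, the $t$-exact functor $\iota_g^*$ of Proposition \ref{prop_final_tr_on_C_f} becomes $\iota_g^!$, and $Lg^*$ becomes $g^!$, producing exactly the recollement in the statement.

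For the second claim, I would apply the same dualisation to the entire strict $\Dec(f)^{\textrm{op}}$-filtration $g \mapsto Lg^*\cC_h$ of $\cC_f$; Proposition \ref{prop_right_dual_filt_on_C_f} already verifies that $g \mapsto g^!\cC_h$ is a strict admissible $\Dec(f)^{\textrm{op}}$-filtration. To identify the \tr es induced on the join-prime subquotients, I use that, for $g \in \Conn(f)$ with associated single blow-up $s$, the equivalence $\theta_g$ of (\ref{eqtn_theta}) takes the dual \tr e on $\dD^b(\bB_g)$ to the $T_s$-injective \tr e on $\cC_s$ by Proposition \ref{prop_dual_t-str_on_W_and_C_f}. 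By $\mathscr{D}_f$-duality and the identity $S_s \simeq T_s[-1] \otimes s^*\omega_Z^{-1}$ (cf.\ Example \ref{exm_1-perverse}), the corresponding \tr e on the dual side is the standard \tr e on $\dD^b(\bB_g)$ shifted by $-1$. The uniqueness assertion in Theorem \ref{thm_gluing_via_poset} then identifies the $S_f$-projective \tr e on $\cC_f$ with the one glued from these pieces.

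The main obstacle is bookkeeping: verifying carefully that the identity $\mathscr{D}_f \circ Lg^* \simeq g^! \circ \mathscr{D}_h$ holds on $\cC_h$, that the duality $\mathscr{D}_f$ intertwines the various adjunctions in the original and dual recollements so that the claimed functors $\iota_g^!$ and $g^!$ appear in the right positions, and that the twist by $Lf^*\omega_Y^{-1}$ distinguishing $\mathscr{D}_f$ from $\mathscr{D}_X$ does not affect any of the \tr es. All of these are formal consequences of Grothendieck--Verdier duality for birational morphisms of smooth spaces and of the $t$-exactness of tensoring with a pulled-back line bundle.
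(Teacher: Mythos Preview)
Your proposal is correct and follows essentially the same approach as the paper: the paper's own proof is a single sentence observing that, since for a single blow-up the $S_{f_i}$-projective \tr e is the $T_{f_i}$-projective \tr e shifted by $-1$, the result follows by dualising Proposition \ref{prop_final_tr_on_C_f}. You have carefully unpacked precisely this dualisation, including the identification $\mathscr{D}_f(Lg^*\cC_h)=g^!\cC_h$ and the effect on the join-prime subquotients.
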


\subsection{Tilting of the \tr es in torsion pairs}\label{ssec_Def_f_syst_of_trs}

To a relatively projective morphism $f\colon X \to Y$ of smooth algebraic spaces with dimension of fibers bounded by 1 and an element $g\colon X \to Z$ in $\Dec(f)$, we assign the $S_{X, g}$-projective  \tr e on $\dD^b(X)$. 
We shall show that, for $g,g'\in \Dec(f)$, the corresponding \tr es on $\dD^b(X)$ are related by two tilts, provided $g$ and $g'$ are comparable in $\Dec(f)$. 

More precisely, we consider the partially ordered set $\Dec(f)$ as a category whose objects are elements of $\Dec(f)$ and with exactly one morphism $(g,h)\to (g',h')$ provided $(g,h)\preceq (g',h')$. We shall label objects and morphisms in $\Dec(f)$ with \tr es on $\dD^b(X)$ such that the \tr e $(\dD^b(X)_{\f}^{\lle 0}, \dD^b(X)_{\f}^{\gge 1})$ assigned to $\f\colon (g,h) \to (g',h')$ is smaller than the \tr es $(\dD^b(X)_g^{\lle 0}, \dD^b(X)_g^{\gge 1})$, $(\dD^b(X)_{g'}^{\lle 0}, \dD^b(X)_{g'}^{\gge 1})$  assigned to $(g,h)$ and to $(g',h')$, and differs from them by a single tilt in a torsion pair, i.e. we have 
\begin{align}\label{eqtn_cond_on_t-str}
&\dD^b(X)_\f^{\lle 0} \subset \dD^b(X)_g^{\lle 0} \subset \dD^b(X)_\f^{\lle 1},&  &\dD^b(X)_\f^{\lle 0} \subset \dD^b(X)_{g'}^{\lle 0} \subset \dD^b(X)_\f^{\lle 1}.&
\end{align}

We assign to $g\in {\rm Dec}(f)$ the $S_{X,g}$-projective \tr e.

Let $\f \colon g_0 \to g_1$ be a morphism in $\Dec(f)$ and let $g_1 \colon X \xrightarrow{g_0} Z_0 \xrightarrow{\f_1} Z_1$ be the corresponding decomposition. We define the \tr e $(\dD^b(X)_{\f}^{\lle 0}, \dD^b(X)_{\f}^{\gge 1})$ on $\dD^b(X)$ to be glued via the filtration
\begin{equation}\label{eqtn_filt_for_f} 
g_0^! \cC_{\f_1} \subset g_0^!\dD^b(Z_0) \subset \dD^b(X)
\end{equation} 
from the standard \tr es on $\cC_{\f_1}$ and $\dD^b(Z_1) \simeq \dD^b(Z_0)/\cC_{\f_1}$ and the $S_{g_0}$-projective \tr e on $\cC_{g_0} \simeq \dD^b(X)/\dD^b(Z_0)$. Note that the \tr e on $\dD^b(Z_0)$ is by definition the \emph{0-perverse} \tr e.
\begin{EXM}
	For a single blow-up $f\colon X \to Y$, the poset $\Dec(f)$ has two elements $(\Id_X,f)$ and $(f, \Id_Y)$. In view of Remark \ref{rem_line_bundle_pro} and Example \ref{exm_1-perverse}, the corresponding \tr es on $\dD^b(X)$ are the standard one and the 1-perverse \tr e for the map $f$. The \tr e assigned to $\f \colon (\Id_X,f) \to (f,\Id_Y)$ is the 0-perverse \tr e for $f$. 
\end{EXM}

We denote by $(\cC_f^{\lle 0}, \cC_f^{\gge 1})$ the standard \tr e on $\cC_f$.

\begin{PROP}\label{prop_t-str_on_C_f_differ_by_tilt}
	We have inclusions $\cC_f^{\lle 0} \subset \cC_{f\, \textrm{pro }S_f}^{\lle 0} \subset \cC_f^{\lle 1}$.
\end{PROP}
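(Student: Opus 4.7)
My plan is to compute the functor $\Phi_{S_f}=Rf_*R\hHom_X(S_f,-)\colon \cC_f\to \dD^b(\aA_{S_f})$ explicitly and read both inclusions off the resulting formula. First I would apply $R\hHom_X(-,\oO_X)$ to (\ref{eqtn_dis_of_f}) and use that $\omega_g|_{D_g}$ is perfect on the smooth space $X$ to obtain $R\hHom_X(\omega_g|_{D_g}, E) \simeq (\oO_{D_g}\otimes^L_X E)[-1]$ for any $E$. Summing over $g\in \Conn(f)$ and accounting for the shift in $S_f=\bigoplus_g \omega_g|_{D_g}[-1]$ gives $R\hHom_X(S_f,E)\simeq \bigoplus_g \oO_{D_g}\otimes^L_X E$. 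For $E\in \cC_f$ I would then combine $Rf_*E=0$ with the triangle $\omega_g^{-1}\otimes E\to E \to \oO_{D_g}\otimes^L_X E$ coming from $0\to \omega_g^{-1}\to \oO_X\to \oO_{D_g}\to 0$ to arrive at the key identity
\[\Phi_{S_f}(E)\simeq \bigoplus_{g\in \Conn(f)} Rf_*(\omega_g^{-1}\otimes E)[1].\]

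The inclusion $\cC_f^{\lle 0}\subset \cC_{f\,\textrm{pro }S_f}^{\lle 0}$ is then immediate: if $E\in \cC_f^{\lle 0}$, then $\omega_g^{-1}\otimes E\in \dD^{\lle 0}(X)$ because twisting by a line bundle is $t$-exact, and $Rf_*$ has cohomological amplitude at most $1$ since the fibres of $f$ have dimension $\leq 1$; hence $Rf_*(\omega_g^{-1}\otimes E)\in \dD^{\lle 1}(Y)$ and the displayed formula places $\Phi_{S_f}(E)$ in $\dD^{\lle 0}(Y)$.

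For the inclusion $\cC_{f\,\textrm{pro }S_f}^{\lle 0}\subset \cC_f^{\lle 1}$ I would argue by contraposition. Assume $E\notin \cC_f^{\lle 1}$, let $n\geq 2$ be the largest integer with $A:=\hH^n(E)\neq 0$, and use the equivalence $\Phi_{S_f}\colon \cC_f \xrightarrow{\sim} \dD^b(\aA_{S_f})$ (Theorem \ref{thm_relative_tilting} together with Proposition \ref{prop_Perf=Db}) to find $g\in \Conn(f)$ with $Rf_*(\omega_g^{-1}\otimes A)\neq 0$; let $m\in\{0,1\}$ be its top cohomological degree. Applying $Rf_*(\omega_g^{-1}\otimes -)$ to the truncation triangle $\tau^{\lle n-1}E\to E\to A[-n]$ and examining the long exact sequence at cohomological degree $m+n$, the bound $Rf_*(\omega_g^{-1}\otimes \tau^{\lle n-1}E)\in \dD^{\lle n}(Y)$ yields a surjection of $\hH^{m+n}(Rf_*(\omega_g^{-1}\otimes E))$ onto the non-zero top cohomology of $Rf_*(\omega_g^{-1}\otimes A)[-n]$, so the corresponding summand of $\Phi_{S_f}(E)$ has non-zero cohomology in degree $m+n-1\geq 1$, contradicting $\Phi_{S_f}(E)\in \dD^{\lle 0}(Y)$.

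The main obstacle is the second inclusion: one has to invoke the full-faithfulness of $\Phi_{S_f}$ to guarantee a non-vanishing direct summand and then track cohomological degrees through the truncation triangle precisely enough to locate the obstructing class in degree $\geq 1$.
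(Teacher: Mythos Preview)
Your argument is correct and complete. The explicit formula $\Phi_{S_f}(E)\simeq \bigoplus_{g\in\Conn(f)} Rf_*(\omega_g^{-1}\otimes E)[1]$ for $E\in\cC_f$ is valid (the dualisation of (\ref{eqtn_dis_of_f}) and the perfectness of $\omega_g|_{D_g}$ give exactly what you claim), and both inclusions follow from it by the amplitude bookkeeping you describe. The one point worth stating explicitly is that the standard \tr e on $\dD^b(A_{S_f})$ is detected by the forgetful functor to $\dD^b(Y)$, so membership in $\dD^b(A_{S_f})^{\lle 0}$ is tested on the underlying complex of $\oO_Y$-modules; this is implicit in your use of the formula but should be said.

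The paper takes a different route: it argues by induction on the length of a Danilov decomposition of $f$, using the gluing description of the $S_f$-projective \tr e in Proposition \ref{prop_proj_tr_on_C_f}. For a single blow-up $g$ one has $\cC_{g\,\textrm{pro }S_g}^{\lle 0}=\cC_g^{\lle 1}$ directly, and for a decomposition $f=h\circ g$ with $g$ a single blow-up the inclusions are pulled back through the recollement $\cC_h\xrightarrow{g^!}\cC_f\xrightarrow{\iota_g^!}\cC_g$ via the triangle (\ref{eqtn_iota_g}), the $t$-exactness of $Rg_*$ (Proposition \ref{prop_g_is_exact}), and the inductive hypothesis for $h$. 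Your approach is more elementary and self-contained: it bypasses the recollement machinery entirely and produces as a by-product an explicit closed formula for $\Phi_{S_f}$ on $\cC_f$. The paper's approach, by contrast, is the natural one within its framework, since the gluing description of these \tr es is the main theme and the inductive argument simply exercises it.
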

\begin{proof}
	We proceed by induction on the length of decomposition of $f$ into a sequence of blow-ups. If $f$ is the blow-up of $Y$ along a smooth $W$, then, in view of Proposition \ref{prop_st_t-str_on_W_and_C_f} and the fact that $S_f=T_{f}[-1]\otimes f^*(\omega_Y)^{-1}$, the $S_f$-projective \tr e is just the standard \tr e on $\cC_f$ shifted by -1, i.e. we have $\cC_{f\, \textrm{pro }S_f}^{\lle 0} =\cC_f^{\lle 1}$.
	
	Now let  $f$ be a morphism which can be decomposed into $n$ consecutive blow-ups and let $(g,h)\in \Dec(f)$ with $g\colon X \to Z$ the blow-up of $Z$ along a smooth $W$. Take $E\in \cC_f^{\lle 0}$. In view of Proposition \ref{prop_proj_tr_on_C_f} and formula (\ref{eqtn_aisles_of_glued_tr}) for the negative aisle of the glued \tr e, $E$ lies in $\cC_{f\, \textrm{pro }S_f}^{\lle 0}$ if and only if $Rg_* E\in \cC_{h\, \textrm{pro }S_h}^{\lle 0}$ and $\iota_g^!E \in \cC_{g\, \textrm{pro }S_g}^{\lle 0}$. By Proposition \ref{prop_g_is_exact}, $Rg_*E \in \cC_h^{\lle 0}$. The inductive hypothesis for $h$ implies that $Rg_*E \in \cC_{h\, \textrm{pro }S_h}^{\lle 0}$. Moreover, since $Lg^*$ is right $t$-exact so is $g^!(-) \simeq Lg^*(-)\otimes \omega_g$. Hence, we have $g^! Rg_* E \in \dD^b(X)^{\lle 0}$. It follows, by considering the long exact sequence of the cohomology sheaves for the triangle 
	\begin{align}\label{eqtn_iota_g}
	\iota_{g*}\iota_g^!E \to E \to g^!Rg_* E\to \iota_{g*}\iota_g^!E[1],
	\end{align}
	that $\iota_g^!E \in \cC_g^{\lle 1}= \cC_{g\, \textrm{pro }S_g}^{\lle 0}$. 
			
	Now let $E$ be an object in $\cC_{f\,\textrm{pro }S_f}^{\lle 0}$. Then $Rg_*E \in \cC_{h\,\textrm{pro }S_h}^{\lle 0} \subset \cC_h^{\lle 1}$, by the inductive hypothesis applied to $h$. Right $t$-exactness of $g^!$ implies that $g^!Rg_*E \in \cC_f^{\lle 1}$. The gluing property for the $S_f$-projective \tr e on $\cC_f$, Proposition \ref{prop_proj_tr_on_C_f},   implies that $\iota_g^!E \in \cC_{g\,\textrm{pro }S_g}^{\lle 0}$. The first paragraph of the proof assures that $ \cC_{g\,\textrm{pro }S_g}^{\lle 0} = \cC_g^{\lle 1}$. Since $\iota_{g*}$ is $t$-exact for the standard \tr e, we have $\iota_{g*}\iota_g^!E \in \cC_f^{\lle 1}$. 
	The long exact sequence of the cohomology sheaves for (\ref{eqtn_iota_g}), implies  that $E\in \cC_f^{\lle 1}$, i.e. $\cC_{f\, \textrm{pro }S_f}^{\lle 0} \subset \cC_f^{\lle 1}$.
\end{proof}
In order to show that (\ref{eqtn_cond_on_t-str}) is verified, we prove
\begin{LEM}\label{lem_gluing_and_tilting}
	Let $\dD$ be a triangulated category with a recollement (\ref{eqtn_recollement}) and $(\dD_0^{\lle_1 0}, \dD_0^{\gge_1 1})$, $(\dD_0^{\lle_2 0}, \dD_0^{\gge_2 1})$ \tr es on $\dD_0$ with $\dD_0^{\lle_1 0} \subset \dD_0^{\lle_2 0}\subset \dD_0^{\lle_1 1}$, $(\dD_1^{\lle_1 0}, \dD_1^{\gge_1 1})$, $(\dD_1^{\lle_2 0}, \dD_1^{\gge_2 1})$ \tr es on $\dD_1$ with $\dD_1^{\lle_1 0} \subset \dD_1^{\lle_2 0}\subset \dD_1^{\lle_1 1}$.
	Let $(\dD^{\lle_i 0}, \dD^{\gge_i 1})$ be the \tr e on $\dD$ glued from $(\dD_0^{\lle_i 0}, \dD_1^{\gge_i 1})$ and $(\dD_1^{\lle_i 0}, \dD_1^{\gge_i 1})$, for $i=1,2$. Then $\dD^{\lle_1 0} \subset \dD^{\lle_2 0} \subset \dD^{\lle_1 1}$.
\end{LEM}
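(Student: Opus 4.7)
The plan is to exploit the standard description (due to Beilinson–Bernstein–Deligne) of the negative aisle of a glued \tr e in terms of the exact functors $i^*$ and $j^*$. Explicitly, for the glued \tr es on $\dD$ one has the characterisation
\[
\dD^{\lle_i 0} = \{ D \in \dD \mid j^*D \in \dD_1^{\lle_i 0},\; i^*D \in \dD_0^{\lle_i 0}\}, \quad i=1,2.
\]
Both functors $i^*$ and $j^*$ are exact (hence commute with shifts), so the analogous formula describes $\dD^{\lle_1 1}$ after replacing $\dD_0^{\lle_1 0}$ by $\dD_0^{\lle_1 1}$ and $\dD_1^{\lle_1 0}$ by $\dD_1^{\lle_1 1}$.

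With this in hand the two required inclusions are purely formal. For the first, take $D\in \dD^{\lle_1 0}$. Then $i^*D\in \dD_0^{\lle_1 0}\subset \dD_0^{\lle_2 0}$ by hypothesis, and similarly $j^*D\in \dD_1^{\lle_1 0}\subset \dD_1^{\lle_2 0}$. Applying the characterisation of $\dD^{\lle_2 0}$ yields $D\in \dD^{\lle_2 0}$. For the second inclusion, take $D\in \dD^{\lle_2 0}$; then $i^*D\in \dD_0^{\lle_2 0}\subset \dD_0^{\lle_1 1}$ and $j^*D\in \dD_1^{\lle_2 0}\subset \dD_1^{\lle_1 1}$, so $D\in \dD^{\lle_1 1}$.

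I expect there is essentially no obstacle; the only point requiring care is to justify the BBD formula for the glued aisle, which the paper can either cite from \cite{BBD} or recover from the recollement axioms together with the fact that every object of $\dD$ sits in a canonical triangle $i_*i^*D\to D\to j_*j^*D[?]$ (up to indexing conventions). Once the aisle description is in place, the lemma follows by a one-line diagram chase in each direction, as outlined above.
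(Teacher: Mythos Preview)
Your proof is correct and is essentially the same as the paper's: the paper's proof is the single sentence ``The inclusions immediately follow from the definition of the negative aisle of the glued \tr e,'' and you have written out exactly what that sentence means using the BBD description of the aisle (which the paper records as (\ref{eqtn_aisles_of_glued_tr})). One small remark: your parenthetical triangle $i_*i^*D\to D\to j_*j^*D[?]$ is not the right one (the relevant triangles are $j_!j^*D\to D\to i_*i^*D$ and $i_*i^!D\to D\to j_*j^*D$), but since you do not actually use it and rely directly on the aisle formula, this does not affect the argument.
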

\begin{proof}
	The inclusions immediately follow from the definition of the negative aisle of the glued \tr e.
\end{proof}

\begin{PROP}\label{prop_gluing_and_tilting_via_poset}
	Let $\lL$ be a finite distributive lattice and $\dD$ a triangulated category with a strict admissible  $\lL$-filtration. For every join-prime $s\in \lL$, assume the category $\dD_s^o$ has \tr es $({\dD_s^o}^{\lle_1 0}, {\dD_s^o}^{\gge_1 1})$ $({\dD_s^o}^{\lle_2 0}, {\dD_s^o}^{\gge_2 1})$ with ${\dD_s^o}^{\lle_1 0} \subset {\dD_s^o}^{\lle_2 0} \subset {\dD_s^o}^{\lle_1 1}$. Let $(\dD^{\lle_1 0}, \dD^{\gge_1 1})$, respectively $(\dD^{\lle_2 0}, \dD^{\gge_2 1})$, be the \tr e on $\dD$ glued via the $\lL$-filtration from the \tr es $({\dD_s^o}^{\lle_1 0}, {\dD_s^o}^{\gge_1 1})$, respectively $({\dD_s^o}^{\lle_2 0}, {\dD_s^o}^{\gge_2 1})$. Then $\dD^{\lle_1 0}\subset \dD^{\lle_2 0} \subset \dD^{\lle_1 1}$. 
\end{PROP}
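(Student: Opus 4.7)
The plan is to induct on $|S|$, where $S := JP(\lL)$, mirroring the inductive construction of glued t-structures in the proof of Theorem \ref{thm_gluing_via_poset} and invoking Lemma \ref{lem_gluing_and_tilting} at each stage.

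For the base case $|S|=1$, the lattice $\lL$ has only the elements $0$ and $s$, so $\dD \simeq \dD_s^o$ and the desired chain of inclusions reduces directly to the hypothesis.

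For the inductive step, I would pick a maximal element $s_0 \in S$ and set $S' := S \setminus \{s_0\}$. The strict admissible $\lL$-filtration restricts to a strict admissible filtration indexed by $\lL' := I(S') \subset \lL$ on $\dD_{S'}$, with the same join-prime quotients $\dD_s^o$ for $s \in S'$ equipped with the same two t-structures from the data. Maximality of $s_0$ identifies $\dD_S/\dD_{S'}$ with $\dD_{s_0}^o$, and, by the uniqueness clause of Theorem \ref{thm_gluing_via_poset}, each of the t-structures $(\dD^{\lle_i 0}, \dD^{\gge_i 1})$ on $\dD = \dD_S$ coincides with the one obtained by gluing, via the recollement with respect to $\dD_{S'}$, the glued t-structure on $\dD_{S'}$ (coming from the $\lL'$-filtration and the given t-structures on $\dD_s^o$, $s \in S'$) with the given t-structure on $\dD_{s_0}^o$.

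The inductive hypothesis applied to $\lL'$ delivers $\dD_{S'}^{\lle_1 0} \subset \dD_{S'}^{\lle_2 0} \subset \dD_{S'}^{\lle_1 1}$, and the assumption of the Proposition gives the analogous inclusions on $\dD_{s_0}^o$. A single application of Lemma \ref{lem_gluing_and_tilting} to the above recollement then yields $\dD^{\lle_1 0} \subset \dD^{\lle_2 0} \subset \dD^{\lle_1 1}$, finishing the induction. Since Lemma \ref{lem_gluing_and_tilting} is essentially immediate from the description of the negative aisle of a glued t-structure, the only mildly delicate ingredient is the identification of the two descriptions of the glued t-structure on $\dD_S$ at the inductive step; this is handled by exactly the same uniqueness/coherence argument already used in the proof of Theorem \ref{thm_gluing_via_poset}, so no genuinely new technical obstacle arises.
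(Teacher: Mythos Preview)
Your proof is correct and follows essentially the same approach as the paper: both reduce to a linear chain of recollements (the paper by invoking Remark~\ref{rem_full_order} to choose a full order on $JP(\lL)$, you by inductively peeling off a maximal element) and then apply Lemma~\ref{lem_gluing_and_tilting} at each step. The paper's version is terser, but the underlying argument is the same.
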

\begin{proof}
	Since the \tr e on $\dD$ is glued via any admissible filtration on $\dD$ compatible with the order on $\lL$ (Remark \ref{rem_full_order}) the statement follows by induction from Lemma \ref{lem_gluing_and_tilting}.
\end{proof}

\begin{PROP}\label{prop_Per_0_on_the_edge}
	For $\f\colon g_0 \to g_1$ in $\Dec(f)$, we have embeddings (\ref{eqtn_cond_on_t-str}).
\end{PROP}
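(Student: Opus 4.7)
The plan is to deduce (\ref{eqtn_cond_on_t-str}) from two applications of Lemma \ref{lem_gluing_and_tilting}, after observing that all three \tr es on $\dD^b(X)$ in question admit a description as glued \tr es via one and the same recollement $g_0^!\dD^b(Z_0)\to \dD^b(X)\to \cC_{g_0}$ with the $S_{g_0}$-projective \tr e on $\cC_{g_0}$, differing only in the \tr e chosen on the subcategory $\dD^b(Z_0)$. Iterating the defining three-step filtration $g_0^!\cC_{\f_1}\subset g_0^!\dD^b(Z_0)\subset \dD^b(X)$ as in the proof of Theorem \ref{thm_gluing_via_poset}, the $\f$-\tr e is glued via this recollement from the $0$-perverse \tr e on $\dD^b(Z_0)$. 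Proposition \ref{prop_pro_tr_on_X}(i) applied to the trivial decomposition $g_0=\Id_{Z_0}\circ g_0$ exhibits the $S_{X,g_0}$-projective \tr e as glued via the same recollement from the standard \tr e on $\dD^b(Z_0)$ (note that $S_{Z_0,\Id_{Z_0}}=\oO_{Z_0}$ is a line bundle, whose projective \tr e is standard by Remark \ref{rem_line_bundle_pro}). Finally, Proposition \ref{prop_pro_tr_on_X}(i) applied to the decomposition $g_1=\f_1\circ g_0$ exhibits the $S_{X,g_1}$-projective \tr e as glued via the same recollement from the $S_{Z_0,\f_1}$-projective \tr e on $\dD^b(Z_0)$.

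For the $g_0$ inclusions in (\ref{eqtn_cond_on_t-str}), by Lemma \ref{lem_gluing_and_tilting} it then suffices to verify that the $0$-perverse aisle on $\dD^b(Z_0)$ lies between $\dD^b(Z_0)^{\lle -1}$ and $\dD^b(Z_0)^{\lle 0}$. For an $E$ in the $0$-perverse aisle, the triangle $L\f_1^*R\f_{1*}E\to E\to \iota_{\f_1*}\iota_{\f_1}^*E\to$ presents $E$ as an extension of two objects of $\dD^b(Z_0)^{\lle 0}$, using right $t$-exactness of $L\f_1^*$ and $t$-exactness of $\iota_{\f_1*}$. Conversely, for $E\in \dD^b(Z_0)^{\lle -1}$ the bound on fibre dimension of $\f_1$ gives $R\f_{1*}E\in \dD^b(Z_1)^{\lle 0}$, and right $t$-exactness of $\iota_{\f_1}^*$ gives $\iota_{\f_1}^*E\in \cC_{\f_1}^{\lle 0}$, so $E$ lies in the $0$-perverse aisle by its defining gluing formula.

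For the $g_1$ inclusions, the same top-level application of Lemma \ref{lem_gluing_and_tilting} reduces the problem to verifying tilting inclusions between the $0$-perverse and $S_{Z_0,\f_1}$-projective \tr es on $\dD^b(Z_0)$. Both are glued via the recollement $\cC_{\f_1}\to \dD^b(Z_0)\to \dD^b(Z_1)$ from the same standard \tr e on $\dD^b(Z_1)$, and differ only on the subcategory $\cC_{\f_1}$: the $0$-perverse uses the standard \tr e, while the $S_{Z_0,\f_1}$-projective uses the $S_{\f_1}$-projective one. By Proposition \ref{prop_t-str_on_C_f_differ_by_tilt} these two \tr es on $\cC_{\f_1}$ satisfy the tilting inclusion $\cC_{\f_1}^{\lle 0}\subset \cC_{\f_1\,\textrm{pro }S_{\f_1}}^{\lle 0}\subset \cC_{\f_1}^{\lle 1}$, so a preliminary application of Lemma \ref{lem_gluing_and_tilting} at the level of $\dD^b(Z_0)$ promotes this to the analogous tilting between the $0$-perverse and $S_{Z_0,\f_1}$-projective \tr es on $\dD^b(Z_0)$. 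Feeding this into the top-level application yields $\dD^b(X)_\f^{\lle 0}\subset \dD^b(X)_{g_1}^{\lle 0}\subset \dD^b(X)_\f^{\lle 1}$.

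The main delicate point is the identification in the first paragraph of a common recollement via which all three \tr es are glued: the $S_{X,g_1}$-projective \tr e is most naturally described via the recollement $\cC_{g_1}\to \dD^b(X)\to \dD^b(Z_1)$ of Proposition \ref{prop_pro_tr_on_X}(ii), whose admissible subcategory differs from the one underlying the $\f$-\tr e, so Lemma \ref{lem_gluing_and_tilting} does not apply directly. It is the alternative description afforded by Proposition \ref{prop_pro_tr_on_X}(i), exhibiting the $S_{X,g_1}$-projective \tr e as glued via $g_0^!\dD^b(Z_0)\to \dD^b(X)\to \cC_{g_0}$, that makes the comparison with the $\f$-\tr e possible.
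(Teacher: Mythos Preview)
Your proof is correct and follows essentially the same strategy as the paper's: reduce to the common recollement with respect to $g_0^!\dD^b(Z_0)$, where all three \tr es differ only in the \tr e chosen on $\dD^b(Z_0)$, and then apply Lemma~\ref{lem_gluing_and_tilting}. Two minor cosmetic differences: for the $g_0$ inclusions the paper simply cites \cite{VdB} for the tilting relation between the $0$-perverse and standard \tr es on $\dD^b(Z_0)$, whereas you supply a short direct argument; for the $g_1$ inclusions the paper invokes Proposition~\ref{prop_gluing_and_tilting_via_poset} on the three-step filtration (\ref{eqtn_filt_for_f}) in one shot, while you unwind this into two nested applications of Lemma~\ref{lem_gluing_and_tilting}, first on $\dD^b(Z_0)$ and then on $\dD^b(X)$.
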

\begin{proof}
	The \tr e corresponding to $g_0$ is glued via the filtration $g_0^! \dD^b(Z_0)\subset \dD^b(X)$ from the standard \tr e on $\dD^b(Z_0)$ and the $S_{g_0}$-projective \tr e on $\cC_{g_0}$ by Proposition \ref{prop_pro_tr_on_X}(i). It is proved in \cite{VdB} that  $\dD^b(Z_0)_{{}^0\textrm{Per}(Z_0/Z_1)}^{\lle 0} \subset \dD^b(Z_0)^{\lle 0} \subset \dD^b(Z_0)_{{}^0\textrm{Per}(Z_0/Z_1)}^{\lle 1}$, hence, by Lemma \ref{lem_gluing_and_tilting}, $\dD^b(X)_{\f}^{\lle 0} \subset \dD^b(X)_{g_0}^{\lle 0} \subset \dD^b(X)_{\f}^{\lle 1}$.
	
	The \tr e corresponding to $g_1$ is glued via the filtration (\ref{eqtn_filt_for_f}) from the $S_{\f_1}$-projective \tr e on $\cC_{\f_1}$, the standard \tr e on $\dD^b(Z_1)$ and the $S_{g_0}$-projective \tr e on $\cC_{g_0}$, see Proposition \ref{prop_pro_tr_on_X}. By Proposition \ref{prop_t-str_on_C_f_differ_by_tilt},  $\cC_{\f_1}^{\lle 0} \subset \cC_{\f_1\,\textrm{pro }S_{\f_1}}^{\lle 0} \subset \cC_{\f_1}^{\lle 1}$. Then, Proposition \ref{prop_gluing_and_tilting_via_poset} implies that $\dD^b(X)_{\f}^{\lle 0} \subset \dD^b(X)_{g_1}^{\lle 0} \subset \dD^b(X)_{\f}^{\lle 1}$.
\end{proof}

\begin{PROP}
	For a morphism $\f \colon g_0 \to g_1$ in $\Dec(f)$, the \tr e $(\dD^b(X)_{\f}^{\lle 0}, \dD^b(X)_{\f}^{\gge 1})$ is the naive intersection of \tr es $(\dD^b(X)_{g_0}^{\lle 0}, \dD^b(X)_{g_0}^{\gge 1})$ and $(\dD^b(X)_{g_1}^{\lle 0}, \dD^b(X)_{g_1}^{\gge 1})$ (see \cite{Bon2}), i.e. we have $\dD^b(X)_{\f}^{\lle 0} = \dD^b(X)_{g_0}^{\lle 0} \cap \dD^b(X)_{g_1}^{\lle 0}$.
\end{PROP}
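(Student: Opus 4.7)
\emph{Plan.} The inclusion $\dD^b(X)_\f^{\lle 0} \subseteq \dD^b(X)_{g_0}^{\lle 0} \cap \dD^b(X)_{g_1}^{\lle 0}$ is immediate from Proposition~\ref{prop_Per_0_on_the_edge}, so the content lies in the reverse inclusion. My plan exploits that the $\f$-\tr e and the $g_1$-\tr e are glued along the same three-step filtration~(\ref{eqtn_filt_for_f}) with identical \tr es on two of the three minimal quotients, namely the standard one on $\dD^b(Z_1)$ and the $S_{g_0}$-projective one on $\cC_{g_0}$, and differing only on $\cC_{\f_1}$, where the $\f$-\tr e uses the standard \tr e and the $g_1$-\tr e uses the $S_{\f_1}$-projective \tr e. For $E$ in the intersection, the aisle formula~(\ref{eqtn_aisles_of_glued_tr}) for the $\f$-\tr e then reduces the question to three conditions on the projections $j_k^*E$ to the minimal quotients, of which the two corresponding to $\dD^b(Z_1)$ and $\cC_{g_0}$ are already implied by $E \in \dD^b(X)_{g_1}^{\lle 0}$ since these two quotients carry the same \tr e in both gluings.

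Setting $F := Rg_{0*}E$, the only remaining non-trivial condition is $\iota_{\f_1}^*F \in \cC_{\f_1}^{\lle 0}$ in the standard \tr e on $\cC_{\f_1}$. Because the $g_1$-\tr e differs from the $\f$-\tr e only on the $\cC_{\f_1}$-piece, the hypothesis $E \in \dD^b(X)_{g_1}^{\lle 0}$ provides the weaker bound $\iota_{\f_1}^*F \in \cC_{\f_1\,\textrm{pro }S_{\f_1}}^{\lle 0}$, which by Proposition~\ref{prop_t-str_on_C_f_differ_by_tilt} is contained in $\cC_{\f_1}^{\lle 1}$. Hence only the vanishing of the standard cohomology sheaf $H^1(\iota_{\f_1}^*F)$ remains to be established.

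For this vanishing I would invoke the second hypothesis: by Proposition~\ref{prop_pro_tr_on_X}(ii) the $g_0$-\tr e is glued from the $S_{g_0}$-projective \tr e on $\cC_{g_0}$ and the standard \tr e on $\dD^b(Z_0)$ via the recollement $\cC_{g_0} \xrightarrow{\iota_{g_0*}} \dD^b(X) \xrightarrow{Rg_{0*}} \dD^b(Z_0)$, so $E \in \dD^b(X)_{g_0}^{\lle 0}$ forces $F \in \dD^b(Z_0)^{\lle 0}$ in the standard \tr e. Applying the long exact sequence in standard cohomology to the triangle~(\ref{eqtn_def_of_iota*}) for $\f_1$ at $F$, namely $Lf_1^*Rf_{1*}F \to F \to \iota_{\f_1*}\iota_{\f_1}^*F$, and using $Rf_{1*}F = Rg_{1*}E \in \dD^b(Z_1)^{\lle 0}$ together with the right $t$-exactness of $Lf_1^*$ for standard \tr es, both $Lf_1^*Rf_{1*}F$ and $F$ lie in $\dD^b(Z_0)^{\lle 0}$, whence $H^1(\iota_{\f_1*}\iota_{\f_1}^*F) = H^1(\iota_{\f_1}^*F) = 0$. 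The main obstacle, I expect, is the bookkeeping needed to unpack the aisle formula~(\ref{eqtn_aisles_of_glued_tr}) for each of the three \tr es on $\dD^b(X)$ in terms of concrete functors (notably identifying $j_1^*E$ with $\iota_{\f_1}^*Rg_{0*}E$, $j_2^*E$ with $Rg_{1*}E$, and the top projection $j_3^*$ with $\iota_{g_0}^!$ coming from the recollement of Proposition~\ref{prop_pro_tr_on_X}(i)), and to reconcile this with the different recollement featuring in Proposition~\ref{prop_pro_tr_on_X}(ii) used to control $Rg_{0*}E$.
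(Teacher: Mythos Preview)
Your proposal is correct and follows essentially the same route as the paper's proof. Both arguments reduce to showing that $F := Rg_{0*}E$ lies in the $0$-perverse aisle on $\dD^b(Z_0)$, i.e.\ that $\iota_{\f_1}^*F \in \cC_{\f_1}^{\lle 0}$ and $R\f_{1*}F \in \dD^b(Z_1)^{\lle 0}$, and both establish the first condition via the triangle $L\f_1^*R\f_{1*}F \to F \to \iota_{\f_1*}\iota_{\f_1}^*F$ using that $F \in \dD^b(Z_0)^{\lle 0}$ (from the $g_0$-hypothesis) and $R\f_{1*}F = Rg_{1*}E \in \dD^b(Z_1)^{\lle 0}$ (from the $g_1$-hypothesis).

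Two minor remarks. First, the detour through the weaker bound $\iota_{\f_1}^*F \in \cC_{\f_1\,\textrm{pro }S_{\f_1}}^{\lle 0} \subset \cC_{\f_1}^{\lle 1}$ is unnecessary: since both $L\f_1^*R\f_{1*}F$ and $F$ lie in $\dD^b(Z_0)^{\lle 0}$, the long exact sequence gives $H^k(\iota_{\f_1*}\iota_{\f_1}^*F) = 0$ for \emph{all} $k \ge 1$, not just $k=1$, so $\iota_{\f_1}^*F \in \cC_{\f_1}^{\lle 0}$ directly (this is how the paper phrases it). Second, you invoke Proposition~\ref{prop_pro_tr_on_X}(ii) to get $Rg_{0*}E \in \dD^b(Z_0)^{\lle 0}$ while the paper invokes part~(i); both yield the same conclusion since $Rg_{0*}$ is $t$-exact in either recollement. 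Your identification of the projection functors $j_k^*$ for the filtration~(\ref{eqtn_filt_for_f}) is correct.
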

\begin{proof} 
	The inclusion of $\dD^b(X)_{\f}^{\lle 0}$ into $\dD^b(X)_{g_0}^{\lle 0} \cap \dD^b(X)_{g_1}^{\lle 0}$ follows from Proposition \ref{prop_Per_0_on_the_edge}. For $F \in\dD^b(X)_{g_0}^{\lle 0} \cap \dD^b(X)_{g_1}^{\lle 0}$, Proposition \ref{prop_pro_tr_on_X}(i) implies that $\iota_{g_0}^!F \in \cC_{g_0\,\textrm{pro }S_{g_0}}^{\lle 0}$. Hence $F \in \dD^b(X)^{\lle 0}_{\f}$ if $Rg_{0*}F \in \dD^b(Z_0)^{\lle 0}_{{}^0\textrm{Per}(Z_0/Z_1)}$.
	
	Proposition \ref{prop_pro_tr_on_X}(i) implies $Rg_{0*}F \in \dD^b(Z_0)^{\lle 0}$. Since $g_1 = \f_1 \circ g_0$, $Rg_{1*}F = R\f_{1*}Rg_{0*}F \in \dD^b(Z_1)^{\lle 0}$ and 
	$L\f_1^*R\f_{1*}Rg_{0*}F = L\f_1^* Rg_{1*}F \in \dD^b(Z_0)^{\lle 0}$. Then triangle of functors (\ref{eqtn_def_of_iota*}) for $f =\f_1$ applied to object $Rg_{0*}F$ implies that $\iota_{\f_1*}\iota_{\f_1}^*Rg_{0*}F \in \dD^b(Z_0)^{\lle 0}$. Since $\iota_{\f_1*}$ is $t$-exact and has no kernel, $\iota_{\f_1}^* Rg_{0*}F \in \cC_{\f_1}^{\lle 0}$. Together with $R\f_{1*}Rg_{0*}F \in \dD^b(Z_1)^{\lle 0}$ and the fact that the 0-perverse \tr e is glued from the standard \tr es via recollement w.r.t. subcategory $\cC_{\f_1}$, this implies that $Rg_{0*} F \in \dD^b(Z_0)_{{}^0\textrm{Per}(Z_0/Z_1)}^{\lle 0}$.
\end{proof}

\section{\textbf{Contractions as relative moduli of simple quotients of $\oO_X$}}\label{sec_Z_as_moduli}

Let $\dD$ be a triangulated category admitting a recollement (\ref{eqtn_recollement}) and let $(\dD_0^{\lle 0}, \dD_0^{\gge 1})$, $(\dD_1^{\lle 0}, \dD_1^{\gge 1})$ be \tr es on $\dD_0$, respectively $\dD_1$, with hearts $\aA_0$, respectively $\aA_1$. Let $(\dD^{\lle 0}, \dD^{\gge 1})$ be the \tr e on $\dD$ glued from these \tr es via recollement (\ref{eqtn_recollement}). We denote by $\aA$ its heart. We recall after \cite{BBD} the description of simple objects in $\aA$.

For an object $A_1 \in \aA_1$, there exists a canonical morphism $j_!A_1 \to j_* A_1$, which corresponds by adjunction to the isomorphism $j^*j_!A \simeq A$, see Appendix \ref{sec_canonical_morp}. Since $j_!A_1 \in \dD^{\lle 0}$ and $j_* A_1 \in \dD^{\gge 0}$, we get a morphism $\alpha_{A_1}\colon \hH^0(j_! A_1) \to \hH^0(j_* A_1)$. The image of $\alpha_{A_1}$ in $\hH^0(j_* A_1)$ defines the functor $j_{!*} \colon \aA_1 \to \aA$. 

An object of an abelian category is said to be \emph{simple} if it has no proper subobjects.
\begin{PROP}\cite[Proposition 1.4.26]{BBD}\label{prop_simple_object_in_glued} 
	A simple object in $\aA$ is isomorphic either to $i_* s_0$ or to $j_{!*}s_1$, for simple object $s_0 \in \aA_0$ and $s_1 \in \aA_1$.
\end{PROP}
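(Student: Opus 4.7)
The plan is to split the argument on whether $j^*A$ vanishes. If $j^*A = 0$, the recollement triangle $j_!j^*A \to A \to i_*i^*A$ collapses to an isomorphism $A \simeq i_*i^*A$; since $i_*$ is $t$-exact for the glued \tr e, $i^*A$ lies in $\aA_0$, and full faithfulness of $i_*$ transports simplicity in both directions, giving $A \simeq i_*s_0$ for a simple $s_0 \in \aA_0$.

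The substantive case is $s_1 := j^*A \neq 0$. I would apply the adjunctions $j_! \dashv j^* \dashv j_*$ to $\Id_{s_1}$ to obtain canonical morphisms $j_!s_1 \to A \to j_*s_1$ whose composite is the map inducing $\alpha_{s_1}$. A short check from the definition of the glued aisles, combined with $i^*j_! = 0 = i^!j_*$ and $j^*j_! \simeq \Id \simeq j^*j_*$, shows $j_!s_1 \in \dD^{\lle 0}$ and $j_*s_1 \in \dD^{\gge 0}$. Truncation then yields a factorisation in $\aA$,
\[
\hH^0(j_!s_1) \xrightarrow{\beta} A \xrightarrow{\gamma} \hH^0(j_*s_1),
\]
with $\gamma\beta = \alpha_{s_1}$. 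Since $j^*$ is $t$-exact and carries both $\beta$ and $\gamma$ to $\Id_{s_1}$, the image of $\beta$ is a subobject of $A$ with nonzero $j^*$, so simplicity of $A$ forces $\beta$ to be an epimorphism; dually, the kernel of $\gamma$ has zero $j^*$ and hence vanishes, so $\gamma$ is a monomorphism. Consequently $A \simeq \textrm{im}(\alpha_{s_1}) = j_{!*}s_1$.

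Finally, to see that $s_1$ is simple, I would argue by contradiction: a proper nonzero subobject $s_1' \subsetneq s_1$ in $\aA_1$, together with left exactness of $j_*$, produces an inclusion $\hH^0(j_*s_1') \hookrightarrow \hH^0(j_*s_1)$; pulling back the embedding $A \hookrightarrow \hH^0(j_*s_1)$ along it gives a subobject $A' \subset A$ with $j^*A' = s_1'$ by exactness of $j^*$, so $A'$ is proper and nonzero, contradicting simplicity of $A$. The whole argument is essentially formal manipulation with the six recollement functors and the glued aisles; the main point of care is the identification $A \simeq j_{!*}s_1$ via the $\beta$-$\gamma$ factorisation, and I expect no deeper obstacle than keeping the $t$-exactness properties of $j^*$, $i_*$ and the (one-sided) exactness of $j_!$, $j_*$, $i^*$, $i^!$ straight.
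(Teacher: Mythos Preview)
The paper does not supply its own proof of this proposition; it is quoted verbatim from Beilinson--Bernstein--Deligne and simply cited. Your argument is the standard BBD one and is correct. The only place that reads a little fast is ``the kernel of $\gamma$ has zero $j^*$ and hence vanishes'': vanishing here needs one more appeal to simplicity of $A$ (the kernel is a \emph{proper} subobject since $j^*A = s_1 \neq 0$, hence zero), which is presumably what your ``dually'' is meant to signal.
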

\begin{PROP}\label{prop_simple_in_B_f}
	Assume that $\aA$ is a heart of a \tr e on $\dD$ glued from \tr es on $\dD_0$ and $\dD_1$ via recollements with respect to subcategories $i_*\dD_0$ and $j_* \dD_1$. Then a simple object in $\aA$ is isomorphic either to $i_*s_0$ or to $j_* s_1$, for some $s_0\in \aA_0$, $s_1\in \aA_1$ simple.
\end{PROP}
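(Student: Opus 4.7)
The plan is to apply Proposition \ref{prop_simple_object_in_glued} to the \emph{second} recollement, namely the one with respect to the admissible subcategory $j_*\dD_1$, which by hypothesis also glues the same \tr e on $\dD$ from the given \tr es on $\dD_0$ and $\dD_1$. In this second recollement $j_* \colon \dD_1 \to \dD$ plays the role of the closed embedding (its left adjoint is $j^*$ and its right adjoint is $j^!$, both already provided by the original recollement), while the open side is the quotient $\dD/j_*\dD_1$. Using the adjunction $j^* \dashv j_*$, the left orthogonal ${}^\perp(j_*\dD_1)$ equals $\ker j^*$, which by axiom (r3) of the first recollement is exactly $i_*\dD_0$. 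Thus, under the equivalence $\dD/j_*\dD_1 \simeq {}^\perp(j_*\dD_1) = i_*\dD_0 \simeq \dD_0$, the left adjoint of the second quotient functor, which I denote $\wt{j}_!$, coincides with $i_*$.

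By Proposition \ref{prop_simple_object_in_glued} applied to the second recollement, every simple object of $\aA$ is isomorphic either to $j_* s_1$ for some simple $s_1 \in \aA_1$ (the ``closed'' case), or to $\wt{j}_{!*} s_0$ for some simple $s_0 \in \aA_0$ (the ``open'' case, after the identification above). The first alternative already has the desired form, so it remains only to show that each $\wt{j}_{!*}s_0$ is isomorphic to $i_* s_0'$ for some simple $s_0' \in \aA_0$.

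By definition, $\wt{j}_{!*}s_0$ is the image in $\aA$ of the canonical morphism $\hH^0(\wt{j}_! s_0) \to \hH^0(\wt{j}_* s_0)$. The functor $j^*$ is $t$-exact for the glued \tr e (being the open functor of the first recollement) and so restricts to an exact functor $\aA \to \aA_1$ between abelian categories; in particular, it commutes with taking images. Since $\wt{j}_!s_0 = i_* s_0 \in \ker j^*$, we obtain
$$
j^*(\wt{j}_{!*}s_0) = \mathrm{Im}\bigl(\hH^0(j^* \wt{j}_! s_0) \to \hH^0(j^* \wt{j}_* s_0)\bigr) = 0.
$$
Therefore $\wt{j}_{!*}s_0 \in \aA \cap \ker j^* = \aA \cap i_*\dD_0 = i_*\aA_0$, where the last equality uses $t$-exactness of $i_*$ in the first recollement. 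Since $i_*$ is fully faithful and exact and $\wt{j}_{!*}s_0$ is simple, there is a simple $s_0' \in \aA_0$ with $\wt{j}_{!*}s_0 \simeq i_* s_0'$, completing the proof. The main technical point to verify carefully is the adjunction-based identification ${}^\perp(j_*\dD_1) = \ker j^* = i_*\dD_0$, which makes the second recollement transparent; once this is settled, the rest is a clean application of BBD combined with the $t$-exactness of $j^*$.
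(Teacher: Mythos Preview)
Your proof is correct and uses essentially the same ingredients as the paper's, just in the mirror order: the paper applies Proposition~\ref{prop_simple_object_in_glued} to the \emph{first} recollement (getting $i_*s_0$ and $j_{!*}s_1$) and then uses the second recollement to see that $j_*$ is $t$-exact and $j_*s_1$ is simple, whence $j_{!*}s_1 \simeq j_*s_1$; you apply Proposition~\ref{prop_simple_object_in_glued} to the \emph{second} recollement (getting $j_*s_1$ and $\wt{j}_{!*}s_0$) and then use the first recollement to see that $\wt j_! = i_*$ and $j^*$ is $t$-exact, forcing $\wt{j}_{!*}s_0 \in i_*\aA_0$. Both arguments are short and equivalent; yours has the minor advantage that it invokes BBD only once, while the paper's version identifies the intermediate extension explicitly as $j_*s_1$ rather than $i_*s_0'$ for an a priori new $s_0'$ (though in fact $s_0' = s_0$, since $\wt{j}_{!*}s_0$ is a nonzero quotient of $\hH^0(\wt j_! s_0) = i_* s_0$, which is already simple).
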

\begin{proof}
	Proposition \ref{prop_simple_object_in_glued} implies that $i_*s_0$ and the image of $\hH^0(j_!s_1) \to \hH^0(j_*s_1)$ are the only isomorphism classes of simple objects in $\aA$. Since the \tr e is glued with respect to subcategory $j_* \dD_1$, functor $j_*$ is $t$-exact and $\hH^0(j_* s_1) \simeq j_* s_1$. Moreover, Proposition \ref{prop_simple_object_in_glued} implies that $j_* s_1$ is simple in $\aA$, hence $j_{!*}s_1\simeq j_*s_1$.
\end{proof}
Consider a relatively projective birational morphism $f\colon X \to Y$  of smooth algebraic spaces with dimension of fibers bounded by one. We denote by $\mathscr{B}_f$ the heart of the $S_{X,f}$-projective \tr e. Note that the structure sheaf of $X$ is a direct summand of $S_{X,f}$, hence $\oO_X \in \mathscr{B}_f$.

\begin{COR}\label{cor_simpl_quot_of_O}
		A simple object in $\mathscr{B}_f$ is isomorphic either to $f^!(\oO_y)$, for a closed point $y \in Y$, or to $\iota_{f*}s$, for a simple object $s$ in the heart of the $S_f$-projective \tr e.
		Objects $f^!\oO_y$ are the only simple quotients of $\oO_X$ in $\mathscr{B}_f$.
\end{COR}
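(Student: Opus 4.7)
The plan is to derive the first statement from Proposition \ref{prop_simple_in_B_f} applied to the recollement $\cC_f \xrightarrow{\iota_{f*}} \dD^b(X) \xrightarrow{Rf_*} \dD^b(Y)$. To invoke it I need the $S_{X,f}$-projective \tr e to be glued with respect to both admissible subcategories $\iota_{f*}\cC_f$ and $f^!\dD^b(Y)$ of $\dD^b(X)$: the first gluing is exactly Proposition \ref{prop_pro_tr_on_X}(ii), and the second is Proposition \ref{prop_pro_tr_on_X}(i) applied to the trivial decomposition $X\xrightarrow{f}Y\xrightarrow{\Id_Y}Y$ (using Remark \ref{rem_line_bundle_pro} to identify the $\oO_Y$-projective \tr e on $\dD^b(Y)$ with the standard one). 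Since the simple objects of $\Coh(Y)$ are exactly the skyscrapers $\oO_y$ at closed points $y\in Y$, this identifies the simples of $\mathscr{B}_f$ as either $\iota_{f*}s$, for $s$ simple in the heart of the $S_f$-projective \tr e, or $f^!\oO_y$, giving the first assertion.

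For the second assertion I first establish that $\iota_f^*\oO_X = 0$. Since $\oO_X \simeq Lf^*\oO_Y$ and $Rf_*\iota_{f*}=0$ by the recollement axioms, the adjunction chain
$$\Hom(\iota_f^*Lf^*D, C) = \Hom(Lf^*D, \iota_{f*}C) = \Hom(D, Rf_*\iota_{f*}C) = 0,$$
valid for all $C\in \cC_f$ and $D\in \dD^b(Y)$, shows $\iota_f^*Lf^* = 0$, and in particular $\iota_f^*\oO_X=0$. Then $\Hom(\oO_X,\iota_{f*}s) = \Hom(\iota_f^*\oO_X, s) = 0$ for any simple object $s$ in the heart of the $S_f$-projective \tr e on $\cC_f$, so no non-zero $\iota_{f*}s$ can be a quotient of $\oO_X$.

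It remains to exhibit each $f^!\oO_y$ as a quotient of $\oO_X$ in $\mathscr{B}_f$. Under the adjunction $Rf_* \dashv f^!$ the canonical surjection $\oO_Y\twoheadrightarrow \oO_y$ corresponds to a morphism $\phi\colon \oO_X \to f^!\oO_y$, and $\phi$ is non-zero because applying the $t$-exact $Rf_*$ recovers the non-zero map $\oO_Y \to \oO_y$. By the first part of the corollary, $f^!\oO_y$ is simple in $\mathscr{B}_f$, so the image of $\phi$ in the heart is either zero or all of $f^!\oO_y$; the non-vanishing of $\phi$ forces the latter, and $\phi$ is the desired surjection. I do not anticipate any serious obstacle here — once the two gluings are in hand, both assertions reduce to formal manipulations with recollement adjunctions together with the simplicity of $\oO_y$ in $\Coh(Y)$.
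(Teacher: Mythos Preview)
Your proof is correct and follows essentially the same approach as the paper's: both invoke Proposition~\ref{prop_pro_tr_on_X} to verify the two gluing hypotheses of Proposition~\ref{prop_simple_in_B_f}, then use adjunction to show $\Hom(\oO_X,\iota_{f*}s)=0$ and that the adjoint of $\oO_Y\twoheadrightarrow\oO_y$ is a non-zero map onto the simple object $f^!\oO_y$. The only cosmetic difference is that you compute $\Hom(\oO_X,\iota_{f*}s)=0$ via the $\iota_f^*\dashv\iota_{f*}$ adjunction (first showing $\iota_f^*\oO_X=0$), whereas the paper uses $Lf^*\dashv Rf_*$ directly to get $\Hom_X(\oO_X,\iota_{f*}s)\simeq\Hom_Y(\oO_Y,Rf_*\iota_{f*}s)=0$; both reduce to $Rf_*\iota_{f*}=0$.
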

\begin{proof} The description of simple objects follows from Proposition \ref{prop_pro_tr_on_X} and Proposition \ref{prop_simple_in_B_f}.
		
	The morphism $ \oO_X \to f^! \oO_y$ which corresponds by adjunction to $\oO_Y \to \oO_y$  is an epimorphism, because it is non-zero and $f^!\oO_y$ is simple in $\mathscr{B}_f$. 
	The other simple objects are not quotients of $\oO_X$ in $\mathscr{B}_f$, because $\Hom_X(\oO_X,\iota_{f*}s_f) \simeq \Hom_Y(\oO_Y, Rf_* \iota_{f*}s_f) =0$. 
\end{proof}

Let $g\colon X \to Z$ be an element of $\Dec(f)$. 
For a $Y$-scheme $S\to Y$ and $s\in S$ a closed point, we consider the following diagram with fiber squares:
\begin{equation}\label{eqtn_diagram_for_Y-scheme}
\xymatrix{X_s \ar[r]^{i_s} \ar[d]_{g_s} & X \times S \ar[d]^{g_S} \ar[r]^{p_X} & X \ar[d]^{g} \\ Z_s \ar[r]^{j_s} \ar[d]_q & Z \times S \ar[d]^{\pi} \ar[r]^{p_Z} & Z \\
	s \ar[r]^{k_s} & S &}
\end{equation}
with  $Z_s\simeq Z$ 
and $X_s\simeq X$. For $E \in \dD^b(X\times S)$, we put
$E_s:=Li_s^*E$ and, for $F \in \dD^b(Z \times S)$, we put $F_s:=Lj_s^* F$.

We consider the following functor of points $Y\textrm{-Sch}^{\textrm{op}}\to {\rm Sets}$, which makes rigorous the idea of families of simple objects in $\mathscr{B}_g$ that are quotients of $\oO_X$ (compare it with the definition due to Bridgeland of point objects in his perverse \tr e for a flopping contraction \cite{Br1}):
\begin{align*}
\eE(S \to Y) =&\{(E,\psi)\,|\, E \in \dD^b(X \times S),\, \psi \colon \oO_{X\times S}\to E,\, \textrm{Supp}(E) \subset X \times_Y S,\,  \forall\, s\in S,\\& Li_s^*\psi\colon \oO_{X_s} \to E_s\, \textrm{is an epimorphism onto a simple object in}\, \mathscr{B}_g\}/\sim,
\end{align*}
where $(E,\psi) \sim (E', \psi')$ if there exists an isomorphism $\kappa \colon E \xrightarrow{\simeq} E'$ such that $\kappa \psi = \psi'$. For a morphism of $Y$-schemes $\sigma \colon S_1 \to S_2$, map $\eE(\sigma)\colon \eE(S_2 \to Y) \to \eE(S_1\to Y)$ is given by $\eE(\sigma)(E,\psi) = ((\Id_X \times \sigma)^*E, (\Id_X \times\sigma)^* \psi)$.

It will be accompanied with another functor $Y\textrm{-Sch}^{\textrm{op}}\to {\rm Sets}$:
\begin{align*}   
\fF(S \to Y) =&\{(F,\xi)\,|\, F \in \dD^b(Z \times S),\, \xi \colon \oO_{Z\times S} \to F,\, \textrm{Supp}(F) \subset Z \times_Y S,\,  \forall\, s \in S,\\& Lj_s^*\xi \colon \oO_{Z_s} \to F_s\,\textrm{is an epimorphism onto a simple object in}\, \Coh(Z_s)\}/\sim,
\end{align*}
with a similar equivalence relation $(F, \xi)\sim (F',\xi')$.

We shall show that functors $\fF$ and $\eE$ are isomorphic and conclude that, for any $Y$-scheme $S$, we have $\eE(S \to Y) = \Hom_{Y-\textrm{Sch}}(S,Z)$.

For a $Y$-scheme $S$, define $\mu_S\colon \fF(S \to Y) \to \eE(S \to Y)$ via
\begin{align*}
\mu_S(F,\xi) =(g_S^!(F),\, \oO_{X\times S} \xrightarrow{\delta^S_{\oO_{Z \times S}}} g_S^!(\oO_{Z\times S}) \xrightarrow{g_S^!(\xi)} g_S^!(F)),
\end{align*}
where $\delta^S = \f_{Rg_{S*}} \colon Lg_S^* \to g_S^!$ is the functorial morphism in formula (\ref{eqtn_def_of_phi_f}).

\begin{LEM}\label{lem_mu_is_morphism}
	System of maps $\mu_S$ defines a morphism of functors $\mu \colon \fF \to \eE$.
\end{LEM}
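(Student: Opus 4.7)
The proof has two parts: first, for each $Y$-scheme $S$ one verifies that $\mu_S$ lands in $\eE(S\to Y)$; second, one checks naturality of $\mu_S$ in $S$. Throughout the plan I use the identification $g_S^!(-) \simeq Lg_S^*(-) \otimes p_X^*\omega_g$, obtained by flat base-change of $g^!(-) \simeq Lg^*(-)\otimes\omega_g$ along the flat projection $Z\times S \to Z$; in particular $\omega_{g_S} \simeq p_X^*\omega_g$.

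For well-definedness at a fixed $S$, I fix $(F,\xi) \in \fF(S\to Y)$ and set $E := g_S^!(F)$, $\psi := g_S^!(\xi)\circ \delta^S_{\oO_{Z\times S}}$. Boundedness of $E$ and the support condition $\textrm{Supp}(E)\subset X\times_Y S$ follow at once from the displayed formula and $g_S^{-1}(Z\times_Y S) = X\times_Y S$. The fibrewise condition is the core of the argument. Combining the formula with $g_S\circ i_s = j_s\circ g_s$ and $p_X\circ i_s = \Id$ produces a base-change isomorphism $Li_s^* g_S^!(-) \simeq g_s^!(Lj_s^*-)$ under which $Li_s^*\psi$ is identified with $g_s^!(\xi_s)\circ \delta^s_{\oO_{Z_s}}$. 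Since $\xi_s\colon\oO_{Z_s}\to F_s$ is an epimorphism onto a simple coherent sheaf, $F_s$ is a skyscraper $\oO_{z_s}$; by Corollary \ref{cor_simpl_quot_of_O} the object $g_s^!(\oO_{z_s})$ is simple in $\mathscr{B}_g$, and the canonical map $\oO_{X_s}\to g_s^!(\oO_{z_s})$ is the corresponding quotient in $\mathscr{B}_g$. Non-triviality of $Li_s^*\psi$ (which, as the target is simple, forces it to be precisely such a quotient) is seen via the adjunction isomorphism $\Hom(\oO_{X_s}, g_s^!F_s) \simeq \Hom(Rg_{s*}\oO_{X_s}, F_s) = \Hom(\oO_{Z_s}, F_s)$, under which $Li_s^*\psi$ corresponds to the nonzero $\xi_s$; here the unit is $\delta^s_{\oO_{Z_s}}$ modulo $Rg_{s*}\oO_{X_s} \simeq \oO_{Z_s}$.

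For naturality, given $\sigma\colon S_1\to S_2$ over $Y$ and setting $\sigma_X := \Id_X\times\sigma$, $\sigma_Z := \Id_Z\times\sigma$, I need the canonical isomorphism $L\sigma_X^* g_{S_2}^! \simeq g_{S_1}^! L\sigma_Z^*$ together with its compatibility with the morphisms $\delta^{S_i}$. Both follow from the $g_S^!$-formula above combined with the standard compatibilities $L\sigma_X^* Lg_{S_2}^* \simeq Lg_{S_1}^* L\sigma_Z^*$, $L\sigma_X^* p_X^*\omega_g \simeq p_X^*\omega_g$, and the functoriality of $\delta = \f_{Rg_*}$ under pullbacks from Appendix \ref{sec_canonical_morp}. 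This yields $\sigma_X^*\mu_{S_2}(F,\xi) = \mu_{S_1}(\sigma_Z^*F, \sigma_Z^*\xi)$ in $\eE(S_1\to Y)$. I expect the only real obstacle to be a clean proof of the base-change isomorphism $Li_s^* g_S^! \simeq g_s^!\, Lj_s^*$ and of its compatibility with $\delta$; the rest is formal, with the fibrewise simplicity and surjectivity reducing to Corollary \ref{cor_simpl_quot_of_O} and the adjunction computation above.
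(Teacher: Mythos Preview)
Your proof is correct and follows the same overall route as the paper: same base-change identification $Li_s^*g_S^! \simeq g_s^! Lj_s^*$ (obtained, as in the paper, from $g_S^!(-)\simeq Lg_S^*(-)\otimes p_X^*\omega_g$), same support argument, same appeal to Corollary~\ref{cor_simpl_quot_of_O} for simplicity, and the same naturality argument via $L\sigma_X^* g_{S_2}^! \simeq g_{S_1}^! L\sigma_Z^*$.

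The one point where you diverge is in showing $Li_s^*\psi\neq 0$. The paper argues indirectly: it first shows $Li_s^*g_S^!(\xi)\neq 0$ (by full faithfulness of $Lg_s^*$), then uses the exact sequence $0\to\oO_{X_s}\to\omega_{g_s}\to\omega_{g_s}|_{D_{g_s}}\to 0$ together with the vanishing $\Hom(\omega_{g_s}|_{D_{g_s}}, g_s^!\oO_z)=0$ to deduce that precomposition with $\delta^s$ is injective. Your adjunction argument is cleaner: under $Rg_{s*}\dashv g_s^!$ the composite $g_s^!(\xi_s)\circ\delta^s$ corresponds to $\xi_s$ composed with the isomorphism $Rg_{s*}\oO_{X_s}\simeq\oO_{Z_s}$ (this is exactly what formula~(\ref{eqtn_property_of_f_F}) and Lemma~\ref{lem_another_unit} give you), hence is nonzero. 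Both work; yours avoids the auxiliary vanishing computation.
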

\begin{proof}
	We need to check that $\mu_S(F, \xi)$ is indeed an element of $\eE(S\to Y)$ and that $\mu$ is a natural transformation, i.e. for any $\sigma \colon S_1 \to S_2$, we have $\eE(\sigma)\circ \mu = \mu \circ \fF(\sigma)$. The latter follows from the fact that, for the commutative diagram defined by $\sigma$: 
	\[
	\xymatrix{X\times S_1 \ar[rr]^{\Id_X\times \sigma} \ar[d]_{g_{S_1}} && X \times S_2 \ar[d]^{g_{S_2}} \\ Z \times S_1 \ar[rr]^{\Id_Z\times \sigma} && Z \times S_2,}
	\]
	we have     
	\begin{equation}\label{eqtn_stars_commute}
	(\Id_X\times \sigma)^* \circ g_{S_2}^! \simeq g_{S_1}^! \circ (\Id_Z\times \sigma)^*.
	\end{equation}
	
	Now let $(F,\xi) \in \fF(S \to Y)$. Equality $g_S = g \times \Id_S$ implies that $g_S^!(\oO_{Z\times S}) \simeq  Lp_X^*(\omega_g)$ and $g_S^!(-)\simeq Lg_S^*(-)\otimes g_S^!(\oO_{Z\times S})$, hence $\mu_S(F) = g_S^!(F) \simeq Lg_S^*(F) \otimes Lp_X^*(\omega_g)$ is supported on the preimage of $Z\times_YS$, i.e. on $X \times_Y S$.
	
	Let $s\in S$ be a closed point. As $i_s = \Id_X \times \sigma_s$ and $j_s = \Id_Z \times \sigma_s$, for the embedding $\sigma_s \colon s \to S$, formula (\ref{eqtn_stars_commute}) implies that $Li_s^*g_S^!(E) \simeq g_s^!Lj_s^*(E) \simeq g_s^!(\oO_z)$, for some $z\in Z_s$, i.e. it is a simple object in $\bB_{g_s}$, see Corollary \ref{cor_simpl_quot_of_O}. 
	
	It remains to check that $Li_s^*\mu_S(\xi)\neq 0$. Since $(F,\xi) \in \fF(S\to Y)$, $Lj_s^*(\xi)\neq 0 $. As $Lg_s^*$ is fully-faithful, morphism $Lg_s^*Lj_s^*(\xi)=Li_s^*Lg_S^*(\xi) \colon \oO_{X_s} \to Li_s^*Lg_S^*(F)$ is non-zero. It follows that $Li_s^*g_S^!(\xi) \simeq Li_s^* g_S^*(\xi) \otimes \omega_{g_s} \colon \omega_{g_s} \to Lj_s^*g_S^!(F)$ is non-zero. 
	By adjunction, $\Hom_{X_s}(\omega_{g_s}|_{D_{g_s}}, g_s^!(\oO_z)) \simeq \Hom_{Z_s}(Rg_{s*} \omega_{g_s}|_{D_{g_s}}, \oO_z) =0$. Then applying $\Hom(-,Lj_s^*g_s^!(F))$ to triangle (\ref{eqtn_dis_of_f}) for $g=g_s$ implies the embedding $\Hom_{X_s}(\omega_{g_s}, Lj_s^*g_s^!(F)) \xrightarrow{(-)\circ \delta^s} \Hom_{X_s}(\oO_{X_s}, Lj_s^*g_s^!(F))$, where $\delta^s \colon \oO_{X_s} \to \omega_{g_s}$ is the canonical morphism. It follows that $Li_s^* \mu_S(\xi)=Li_s^*g_S^!(\xi)\circ \delta $ is non-zero.
\end{proof}

\begin{PROP}\label{prop_families_on_X_and_Z}
	Morphism $\mu\colon \fF \to \eE$ is an isomorphism.
\end{PROP}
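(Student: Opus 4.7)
The plan is to construct an inverse $\nu\colon \eE \to \fF$ via the proper pushforward $Rg_{S*}$ and verify $\mu\nu = \Id$ and $\nu\mu = \Id$. Explicitly, for $(E,\psi)\in \eE(S\to Y)$, set
\[
\nu_S(E,\psi) := (Rg_{S*}E,\; \oO_{Z\times S}\simeq Rg_{S*}\oO_{X\times S} \xrightarrow{Rg_{S*}\psi} Rg_{S*}E),
\]
the first isomorphism coming from the projection formula applied to $g_S = g\times \Id_S$ together with $Rg_*\oO_X\simeq \oO_Z$ from Lemma \ref{lem_g(O)_is_O}. To see $\nu_S(E,\psi)\in \fF(S\to Y)$, the support condition is immediate, while for the fiberwise condition proper base change for the cartesian square formed by $i_s$, $g_S$, $g$, $j_s$ gives $Lj_s^*Rg_{S*}E \simeq Rg_*Li_s^*E \simeq Rg_*g^!\oO_z\simeq \oO_z$ by Corollary \ref{cor_simpl_quot_of_O} and (\ref{eqtn_ff!=id}); the induced map $\oO_{Z}\to \oO_z$ is the required epimorphism onto a simple object. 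Functoriality in $S$ and compatibility with the equivalence relation are formal.

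For $\nu_S\circ\mu_S = \Id_\fF$, applying $Rg_{S*}$ to $\mu_S(F,\xi)=(g_S^!F,\, g_S^!\xi\circ \delta^S_{\oO_{Z\times S}})$ and using $Rg_{S*}\circ g_S^!\simeq \Id$ (again by Lemma \ref{lem_g(O)_is_O} and (\ref{eqtn_ff!=id})) recovers $F$. The morphism part reduces to the fact that $Rg_{S*}(\delta^S_{\oO_{Z\times S}})\colon \oO_{Z\times S}\to Rg_{S*}g_S^!\oO_{Z\times S}\simeq \oO_{Z\times S}$ is the identity, which follows from the construction of $\delta = \f_{Rg_*}$ via the adjunction unit recalled in Appendix \ref{sec_canonical_morp}.

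The main obstacle is $\mu_S\circ\nu_S = \Id_\eE$. For this I would use the canonical triangle (\ref{eqtn_def_of_iota!}) for $g_S$,
\[
\iota_{g_S*}\iota_{g_S}^!E \to E \to g_S^!Rg_{S*}E \to \iota_{g_S*}\iota_{g_S}^!E[1],
\]
and show that $C := \iota_{g_S*}\iota_{g_S}^!E$ vanishes, yielding $E \simeq g_S^!Rg_{S*}E$ via the adjunction unit, with the morphism compatibility $\mu_S\nu_S(\psi) = \psi$ following from naturality and the identification of $\delta$ on $\oO$ with this same unit. Vanishing of $C$ is checked fiberwise: the identifications $Li_s^*Lg_S^* \simeq Lg^*Lj_s^*$ and $Li_s^*g_S^!\oO_{Z\times S}\simeq \omega_g$ combine to give $Li_s^*g_S^!(-)\simeq g^!Lj_s^*(-)$, so applying $Li_s^*$ to the triangle and using proper base change produces a triangle
\[
Li_s^*C \to g^!\oO_z \to g^!Rg_*g^!\oO_z \simeq g^!\oO_z,
\]
whose second arrow is an isomorphism by (\ref{eqtn_ff!=id}) and naturality. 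Thus $Li_s^*C = 0$ for every closed point $s\in S$. Since $C$ is bounded with coherent cohomology, a Nakayama-type argument applied to the lowest non-zero $\hH^i(C)$ (whose pullback $i_s^*\hH^i(C)$ equals $\hH^i(Li_s^*C)$ by the standard spectral sequence) forces $C=0$. The delicate points are the justification of proper base change in the derived setting for the non-flat map $j_s$ and the descent from pointwise to global vanishing; both go through thanks to the properness of $g_S$ and the product structure $g_S = g\times \Id_S$.
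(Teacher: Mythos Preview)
Your approach coincides with the paper's: define $\nu$ via $Rg_{S*}$, check $\nu\mu=\Id$ via the counit and (\ref{eqtn_property_of_f_F}), and check $\mu\nu=\Id$ by showing the adjunction unit $\eta'\colon E \to g_S^!Rg_{S*}E$ is an isomorphism fiberwise. The triangle (\ref{eqtn_def_of_iota!}) you invoke is just a repackaging of this, since its second arrow \emph{is} $\eta'$ and its cone vanishes iff $\eta'$ is an isomorphism.

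The one step that is too thin is your assertion that after applying $Li_s^*$ the second arrow ``is an isomorphism by (\ref{eqtn_ff!=id}) and naturality''. What you need here is that $Li_s^*$ of the $(Rg_{S*}\dashv g_S^!)$-unit becomes, under the base-change identifications $Li_s^*g_S^!\simeq g_s^!Lj_s^*$ and $Lj_s^*Rg_{S*}\simeq Rg_{s*}Li_s^*$, the $(Rg_{s*}\dashv g_s^!)$-unit on $Li_s^*E$; that 2-categorical compatibility is not literally ``naturality'' and does require an argument (it is part of the coherence of a six-functor formalism, but you have not set that up). The paper sidesteps this entirely: it only needs $Li_s^*\eta'$ to be \emph{nonzero}, since source and target are both isomorphic to $g^!\oO_z$ and hence simple in $\mathscr{B}_g$. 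Non-vanishing is obtained by pushing forward: $Rg_{S*}\eta'$ is the identity, so $Lj_s^*Rg_{S*}\eta'\neq 0$, and the chain (\ref{eqtn_iso_of_func_2}) together with faithfulness of $Rj_{s*}$ gives $Li_s^*\eta'\neq 0$. This is both shorter and avoids the base-change-of-units compatibility.

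A smaller point: in verifying $\nu_S(E,\psi)\in\fF(S\to Y)$ you write ``the induced map $\oO_Z\to\oO_z$ is the required epimorphism'' without checking it is nonzero; the paper does this explicitly by the same pushforward-and-faithfulness trick, and it is worth including.
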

\begin{proof}
	It suffices to check that $\mu_S \colon \fF(S\to Y) \to \eE(S \to Y)$ is a bijection, for any $Y$-scheme $S$.
	
	Define $\nu_S \colon \eE(S \to Y) \to \fF(S \to Y)$ via
	\begin{align*}
		\nu_S(E, \psi) = (Rg_{S*}(E),\,\oO_{Z\times S} \xrightarrow{\eta} Rg_{S*}\oO_{X\times S} \xrightarrow{Rg_{S*}\psi} Rg_{S*}E),
	\end{align*}
	where $\eta$ is the $Lg_S^* \dashv Rg_{S*}$ adjunction unit. We check that $\nu_S$ is well-defined and inverse to $\mu_S$. 

	Let $(E, \psi)$ be an element of $\eE(S \to Y)$. Then $Rg_{S*}(E)$ is supported on the image of $X \times_Y S$, i.e. on $Z \times_Y S$. To calculate $Lj_s^*Rg_{S*}(E)$, we note that 
	we have isomorphisms of functors
	\begin{equation}\label{eqtn_iso_of_func_2} 
	\begin{aligned}
		& Rj_{s*}Lj_s^*Rg_{S*}(-) \simeq Rg_{S*}(-) \otimes^L \oO_{Z_s} \simeq Rg_{S*}(- \otimes^L \oO_{X_s}) \simeq \\
		&Rg_{S*}Ri_{s*}Li_s^*(-) \simeq Rj_{s*}Rg_{s*}Li_s^*(-).
	\end{aligned}
	\end{equation} 
	Since Corollary \ref{cor_simpl_quot_of_O} implies that $Li_s^*E \simeq g_s^!(\oO_z)$, for some $z \in Z$, we have $Rj_{s*}Lj_s^* Rg_{S*}(E) \simeq Rj_{s*}Rg_{s*}Li_s^*(E) \simeq Rj_{s*} \oO_z$. Since $Rj_{s*}$ is $t$-exact and it is an equivalence with the image when restricted to the heart of the standard \tr e, we have $Lj_s^* Rg_{S*}(E) \simeq \oO_z$. 
	
	In order to check that $Lj_s^* Rg_{S*}(\psi) \neq 0$, it suffices to show that $Rj_{s*}Lj_s^*Rg_{S*}(\psi) \simeq Rj_{s*}Rg_{s*}Li_s^*(\psi) \neq 0$. By definition of $\eE$, $Li_s^*(\psi) \colon \oO_{X_s} \to g_s^!(\oO_z)$ is non-zero. By adjunction, $Li_s^*(\psi)$ corresponds to a non-zero morphism $Rg_{s*}\oO_{X_s} \xrightarrow{Rg_{s*}Li_s^*(\psi)} Rg_{s*}g_s^!(\oO_z) \xrightarrow{\varepsilon} \oO_z$, where  $\varepsilon$ is for the adjunction counit. It follows that $Rg_{s*}Li_s^*(\psi)\neq 0$.
	
	Since $Rp_{Z*}Rj_{s*}$ is the identity, functor $Rj_{s*}$ is faithful. Therefore, $Rj_{s*}Rg_{s*}Li_s^*(\psi) \neq 0$, i.e. $\nu_S$ is well-defined.

	The composite $\oO_{Z \times S} \xrightarrow{\eta} Rg_{S*}Lg_S^*(\oO_{Z \times S}) \xrightarrow{Rg_{S*}\delta^S} Rg_{S*}g_S^!(\oO_{Z\times S}) \xrightarrow{\varepsilon} \oO_{Z\times S}$ is the identity morphism, see (\ref{eqtn_property_of_f_F}). Thus, for $(F,\xi)\in \fF(S\to Y)$, the adjunction counit $\varepsilon\colon Rg_{S*}g_S^! \to \Id$ provides an isomorphism $\nu_S\mu_S(F,\xi) \xrightarrow{\simeq} (F,\xi)$ in view of the following commutative diagram:
	\[
	\xymatrix{& & &\oO_{Z\times S} \ar[rr]^\xi && F\\
		\oO_{Z\times S} \ar[r]|(0.4)\eta \ar[rrru]^{\Id}  &Rg_{S*} \oO_{X\times S} \ar[rr]^{Rg_{S*}\delta^S} && Rg_{S*}g_S^!(\oO_{Z\times S}) \ar[u]^{\varepsilon} \ar[rr]^{Rg_{S*}g_S^!(\xi)} && Rg_{S*}g_S^!(F) \ar[u]_{\varepsilon} }
	\]
	Lemma \ref{lem_another_unit} implies that, for any $(E,\psi) \in \eE(S\to Y)$, the adjunction unit $\eta'\colon \Id \to g_S^!Rg_{S*}$ provides a morphism $(E,\psi) \to\mu_S\nu_S(E,\psi)$ in view of the diagram:
	\[
	\xymatrix{\oO_{X\times S} \ar[r]^(0.4){\delta^S} & g_S^!(\oO_{Z\times S}) \ar[r]^(0.45){g_S^!(\eta)} & g_S^!Rg_{S*}\oO_{X\times S} \ar[rr]^(0.55){g_S^!Rg_{S*} \psi} && g_S^!Rg_{S*} E \\ && \oO_{X\times S} \ar[llu]^{\Id} \ar[rr]^{\psi} \ar[u]^{\eta'} && E \ar[u]_{\eta'}}
	\] 
	To prove that $\eta'\colon E \to g_S^!Rg_{S*}E$ is an isomorphism, we check that it is an isomorphism when restricted to the fiber $X_s$ of $\pi \circ g_S$ over any closed point $s\in S$. It is enough because of the support condition on $E$. Since both $\mu_S$ and $\nu_S$ are well-defined, $\mu_S\nu_S(E,\psi) \in \eE(S \to Y)$. In particular, $Li_s^* g_S^!Rg_{S*}E \simeq g_s^!(\oO_{z'})$, hence $Li_s^*\eta'$ is a morphism $g_s^!(\oO_z) \to g_s^!(\oO_{z'})$ and it is an isomorphism if and only if its is non-zero (because $g_s^!$ is fully faithful). Morphism $Rg_{S*}\eta'$ is the identity, hence so is $Lj_s^*Rg_{S*}\eta'$. As functor $Rj_{s*}$ is faithful, $Rj_{s*}Lj_s^*Rg_{S*}\eta' \neq 0$. In view of isomorphism (\ref{eqtn_iso_of_func_2}), it implies that $Rj_{s*}Rg_{s*}Li_s^*\eta'\neq 0$, i.e. $Li_s^*\eta' \neq 0$ is an isomorphism. 
\end{proof}

\begin{THM}\label{thm_Z_is_moduli_space}
	For an element $g\colon X \to Z$ of $\Dec(f)$, $Z$ is the fine moduli space of simple quotients of $\oO_X$ in $\mathscr{B}_g$, which have the form $g^!(\oO_z)$, where $z$ runs over the set of closed points in $Z$. More precisely, we have an isomorphism of functors $\eE(-) \simeq \Hom(-,Z) \colon Y\textrm{-Sch}^{\textrm{op}} \to \textrm{Sets}$.
\end{THM}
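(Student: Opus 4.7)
The plan is to invoke Proposition \ref{prop_families_on_X_and_Z} to reduce the statement to the natural isomorphism $\fF(-) \simeq \Hom_{Y\text{-Sch}}(-,Z)$ of functors on $Y\textrm{-Sch}^{\textrm{op}}$. I will construct natural maps in both directions and check they are mutually inverse.

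Given a $Y$-morphism $\sigma \colon S \to Z$, its graph $\Gamma_\sigma = (\sigma, \Id_S) \colon S \hookrightarrow Z\times S$ is a closed embedding whose image is contained in $Z \times_Y S$. Setting $F_\sigma := \Gamma_{\sigma *}\oO_S$ with the canonical surjection $\xi_\sigma \colon \oO_{Z\times S} \twoheadrightarrow F_\sigma$, one checks that for each closed point $s\in S$, $Lj_s^* F_\sigma \simeq \oO_{\sigma(s)}$ is simple in $\Coh(Z_s)$ and $Lj_s^*\xi_\sigma$ is the evident quotient. Hence $(F_\sigma,\xi_\sigma)\in \fF(S\to Y)$, producing a natural transformation $\Hom_{Y\text{-Sch}}(-,Z) \to \fF$.

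Conversely, given $(F,\xi)\in \fF(S\to Y)$, I would proceed in three steps. First, since $Lj_s^* F$ is concentrated in degree zero for every closed point $s\in S$, a standard Tor-dimension and semicontinuity argument using the spectral sequence $\mathrm{Tor}_p^{\oO_S}(\hH^q(F),k(s)) \Rightarrow \hH^{q-p}(Lj_s^* F)$ shows that $F$ is quasi-isomorphic to a single coherent sheaf on $Z\times S$. Second, the same fiberwise concentration together with constancy of the fiber length, equal to one, forces $F$ to be flat over $S$ by the local criterion of flatness and Nakayama. Third, the surjection $\xi$ realises $F = \oO_W$ for a closed subscheme $W \subset Z\times S$, and $\pi|_W \colon W \to S$ is finite flat of degree one, hence an isomorphism. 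Thus $W$ is the graph of the unique morphism $\tau := p_Z\circ (\pi|_W)^{-1} \colon S \to Z$, and the support condition $\textrm{Supp}(F)\subset Z\times_Y S$ ensures that $\tau$ is a $Y$-morphism. Sending $(F,\xi)\mapsto \tau$ gives the desired inverse transformation.

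The main technical obstacle is globalising the pointwise condition $Lj_s^* F \in \Coh(Z_s)$ simple: concretely, concluding from this fiberwise data that $F$ is a single sheaf, flat over $S$, of constant fiber length one. Once this is established, the identification $F = \oO_W$ with $W \to S$ an isomorphism is automatic, and the equivalence-class construction in the definition of $\fF$ kills the only ambiguity in the inverse direction. Naturality in $S$ (compatibility with pullback along $\sigma \colon S_1 \to S_2$) and mutual inversion of the two transformations are then routine verifications using the base-change isomorphism for closed embeddings.
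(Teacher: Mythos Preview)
Your proposal is correct and follows essentially the same route as the paper: reduce via Proposition~\ref{prop_families_on_X_and_Z} to $\fF(-)\simeq\Hom_{Y\text{-Sch}}(-,Z)$, send a morphism to the structure sheaf of its graph, and recover the morphism from $(F,\xi)$ by showing $F$ is a sheaf flat over $S$, hence the structure sheaf of a closed subscheme that is the graph of a map $S\to Z$. The only difference is that where you sketch the spectral-sequence and Nakayama argument for purity and flatness, the paper simply cites \cite[Lemma 4.3]{Br6} for the implication ``fiberwise pure $\Rightarrow$ sheaf flat over the base''.
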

\begin{proof}
	In view of Proposition \ref{prop_families_on_X_and_Z} it suffices to check that functors $\fF(-)$ and $\Hom(-,Z)$ are isomorphic. By assigning to $\gamma \in \Hom_{Y\textrm{-Sch}}(S,Z)$ the structure sheaf of the graph of $\gamma$ we define a morphism $\Hom_{Y\textrm{-Sch}}(-,Z)\to \fF(-)$ of functors $Y\textrm{-Sch}^\textrm{op} \to \textrm{Sets}$. Let us check that it is an isomorphism.
	
	For an element $(F, \xi) \in \fF(S \to Y)$, $F$ is a sheaf on $Z \times S$, flat over $S$, because the restriction to every fiber is a pure sheaf, see \cite[Lemma 4.3]{Br6}. As $\xi$ is surjective on every fiber of $\pi$, it is surjective. It follows that $F \simeq \oO_{\Gamma}$, for some $\Gamma \subset Z \times S$, flat over $S$. Since the restrictions of $\oO_\Gamma$ to the fibers of $\pi$ are simple objects, i.e. skyscrapers, $\pi|_\Gamma \colon \Gamma \to S$ is an isomorphism. Hence, $\Gamma$ is a graph of a morphism $\gamma \colon S\to Z$.	
	
	The form $g^!(\oO_z)$ of the simple quotients follows from Corollary \ref{cor_simpl_quot_of_O}.
\end{proof}

\appendix

\section{\textbf{Mutations over admissible subcategories revisited}}\label{sec_canonical_morp}

The following is a write-up on the mutations over admissible subcategories \cite{B} and iteration over them in the non-triangulated set-up. We use the description of a relevant morphism between adjoint functors given below in Sections \ref{sec_local_gen} and \ref{sec_Z_as_moduli} of the main body of the paper.

Let $\cC_0$ $\cC_2$ be categories and $F\colon \cC_0\to \cC_2$ a functor that admits left and right adjoints, $F^*$,  $F^!$. Let $\eta \colon \textrm{Id} \to F\,F^*$ be the unit and $\varepsilon \colon F\, F^! \to \Id$ the counit of the adjunctions. 

If $F^*$ and $F^!$ are fully faithful, then both $\eta$ and $\varepsilon$ are isomorphisms. Let 
\begin{equation}\label{eqtn_def_of_phi_f}
\f_F \colon F^* \to F^!
\end{equation} 
be the morphism of functors that corresponds by the adjunction isomorphism $\varepsilon\circ F(-)\colon \Hom(F^*, F^!) \xrightarrow{\simeq} \Hom(FF^*, \Id)$ to $\eta^{-1} \colon FF^* \to \Id$, i.e. $\varepsilon\circ (F\f_F) = \eta^{-1}$. Hence
\begin{equation}\label{eqtn_property_of_f_F}
\varepsilon \circ (F\f_F) \circ \eta =\textrm{id}_\textrm{Id}.
\end{equation}
Since $\varepsilon\circ F(-)$ is an isomorphism, formula
(\ref{eqtn_property_of_f_F}) is a defining property for $\f_F$.

Note that $(F\f_F) \circ \eta\colon \Id \to FF^!$ is the morphism that corresponds to $\f_F$ by $F^*\dashv F$ adjunction. Since (\ref{eqtn_property_of_f_F}) is a defining property for $\f_F$, by exchanging roles of $\eta$ and $\varepsilon$, we see that $\f_F$ can equivalently be defined as the morphism $F^* \to F^!$ that corresponds to $\varepsilon^{-1}$ under the $F^*\dashv F$ adjunction.  

Denote by $\eta'\colon \Id \to F^!F$ the $F^! \dashv F$ adjunction unit.
\begin{LEM}\label{lem_another_unit}
	We have $F^!\eta \circ \f_F = \eta' {F^*} \colon  F^*\to F^!FF^*$.
\end{LEM}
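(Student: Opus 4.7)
The plan is to apply the $F \dashv F^!$ adjunction bijection
\[
\Hom(F^*,\, F^!FF^*) \xrightarrow{\simeq} \Hom(FF^*,\, FF^*),\qquad \alpha \mapsto \varepsilon_{FF^*} \circ F\alpha,
\]
to both sides of the claimed identity, and to verify that both sides correspond to $\Id_{FF^*}$. Since the map is a bijection, equality of the images will force equality of the original morphisms.

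First I would dispose of the right-hand side $\eta' F^*$. Its image under the bijection is $\varepsilon_{FF^*} \circ F\eta'_{F^*}$, which is precisely the instance of the $F \dashv F^!$ triangle identity $\varepsilon F \circ F\eta' = \Id_F$ evaluated at the object $F^*$, and so equals $\Id_{FF^*}$.

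Next I would handle the left-hand side $F^!\eta \circ \f_F$. Functoriality of $F$ gives
\[
\varepsilon_{FF^*} \circ F(F^!\eta \circ \f_F) \;=\; \varepsilon_{FF^*} \circ FF^!\eta \circ F\f_F.
\]
Naturality of $\varepsilon \colon FF^! \to \Id$, applied to the natural transformation $\eta \colon \Id \to FF^*$ viewed as a morphism of functors, yields $\varepsilon_{FF^*} \circ FF^!\eta = \eta \circ \varepsilon$. Substituting, the image becomes $\eta \circ \varepsilon \circ F\f_F$. The defining property (\ref{eqtn_property_of_f_F}) of $\f_F$, combined with invertibility of $\eta$ (which holds because $F^*$ is fully faithful), gives $\varepsilon \circ F\f_F = \eta^{-1}$, so the image collapses to $\eta \circ \eta^{-1} = \Id_{FF^*}$, matching the right-hand side.

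There is no substantive obstacle: the lemma is a purely formal manipulation with units, counits, the triangle identities, and naturality. The only point requiring care is tracking the correct whiskering when applying naturality of $\varepsilon$ to the natural transformation $\eta$, which the notation $\varepsilon_{FF^*} \circ FF^!\eta = \eta \circ \varepsilon$ makes unambiguous.
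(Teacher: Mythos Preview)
Your proof is correct and follows the same overall strategy as the paper: both reduce the claim, via the $F\dashv F^!$ adjunction bijection $\Hom(F^*, F^!FF^*)\simeq\Hom(FF^*,FF^*)$, to verifying that $\varepsilon FF^* \circ FF^!\eta \circ F\f_F = \Id_{FF^*}$.

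The execution of that verification differs slightly. You invoke naturality of $\varepsilon$ with respect to $\eta$ to rewrite $\varepsilon FF^* \circ FF^!\eta$ as $\eta\circ\varepsilon$, after which $\varepsilon\circ F\f_F=\eta^{-1}$ (from the defining property of $\f_F$) finishes immediately. The paper instead uses naturality of $F\f_F$ (the interchange law) to reduce to the auxiliary statement that $FF^*\eta$ is inverse to $\eta FF^*$, which it then checks using the $F^*\dashv F$ counit $\varepsilon'$. Your route is more economical: it avoids the second diagram and the appearance of $\varepsilon'$ entirely, at the cost of nothing. Either way the content is the same cluster of triangle identities and naturality squares.
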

\begin{proof}
	We need show that $F^!\eta \circ\f_F$ corresponds under the isomorphism $\Hom(F^*, F^!FF^*) \simeq \Hom(FF^*, FF^*)$ to the identity, i.e. that $\varepsilon {FF^*}\circ FF^!\eta \circ  F\f_F = \textrm{id}_{FF^*}$. 
	
	Consider the diagram 
	\[
	\xymatrix{FF^! \ar[rr]^{FF^!\eta} && FF^!FF^* \ar[rr]^{\varepsilon FF^*} && FF^* \\
		FF^* \ar[rr]^{FF^*\eta} \ar[u]^{F\f_F} && FF^*FF^* \ar[u]|{F\f_F {FF^*}} && FF^* \ar[u]_{\Id} \ar[ll]_{\eta FF^*} }
	\]
	Since $\f_F$ is a morphism of functors, the left square commutes. Formula (\ref{eqtn_property_of_f_F}) implies the commutativity of the right square.
	Note that $\eta FF^*$ is an isomorphism, hence the diagram implies that to show $\varepsilon {FF^*}\circ FF^!\eta \circ  F\f_F = \textrm{id}_{FF^*}$ is equivalent to checking that $FF^*\eta$ is inverse to $\eta FF^*$. This is implied by the standard commutative diagram:
	\[
	\xymatrix{& FF^*FF^* \ar[d]|{F\varepsilon' F^*} &\\
		FF^* \ar[ur]^{\eta FF^*} \ar[r]_{\textrm{id}} & FF^* & FF^* \ar[ul]_{FF^*\eta} \ar[l]^{\textrm{id}}, }
	\]
	where $\varepsilon'\colon FF^* \to \Id$ is the $F^*\dashv F$ adjunction counit, because $\varepsilon'$ is an isomorphism.
\end{proof}
Assume now that $F = H G$, for functors $G\colon \cC_0 \to \cC_1$, $H\colon \cC_1 \to \cC_2$, which admit fully faithful left and right adjoint functors. Then functors $F^*$, $F^!$, left and right adjoin to $F$ are also fully faithful. Denoting by $\eta_F \colon \Id \to FF^*$, $\varepsilon_F\colon FF^!\to \Id$, $\eta_G \colon \Id \to GG^*$, $\varepsilon_G\colon GG^! \to \Id$, $\eta_H \colon \Id \to HH^*$, $\varepsilon_H \colon HH^! \to \Id$ the units and counits, we have presentations for $\eta_F$ and $\varepsilon_F$ as the composites:
\begin{equation}\label{eqtn_consist_of_unit_and_coun}
\eta_F\colon \textrm{Id} \xrightarrow{\eta_H} H\,H^* \xrightarrow{H\eta_G{H^*}} H\,G\,G^*\,H^*,\quad \varepsilon_F \colon H\,G\,G^!\,H^! \xrightarrow{H\varepsilon_G{H^!}} H\,H^! \xrightarrow{\varepsilon_H} \textrm{Id}.
\end{equation}

\begin{PROP}\label{prop_map_f_g}
	Let $F = H G$, where 
$G$ and $H$ admit fully faithful left and right adjoints. Then 
	\begin{enumerate}
		\item[(i)] $\f_H$ is decomposed as $\f_H \colon H^* \xrightarrow{\eta_G{H^*}} G\,G^*\,H^* \xrightarrow{G\f_F} G\,G^!\,H^! \xrightarrow{\varepsilon_G{H^!}} H^!$,
		\item[(ii)] we have decomposition $\f_F \colon F^* \simeq G^*\, H^* \xrightarrow{\f_G{H^*}} G^! \, H^* \xrightarrow{G^!\f_H} G^!\, H^! \simeq F^!$,
		\item[(iii)] we have decomposition $\f_F\colon F^* \simeq G^*\, H^* \xrightarrow{G^*\f_{H}}G^* H^! \xrightarrow{\f_G{H^!}} G^! \, H^! \simeq F^!$.
	\end{enumerate} 
\end{PROP}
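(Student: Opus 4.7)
The plan is to verify all three statements by checking the defining property (\ref{eqtn_property_of_f_F}) for the relevant canonical morphism, combined with the decompositions (\ref{eqtn_consist_of_unit_and_coun}) of $\eta_F$ and $\varepsilon_F$. Throughout, we tacitly use that $G^*, G^!, H^*, H^!$, hence also $F^* \simeq G^*H^*$ and $F^! \simeq G^!H^!$, are fully faithful, so that all units and counits in sight are isomorphisms and each $\f$ is characterized by the single identity (\ref{eqtn_property_of_f_F}).

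For part (i), I will plug the proposed morphism $\psi := (\varepsilon_G H^!)\circ (G\f_F)\circ (\eta_G H^*)\colon H^*\to H^!$ into the defining equation of $\f_H$. Applying $H$ to $\psi$ and composing with $\eta_H$ on the right and $\varepsilon_H$ on the left, the two inner unit/counit factors combine with $\eta_H$ and $\varepsilon_H$, by the formulas (\ref{eqtn_consist_of_unit_and_coun}), into $\eta_F$ and $\varepsilon_F$; what remains in the middle is precisely $F\f_F$. Thus
\[
\varepsilon_H\circ H\psi\circ\eta_H \;=\; \varepsilon_F\circ F\f_F\circ \eta_F \;=\; \mathrm{id},
\]
by (\ref{eqtn_property_of_f_F}) for $F$. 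By uniqueness of the morphism satisfying this defining equation, $\psi = \f_H$.

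For part (ii), set $\chi := (G^!\f_H)\circ (\f_G H^*)\colon G^*H^*\to G^!H^!$. I verify $\varepsilon_F\circ F\chi\circ \eta_F = \mathrm{id}$. Expanding both sides using (\ref{eqtn_consist_of_unit_and_coun}) and pulling $H$ outside the inner computation, the question reduces to showing
\[
\varepsilon_H\circ H\bigl[(\varepsilon_G H^!)\circ (GG^!\f_H)\circ (G\f_G H^*)\circ (\eta_G H^*)\bigr]\circ \eta_H \;=\; \mathrm{id}.
\]
The key step is naturality of $\varepsilon_G\colon GG^!\Rightarrow \mathrm{Id}$ applied to the morphism $\f_H$, which yields $(\varepsilon_G H^!)\circ (GG^!\f_H) = \f_H\circ (\varepsilon_G H^*)$. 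After this move, the tail $(\varepsilon_G\circ G\f_G\circ \eta_G) H^*$ collapses to $\mathrm{id}_{H^*}$ by the defining property of $\f_G$, and the bracket reduces to $\f_H$. The outer identity $\varepsilon_H\circ H\f_H\circ \eta_H = \mathrm{id}$ is the defining property of $\f_H$, finishing (ii).

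Part (iii) is strictly dual to (ii) in the following sense: one replaces the naturality argument for $\varepsilon_G$ applied to the target of $\f_H$ with a naturality argument for $\eta_G$ applied to the source of $\f_H$, namely $(GG^*\f_H)\circ (\eta_G H^*) = (\eta_G H^!)\circ \f_H$, which propagates $\eta_G$ past $\f_H$ so that $(\varepsilon_G\circ G\f_G\circ \eta_G) H^!$ collapses to $\mathrm{id}_{H^!}$ by the defining property of $\f_G$. The only real obstacle is keeping track of the $G$-$H$ bookkeeping and correctly identifying which naturality square to invoke; once the naturality rewrite is performed, the remaining manipulation is a mechanical application of (\ref{eqtn_consist_of_unit_and_coun}) and (\ref{eqtn_property_of_f_F}).
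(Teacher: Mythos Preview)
Your proof is correct and follows essentially the same approach as the paper's: verifying the defining identity (\ref{eqtn_property_of_f_F}) for each proposed composite, using (\ref{eqtn_consist_of_unit_and_coun}) to pass between the $F$- and $G,H$-level units/counits, and invoking naturality of $\varepsilon_G$ (respectively $\eta_G$) to reduce to the known identities for $\f_G$ and $\f_H$. The paper presents the part (ii) naturality step via an explicit commutative diagram and makes the uniqueness step slightly more explicit by noting the injection $\Hom(H^*,H^!)\hookrightarrow \Hom(HH^*,HH^!)$, but the content is the same.
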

\begin{proof}
	We have, by formulas (\ref{eqtn_property_of_f_F}) and (\ref{eqtn_consist_of_unit_and_coun}): 
	\begin{equation}\label{eqtn_long_cal}
	\begin{aligned}
	\varepsilon_H \circ H((\varepsilon_G{H^!}) \circ (G\f_F ) \circ (\eta_G {H^*}))\circ \eta_H = & \varepsilon_H \circ (H\varepsilon_G{H^!}) \circ (H\,G\f_F ) \circ (H\eta_G {H^*})\circ \eta_H \\
	  =& \varepsilon_F \circ (F\f_F ) \circ \eta_F = \textrm{Id},
	\end{aligned} 
	\end{equation}
	Since the composite $\Hom(H^*, H^!) \xrightarrow{H} \Hom(H\,H^*, H\,H^!) \xrightarrow{(-)\circ \eta_H} \Hom(\textrm{Id}, H\,H^!)$ is the adjunction isomorphism, 
	we have an injection
	\begin{equation}\label{eqtn_injections}
	\Hom(H^*, H^!) \hookrightarrow \Hom(H\,H^*, H\,H^!),
	\end{equation}
	hence $\f_H$ is uniquely determined by $H\f_H$. Since $\eta_H$ and $\varepsilon_H$ are isomorphisms, equalities (\ref{eqtn_property_of_f_F}) and (\ref{eqtn_long_cal}) imply (i).
	
	Now consider a diagram:
	\[
	\xymatrix{H\,G\,G^*\,H^* \ar[rrr]^{H\,G\f_G{H^*}} &&&  H\,G\,G^!\,H^* \ar[rr]^{H\,G\, G^!\,\f_H} \ar[d]^{H\varepsilon_G{H^*}} && H\,G\,G^!\,H^! \ar[d]^{H\varepsilon_G{H^!}} \\
		H\,H^* \ar[rrr]^{\textrm{Id}} \ar[u]^{H\eta_G{H^*}} &&& H\,H^* \ar[rr]^{H\f_H} && H\,H^!  }
	\]  
	Its left square commutes by formula (\ref{eqtn_property_of_f_F}) for $\f_G$. Commutativity of the right square follows from the functoriality of $\varepsilon_G$. Together with (\ref{eqtn_consist_of_unit_and_coun}) this implies
	\begin{align*}
	\varepsilon_F \circ F\,(G^! \f_H \circ \f_G H^*) \circ \eta_F &= \varepsilon_H \circ H \varepsilon_G H^! \circ HGG^!\f_H \circ HG\f_GH^* \circ H\eta_GH^* \circ \eta_H \\
	&= \varepsilon_H \circ H \f_H \circ \eta_H = \textrm{Id}.
	\end{align*}
	The statement (ii) follows again from (\ref{eqtn_property_of_f_F}) and (\ref{eqtn_injections}) for $F$. Similarly for (iii).
\end{proof}

\bibliographystyle{alpha}
\bibliography{../../ref}
\end{document}